\newtheorem{theorem}{Theorem}[section]
\newtheorem{lem}[theorem]{Lemma}
\newtheorem{proposition}[theorem]{Proposition}
\newtheorem{coro}[theorem]{Corollary}
\theoremstyle{definition}
\newtheorem {definition}[theorem]{Definition}
\theoremstyle{remark}
\newtheorem{remark}[theorem]{Remark}
\numberwithin{equation}{section}
\numberwithin{theorem}{section}
\def\beq{\begin{equation}}
\def\eeq{\end{equation}}
\def\bae{\begin{eqnarray}}
\def\eae{\end{eqnarray}}
\def\AS{\mathrm{A\!S}}
\def\AA{\mathrm{A\!A}}
\def\SA{\mathrm{S\!A}}
\def\SS{\mathrm{S\!S}}
\def\cd{\circledast}
\def\La{\Lambda}
\def\la{\lambda}
\def\Om{\Omega}
\def\Ga{\Gamma}
\def\ga{\gamma}
\newcommand{\tcercle}[1]{\ensuremath{\setlength{\unitlength}{1ex}\begin{picture}(2.8,2.8)\put(1.4,1.4){\circle{2.7}\makebox(-5.6,0){#1}}\end{picture}}}
\begin{document}

\title[]{Jack polynomials with prescribed symmetry and some of their clustering properties}

\author{Patrick Desrosiers}\address{Instituto Matem\'atica y F\'isica,
Universidad de Talca, 2 Norte 685, Talca,
Chile}\email{Patrick.Desrosiers@inst-mat.utalca.cl}

\author{Jessica Gatica} \address{Instituto Matem\'atica y F\'isica,
Universidad de Talca, 2 Norte 685, Talca,
Chile}\email{jgatica@inst-mat.utalca.cl}

\thanks{\textit{Acknowledgments:} The authors thank Luc Lapointe and Stephen Griffeth for very stimulating and useful discussions.  
The  work of P.D.\ was supported by CONICYT through FONDECYT's grants \#1090034 and \#1131098, and the Anillo de Investigaci\'on ACT56.  J.G.\ is grateful to CONICYT for the award of a doctoral scholarship.  }

%\date{August 2013}
\keywords{Jack polynomials, Calogero-Sutherland models, clustering properties}

 \subjclass[2010]{15B52}

\begin{abstract}  We study Jack polynomials in $N$ variables, with parameter $\alpha$, and having a prescribed symmetry with respect to two disjoint subsets of variables.  For instance, these polynomials can exhibit a symmetry of type AS, which means that they are  antisymmetric in the first $m$ variables and symmetric in the remaining $N-m$ variables. One of our main goals is to extend recent works on symmetric Jack polynomials \cite{bh,BarFor,Griffeth} and prove that the Jack polynomials with prescribed symmetry also admit clusters of size $k$ and order $r$, that is, the polynomials  vanish to order $r$ when $k+1$ variables coincide.   

We first prove some general properties for generic $\alpha$, such as their uniqueness as triangular eigenfunctions of  operators of Sutherland type, and the existence of their analogues in infinity many variables.   We then turn our attention to the case with $\alpha=-(k+1)/(r-1)$.  We show that for each triplet $(k,r,N)$, there exist  admissibility conditions on the indexing sets, called superpartitions, that guaranty both the regularity and the uniqueness of the polynomials.  These conditions are also  used to establish similar properties for non-symmetric Jack polynomials.  As a result, we prove that the Jack polynomials with arbitrary prescribed symmetry, indexed by $(k,r,N)$-admissible superpartitions,  admit  clusters of size  $k=1$ and order $r\geq 2$.  In the last part of the article, we find necessary and sufficient conditions for  the invariance under translation of the Jack polynomials with prescribed symmetry AS.  This allows to find special families of superpartitions that imply the existence of clusters of size $k>1$ and order $r\geq 2$.  
\end{abstract}

\maketitle
\vspace{-1.0cm}
%\footnotesize
\small
\tableofcontents
\vspace{-1.0cm}
%\listoffigures
\normalsize
\thispagestyle{empty}

\newpage

\normalsize

\section{Introduction}

\subsection{Quantum Sutherland system}

In this article, we study properties of polynomials in many variables that provide the wave functions for the Sutherland model with exchange term, which is 
a famous quantum mechanical many-body problem in mathematical physics. This model describes the evolution of $N$ particles interacting  on the unit circle. 

 To be more explicit, let   $\phi_j\in\mathbb{T}=[0,2\pi)$ be the variable that describes the position of the $j$th particle in the system.  Let also the operator $K_{i,j}$ act on any multivariate function of $\phi_1,\ldots,\phi_N$ by interchanging the variables $\phi_i$ and $\phi_j$.  Finally, suppose that  $g$ is some positive real.  Then, the Sutherland model, with coupling constant $g$ and exchange terms $K_{i,j}$, is defined via the following  Schrodinger operator acting on $L^2(\mathbb{T}^N)$ \cite{polychronakos, pasquier}:
\beq H =-\sum_{i=1}^N\frac{\partial^2}{\partial \phi_i^2}+\frac{1}{2}\sum_{i\neq j}\frac{1}{\sin^2(\frac{\phi_i-\phi_j}{2})}g(g- K_{i,j}) \, .\eeq 
%where we have set all the masses to $1$ and used the standard convention of $\hbar=1$. 
When acting on symmetric functions, the operators $K_{i,j}$ can be replaced by the identity and the standard Sutherland model is recovered \cite{sutherland}. The latter  is intimately related to Random Matrix Theory \cite{forrester}. For $K_{i,j}\neq 1$, the operator $H_K$ was used for  describing  systems of particles with spin (see for instance \cite{kato2009,polychronakos2006}).   

 Up to a multiplicative constant, there is a unique eigenfunction $\Psi_0$ of $H$ with minimal eigenvalue $E_0$ \cite{kato}.  Explicitly, defining $\alpha=g^{-1}$ and $x_j=e^{\mathrm{i}\phi_j}$, where $\mathrm{i}=\sqrt{-1}$, we have  
\beq \Psi_0=\prod_{1\leq i<j\leq N}|x_i-x_j|^{1/\alpha} , \qquad E_0=\frac{ N(N^2-1)}{12\alpha^2}.\eeq 
The operator $H$ admits eigenfunctions of the form $\Psi(x)=\Psi_0(x) P(x)$, where $P(x)$ is a polynomial eigenfunction of the operator $D=\Psi_0^{-1} \circ (H-E_0)\circ \Psi_0$, that is, 
\begin{multline}\label{defD} \qquad D =\sum_{i=1}^N\left(x_i\frac{\partial}{\partial x_i}\right)^2+\frac{2}{\alpha}\sum_{1\leq i<j\leq N}^N\frac{x_ix_j}{x_i-x_j} \left( \frac{\partial}{\partial x_i}-\frac{\partial}{\partial x_j}\right)\\-\frac{2}{\alpha}\sum_{1\leq i<j\leq N} \frac{x_ix_j}{(x_i-x_j)^2}(1-K_{i,j})+\frac{N-1}{\alpha}\sum_{i=1}^Nx_i\frac{\partial}{\partial x_i}\, .\qquad \end{multline}

\subsection{Symmetric Jack polynomials and their clustering}

 Let $\mathscr{S}_{\{1,\ldots,N\}}$ denote the ring of symmetric polynomials in $N$ variables with coefficients in the field  of rational functions in the formal parameter $\alpha$, here denoted by  $\mathbb{C}(\alpha)$.  Any homogenous element of degree $n$ in $\mathscr{S}_{\{1,\ldots,N\}}$  can be indexed by a partition of $n$, which is sequence  $\lambda=(\la_1,\ldots,\la_N)$ such that $\la_1\geq \ldots\geq \la_N\geq 0$ and $\la_1+\ldots+\la_N=n$.  Note that in general, we only write  the non-zero elements of the partition.  Partitions are often sorted with the help of the following partial order, called the dominance order:
\beq \label{defdominance}\la\geq \mu \qquad \Longleftrightarrow \qquad \sum_{i=1}^k\lambda_i\geq \sum_{i=1}^k \mu_i,\quad \forall \, k,\eeq   where it is assumed that both partitions have the same degree $n$. A convenient way to write a symmetric polynomial consists in giving its linear expansion in the basis of monomial symmetric functions  $\{m_\la\}_\la$, where 
\beq m_\la=x_1^{\la_1}\cdots x_N^{\la_N}+\text{distinct permutations}.\eeq

 Since Stanley's seminal work \cite{stanley}, we know that the symmetric Jack polynomial, denoted $P_\la=P_\lambda(x;\alpha)$, is the unique symmetric eigenfunction of \eqref{defD}   that is  monic and triangular  in the monomial basis, where the triangular is taken with respect to the dominance ordering.   In symbols,   $P_\la$ is the unique element of  $\mathscr{S}_{\{1,\ldots,N\}}$  that satisfies the following two properties: 
\begin{align*} \text{(A1)}\qquad  &P_\la=m_\la+\sum_{\mu<\lambda}c_{\lambda,\mu}(\alpha)m_\mu \, ,\\
\text{(A2)}\qquad  &D P_\la= \varepsilon_\la(\alpha) P_\la \, ,
\end{align*}
where $\varepsilon_\la(\alpha)$ is the eigenvalue and will be given later in Lemma \ref{lemordereigen}.  For instance, 
\beq \label{ex1} P_{(4)}=m_{(4)}+ \,{\frac { 6\left( \alpha+1 \right) m_{(2,2)}}{ \left( 2\,
\alpha+1 \right)  \left( 3\,\alpha+1 \right) }}+ \,{\frac {4m_{(3,1)}}{
3\,\alpha+1}}+ \,{\frac {12 m_{(2,1,1)}}{ \left( 2\,\alpha+1 \right) 
 \left( 3\,\alpha+1 \right) }}+\,{\frac {24 m_{(1,1,1,1)}}{ \left( 2\,
\alpha+1 \right)  \left( 3\,\alpha+1 \right)  \left( \alpha+1 \right) 
}}
\eeq and
\beq \label{ex2} P_{(2,2)}=m_{(2,2)}+2\,{\frac {m_{(2,1,1)}}{\alpha+1}}+12\,{\frac {m_{(1,1,1,1)}
}{ \left( \alpha+2 \right)  \left( \alpha+1 \right) }}.
\eeq 

It is worth stressing that uniqueness of the polynomial satisfying (A1) and (A2) remains valid if we suppose that $\alpha$ is a positive real or an irrational (see Section \ref{SectionAlphaGeneric}).  However, when $\alpha$ is a negative rational,   the uniqueness is generally lost.  Worse, as the examples above clearly show,  the Jack polynomials have poles for negative rational values of $\alpha$.   

Nevertheless, Feigin,  Jimbo,  Miwa, and  Mukhin \cite{FJMM}  showed that  for certain classes of partitions, called admissible partitions, the  Jack polynomial are not only regular at certain negative fractional values of $\alpha$ but also exhibit remarkable vanishing properties when some variables coincide.   
\begin{definition}[Admissibility]  \label{defadm}  Let $k$ and $r-1$ be positive integers such that $\gcd(k+1,r-1)=1$.
A partition $\lambda=(\la_1,\ldots,\la_N)$ is said to be \textbf{$(k,r,N)$-admissible} if 
\beq \label{defadm} \la_i-\la_{i+k}\geq r \qquad \forall \, 1\leq i\leq N-k.\eeq
\end{definition} 
Proposition 4.1 in \cite{FJMM} states the following: If  $\lambda$ is $(k,r,N)$-admissible and $\alpha$ is equal to \beq\label{alphakr} \alpha_{k,r}=-\frac{k+1}{r-1}  ,\eeq  then $P_\la(x;\alpha)$ is regular  
and vanishes  when $k+1$ variables coincide, that is,  
$ P_\la(x;\alpha_{k,r}) |_{x_{N-k}=\ldots=x_{N}}=0$.  Bernevig and Haldane \cite{bh} later used the above vanishing property for modeling fractional quantum Hall states with  Jack polynomials.   They moreover conjectured that the  Jack polynomials indexed by $(k,r,N)$-admissible partitions satisfy the following clustering property, which gives a more precise statement about how the polynomials vanish.    

\begin{definition}[Clustering property]  \label{defclustering} Let  $k,r\in\mathbb{Z}_+$.   A symmetric polynomial $P$  admits a \textbf{cluster of size $k$ and order $r$}   if it vanishes to order at least $r$ when $k+1$ of the  variables are equal, that is,  
\beq \label{eqclustering} P(x_1,\ldots,x_{N-k},\overbrace{z\ldots,z}^{k\text{ times}}) =\prod_{j=1}^{N-k}(x_{j}-z)^r Q(x_1,\ldots, x_{N-k},z)\, \eeq for some polynomial $Q$ in $N-k+1$ variables. \end{definition} 

Let us illustrate how the clustering property works by returning to the examples given in \eqref{ex1} and \eqref{ex2}.  Clearly, the partition $(4)$ can be admissible only for $N=2$ and in fact, it is  both $(1,2,2)$-admissible and $(1,4,2)$-admissible. There are two possible values for $\alpha$: $\alpha_{1,2}=-2$ and $\alpha_{1,4}=-2/3$.   One can check that as expected, $P_{(4)}$ admits clusters of size $k=1$ whose respective order is $r=2$ and $r=4$:
$$ P_{(4)}(x_1,z;-2)=\frac15  \left(x_1-z\right) ^{2}\left( 5{x_1}^{2}+6\,{ x_1}{z}+5{{z}}^{2}
 \right)
 \quad \text{and}\quad P_{(4)}(x_1,z;-2/3)=(x_1-z)^4. $$   The partition $(2,2)$ is $(2,2,4)$-admissible and one easily sees that the associated Jack polynomial admits a cluster of size $k=2$ and order $r=2$:
$$ P_{(2,2)}(x_1,x_2,z,z;-3)=(x_1-z)^2(x_2-z)^2. $$     
Note that  for the above examples and contrary to the general case (e.g., see the introduction of \cite{dlm_cmp2}),   the order of vanishing is exactly equal to $r$.

  Baratta and Forrester \cite{BarFor} proved that the Jack polynomials (along with other symmetric polynomials such as Hermite and Laguerre) indexed with $(1,r,N)$-admissible partitions follow  Equation \eqref{eqclustering} at $\alpha_{1,r}$.\footnote{The proof in \cite{BarFor} is not entirely complete, since an implicit assumption about the uniqueness of the solution to (A1) and (A1) was made.  See Remark \ref{remarkvalidity}. }  The same 
authors also proved clustering properties for $k>1$ in the case of partitions associated to translationally invariant Jack polynomials \cite{jl}.   Very recently,  Berkesch, Griffeth, and   Sam proved the general $k\geq 1$ clustering property for Jack polynomials \cite{Griffeth}.   Their method method was based  the representation theory of the rational Cherednik algebra.   In fact, reference \cite{Griffeth} also contains the proof for more general vanishing properties in the case of many clusters, some of them having been conjectured earlier in \cite{bh} .  

\subsection{Jack polynomials with prescribed symmetry}

The  operator $D$ obviously has polynomial eigenfunctions of different symmetry classes.    First, as explained earlier,  there are the symmetric Jack polynomials $P_{\la}(x;\alpha)$.    Second, there are the non-symmetric Jack polynomials $E_\eta(x;\alpha)$, which were introduced by Opdam \cite{opdam}. These polynomials can be defined as the common eigenfunctions of the commuting set $\{\xi_j\}_{j=1}^N$, where each $\xi_{j}$ is a first order difference-differential operator $\xi_{j}$, often called  a Cherednik operator.

However,  as first shown by Baker and Forrester \cite{bf2}, we can use the latter polynomials to construct orthogonal eigenfunctions of $D$ whose symmetry property interpolates between the completely symmetric Jack polynomials, $P_{\la}(x;\alpha)$, and the completely antisymmetric ones, sometimes denoted by $S_{\la}(x;\alpha)$.   In order  words, there exist eigenfunctions that are symmetric in some given subsets of $\{x_1,\ldots,x_N\}$  and antisymmetric with other subsets, all subsets of variables being mutually disjoint.     Such eigenfunctions are called Jack polynomials with prescribed symmetry and were studied in \cite{bf2,kato1998,dunkl,bdf,mcanally}.  Here we limit our study to the case of two subsets. 

 Before given the precise definition of the Jack polynomials with prescribed symmetry, let us introduce some more notation.  For a given set $K=\{k_1,\ldots,k_M\}  \subseteq\{1,\ldots,N\}$, let $\mathrm{Asym}_K$ and $\mathrm{Sym}_K$    respectively  denote the antisymmetrization and the symmetrization operators with respect to the variables $x_{k_1}, \ldots,x_{k_M}$.  If $f(x)$ is an element of $\mathscr{V}=\mathbb{C}(\alpha)[x_1,\ldots,x_N]$, then  $\mathrm{Sym}_K f(x)$ belongs to $\mathscr{S}_K$, the  submodule of $\mathscr{V}$ whose elements are symmetric polynomials in $x_{k_1}, \ldots,x_{k_M}$.  Similarly, $\mathrm{Asym}_K f(x)$ belongs to $\mathscr{A}_K$, the submodule of antisymmetric polynomials in $x_{k_1}, \ldots,x_{k_M}$.

\begin{definition} \label{defJackPrescibed}   For a given positive integer $m\leq N$,  set $I=\{1,\ldots,m\}$ and $J=\{m+1,\ldots,N\}$. \footnote{The above definition could be obviously generalized by considering $I=\{i_1,\ldots,i_m\}$ and $J=\{j_1,\ldots,j_{N-m}\}$ as two general disjoint sets such that $I\cup J=\{1,\ldots, N\}$. However, this would make the presentation more intricate.  One easily goes from one definition to the other by permuting the variables.     } Moreover, let  $\lambda=(\lambda_1,\ldots,\lambda_m)$ and $ \mu=(\mu_1,\ldots,\mu_{N-m})$ be partitions.  The monic Jack polynomial with prescribed symmetry of type antisymmetric-symmetric ($\AS$ for short) and   indexed by the ordered set $\La=(\lambda_1,\ldots,\lambda_m; \mu_1,\ldots,\mu_{N-m})$  is defined as  follows:
\beq P^{\AS}_{\La}(x;\alpha)=c^{\AS}_{\La}  \,\mathrm{Asym}_I \mathrm{Sym}_J E_\eta(x;\alpha),   \eeq 
where  $\eta$ is a composition equal to $(\lambda_m,\ldots,\lambda_1,\mu_{N-m},\ldots,\mu_{1})$  while the normalization factor $c^{\AS}_{\La} $ is   such  that the coefficient of $x_{1}^{\lambda_{1}}\cdots x_{m}^{\lambda_{m}}x_{m+1}^{\mu_{1}}\cdots x_{N}^{\mu_{N-m}}$ in $P^{\AS}_{\La}(x;\alpha)$ is equal to one. 
Other types of Jack polynomials are defined similarly:
\begin{align*} P^{\AA}_{\La}(x;\alpha)&=c^{\AA}_{\La} \, \mathrm{Asym}_I \mathrm{Asym}_J E_\eta(x;\alpha), \\ P^{\SA}_{\La}(x;\alpha)&=c^{\SA}_{\La} \, \mathrm{Sym}_I \mathrm{Asym}_J E_\eta(x;\alpha),\\
 P^{\SS}_{\La}(x;\alpha)&=c^{\SS}_{\La} \, \mathrm{Sym}_I \mathrm{Sym}_J E_\eta(x;\alpha)\, . \end{align*} 
The coefficients $c_\La $ are given in Equations \eqref{coefAS}--\eqref{coefSS}.
\end{definition} 

The above polynomials respectively belong to $\mathscr{A}_I\otimes \mathscr{S}_J$, $\mathscr{A}_I\otimes \mathscr{A}_J$, $\mathscr{S}_I\otimes \mathscr{A}_J$, $\mathscr{S}_I\otimes \mathscr{S}_J$, which are all  vector spaces over $\mathbb{C}(\alpha)$.   These spaces are spanned by monomials, denoted by $m_\La$, each of them being indexed by an ordered pair of  partitions $\La=(\lambda_1,\ldots,\lambda_m; \mu_1,\ldots,\mu_{N-m})$.  Analogously to the Jack polynomials with prescribed symmetry, the monomials are  defined by the action of $\mathrm{Asym}_K$ and $\mathrm{Sym}_K$, where $K$ is either $I$ or $J$, on the non-symmetric monomial $x_1^{\la_1}\cdots x_m^{\la_m}x_{m+1}^{\mu_1}\cdots x_{N}^{\mu_{N-m}}$.  See Section \ref{SectionDefPrescribed} for more details.    

The case $\mathrm{AS}$ is very special since  the polynomials  $P^{\AS}_{\La}(x;\alpha)$ can be used to solve the supersymmetric  Sutherland model \cite{dlm_cmp}, which is a generalization of the above model that also involves Grassmann variables.  In this context,  the indexing set $\La=(\lambda_1,\ldots,\lambda_m; \mu_1,\ldots,\mu_{N-m})$ is called a superposition  -- equivalently, it could be called  an overpartition (see \cite{cl})-- and is such that the partition $\lambda=(\lambda_1,\ldots,\lambda_m)$ is strictly decreasing.  The correct diagrammatic representation of superpartitions, first given in \cite{dlm_comb}, proved to be very useful. It allowed, for instance, the derivation of a very simple evaluation  formula for $P^{\AS}_{\La}(x;\alpha)$ \cite{dlm_imrn}, which in turn lead to the first results regarding the clustering properties of these polynomials \cite{dlm_cmp2}.  We adopt here a slightly more general point of view for superpartitions.

\begin{definition}[Superpartitions and diagrams]\label{defsparts} The ordered set $\La=( \Lambda_1,\ldots,\Lambda_m; \Lambda_{m+1},\ldots,\La_N)$ is a  \textbf{superpartition $\La$  of bi-degree $(n|m)$}  if it satisfies the following conditions:$$  \Lambda_1\geq \cdots\geq \Lambda_m\geq 0 \qquad \Lambda_{m+1}\geq \cdots\geq \Lambda_N\geq 0 \qquad   \sum_{i=1}^N\La_i=n.$$ 
If $(\La_1,\ldots,\La_m)$ is moreover strictly decreasing, then $\La$ is called a \textbf{strict superpartition}. 
Equivalently,  we can write the superpartition $\La$ as a pair of partitions $(\La^\cd,\La^*)$ such that  
$$ \La^\cd=( \Lambda_1+1,\ldots,\Lambda_m+1, \Lambda_{m+1},\ldots,\La_N)^+,\qquad  \La^*=( \Lambda_1,\ldots,\Lambda_m, \Lambda_{m+1},\ldots,\La_N)^+,\qquad $$
where $+$ indicates the operation that reorder the elements of a composition in decreasing order.  The diagram of $\La$ is obtained from that of $\La^\cd$ by replacing the boxes belonging to the skew diagram $\La^\cd/\La^*$ by circles. The \textbf{dominance order for superpartitions} is defined as follows:
$$ \La> \Om\quad \Longleftrightarrow\quad \La^*>\Om^*\quad \text{or}\quad \La^*=\Om^*\quad \text{and}\quad\La^\cd>\Om^\cd.$$  
\end{definition} 

For instance,    the ordered set  $\Lambda= (4,3,0;4)$ is a strict superpartitions of bi-degree $(11|3)$.  It can be written as a pair $(\La^\cd,\La^*)$, where $\La^\cd=(4+1,3+1,0+1,4)^+=(5,4,4,1)$ and $\La^*=(4,4,3,0)$.  The diagram associated to $\La$ is obtained as follows:  
{\small $$ \Lambda^\circledast={\tableau[scY]{&&&&\\&&& \\&&&\\  \\ }} \quad  \Lambda^*={\tableau[scY]{&&&\\&&&\\ &&\\ \bl\\ }} \quad \Longrightarrow \quad \Lambda^\circledast/\La^*={\tableau[scY]{\bl&\bl&\bl&\bl&\\ \bl&\bl&\bl \\ \bl&\bl& \bl&\\    \\ }}
\quad \Longrightarrow \quad  \Lambda={\tableau[scY]{&&&&\bl\tcercle{}\\&&&\\&&&\bl\tcercle{}\\\bl\tcercle{} \\ }}  $$} Similarly, $\Omega= (5,3,1; 2)$ and $\Gamma=(3,1,0; 5,2)$ are   superpartitions of the same bi-degree.  The associated diagrams are respectively   
{\small$$ \Omega={\tableau[scY]{&&&&&\bl\tcercle{} \\&&&\bl\tcercle{}\\&\\&\bl\tcercle{}\\}} \quad\text{and}\qquad \Gamma={\tableau[scY]{&&&& \\&&&\bl\tcercle{}\\&\\&\bl\tcercle{}\\\bl\tcercle{}}} $$
}One easily verifies that $\Omega>\Gamma$, while $\La$ is   comparable with neither $\Omega$ nor $\Gamma$.

\subsection{Main results}

Our first aim is to give a very simple characterization of Jack polynomials with prescribed symmetry that generalizes Properties (A1) and (A2).  For this, we use differential operators of Sekiguchi type: 
\beq \label{defseki} S^*(u)=\prod_{i=1}^N(u+\xi_i)\quad \text{and}\quad   S^\cd(u,v)=\prod_{i=1}^m(u+\xi_i+\alpha)\prod_{i=m+1}^N(v+\xi_i),\eeq
where $u$ and $v$ are formal parameters.  Note we will often set $v=u$ since this case leads to simpler eigenvalues. 
It is a simple exercise to show that the symmetric Jack polynomial $P_\lambda(x;\alpha)$  is an eigenfunction of $S^*(u)$, with eigenvalue
\beq \label{eigenseki} \varepsilon_\lambda(\alpha,u)=\prod_{i=1}^N(u+\alpha \lambda_i-i+1).\eeq The same polynomial cannot be an eigenfunction of $S^\cd(u,v)$, since the latter does not preserve $\mathscr{S}_{\{1,\ldots,N\}}$.  In fact,  $ S^*$ and $S^\cd$ together preserve the spaces $\mathscr{A}_I\otimes \mathscr{S}_J$, $\mathscr{A}_I\otimes \mathscr{A}_J$, $\mathscr{S}_I\otimes \mathscr{A}_J$, and $\mathscr{S}_I\otimes \mathscr{S}_J$.  They moreover serve as generating series for the conserved quantities of the Sutherland model with exchange terms:
$$ S^*(u)=\sum_{d=0}^N u^{N-d}\mathcal{H}_d,\qquad   S^\cd(u,v)=\sum_{d=0}^m\sum_{d'=0}^{N-m} u^{m-d}v^{N-m-d'}\mathcal{I}_{d,d'}\,,$$ where all the operators $\mathcal{H}_d$ and $\mathcal{I}_{d,d'}$ commute among themselves and preserve the spaces mentioned above.     Amongst them, the most important are  
\beq \mathcal{H}= \mathcal{H}_2=\sum_{i=1}^N {\xi_i}^2,\qquad \mathcal{I}= \mathcal{I}_1=\sum_{i=1}^m {\xi_i}\, .\eeq 
{ Note that the operator $D$ introduced in \eqref{defD} is related to the operators $\mathcal{H}_1$ and $\mathcal{H}_2$ via $$ \mathcal{H}_2+(N-1)\mathcal{H}_1=\alpha^2D+\frac{N(N-1)(2N-1)}{6} .$$}

\begin{theorem}[Uniqueness at generic $\alpha$] \label{theo1} Let $\La$ be a superpartition of bi-degree $(n|m)$.  Suppose that $\alpha$ is a formal parameter or a complex number that is neither zero nor a negative rational.     Then, the Jack polynomial with prescribed symmetry $P_\La$   is the unique polynomial  satisfying 
\begin{align*} \text{(B1)} \qquad& P_\Lambda=m_\Lambda + \sum_{\Gamma<\Lambda}c_{\La,\Gamma}m_\Gamma \, , \qquad c_{\La,\Gamma}\in\mathbb{C}(\alpha) ;\\
\text{(B2)} \qquad&\mathcal{H}\,P_\La=\,d_\La\,P_\La\quad \quad \text{and} \quad \mathcal{I}\,P_\La=e_\La\, P_\La .
\end{align*} for some $c_{\La,\Ga}, d_{\La}, e_\La\in\mathbb{C}(\alpha)$.   Moreover, the eigenvalues $d_{\La}$ and $e_\La$ can be computed explicitly; they are   in Equations \eqref{eqeigen1} and \eqref{eqeigen2}, given respectively.
%by $d_\La=\varepsilon_{\La^*}(\alpha)$ and $ e_\La=\epsilon_{\La}(\alpha)$,  where  $\varepsilon$ and $\epsilon$ are respectively defined in Lemmas \ref{lemordereigen} and \ref{lemordereigen2}.
\end{theorem}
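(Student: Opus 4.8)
The plan is to split the statement into an existence part---the polynomial $P_\La$ produced by Definition \ref{defJackPrescibed} does satisfy (B1) and (B2)---and a uniqueness part, which is where the hypothesis that $\alpha$ is neither zero nor a negative rational enters. Throughout I write $\bar\eta_i$ for the eigenvalue of the Cherednik operator $\xi_i$ on the non-symmetric Jack polynomial, so that $\xi_i E_\eta=\bar\eta_i E_\eta$ for every $i$. For the existence of (B2) the key observation is that both $\mathcal{H}=\sum_i\xi_i^2$ and $\mathcal{I}=\sum_{i=1}^m\xi_i$ commute with the operators $\mathrm{Asym}_I$ and $\mathrm{Sym}_J$. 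Indeed, using the degenerate affine Hecke relations between the $\xi_i$ and the transpositions $K_{i,i+1}$, any symmetric polynomial in $\xi_1,\ldots,\xi_N$ is central, so $\mathcal{H}$ commutes with the whole symmetric group and in particular with $\mathrm{Asym}_I\mathrm{Sym}_J$; and $\mathcal{I}$, being symmetric in $\xi_1,\ldots,\xi_m$ and involving no index larger than $m$, commutes with every transposition inside $I$ and with every transposition inside $J$, hence with $\mathrm{Asym}_I$ and with $\mathrm{Sym}_J$. I can therefore pull each operator through the (anti)symmetrizer,
\beq \mathcal{H}\,\big(\mathrm{Asym}_I\mathrm{Sym}_J E_\eta\big)=\mathrm{Asym}_I\mathrm{Sym}_J\big(\mathcal{H}E_\eta\big)=\Big(\textstyle\sum_i\bar\eta_i^{2}\Big)\mathrm{Asym}_I\mathrm{Sym}_J E_\eta, \eeq
and likewise $\mathcal{I}$ acts by the scalar $\sum_{i=1}^m\bar\eta_i$. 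This proves (B2) directly for $P^{\AS}_\La$, the three remaining symmetry types being handled in exactly the same way; the resulting eigenvalues are the symmetric functions of the spectral vector recorded in \eqref{eqeigen1} and \eqref{eqeigen2}.

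The existence of (B1) is purely combinatorial. I would start from the known triangular expansion of the non-symmetric Jack polynomial $E_\eta$ in the basis $\{x^\nu\}$ with respect to the Bruhat-type order on compositions, apply $\mathrm{Asym}_I\mathrm{Sym}_J$, regroup the result into the prescribed-symmetry monomials $m_\Ga$, and check that the leading term is a nonzero multiple of $m_\La$ while every other $m_\Ga$ that appears has $\Ga<\La$ in the dominance order of Definition \ref{defsparts}; the constant $c^{\AS}_\La$ is exactly what normalizes that leading multiple to $1$. The one point to verify is the compatibility between the composition order controlling the triangularity of $E_\eta$ and the superpartition dominance order, which is routine but must be done for each of the four symmetry types.

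The heart of the theorem is uniqueness, and it rests on two facts. First, each of $\mathcal{H}$ and $\mathcal{I}$ is itself triangular on the monomial basis, $\mathcal{H}m_\Ga=d_\Ga m_\Ga+\sum_{\Theta<\Ga}(\ast)\,m_\Theta$ and similarly for $\mathcal{I}$ with diagonal entry $e_\Ga$; this follows from the same commuting argument applied to the Sekiguchi-type series $S^*(u)$ and $S^\cd(u,u)$, which preserve the space and are triangular because the $\xi_i$ are. Second, the diagonal spectrum separates: if $\Ga<\La$ then $(d_\Ga,e_\Ga)\neq(d_\La,e_\La)$. Granting these, if $P$ is any solution of (B1) and (B2), then $P-P_\La$ lies in the span of the $m_\Ga$ with $\Ga<\La$ and is a joint eigenfunction of $(\mathcal{H},\mathcal{I})$ with eigenvalues $(d_\La,e_\La)$; writing $P-P_\La=\sum_{\Ga<\La}b_\Ga m_\Ga$ and applying $\mathcal{H}-d_\La$, the coefficient of the dominant surviving $m_{\Ga_0}$ forces $b_{\Ga_0}(d_{\Ga_0}-d_\La)=0$, whence $b_{\Ga_0}=0$ by separation; iterating kills every $b_\Ga$, so $P=P_\La$.

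The step I expect to be the main obstacle is the eigenvalue separation, though note that we only need it for dominance-\emph{comparable} pairs $\Ga<\La$, which turns it into a monotonicity statement along the order of Definition \ref{defsparts}. Writing $\bar\Ga_i=\alpha\Ga_i+\delta_i$ with $\delta_i\in\mathbb{Z}$ fixed by the relative ranking of the parts, the eigenvalues $d_\Ga$ and $e_\Ga$ are explicit polynomials in $\alpha$, so the equalities $d_\Ga=d_\La$ and $e_\Ga=e_\La$ at an $\alpha$ that is not a negative rational force equality coefficient by coefficient in $\alpha$. When $\Ga^*<\La^*$ I would use that the $\mathcal{H}$-eigenvalue already separates dominance-comparable partitions, exactly as in the classical Stanley theory for the $P_\la$, so that $d_\Ga\neq d_\La$; when $\Ga^*=\La^*$ but $\Ga^\cd<\La^\cd$ the $\mathcal{H}$-eigenvalue no longer distinguishes the two and one must read the circle structure off $e_\Ga$. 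This is precisely where a single conserved quantity is insufficient and the pair $(\mathcal{H},\mathcal{I})$ is needed, and where the exclusion of negative rational $\alpha$ is unavoidable: at $\alpha=\alpha_{k,r}$ the spectrum genuinely degenerates, which is the whole reason the later sections must impose admissibility.
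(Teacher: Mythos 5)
Your proposal is correct and follows essentially the same route as the paper: existence of (B1)--(B2) via the commutation of $\mathcal{H}$ and $\mathcal{I}$ with the (anti)symmetrizers together with the triangularity of $E_\eta$ and the order-compatibility of $\varphi_m$ (the paper's Propositions \ref{proptriangularity} and \ref{lemsekiguchi2}), and uniqueness via the triangular action of $\mathcal{H},\mathcal{I}$ on the monomials plus joint eigenvalue separation for dominance-comparable superpartitions, split exactly as in Lemmas \ref{lemordereigen} and \ref{lemordereigen2} into the cases $\Ga^*<\La^*$ (seen by $d_\La$) and $\Ga^*=\La^*$, $\Ga^\cd<\La^\cd$ (seen by $e_\La$), with the eigenvalue differences of the form $\alpha p+q$, $p,q\in\mathbb{Z}_+$, explaining the exclusion of negative rational $\alpha$. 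The only cosmetic caveat is that your elimination step should invoke whichever of $\mathcal{H}-d_\La$ or $\mathcal{I}-e_\La$ actually separates the given pair, since the separation is only guaranteed jointly --- a point you already acknowledge when noting that a single conserved quantity does not suffice.
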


Our second aim is to prove clustering properties for Jack polynomials with prescribed symmetry.  This properties appears only for negative fractional values of $\alpha$. As explained in Section 3, Theorem \ref{theo1} is no longer valid for such $\alpha$, so we must restrict ourselves to polynomials indexed by admissible superpartitions.  In the case of strict superpartitions, the appropriate admissibility condition was first given in \cite{dlm_cmp2} -- below, this is called the weak admissibility.  When we symmetrize with respect to the  first set of variables, then a more restrictive definition of the admissibility is required.

\begin{definition}[Admissibility]\label{defadm} Let $k$ and $r-1$ be positive integers such that $\gcd(k+1,r-1)=1$.
The superpartition $\Lambda$ is \textbf{weakly $(k,r,N)$-admissible} if and only if 
$$ \La_i^\cd-\La_{i+k}^* \geq r\qquad \forall \, i\leq N-k, $$
while it is \textbf{moderately $(k,r,N)$-admissible} if and only if 
$$ \La_i^\cd-\La_{i+k}^\cd \geq r\qquad \forall \, i\leq N-k,$$
{and it is \textbf{strongly $(k,r,N)$-admissible} if and only if 
$$ \La_i^\cd-\La_{i+k}^\cd \geq r\quad \forall \, i\leq N-k \qquad \text{and} \qquad \La^* \; \text{is $(k+1,r,N)$-admissible}$$ 
When   $\Lambda$ is said to be $(k,r,N)$-admissible, without specifying weakly, moderately or strongly, it is understood that either $\Lambda$ is strongly $(k,r,N)$-admissible or  $\Lambda$ is both strict  and weakly $(k,r,N)$-admissible.}
\end{definition}

\begin{theorem}[Uniqueness and regularity at $\alpha_{k,r}$] Let $\La$ be a superpartition of bi-degree $(n|m)$ and $(k,r,N)$-admissible.  Then, the Jack polynomial with prescribed symmetry  obtained from (B1) and (B2) is regular at $\alpha=\alpha_{k,r}$. Moreover, it  is the unique polynomial  satisfying  
\begin{align*} \text{(C1)} \qquad & P_\Lambda=m_\Lambda + \sum_{\Gamma<\Lambda}c_{\La,\Gamma}m_\Gamma \, ,\qquad c_{\La,\Gamma}\in \mathbb{C} ,\\
\text{(C2)} \qquad &  S^*(u)\big|_{\alpha=\alpha_{k,r}}\,P_\La=\varepsilon_{\La^*}(\alpha_{k,r},u)\,P_\La \quad \text{and}\quad  S^\cd(u,u) \big|_{\alpha=\alpha_{k,r}}\, P_\La=\varepsilon_{\La^\cd}(\alpha_{k,r},u)\, P_\La.
\end{align*}
The eigenvalues are given in \eqref{eigenseki}.
\end{theorem}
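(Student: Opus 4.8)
The plan is to reduce both assertions---regularity at $\alpha_{k,r}$ and uniqueness of the solution to (C1)--(C2)---to a single combinatorial statement, which I will call \emph{spectral separation}: at $\alpha=\alpha_{k,r}$ no superpartition $\Ga<\La$ shares with $\La$ both eigenvalue polynomials $\varepsilon_{\La^*}(\alpha_{k,r},u)$ and $\varepsilon_{\La^\cd}(\alpha_{k,r},u)$ of \eqref{eigenseki}. The first preliminary step is to observe that the polynomial $P_\La$ produced by Theorem~\ref{theo1} at generic $\alpha$ is in fact a common eigenfunction of the \emph{whole} commuting family $\{S^*(u),S^\cd(u,v)\}$, not merely of $\mathcal H$ and $\mathcal I$. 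Indeed, $P_\La$ is, up to normalization, $\mathrm{Asym}_I\mathrm{Sym}_J E_\eta$, and the non-symmetric Jack polynomial $E_\eta$ is a simultaneous eigenfunction of all the Cherednik operators $\xi_i$; since $S^*(u)$ and $S^\cd(u,v)$ are symmetric, respectively partially symmetric, functions of the $\xi_i$, they commute with the relevant (anti)symmetrizers and act diagonally on $P_\La$, with eigenvalues that, on setting $v=u$, are the polynomials $\varepsilon_{\La^*}(\alpha,u)$ and $\varepsilon_{\La^\cd}(\alpha,u)$. As these are polynomial in $\alpha$, they specialize without difficulty at $\alpha_{k,r}$.

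Granting spectral separation, uniqueness is immediate. If $P$ and $P'$ both satisfied (C1)--(C2), their difference $P-P'=\sum_{\Ga<\La}b_\Ga m_\Ga$ would lie in the joint kernel of $S^*(u)-\varepsilon_{\La^*}$ and $S^\cd(u,u)-\varepsilon_{\La^\cd}$. Were it nonzero, its dominance-maximal index $\Ga_0<\La$ would, upon extracting the leading term of each eigenequation and using the triangularity of $S^*(u)$ and $S^\cd(u,u)$ in the monomial basis $\{m_\Ga\}$, be forced to satisfy $\varepsilon_{\Ga_0^*}(\alpha_{k,r},u)=\varepsilon_{\La^*}(\alpha_{k,r},u)$ and $\varepsilon_{\Ga_0^\cd}(\alpha_{k,r},u)=\varepsilon_{\La^\cd}(\alpha_{k,r},u)$, contradicting separation. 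Hence $P=P'$.

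For regularity I would show that the generic-$\alpha$ coefficients $c_{\La,\Ga}(\alpha)\in\mathbb C(\alpha)$ have no pole at $\alpha_{k,r}$. The operators $\mathcal H_d$ and $\mathcal I_{d,d'}$ act on the monomial basis with matrix entries in $\mathbb Z[\alpha]$, since each $\xi_i$ sends a monomial to $(\alpha\eta_i-\#\{\dots\})$ times itself plus lower monomials with integer coefficients. Solving the triangular eigensystem recursively, the only denominators entering $c_{\La,\Ga}(\alpha)$ are differences $\mathrm{eig}_\La(\alpha)-\mathrm{eig}_\Om(\alpha)$ with $\Om<\La$, and the same rational functions $c_{\La,\Ga}$ arise whichever operator of the family one uses. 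Choosing one generic $\mathbb C$-linear combination $\mathcal L$ of the $\mathcal H_d,\mathcal I_{d,d'}$ that separates $\La$ from every $\Om<\La$ whenever the full family does---possible since there are only finitely many such $\Om$---spectral separation guarantees these differences are nonzero at $\alpha_{k,r}$. Thus each $c_{\La,\Ga}$ is regular there, the limit $P_\La(x;\alpha_{k,r})$ is a genuine polynomial over $\mathbb C$, and it satisfies (C1)--(C2) with eigenvalues obtained by specializing \eqref{eigenseki}. Equivalently, regularity descends from that of $E_\eta$ at $\alpha_{k,r}$, established from the Cherednik recursion under the admissibility hypotheses, since the (anti)symmetrizers preserve regularity.

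The main obstacle is spectral separation, which is where the admissibility conditions of Definition~\ref{defadm} enter. Writing $\alpha_{k,r}=-(k+1)/(r-1)$, equality of the multisets of roots of $\varepsilon_{\La^*}$ and $\varepsilon_{\Ga^*}$ means there is a permutation $w$ with $\La^*_i-\Ga^*_{w(i)}=(r-1)t_i$ and $w(i)-i=(k+1)t_i$ for integers $t_i$, the hypothesis $\gcd(k+1,r-1)=1$ being exactly what forces the common factor $t_i$; the same holds for $\La^\cd,\Ga^\cd$. One must then show that the admissibility inequalities forbid any nontrivial slope $(w(i)-i):(\La^*_i-\Ga^*_{w(i)})=(k+1):(r-1)$, i.e. that all $t_i=0$, whence $\Ga^*=\La^*$ and $\Ga^\cd=\La^\cd$, so $\Ga=\La$, contradicting $\Ga<\La$. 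This is a superpartition analogue of the gap estimate of Feigin--Jimbo--Miwa--Mukhin: the condition $\La^\cd_i-\La^\cd_{i+k}\geq r$ kills nontrivial slopes in the $\La^\cd$-spectrum. The delicate point is that $S^*(u)$ alone may fail to separate, since $\La^*$ need not be $(k+1,r,N)$-admissible, so the two generating functions must be used together; this forces the case split of Definition~\ref{defadm}. In the strongly admissible case one invokes $(k+1,r,N)$-admissibility of $\La^*$ for the $S^*$-spectrum, whereas in the weakly admissible (strict) case one exploits the strict decrease of $(\La_1,\dots,\La_m)$ together with the interlacing $\La^\cd_i-\La^*_{i+k}\geq r$ to close the remaining collisions through $S^\cd(u,u)$.
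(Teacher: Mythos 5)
Your overall architecture is sound and, on the uniqueness half, coincides with the paper's: both arguments reduce uniqueness to the statement that no $\Ga<\La$ can share with $\La$ both Sekiguchi eigenvalue polynomials at $\alpha_{k,r}$ (your ``spectral separation''), and both extract from $\gcd(k+1,r-1)=1$ the permutation equations $\Ga^*_i=\La^*_{\sigma(i)}+\tfrac{r-1}{k+1}(\sigma(i)-i)$ together with the analogous equations for $\Ga^\cd$. Where you genuinely diverge is on regularity: you derive it from spectral separation by recursively solving the triangular eigensystem of one separating member $\mathcal{L}$ of the commuting family, so that the only denominators are eigenvalue differences that are nonzero at $\alpha_{k,r}$. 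The paper instead proves regularity independently of separation, via the Knop--Sahi integral form of $E_{(\lambda^+,\mu^+)}$ (Lemma \ref{regjnonsym}) combined with the change of normalization of Proposition \ref{normalizAS} and the non-vanishing of $C_\La$ at $\alpha_{k,r}$ (Lemma \ref{CLambda}). Your recursion route is legitimate and arguably more economical, since it makes regularity a corollary of the same separation statement that yields uniqueness. Your closing ``equivalently'' remark, however, is false as stated: the polynomial $E_\eta$ with $\eta=(\lambda^-,\mu^-)$ appearing in Definition \ref{defJackPrescibed} is in general singular at $\alpha_{k,r}$ even when $\La$ is admissible --- this is exactly why the paper introduces the second normalization --- so regularity does not simply descend through the (anti)symmetrizers from the defining formula.

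The genuine gap is that spectral separation itself is asserted and motivated but not proved, and essentially all of the difficulty of the theorem lives there. One must show that the admissibility inequalities exclude every solution of the permutation equations with $\Ga<\La$, and this requires: the derived gap inequalities $\La^\cd_{i+1}-\La^*_{i+\rho(k+1)}\geq \rho r$ of Lemmas \ref{leminequality1}--\ref{leminequality2}; the rigidity statements of Lemmas \ref{lemabase} and \ref{lemabase2} pinning $\sigma(i)$ to $i$ or $i\pm(k+1)$ at the first row where $\La^*$ and $\Ga^*$ disagree; the nontrivial fact that the permutations $\sigma$ and $\omega$ arising from $S^*$ and $S^\cd$ must coincide (Lemma \ref{sigmaomega}); and a final case analysis in which the strict-plus-weak and the strong admissibility hypotheses are used in genuinely different ways to reach a contradiction. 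Your sketch correctly identifies these pressure points (in particular that $S^*$ alone cannot separate because $\La^*$ need not be $(k+1,r,N)$-admissible in the weak case), but carries out none of them; moreover, the claim that one shows ``all $t_i=0$'' is not how the argument closes when $\Ga^*<\La^*$, where one instead derives a numerical contradiction of the form $\rho(r-1)\geq \rho r$ from the assumed collision. As written, the proposal is a correct reduction plus a correct plan, with the central combinatorial lemma still to be supplied.
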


In the case $k=1$, a similar theorem holds for the non-symmetric Jack polynomials indexed by a composition of the form $(\La_1,\ldots,\La_m,\La_{m+1},\ldots, \La_N)$, where the entries $\La_i$ belong to the admissible superpartition of the previous theorem.  Combining this result with Definition \ref{defJackPrescibed} allows us to prove that the general $k=1$ clustering property for Jack polynomials with prescribed symmetry.  In the $\mathrm{AS}$ case, this property was first conjectured in \cite{dlm_cmp2}.    

\begin{proposition}[Clustering property for $k=1$] Let $\La$ be $(1,r,N)$-admissible, where $r$ is even.  For the symmetry types AS, SS, and SA, let $K$ respectively stand for $J$, $J$, and $I$. Then, $$P_\La(x;\alpha_{1,r})=\prod_{\substack{i,j\in K\\  i<j}}(x_i-x_j)^rQ(x),$$
while for the symmetry type AA, $$  P_\La(x;\alpha_{1,r})=\prod_{\substack{1\leq i<j\leq N}}(x_i-x_j)^{r-1}Q(x).$$
  The precise form of $Q(x)$ will be given in Section \ref{SectionClusterk1}.
\end{proposition}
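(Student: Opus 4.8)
The plan is to derive the clustering of the prescribed-symmetry polynomials from two ingredients: the defining (anti)symmetrization formula of Definition \ref{defJackPrescibed}, namely $P_\Lambda=c_\Lambda\,\mathrm{Asym}_I\mathrm{Sym}_J E_\eta$ together with its three analogues, and the $k=1$ clustering statement for the non-symmetric Jack polynomials announced just above the proposition. The first reduction is to fix the block $K$ prescribed in the statement and to show that $P_\Lambda(x;\alpha_{1,r})$ vanishes to the stated order whenever two variables $x_a,x_b$ with $a,b\in K$ are identified. Since $P_\Lambda$ is symmetric (resp.\ antisymmetric) in the variables of $K$, and the product $\prod_{i<j\in K}(x_i-x_j)^r$ (resp.\ $(x_i-x_j)^{r-1}$) is the corresponding invariant, divisibility by this product is equivalent to such an order-of-vanishing statement along a single diagonal $x_a=x_b$; the remaining diagonals follow from the symmetry of $P_\Lambda$, and because distinct linear forms $x_a-x_b$ are pairwise coprime, simultaneous vanishing along all of them yields divisibility by the whole product, with $Q$ then automatically a polynomial.

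The heart of the matter is an order-$(r-1)$ vanishing at the non-symmetric level, followed by a parity upgrade. Using the regularity of $E_\eta$ at $\alpha_{1,r}$ (supplied by the admissibility of $\Lambda$ and the preceding theorem) together with the non-symmetric clustering theorem, I would first show that the partially (anti)symmetrized object vanishes to order at least $r-1$ when $x_a=x_b$ for $a,b\in K$. Here one uses that the composition $\eta=(\lambda_m,\dots,\lambda_1,\mu_{N-m},\dots,\mu_1)$ places the $K$-variables in the block where the admissibility gaps of size $\ge r$ between the entries of $\Lambda^\cd$ and $\Lambda^*$ force the vanishing of $E_\eta$, and that $\mathrm{Asym}_I$ commutes with the restriction of the $J$-variables (and symmetrically for type SA). The decisive role of the hypothesis that $r$ is even enters now: a polynomial symmetric in $x_a,x_b$ is even in $x_a-x_b$ and hence cannot vanish to odd order along $x_a=x_b$, so an order-$(r-1)$ vanishing with $r-1$ odd is automatically promoted to order $r$. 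This gives the power $r$ for the symmetric blocks occurring in types AS, SS, and SA.

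For type AA the same order-$(r-1)$ input is combined with the symmetry in the opposite way. For a pair within $I$ or within $J$ antisymmetry forces odd order, which is compatible with $r-1$ and produces no upgrade; for a cross pair $a\in I$, $b\in J$ there is no symmetry and the order-$(r-1)$ vanishing of the non-symmetric building block is invoked directly. Coprimality of the linear forms then assembles these into divisibility by $\prod_{1\le i<j\le N}(x_i-x_j)^{r-1}$.

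I expect the main obstacle to be the first step, namely establishing that the partially (anti)symmetrized non-symmetric Jack genuinely vanishes to order $r-1$ on the relevant diagonal at the negative rational value $\alpha_{1,r}$. This requires controlling the Knop--Sahi/intertwiner expansion of $s_iE_\eta$ and, in particular, the possible poles of its coefficients at $\alpha=\alpha_{1,r}$, so that the cancellations produced by (anti)symmetrization over $K$ leave no residual singularity; this is precisely where the admissibility conditions and the regularity furnished by the preceding theorem must be used, and where the cross-block count in the AA case is most delicate. Once this order-$(r-1)$ vanishing is secured, the parity upgrade using $r$ even and the coprimality/divisibility bookkeeping are routine.
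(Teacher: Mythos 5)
Your overall architecture matches the paper's: reduce to a clustering statement at the non-symmetric level, then transfer it through the (anti)symmetrizers, with a parity mechanism upgrading the exponent from $r-1$ to $r$ on the symmetric blocks. Your parity argument (a polynomial symmetric in $x_a,x_b$ is even in $x_a-x_b$, so odd-order vanishing promotes to the next even order) is a clean alternative to what the paper does, which is to pull the Vandermonde factors through $\mathrm{Sym}$ and $\mathrm{Asym}$ via the identities $\mathrm{Sym}_K(\Delta_K f)=\Delta_K\mathrm{Asym}_K(f)$ and then observe that $\mathrm{Asym}_J E_\kappa$ is itself divisible by $\Delta_J$; the two mechanisms are essentially equivalent.

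However, there is a genuine gap: the entire argument rests on an order-$(r-1)$ vanishing (equivalently, a factorization) at the non-symmetric level which you assume is ``announced just above the proposition'' and then explicitly defer as ``the main obstacle.'' No such non-symmetric clustering statement is announced there --- what is announced is a uniqueness/regularity theorem for $E_\lambda$ at $\alpha_{1,r}$. The paper must, and does, prove the key input separately (Proposition \ref{agrupk1}): for $r$ even and $\kappa=(\lambda^+,\mu^+)$,
\begin{equation*}
E_{\kappa+(r-1)\delta'}(x;-2/(r-1))\propto \prod_{1\leq i<j\leq N}(x_i-x_j)^{r-1}\,E_{\kappa}(x;2/(r-1)).
\end{equation*}
This is established by a conjugation identity for the Cherednik operators, $\xi_j|_{\alpha=-2/(r-1)}\circ\Delta_N^{r-1}=-\Delta_N^{r-1}\circ\bigl(\xi_j|_{\alpha=2/(r-1)}+2(N-1)\bigr)$, a matching of eigenvalues via arm- and leg-colengths of $\La$ versus $\La+(r-1)\delta$, a triangularity check, and finally the uniqueness theorem at $\alpha_{1,r}$ (Theorem \ref{jacknonsymk1weak}), whose proof is itself the place where the admissibility hypotheses do their work. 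None of this appears in your proposal, and you acknowledge not knowing how to carry it out. Note also that once this factorization is in hand, your diagonal-by-diagonal bookkeeping becomes largely redundant: the global Vandermonde factor already delivers divisibility by the full product, and only the upgrade on the symmetric block (your parity step, or the paper's $\Delta_J$-divisibility of $\mathrm{Asym}_J E_\kappa$) remains to be added. In particular, for the AA cross-pairs $a\in I$, $b\in J$ your claim that ``the order-$(r-1)$ vanishing of the non-symmetric building block is invoked directly'' has no support without the factorization, since $E_\eta$ itself does not vanish on diagonals.
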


We have not been able to prove the natural generalization of the above proposition: All Jack polynomials with prescribed symmetry,  indexed by $(k,r,N)$-admissible superpartitions, admit a cluster of size $k$ and order $r$ at $\alpha=\alpha_{k,r}$.  However, following an idea of Baratta and Forrester \cite{BarFor}, we know  that if a polynomial is invariant under translation and satisfies  basic factorization and stability properties (see Lemma \ref{simpleprod} and Proposition \ref{stability1} ), then the polynomial can  admit clusters of size $k>1$.  In the last part of the article, we thus turn our attention to the  translationally invariant Jack polynomials with prescribed symmetry.  Exploiting a result obtained in the context of the supersymmetric Sutherland model, so only valid for the $\AS$ case, we find all strict and admissible superpartition that lead to invariant polynomials.

\begin{theorem}[Translation invariance] \label{TheoTransla}Let $\Lambda$ be a strict and weakly $(k,r,N)$-admissible superpartition.  Then,  the Jack polynomial with prescribed symmetry $P^\AS_{\Lambda}(x; \alpha_{k,r})$ is invariant under translation if and on if  one of the following two conditions is satisfied: 
\begin{itemize} \item[\textit{(D1)}] all corners (circles or boxes) of $\Lambda$ are located  at the upper corner of a hook of type $B_{k,r}$, $\tilde B_{k,l}$, $C_{k,r}$, or $\tilde C_{k,l}$, except for one corner, which must be located at the point $(N-k,r)$;
\item[\textit{(D2)}] all corners of  $\Lambda$ are circles such that if they are not interior, they are located at the upper corner of a hook of type  $C_{k,r}$ or $\tilde C_{k,l}$, except for at most one non-interior corner $(i,j)$, which is such that  $i=N+1-\bar k(k+1)$ y $j=\bar k(r-1)+1$ for some $\bar k$. 
\end{itemize}
Types of hooks are given in Figure \ref{fige_scuadras}.  Interior and non-interior corners are defined in Definition \ref{defcorner}.
\end{theorem}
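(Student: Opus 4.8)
The plan is to recast translation invariance as an annihilation condition and then read it off from a corner-removal expansion of the translation generator. Since $P^{\AS}_{\Lambda}$ is homogeneous of positive degree, invariance under the shift $x_i\mapsto x_i+c$ is equivalent to $\mathcal{T}P^{\AS}_{\Lambda}=0$, where $\mathcal{T}=\sum_{i=1}^N\partial/\partial x_i$: indeed $\frac{d}{dc}P^{\AS}_{\Lambda}(x+c\mathbf{1})\big|_{c=0}=\mathcal{T}P^{\AS}_{\Lambda}$, and vanishing of this single derivative forces full invariance by homogeneity. Because $\mathcal{T}$ is a symmetric first-order operator, it preserves $\mathscr{A}_I\otimes\mathscr{S}_J$ while lowering the total degree by one, so at generic $\alpha$ the image $\mathcal{T}P^{\AS}_{\Lambda}$ admits a finite expansion in the basis $\{P^{\AS}_{\Gamma}\}$ with $\Gamma$ of bi-degree $(n-1\,|\,m)$.

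The key input is the explicit action of $\mathcal{T}$ on Jack polynomials with $\AS$ symmetry, which I would take from the supersymmetric Sutherland model: $\mathcal{T}P^{\AS}_{\Lambda}=\sum_{s}a_s(\alpha)\,P^{\AS}_{\Lambda-s}$, where $s$ runs over the removable corners (boxes or circles) of the diagram of $\Lambda$, the superpartition $\Lambda-s$ is obtained by deleting $s$, and $a_s(\alpha)$ is an explicit rational function of $\alpha$ whose numerator is a product of linear factors indexed by the cells sharing a row or a column with $s$. I would first record this formula and check it respects strictness of the index set. Specializing to $\alpha=\alpha_{k,r}$, only those $s$ with $\Lambda-s$ again weakly $(k,r,N)$-admissible yield a regular term $P^{\AS}_{\Lambda-s}$; after regularizing, $\mathcal{T}P^{\AS}_{\Lambda}$ becomes a combination of these admissible $P^{\AS}_{\Lambda-s}$. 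Since the latter are triangular in the monomials $m_{\Gamma}$ by property (C1), they are linearly independent, so $\mathcal{T}P^{\AS}_{\Lambda}=0$ holds if and only if $a_s(\alpha_{k,r})=0$ for every corner $s$ whose removal preserves admissibility.

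Next I would evaluate $a_s(\alpha_{k,r})$ corner by corner. Writing the arm and leg of the hook of $s$ in $\La^\cd$ (respectively in $\La^*$), each factor of the numerator has the form (arm)$\,\alpha+$(leg)$\,+\,$const; substituting $\alpha_{k,r}=-(k+1)/(r-1)$ and using $\gcd(k+1,r-1)=1$ shows that $a_s(\alpha_{k,r})=0$ precisely when the arm–leg content of $s$ matches the critical values $k$ and $r-1$. This is exactly the combinatorial statement that $s$ sits at the upper corner of a hook of type $B_{k,r}$, $\tilde B_{k,l}$, $C_{k,r}$, or $\tilde C_{k,l}$ in Figure \ref{fige_scuadras}: the $B/\tilde B$ versus $C/\tilde C$ alternative records whether the deleted cell is a box or a circle, i.e.\ whether it lies in $\La^\cd/\La^*$, and the tilde accounts for the complementary labelling $l$. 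I would thus establish the dictionary ``$a_s(\alpha_{k,r})=0\iff s$ is a $B/C$-hook corner,'' reducing translation invariance to a purely geometric condition on the corners of $\Lambda$.

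It remains to account for the single tolerated exception and to split into the two cases. A removable corner $s$ that is \emph{not} a $B/C$-hook corner does not obstruct invariance exactly when $\Lambda-s$ fails to be admissible, so that no regular $P^{\AS}_{\Lambda-s}$ enters the regularized sum; using weak admissibility together with strictness I would show that such an admissibility-breaking corner is forced to lie on the extreme boundary, namely at $(N-k,r)$ in case \textit{(D1)}, or, when all corners are circles, at one of the tight staircase positions $i=N+1-\bar k(k+1)$, $j=\bar k(r-1)+1$ in case \textit{(D2)}. Proving that at most one such exceptional corner can occur, and that the mixed configurations of \textit{(D1)} and the all-circle configurations of \textit{(D2)} together exhaust every strict weakly $(k,r,N)$-admissible $\Lambda$ with $\mathcal{T}P^{\AS}_{\Lambda}=0$, is the heart of the argument. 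I expect the main obstacle to be precisely this global bookkeeping: one must match the shape forced by admissibility to the local hook conditions of Figure \ref{fige_scuadras} and Definition \ref{defcorner}, confirm that interior-circle corners contribute vanishing coefficients automatically, and verify that no second admissibility-breaking corner is compatible with the staircase geometry — linear independence of the $P^{\AS}_{\Gamma}$ guaranteeing that no accidental cancellation can rescue a nonzero $a_s$.
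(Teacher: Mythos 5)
Your overall strategy (translation invariance $\Leftrightarrow$ $L_+P^{\AS}_\Lambda=0$, expand the image in a linearly independent family, and demand that each coefficient vanish) is the right frame, but the formula you build everything on does not exist in the form you state, and this is a genuine gap rather than a presentational difference. There is no single-corner Pieri rule $L_+P^{\AS}_{\Lambda}=\sum_{s}a_s(\alpha)\,P^{\AS}_{\Lambda-s}$ with $s$ ranging over removable corners of $\Lambda$. What the supersymmetric theory of \cite{dlm_cmp2} actually provides are the actions of two separate operators: $Q_\ocircle$, which removes a \emph{circle} (lowering the fermionic degree $m$ by one), and $Q_{\Box}$, which converts a \emph{box into a circle} (lowering $n$ and raising $m$), and the translation generator decomposes as $L_+=Q_\ocircle\circ Q_{\Box}+Q_{\Box}\circ Q_\ocircle$ (Lemma \ref{QQL+}). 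Each term of $L_+P^{\AS}_\Lambda$ is therefore indexed by a superpartition $\Upsilon$ whose diagram differs from that of $\Lambda$ in \emph{two} cells (one circle removed, one box circled), not by a single deleted corner. Because the same $\Upsilon$ is reached once through $Q_{\Box}\circ Q_\ocircle$ and once through $Q_\ocircle\circ Q_{\Box}$, linear independence of the $P^{\AS}_\Upsilon$ does not reduce $L_+P_\Lambda=0$ to the vanishing of individual corner coefficients: one must rule out cancellation between the two orderings. This is exactly Proposition \ref{decompoL+} in the paper, proved by an explicit comparison of the hook-length coefficients $c_{\Lambda,\Omega^1}b_{\Omega^1,\Upsilon}$ and $b_{\Lambda,\Omega^2}c_{\Omega^2,\Upsilon}$, and it is the step your plan silently assumes away.

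Two further points would need repair even granting a corrected expansion. First, your mechanism for the exceptional corner (``$\Lambda-s$ fails to be admissible, so no regular term enters'') is not what happens: the tolerated corner at $(N-k,r)$, and the staircase positions $i=N+1-\bar k(k+1)$, $j=\bar k(r-1)+1$ in (D2), arise from the vanishing of the explicit prefactor $N+1-i+\alpha_{k,r}(j-1)$ in the $Q_\ocircle$ coefficient (Lemma \ref{lemaQcircle} and Corollary \ref{ultimaesquina}), together with an admissibility argument showing at most one corner can satisfy it; dropping inadmissible terms is not the operative principle. Second, the dichotomy between (D1) and (D2) is not a box-versus-circle relabelling of a single coefficient condition: the $C$-type hooks govern when $Q_\ocircle$ kills a circled corner, the $B$-type hooks govern when $Q_\ocircle$ kills the circle \emph{created} by $Q_{\Box}$ at a boxed corner, and (D2) is the separate regime in which $Q_{\Box}P_\Lambda=0$ outright because every corner is a circle (Lemma \ref{lemaQbox}), with interior circled corners handled by Lemma \ref{LemCorners}. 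Your outline correctly identifies the global bookkeeping as the heart of the matter, but without the two-operator decomposition and the non-cancellation lemma the bookkeeping cannot even be set up.
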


\begin{proposition}[Clustering property for $k\geq1$]   Let $\Lambda$ be  a strict and weakly $(k,r,N)$-admissible superpartition of bi-degree $(n|m)$.  Suppose moreover that $\La$ satisfies (D1) or (D2) and has a length $\ell$ not greater than $N-k$.  Then, for some polynomial $Q$,
$$  P^\AS_{\Lambda}(x_1,\ldots,x_{N-k},\overbrace{z\ldots,z}^{k\text{ times}}; \alpha_{k,r}) =\prod_{j=m+1}^{N-k}(x_{j}-z)^r Q(x_1,\ldots, x_{N-k},z).$$
\end{proposition}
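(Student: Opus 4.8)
The plan is to follow the strategy of Baratta and Forrester, with translation invariance serving as the crucial extra input. Since $\Lambda$ satisfies (D1) or (D2), Theorem \ref{TheoTransla} guarantees that $P^\AS_\Lambda(x;\alpha_{k,r})$ is invariant under the simultaneous shift $x_i\mapsto x_i+c$. Consequently,
$$P^\AS_\Lambda(x_1,\ldots,x_{N-k},z,\ldots,z;\alpha_{k,r})=P^\AS_\Lambda(x_1-z,\ldots,x_{N-k}-z,0,\ldots,0;\alpha_{k,r}),$$
so, writing $y_j=x_j-z$, it suffices to prove the factorization at the cluster value $0$, namely that $\prod_{j=m+1}^{N-k}y_j^{\,r}$ divides $P^\AS_\Lambda(y_1,\ldots,y_{N-k},0^k;\alpha_{k,r})$. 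Here the length hypothesis $\ell(\Lambda)\le N-k$ matters twice: it forces $\Lambda_{N-k+1}=\cdots=\Lambda_N=0$, and it guarantees that the $k$ clustered variables all lie in the symmetric block $J=\{m+1,\ldots,N\}$, which is why the product ranges only over $j>m$. Because $P^\AS_\Lambda$ is symmetric in the $J$-block, its restriction is symmetric in the free symmetric variables $y_{m+1},\ldots,y_{N-k}$, and these linear forms are pairwise coprime; hence it is enough to establish that the restriction vanishes to order at least $r$ in one symmetric variable, say $y_{m+1}$, as $y_{m+1}\to 0$ while the last $k$ variables are held at $0$.

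I would then bring in the stability property of Proposition \ref{stability1}, which lets me peel off the trailing zero parts one variable at a time, relating $P^\AS_\Lambda$ in $N$ variables to a Jack polynomial with prescribed symmetry of the same $\AS$ type in fewer variables indexed by a truncated superpartition. I would organize the argument as an induction on $N$ (equivalently on the number of trailing zero parts), with the base case supplied by Lemma \ref{simpleprod}: for the minimal (densest) admissible superpartition the polynomial is literally a simple product of powers of differences, for which the claimed factorization is manifest. It is along this reduction that the admissibility inequalities $\Lambda_i^\cd-\Lambda_{i+k}^*\ge r$ are converted into the exponent $r$ attached to each free symmetric variable.

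The substance of the argument, and the step I expect to be the main obstacle, is the \emph{boosting}: the admissibility condition and the accompanying FJMM-type vanishing (already contained in the regularity statement of the theorem ``Uniqueness and regularity at $\alpha_{k,r}$'') only yield vanishing of order one when $k+1$ variables coincide, whereas the clustering claim asserts order $r$. This is exactly where translation invariance is indispensable. Combined with the base factorization of Lemma \ref{simpleprod} and the eigenvalue equations (C2) for the Sekiguchi-type operators $S^*$ and $S^\cd$ at $\alpha=\alpha_{k,r}$, it rigidifies the low-order Taylor coefficients of the restriction in $y_{m+1}$: I would argue that any putative factor $y_{m+1}^{\,s}$ with $s<r$ would, after the stability reduction, produce a nonzero lower-variable polynomial whose degree or eigenvalue is incompatible with the admissibility bound, forcing all Taylor coefficients below order $r$ to vanish.

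Having established the single-variable order-$r$ vanishing, I collect the coprime factors $(x_j-z)^r$ over all $m+1\le j\le N-k$, undo the translation $y_j\mapsto x_j-z$, and read off $Q$ as the resulting quotient; its polynomiality and regularity at $\alpha_{k,r}$ follow from those of $P^\AS_\Lambda$ guaranteed by the preceding theorem. The delicate point throughout is tracking, under the repeated stability reductions, precisely how the circles and boxes of $\Lambda$ sitting on the hooks of type $B_{k,r}$, $\tilde B_{k,l}$, $C_{k,r}$, $\tilde C_{k,l}$ described in (D1)--(D2) contribute to the accumulated exponent, and verifying that this bookkeeping is compatible with the induction.
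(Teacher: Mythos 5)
Your opening moves coincide exactly with the paper's: Theorem \ref{TheoTransla} gives translation invariance, which converts the specialization $x_{N-k+1}=\cdots=x_N=z$ into $P^\AS_\Lambda(x_1-z,\ldots,x_{N-k}-z,0,\ldots,0;\alpha_{k,r})$, and the stability of Proposition \ref{stability1} strips the $k$ trailing zeros, leaving $P^\AS_\Lambda(y_1,\ldots,y_{N-k};\alpha_{k,r})$ with $y_j=x_j-z$. The gap is in everything after that. Your reduction to order-$r$ vanishing in a single symmetric variable, followed by a ``boosting'' induction on $N$ that is supposed to kill the low-order Taylor coefficients through eigenvalue or degree incompatibilities, is only a sketch: no mechanism is given for why a coefficient of $y_{m+1}^{\,s}$ with $0<s<r$ would violate anything (stability only detects the value at $y=0$, i.e.\ order-one vanishing, not higher derivatives), and the proposed base case is misattributed --- Lemma \ref{simpleprod} merely says that multiplying by $x_1\cdots x_N$ adds a column to $\Lambda$; it does not identify any admissible superpartition whose polynomial is a product of powers of differences. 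As written, the central divisibility claim is asserted rather than proved.

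The step you are missing is elementary and needs none of this machinery. Conditions (D1)/(D2) together with $\ell(\Lambda)=N-k$ force the last corner of $\Lambda$'s diagram to sit at $(N-k,r)$ (Corollary \ref{ultimaesquina}), so $\Lambda_{N-k}\geq r$. Now expand the restricted polynomial in the monomial basis, $P^\AS_\Lambda(y_1,\ldots,y_{N-k})=m_\Lambda+\sum_{\Gamma<\Lambda}c_{\Lambda,\Gamma}\,m_\Gamma$. Every contributing $\Gamma$ has the same degree $n$ and satisfies $\sum_{i\leq j}\Gamma^*_i\leq\sum_{i\leq j}\Lambda^*_i$ for all $j$; taking $j=N-k-1$ and subtracting from $n$ gives $\Gamma^*_{N-k}\geq\Lambda^*_{N-k}\geq r$, i.e.\ \emph{every} part of \emph{every} superpartition in the expansion is at least $r$. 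Hence each monomial $m_\Gamma(y_1,\ldots,y_{N-k})$ is already divisible by $\prod_{j=m+1}^{N-k}y_j^{\,r}$, and undoing the translation produces the factor $\prod_{j=m+1}^{N-k}(x_j-z)^r$ with $Q$ the resulting polynomial quotient. In other words, once translation invariance and stability have been applied, the admissibility and corner data enter only through the single inequality $\Lambda_{N-k}\geq r$ combined with unitriangularity and the dominance order; no coprimality argument, single-variable reduction, or induction is required.
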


\begin{figure}
\caption{Types of hooks.  From left to right,  $C_{k,r}$, $\tilde C_{k,r}$, $B_{k,r}$ and $\tilde B_{k,r}$}
\label{fige_scuadras}
\begin{center}
\setlength{\unitlength}{4.2pt}
\vspace{0.5cm}
\begin{picture}(84,19)(-10,-3)
%\put(0,0){i}%\put(14,14){f}
\put(-8,5){{$$\tableau[scY]{  \tf &  &\bl $\ldots$  & & \bl \tcercle{} \\  \\ \bl \vspace{-2ex}\vdots \\ \bl& \bl \\  & \bl\\ \\ } $$} } 
\put(-6,13.2){ {$\overbrace{\phantom{xxxxxx}}^{r-1} $} }
  \put(-11.7,3.5){{$ k \left\{\rule[20pt]{0pt}{15pt}\right.$}}   
\put(14,5){{$$\tableau[scY]{  \tf &  &\bl \ldots    & & \bl \tcercle{}\\    \\ \bl \vdots\\ \bl \\ \\ \bl \tcercle{} } $$} }
 \put(16,13.2){ {$\overbrace{\phantom{xxxxxx}}^{r-1} $} }
 \put(6.2,3.6){{$ k+1 \left\{\rule[19.9pt]{0pt}{15pt}\right.$}}   
\put(41,5){{$$\tableau[scY]{  \tf &  &\bl \ldots     & & \\  \\ \bl \vdots\\ \bl \\ \\  \\}$$} }
\put(43,13.2){ {$\overbrace{\phantom{xxxxxxxx}}^{r} $} }
  \put(37.2,3.6){{$ k\left\{\rule[19.9pt]{0pt}{15pt}\right.$}}   
\put(65,5){{$$\tableau[scY]{  \tf &  &\bl \ldots    & & \\     \\ \bl \vdots\\ \bl \\ \\ \bl \tcercle{} } $$} }
 \put(67,13.2){ {$\overbrace{\phantom{xxxxxxxx}}^{r} $} }
  \put(57.1,3.6){{$ k+1\left\{\rule[19.9pt]{0pt}{15pt}\right.$}}    
\end{picture}
\end{center}
\end{figure}
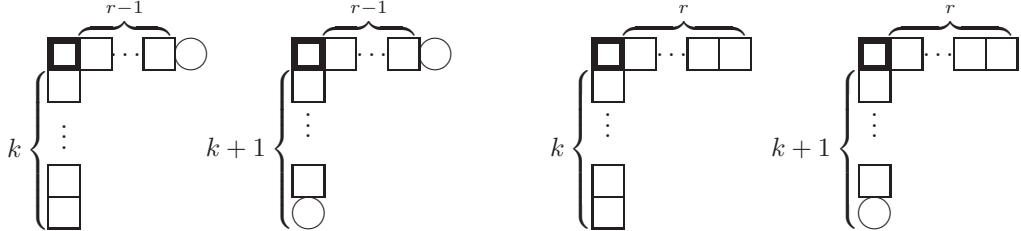
\vspace{0.5cm}

\section{Basic theory for generic $\alpha$}

In this section we develop the basic theory of the Jack polynomials with prescribed symmetry.  We assume here  that $\alpha$ is generic, which means in the present context  that $\alpha$ is either a formal parameter or a complex number that is not zero nor a negative rational.

\subsection{Compositions, partitions, and superpartition} \label{SectionAlphaGeneric}

We recall that a composition  is an ordered list of non-negative integers.  We way that $\eta=(\eta_1,\ldots,\eta_N)$ is a composition of $n$, or has degree $n$, if $$|\eta|:=\sum_{i=1}^N\eta_i=n.$$  

A partition $\la=(\la_1,\ldots,\la_N)$ of $n$ is a composition of $n$ whose elements are  decreasing: $\la_1\geq\cdots\geq \la_N\geq 0$.  The number of non-zero elements in a partition $\la$ is called the length and it is usually denoted by $\ell$ or $\ell(\lambda)$. { Each partition is associated to a diagram that contains $\ell$ rows.  The highest row, which is considered as the first one,    contains $\lambda_1$ boxes, the second row, which is just below the first one, contains $\lambda_2$ boxes, and so on, all boxes being left justified.  The box located in the $i${th} row and the $j${th}   is denoted by  $(i,j)$.  Such coordinates are also called cells.  Given a partition $\lambda$ , its conjugate $\lambda'$ is obtained by reflecting $\lambda$'s  diagram in the main diagonal. Given a cell $s=(i,j)$ in the diagram associated to $\lambda$, we let
\begin{equation*} a_\lambda(s)=\lambda_i-j\qquad a'_\lambda(s)=j-1  \qquad l_\lambda(s)=\lambda_j'-i \qquad l'_\lambda(s)=i-1 .\end{equation*}
The quantities  $a_\lambda(s), a'_\lambda(s), l_\lambda(s), l'_\lambda(s)$ are respectively called the arm-length, arm-colength, leg-length and leg-colength of $s$ in $\lambda$'s diagram.}

Note that throughout the article,  we compare the partitions by using the dominance order, which is defined in \eqref{defdominance}.

The following lemma will be used later in the article.  For $\alpha$ a formal parameter, it was first stated without proof in Stanley's article \cite{stanley}. 
\begin{lem}\label{lemordereigen}For any partition $\la$, let $$b(\la)=\sum_{i=1}^\ell(i-1)\la_i\quad\text{and}\quad \varepsilon_\la(\alpha)=\alpha b(\la')-b(\la).$$ Suppose that $\alpha$ is generic.  Then, $$ \la>\mu\quad\Longrightarrow\quad \varepsilon_\la(\alpha)\neq \varepsilon_\mu(\alpha).$$
\end{lem}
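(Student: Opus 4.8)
The plan is to reduce the claim to a single affine function of $\alpha$ and then locate its only possible zero. Since the dominance order only compares partitions of the same weight, I may assume $|\lambda|=|\mu|=n$ throughout. Setting $A=b(\lambda')-b(\mu')$ and $B=b(\lambda)-b(\mu)$, both of which are integers, the difference of eigenvalues factors cleanly as
\[
\varepsilon_\lambda(\alpha)-\varepsilon_\mu(\alpha)=A\,\alpha-B,
\]
so the whole statement comes down to controlling the signs of the two integers $A$ and $B$.

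First I would establish the monotonicity of $b$ under the dominance order, namely that $\lambda>\mu$ forces $b(\lambda)<b(\mu)$. Introducing the partial sums $S_k^\lambda=\sum_{i=1}^k\lambda_i$ and applying summation by parts, one gets, for any $M$ exceeding the lengths of both partitions,
\[
b(\lambda)=(M-1)n-\sum_{i=1}^{M-1}S_i^\lambda ,
\qquad\text{so that}\qquad
b(\lambda)-b(\mu)=-\sum_{i=1}^{M-1}\bigl(S_i^\lambda-S_i^\mu\bigr).
\]
Because $\lambda>\mu$ means $S_i^\lambda\ge S_i^\mu$ for every $i$, with at least one strict inequality, the right-hand side is strictly negative, giving $B<0$. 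Running the same estimate on the conjugate partitions and using the standard equivalence $\lambda>\mu\iff\mu'>\lambda'$ yields $b(\lambda')>b(\mu')$, i.e.\ $A>0$. (Equivalently, one may simply invoke $b(\lambda)=n(\lambda)$ in Macdonald's notation and the known strict monotonicity of $n$ for the dominance order.)

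With $A>0$ and $B<0$ secured, the conclusion follows at once in both regimes. If $\alpha$ is a formal parameter, then $A\alpha-B$ has nonzero leading coefficient $A$ and is therefore a nonzero element of $\mathbb{C}(\alpha)$, so $\varepsilon_\lambda(\alpha)\neq\varepsilon_\mu(\alpha)$. If $\alpha$ is a complex number, then $A\alpha-B$ vanishes only at $\alpha=B/A$; since $B<0$ and $A>0$, this value is a \emph{negative rational}, which is exactly the case excluded by the genericity hypothesis, so the two eigenvalues again differ.

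The only genuine content lies in the sign computation, and the step I expect to need the most care is the monotonicity $b(\lambda)<b(\mu)$ together with its conjugate counterpart (the summation-by-parts identity and the use of $\lambda>\mu\iff\mu'>\lambda'$). Once the signs $A>0$, $B<0$ are in place, the decisive observation—that the unique zero $B/A$ of $A\alpha-B$ is necessarily a negative rational—lets the genericity assumption do all of the remaining work with no further effort.
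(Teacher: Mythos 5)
Your proof is correct, and it reaches the same final form as the paper's — the difference of eigenvalues is $\alpha p+q$ with $p,q$ positive integers, whose unique zero is a negative rational excluded by genericity — but it gets the key sign facts by a different route. The paper proves $\lambda>\mu\Rightarrow b(\lambda)<b(\mu)$ by invoking the covering-relation description of the dominance order (Macdonald's result that $\mu<\lambda$ iff $\mu$ is reached from $\lambda$ by a chain of lowering operators $L_{i,j}$, each of which increases $b$ by $j-i>0$), whereas you derive the Abel-summation identity $b(\lambda)=(M-1)n-\sum_{i=1}^{M-1}S_i^\lambda$ and read off strict monotonicity directly from the defining inequalities of dominance. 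Your version is more self-contained — it needs no structural lemma about the dominance order beyond its definition, and the strictness of the inequality is transparent since the partitions must differ in some partial sum with index at most $M-1$. The paper's version has the advantage of setting up the lowering operators $L_{i,j}$ and the chain characterization \eqref{eqorderopL}, which it reuses immediately afterwards in the proof of Lemma \ref{lemordereigen2}; as a standalone argument for this lemma, though, the two are equally rigorous and yours is arguably the more elementary.
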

\begin{proof}Let us firs define the lowering operators as follows: 
\beq \label{opL}L_{i,j}(\ldots,\la_i,\ldots,\la_j,\ldots)=
\begin{cases} (\ldots, \la_i-1,\ldots,\la_j+1,\ldots)&\text{if } i<j\text{ and }\la_i-\la_j>1\\
              (\ldots, \la_i,\ldots,\la_j,\ldots)&\text{otherwise}.\end{cases}
							\eeq
Note that in general, if $\la$ is a partition, then $L_{i,j}\la$ is a composition. However,  from \cite[Result (1.16)]{macdonald}, one easily deduces that \beq \label{eqorderopL}\mu<\la \qquad \Longleftrightarrow\qquad \mu =L_{i_k,j_k}\circ \cdots \circ L_{i_1,j_1}\la\eeq  for some sequence  $\left((i_1,j_1),\ldots, (i_k,j_k)\right)$ such that $L_{i_{k'},j_{k'}}\circ \cdots \circ L_{i_1,j_1}\la$ is a partition for all $1\leq k'\leq k$.  

Now, let us suppose that $\bar \la=L_{i,j}\la$ is a partition for some $i<j$.  Then, $b(\bar\la)-b(\la)=j-i>0$.  This  last result together with Equation \eqref{eqorderopL} prove the following: 
$$ \mu<\la\quad \Longrightarrow\quad b(\mu)>b(\la).$$
Moreover, as is well known \cite[Result (1.11)]{macdonald}, $\la>\mu$ if and only if $\mu'>\la'$. Consequently, 
$$ \varepsilon_\la(\alpha)-\varepsilon_\mu(\alpha)=\alpha \big(b(\la')-b(\mu')\big)+b(\mu)-b(\la)=\alpha p+q,  $$ 
where $p$ and $q$ are positive integers. Therefore, $\varepsilon_\la(\alpha)-\varepsilon_\mu(\alpha)=0$ only if $\alpha$ is a negative rational, and the lemma follows. 
\end{proof}

To each composition $\eta$ corresponds a unique partition $\eta^+$, which is obtained from $\eta$ by reordering the elements of $\eta$ in decreasing order:
$$ \eta^+=(\eta^+_1,\ldots,\eta^+_N)   \quad \Longleftrightarrow  \quad\eta^+_i=\eta_{\sigma(i)}\quad \text{for some}\quad \sigma\in S_N \quad \text{such that}\quad  \eta^+_1\geq \ldots\geq \eta^+_N .$$ 
This allows to define  the dominance order between   compositions as follows: 
 $$\eta \succ  \mu \quad \Longleftrightarrow\quad  \eta^{+}>\mu^{+} \quad \text{or}\quad \eta^{+}=\mu^{+} \quad \text{and}\quad \sum_{i=1}^{k}\eta_{i}\geq \sum_{i=1}^{k}\mu_{i}\;\; \forall\, k, $$ where it is also assumed that $\eta\neq \mu$ and of the same degree.

According to Definition \ref{defsparts},  there are two useful ways of writing a superpartition $\La$ as a pair of partitions.  On the one hand,  there is the representation that  provides the correct indices for the  polynomials with prescribed  symmetry: $$\La=(\La_1,\ldots,\La_m;\La_{m+1},\ldots, \La_N)\quad\text{where}\quad \La_1\geq \cdots\geq \La_m\geq 0\quad\text{and}\quad\La_{m+1}\geq \cdots\geq \La_N\geq 0.$$  On the other hand, there is the representation naturally associated to the diagrams:  $$\La=(\La^\cd,\La^*)\quad\text{where}\quad \La^\cd_i\geq \La^\cd_{i+1},\quad \La^*_i\geq \La^*_{i+1},\quad \La^\cd_i-\La^*_i=0,1.$$
The elements of $\La=(\La_1,\ldots,\La_m;\La_{m+1},\ldots, \La_N)$ all come from those  of $\La^*$: $\La_1$ is the first elements of $\Lambda^*$  such that  $\La^\cd_i-\La^*_i=1$ for some $i$, $\La_2$ is the second, and so on, till $\La_m$, which is the smallest elements of $\La^*$ such that $\La^\cd_i-\La^*_i=1$ for some $i$; $\La_{m+1}$ is the first elements of $\Lambda^*$  such that  $\La^\cd_i-\La^*_i=0$ for some $i$, $\La_{m+2}$ is the second, and so on till $\La_N$, which is the smallest elements of $\La^*$ such that $\La^\cd_i-\La^*_i=0$
for some $i$.  Conversely,
$$ \La^\cd=(\La_1+1,\ldots,\La_m+1,\La_{m+1},\ldots, \La_N)^+\quad \text{and}\quad \La^*=(\La_1,\ldots,\La_{m},\La_{m+1},\ldots, \La_N)^+.$$

 Let $\gamma=(\gamma_1,\ldots,\gamma_N)$ be a composition of $n$.   Fix a positive integer $m$ not greater than $N$.  We define the map $\varphi_m$ as
$$ \varphi_m(\gamma)=(\Gamma^*,\Gamma^\cd),\qquad \Gamma^*=(\gamma_1,\ldots,\gamma_N)^+,\qquad \Gamma^\cd=(\gamma_1+1,\ldots,\gamma_m+1,\gamma_{m+1},\ldots,\gamma_N)^+. $$ 
In other words,  $\varphi_m$ maps the composition $\gamma$ to the superpartition $\Gamma=(\Gamma^*,\Gamma^\cd)$ of bi-degree $(n|m)$, which is  equivalent to $$\Gamma=\left( (\gamma_1,\ldots,\gamma_m)^+; (\gamma_{m+1},\ldots,\gamma_N)^+\right).$$ 

\begin{lem} \label{lemspartcomp} Let $\La=\varphi_m(\lambda)$ and $\Gamma=\varphi_m(\gamma)$, where $\lambda$ and $\gamma$ are compositions of the same degree.  If $\lambda \succ \mu$, then $\La>\Gamma$.  
\end{lem}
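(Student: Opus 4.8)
The plan is to read off both orders through the definition of $\varphi_m$ and to split along the two clauses that define $\succ$ on compositions. Write $\Lambda=\varphi_m(\lambda)$ and $\Gamma=\varphi_m(\gamma)$, so that $\Lambda^*=\lambda^+$ and $\Gamma^*=\gamma^+$, while $\Lambda^\cd$ and $\Gamma^\cd$ are obtained from $\lambda$ and $\gamma$ by adding $1$ to the first $m$ entries and then reordering. If $\lambda\succ\gamma$ holds because $\lambda^+>\gamma^+$, then $\Lambda^*=\lambda^+>\gamma^+=\Gamma^*$, and the first clause in the dominance order on superpartitions gives $\Lambda>\Gamma$ at once. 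Hence the substance of the lemma is the second clause: $\lambda^+=\gamma^+$ (so that $\Lambda^*=\Gamma^*$) together with $S_k(\lambda)\geq S_k(\gamma)$ for all $k$, where $S_k(\eta):=\sum_{i=1}^k\eta_i$. In this situation I must compare $\Lambda^\cd$ with $\Gamma^\cd$ and conclude through the second clause of the superpartition order.

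To compare $\Lambda^\cd$ and $\Gamma^\cd$ I would reduce to a single elementary move, exactly as the proof of Lemma~\ref{lemordereigen} reduces to the lowering operators $L_{i,j}$. Since $\lambda$ and $\gamma$ have the same content ($\lambda^+=\gamma^+$), the relation $\gamma\preceq\lambda$ is the transitive closure of single raising transpositions: there is a chain $\gamma=\gamma^{(0)}\prec\gamma^{(1)}\prec\cdots\prec\gamma^{(t)}=\lambda$ in which each $\gamma^{(s+1)}$ is obtained from $\gamma^{(s)}$ by interchanging two entries in positions $p<q$ with $\gamma^{(s)}_p<\gamma^{(s)}_q$. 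This is the composition analogue of the partition statement \cite[(1.16)]{macdonald}. Because the dominance order on superpartitions is transitive, it then suffices to prove $\Lambda^\cd\geq\Gamma^\cd$ when $\lambda$ is obtained from $\gamma$ by one such transposition $(p\,q)$.

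For a single raising transposition the analysis is short and depends only on where $p$ and $q$ sit relative to $m$. If $p,q\leq m$ or $p,q>m$, the interchange permutes entries inside a single block, so the multisets $\{\gamma_1,\dots,\gamma_m\}$ and $\{\gamma_{m+1},\dots,\gamma_N\}$ are unchanged; as $\Lambda^\cd$ is the decreasing rearrangement of the first multiset shifted up by $1$ merged with the second, we get $\Lambda^\cd=\Gamma^\cd$. The only genuine case is $p\leq m<q$, where a smaller value $v=\gamma_p$ and a larger value $w=\gamma_q$, with $v<w$, trade blocks. Then the multiset whose reordering is $\Lambda^\cd$ differs from the one giving $\Gamma^\cd$ only in that the pair $\{v+1,w\}$ is replaced by $\{v,w+1\}$. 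Since $v+1\leq w$, this replacement increases the larger part by $1$ and decreases the smaller by $1$: it is a spreading of two parts at fixed total, and such a spread never lowers the dominance order of the reordered sequence. Hence $\Lambda^\cd\geq\Gamma^\cd$, and together with $\Lambda^*=\Gamma^*$ this yields $\Lambda\geq\Gamma$.

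I expect the reduction, rather than the single-move computation, to be the real obstacle: proving that $\gamma\preceq\lambda$ inside a fixed content class is generated by raising transpositions requires producing, at each stage, one transposition that strictly raises the order yet does not overshoot $\lambda$, and choosing it correctly (as in the proof of Lemma~\ref{lemordereigen} for partitions) is the delicate point. It is also worth recording that the single-move analysis only delivers the non-strict inequality: when every step of the chain is an internal-block interchange one obtains $\Lambda=\Gamma$ --- for instance $\lambda=(1,3,2)\succ(1,2,3)=\gamma$ with $m=1$ both map to the superpartition $(1;3,2)$ --- so the conclusion is sharp as $\Lambda\geq\Gamma$, with $\Lambda>\Gamma$ precisely when $\Lambda\neq\Gamma$.
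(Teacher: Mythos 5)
Your proof follows the same route as the paper's: split on the two clauses of $\succ$, dispose of the first clause via $\La^*>\Ga^*$, and in the second clause reduce to elementary transpositions, observing that only swaps across the two blocks $\{1,\dots,m\}$ and $\{m+1,\dots,N\}$ change the image under $\varphi_m$. Your single-move computation --- the multiset defining $\La^\cd$ is obtained from that of $\Ga^\cd$ by replacing $\{v+1,w\}$ with $\{v,w+1\}$ where $v+1\leq w$, a spreading at fixed sum which cannot lower the dominance order of the sorted sequence --- is correct and is simply a more explicit version of the step the paper asserts; both arguments also lean, without proof, on the standard fact that $\succ$ within a fixed content class is generated by such transpositions. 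The one substantive point is your closing observation, and you are right: when every transposition in the chain acts inside a single block one gets $\La=\Ga$, so only $\La\geq\Ga$ can be concluded, and your example $\lambda=(1,3,2)\succ(1,2,3)=\gamma$ with $m=1$, both mapping to $(1;3,2)$, is a genuine counterexample to the strict inequality claimed in the lemma. The paper's own proof has exactly this gap: its final assertion that $\Ga^\cd<\La^\cd$ does not follow when all the induced maps $\hat s_{i',j'}$ act trivially. The damage downstream is limited, because in Proposition \ref{proptriangularity} the lemma is applied with $\lambda=\eta$ whose two blocks are arranged increasingly; for such $\eta$, a composition $\nu\prec\eta$ with the same block multisets would have some prefix sum strictly exceeding that of $\eta$, which is impossible, so equality never occurs in that application. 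Still, as a standalone statement the lemma should read $\La\geq\Ga$, exactly as you conclude.
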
\begin{proof} There are two possible cases.
\begin{itemize} 
\item[(1)] Suppose that  $\lambda^+ > \mu^+$. Then, obviously, $\La^*>\Ga^*$. 

\item[(2)] Suppose that (i) $\lambda^+ =\mu^+$ and  (ii) $\sum_{i=1}^{k}\lambda_{i}\geq \sum_{i=1}^{k}\mu_{i}$, $\forall \,k$. Equation (i)  implies that $\La^*=\Ga^*$.  Equation (ii) implies that $\mu$ is a permutation of $\la$ that can be written  as $$\mu=s_{i_l,j_l}\circ \cdots \circ s_{i_1,j_1}\lambda,$$
 where each $s_{i,j}$ is a transposition  such that 
$$s_{i,j}(\la_1,\ldots\la_i,\ldots,\la_j,\ldots,\la_N)=\begin{cases} (\la_1,\ldots\la_j,\ldots,\la_i,\ldots,\la_N) & \text{if } i<j \text{  and } \la_i>\la_j\\ (\la_1,\ldots\la_i,\ldots,\la_j,\ldots,\la_N)&\text{otherwise}.\end{cases}$$  Now, if $1\leq i<j\leq m$ or $m+1\leq i<j\leq N$, then $\varphi_m(s_{i,j}\la)=\La$.  This means that $s_{i,j}$ induces, via the map $\varphi_m$, a nontrivial action on the superpartition $\La$ only if $i\in I=\{1,\ldots, m\}$ and $j\in J=\{m+1,\ldots, N\}$.  To be more explicit, let $i'$ and $j'$ be such that $\varphi_m$ maps $\la_i$ to $\La_{i'}$ and $\la_j$ to $\La_{j'}$, respectively.  Then,  
$$\varphi_m(s_{i,j}\la)=\hat s_{i',j'}\varphi_m(\la)=\hat s_{i',j'}\La ,$$ where 
$$\qquad \hat s_{i',j'}\La=\begin{cases} \big((\La_1,\ldots,\La_{j'},\ldots,\La_m)^+;(\La_{m+1},\ldots,\La_{i'},\ldots,\La_N)^+\big) & \text{if } i'\in I, j'\in J \text{  and } \La_{i'}>\La_{j'}\\  \La &\text{otherwise}.\end{cases}$$
Therefore,  $\La^*=\Ga^*$ and $$\Ga=\varphi_m(\mu)=\varphi_m(s_{i_l,j_l}\circ \cdots \circ s_{i_1,j_1}\lambda)=s_{i'_l,j'_l}\circ \cdots \circ s_{i'_1,j'_1}\Lambda,$$
which implies that $\Ga^\cd<\La^\cd$, as expected.
\end{itemize}
\end{proof}

\begin{lem}\label{lemordereigen2}For any superpartition $\La$, let  $$\epsilon_\La=\sum_{s\in\La^\cd/\La^*}\big(\alpha \,a'_{\La^\cd}(s)-l'_{\La^\cd}(s)\big).$$  Suppose that $\alpha$ is generic.  Then, $$ \La^*=\Om^*\quad \text{and}\quad \La^\cd >\Om^\cd \quad\Longrightarrow\quad \epsilon_\La(\alpha)\neq \epsilon_\Om(\alpha).$$
\end{lem}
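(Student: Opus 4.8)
The plan is to reduce the statement directly to Lemma \ref{lemordereigen} by rewriting $\epsilon_\La$ as a difference of the eigenvalues $\varepsilon$ attached to the two partitions $\La^\cd$ and $\La^*$. The starting observation is that the co-arm and co-leg are functions of the cell position alone: for $s=(i,j)$ one has $a'_\mu(s)=j-1$ and $l'_\mu(s)=i-1$ no matter which partition $\mu$ contains $s$. Hence the summand $\alpha\,a'(s)-l'(s)$ defines a weight on cells that is additive and does not depend on the ambient diagram.

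First I would re-express the eigenvalue appearing in Lemma \ref{lemordereigen} as a sum over cells. Since $b(\mu)=\sum_i(i-1)\mu_i=\sum_{s\in\mu}l'_\mu(s)$ and $b(\mu')=\sum_j(j-1)\mu'_j=\sum_{s\in\mu}a'_\mu(s)$, one gets
$$ \varepsilon_\mu(\alpha)=\alpha\,b(\mu')-b(\mu)=\sum_{s\in\mu}\big(\alpha\,a'_\mu(s)-l'_\mu(s)\big). $$
Because $\La^*\subseteq\La^\cd$ as diagrams and the weight is cell-additive and diagram-independent, splitting the sum over $\La^\cd$ into the part sitting in $\La^*$ and the skew part $\La^\cd/\La^*$ yields
$$ \epsilon_\La=\sum_{s\in\La^\cd/\La^*}\big(\alpha\,a'_{\La^\cd}(s)-l'_{\La^\cd}(s)\big)=\varepsilon_{\La^\cd}(\alpha)-\varepsilon_{\La^*}(\alpha). $$

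Second, using the hypothesis $\La^*=\Om^*$, the two contributions $\varepsilon_{\La^*}$ and $\varepsilon_{\Om^*}$ cancel, so that $\epsilon_\La(\alpha)-\epsilon_\Om(\alpha)=\varepsilon_{\La^\cd}(\alpha)-\varepsilon_{\Om^\cd}(\alpha)$. Here $\La^\cd$ and $\Om^\cd$ are genuine partitions of the same degree: $\La^*=\Om^*$ fixes $|\La^*|=|\Om^*|$, and the dominance comparison $\La^\cd>\Om^\cd$ presupposes equal degrees, which in turn forces the number of circles $m$ to agree. The hypothesis $\La^\cd>\Om^\cd$ then lets me invoke Lemma \ref{lemordereigen} with $\la=\La^\cd$ and $\mu=\Om^\cd$ to conclude $\varepsilon_{\La^\cd}(\alpha)\neq\varepsilon_{\Om^\cd}(\alpha)$ for generic $\alpha$, which is exactly $\epsilon_\La(\alpha)\neq\epsilon_\Om(\alpha)$.

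I do not expect a genuine obstacle once the identity $\epsilon_\La=\varepsilon_{\La^\cd}-\varepsilon_{\La^*}$ is recognized; the crux is precisely spotting that the co-arm and co-leg are position-only quantities, so that the skew sum splits cleanly and the problem collapses onto the already-proved Lemma \ref{lemordereigen}. The only point needing care is the bookkeeping verifying that $\La^\cd$ and $\Om^\cd$ are partitions of equal degree, so that Lemma \ref{lemordereigen} applies verbatim.
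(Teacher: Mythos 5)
Your proof is correct, but it follows a genuinely different route from the paper's. The paper argues directly on the skew diagrams: it takes $\Om^\cd=L_{i,j}\La^\cd$ for a single lowering operator, observes that the diagrams differ only in rows $i$ and $j$, and computes by hand that $\sum_{s\in\La^\cd/\La^*}a'(s)-\sum_{s\in\Om^\cd/\Om^*}a'(s)=\La^*_i-\La^*_j>0$ while the analogous difference of co-legs is $i-j<0$, then iterates along a chain of lowering operators to land on $\alpha p+q$ with $p,q\in\mathbb{Z}_+$. You instead prove the clean identity $\epsilon_\La=\varepsilon_{\La^\cd}(\alpha)-\varepsilon_{\La^*}(\alpha)$ — exploiting that $a'$ and $l'$ are position-only weights and that $\La^*\subseteq\La^\cd$ as diagrams, so the skew sum telescopes — and then the hypothesis $\La^*=\Om^*$ cancels the starred terms and Lemma \ref{lemordereigen} applied to $\La^\cd>\Om^\cd$ finishes the job. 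Your version buys economy: it invokes Lemma \ref{lemordereigen} as a black box instead of re-running its lowering-operator computation on skew diagrams, and the identity $\epsilon_\La=\varepsilon_{\La^\cd}-\varepsilon_{\La^*}$ is independently useful (it is essentially what the proof of Theorem \ref{theo1} uses when matching $e_\La$ with $\epsilon_\La(\alpha)$). The paper's version is more self-contained and makes the positivity of $p$ and $q$ visible cell by cell. Your bookkeeping on degrees (both $\cd$-diagrams have degree $n+m$ since the superpartitions share bi-degree $(n|m)$) is the right point to check, and you handled it.
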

\begin{proof}Let $\Om$ be a superpartition be such that $\Om^*=\La^*$ and  $\Om^\cd=L_{i,j}\La^\cd$ for some $i<j$, where $L_{i,j}$ is the lowering operator defined in Equation \eqref{opL}.  Note that this assumption makes sense only if $\La_i^*>\La_j^*$.   Then, the diagram of $\Om^\cd$ differs from that of $\La^\cd$ only in rows $i$ and $j$, so that 
$$ \sum_{s\in\La^\cd/\La^*}a'_{\La^\cd}(s)-\sum_{s\in\Om^\cd/\Om^*}  a'_{\Om^\cd}(s)=\La^*_i-\La^*_j>0,$$
and 
$$ \sum_{s\in\La^\cd/\La^*}l'_{\La^\cd}(s)-\sum_{s\in\Om^\cd/\Om^*}  l'_{\Om^\cd}(s)= i-j<0.$$ 
%In short, $$ \epsilon_\La(\alpha)-\epsilon_\Ga(\alpha)=\alpha(\La^*_i-\La^*_j)+j-i.$$
Finally, recalling  \eqref{eqorderopL}, we find that
$$ \epsilon_\La(\alpha)-\epsilon_\Om(\alpha)= \alpha p+q ,\quad\text{where}\quad p,q\in\mathbb{Z}_+.$$ 
Clearly, if $\alpha$ is not a negative rational, then $\epsilon_\La(\alpha)-\epsilon_\Om(\alpha)\neq 0$, as expected.
\end{proof}

\subsection{Non-symmetric Jack polynomials}

There are many ways to define  the non-symmetric Jack polynomial \cite{opdam} (see also \cite{knop}).  The most natural for us is to characterize them as triangular eigenfunctions of commuting difference-differential, first found in physics in \cite{pasquier}, and later generalized to other root systems by Cherednik.   We define these operators as follows:
\begin{equation*} \xi_{j}= \alpha x_{j}\partial_{x_{j}}+ \sum_{i<j}\frac{x_{j}}{x_{j}-x_{i}}(1-K_{ij})+\sum_{i>j}\frac{x_{i}}{x_{j}-x_{i}}(1-K_{ij}) -(j-1),\end{equation*}
where   the operators $K_{i,j}$ give the action of the symmetric group on functions of $N$ variables, i.e.,  $$K_{i,j}f(x_{1},\ldots,x_{i},\ldots,x_{j},\ldots,x_{N})=f(x_{1},\ldots,x_{j},\ldots,x_{i},\ldots,x_{N}).$$ Note that we will use the following shorthand notation:
 $$K_{i}=K_{i,i+1}.$$

Let $\eta$ be a composition and let $\alpha$ be formal parameter or a non-zero complex number not equal to a negative rational.   Then, the non-symmetric Jack polynomials $E_\eta(x;\alpha)$, where $\eta$ is a composition, is the unique polynomial satisfying 
\begin{align*} \text{(A1')}\qquad  &E_{\eta}(x;\alpha)=x^\eta +\sum_{\nu\prec \eta} c_{\eta ,\nu} x^{\nu} \, , \qquad c_{\eta ,\nu}\in\mathbb{C}(\alpha), \\
\text{(A2')}\qquad  &\xi_{j} E_{\eta}=\overline{\eta}_{j} E_{\eta} \quad \forall j=1,\ldots,N,
\end{align*}
where the eigenvalues are given by
\begin{equation}\label{eqeigennonsym} 
\overline{\eta}_{j} = \alpha\eta_{j} - \#\{i<j|\eta_{i}\geq \eta_{j}\} - \#\{i>j|\eta_{i}> \eta_{j}\}.
\end{equation}

One important property of the non-symmetric Jack polynomials is their stability with respect to the number of variables (see  \cite[Corollary 3.3]{knop}).  To be more precise, let  $\eta=(\eta_1,\ldots, \eta_N)$ and $\eta_-=(\eta_1,\ldots, \eta_{N-1})$ be compositions.  Then,  \begin{equation}\label{stabilitynonsym}
E_{\eta}(x_{1},\ldots,x_{N})\big|_{x_{N} =0}=
\begin{cases}
0 & \text{if } \eta_{N}>0,\\
E_{\eta_{-}}(x_{1},\ldots,x_{N-1})& \text{if }\eta_{N}=0.
\end{cases}
\end{equation}
We now prove a closely related property that will help us to establish the stability of the Jack polynomials with prescribed symmetry.   

\begin{lem} \label{stabilitynonsym2} Let $\la=(\la_1,\ldots,\la_m)$ and $\mu=(\mu_{m+1},\ldots,\mu_{N-1})$ be partitions.  Let also $$\eta=(\la_m,\ldots,\la_1,0,\mu_{N-1},\ldots,\mu_{m+1})\quad\text{and}\quad\eta_-=(\la_m,\ldots,\la_1,\mu_{N-1},\ldots,\mu_{m+1}).$$  Finally assume that $\mu_{m+1}>0$.  Then,    
\begin{equation*}
E_{\eta}(x_{1},\ldots,x_{m},x_{N},x_{m+1},\ldots,x_{N-1})\big|_{x_{N=0}}=E_{\eta_{-}}(x_{1},\ldots,x_{m},x_{m+1},\ldots,x_{N-1}). 
\end{equation*}
\end{lem}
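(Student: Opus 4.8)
The plan is to reduce the statement to the stability relation \eqref{stabilitynonsym}, which only removes the \emph{last} variable, by first moving the single $0$ of $\eta$ (sitting in position $m+1$) into the last slot.

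First I would observe that the left-hand side is the value at $x_N=0$ of $K_\tau E_\eta$, where $\tau$ is the cyclic permutation of $\{m+1,\dots,N\}$ sending $m+1\mapsto N\mapsto N-1\mapsto\cdots\mapsto m+2\mapsto m+1$: applying the product $K_\tau=K_{N-1}K_{N-2}\cdots K_{m+1}$ (with $K_{m+1}$ acting first) to $E_\eta(x_1,\dots,x_N)$ reproduces $E_\eta(x_1,\dots,x_m,x_N,x_{m+1},\dots,x_{N-1})$, and one checks $\tau\eta=\tilde\eta$, where $\tilde\eta=(\lambda_m,\dots,\lambda_1,\mu_{N-1},\dots,\mu_{m+1},0)$. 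Since $\tilde\eta_N=0$ and $\tilde\eta$ with its last part deleted is exactly $\eta_-$, relation \eqref{stabilitynonsym} gives $E_{\tilde\eta}\big|_{x_N=0}=E_{\eta_-}$. Thus it suffices to prove that in the expansion of $K_\tau E_\eta$ in the basis $\{E_\rho\}$ the polynomial $E_{\tilde\eta}$ occurs with coefficient $1$, while every \emph{other} $E_\rho$ that occurs has $\rho_N>0$; the latter all vanish at $x_N=0$ by \eqref{stabilitynonsym}.

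To control this expansion I would use the standard action of the transposition operators on non-symmetric Jack polynomials: when $\eta_i<\eta_{i+1}$ one has $K_iE_\eta=E_{s_i\eta}+(\overline{\eta}_i-\overline{\eta}_{i+1})^{-1}E_\eta$, with the eigenvalues $\overline{\eta}_j$ of \eqref{eqeigennonsym}, while $K_iE_\eta=E_\eta$ when $\eta_i=\eta_{i+1}$. The decisive point is a bookkeeping argument on which slot can carry the entry $0$ to the last position. The factors are applied in the order $K_{m+1},\dots,K_{N-1}$, and the index $p$ occurs only in the factors $K_{p-1}$ and $K_{p}$; hence in every term slot $p$ still carries its original value $\eta_p$ until $K_{p-1}$ acts, and in particular slot $N$ equals $\mu_{m+1}>0$ (this is where the hypothesis $\mu_{m+1}>0$ enters) in every term until the final factor $K_{N-1}$ acts. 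Therefore a term can acquire a $0$ in slot $N$ only through the transposed summand produced by $K_{N-1}$ out of a term already having $0$ in slot $N-1$; that term in turn can have come only from $K_{N-2}$ acting on a term with $0$ in slot $N-2$, and so on down to the original $0$ in slot $m+1$. This forces the single chain $\eta\to s_{m+1}\eta\to\cdots\to s_{N-1}\cdots s_{m+1}\eta=\tilde\eta$. At each link the $0$ is pushed to the right past a weakly larger entry, so we are in the case $\eta_i<\eta_{i+1}$ (or the two entries are equal, i.e.\ a trailing zero of $\mu$, in which case $K_i$ acts as the identity); in either case the transposed summand is carried with coefficient exactly $1$, so $E_{\tilde\eta}$ appears with coefficient $1$.

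Combining these facts, $K_\tau E_\eta\big|_{x_N=0}=E_{\tilde\eta}\big|_{x_N=0}=E_{\eta_-}$, which is the claim. I expect the main obstacle to be exactly the bookkeeping of the previous paragraph: proving rigorously that $\tilde\eta$ is the \emph{unique} composition with vanishing last part occurring in $K_\tau E_\eta$ and that its coefficient is $1$. This is most delicate when $\mu$ has trailing zeros, since then $\eta$ has several equal (zero) entries and one must invoke $K_iE_\eta=E_\eta$ for $\eta_i=\eta_{i+1}$ to see that the competing zero-carrying paths collapse to the same $\tilde\eta$ without altering the coefficient. As a fallback I would keep in reserve the direct route of checking that $g:=K_\tau E_\eta\big|_{x_N=0}$ satisfies the defining properties (A1') and (A2') of $E_{\eta_-}$ in $N-1$ variables, the triangularity being elementary and the eigenfunction property following from a comparison of the Cherednik operators $\xi_j$ in $N$ and $N-1$ variables under the specialization $x_N=0$.
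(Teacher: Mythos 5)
Your proposal is correct and follows essentially the same route as the paper: write the left-hand side as $K_{N-1}\cdots K_{m+1}E_\eta$, expand via the third line of \eqref{permjnosym} so that the only term whose composition has vanishing last part is $E_{K_{N-1}\cdots K_{m+1}(\eta)}$ with coefficient $1$, and then apply the stability relation \eqref{stabilitynonsym}. Your explicit treatment of the case where $\mu$ has trailing zeros (via $K_iE_\eta=E_\eta$ when $\eta_i=\eta_{i+1}$) is in fact slightly more careful than the paper's, which invokes only the strict-inequality case of \eqref{permjnosym}.
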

\begin{proof}
We first note that 
\begin{equation*}
E_{\eta}(x_{1},\ldots,x_{m},x_{N},x_{m+1},\ldots,x_{N-1})=K_{N-1}\ldots K_{m+1} E_{\eta}(x_{1},\ldots,x_{m},x_{m+1},\ldots,x_{N-1},x_{N}).
\end{equation*}
Now, the action of the symmetric group on the non-symmetric Jack polynomials is such that (see  \cite[Eq. (2.21)]{bf2})
\beq \label{permjnosym} 
K_{i}E_{\eta}=
\begin{cases} 
\frac{1}{\delta_{i,\eta}} E_{\eta}+(1-\frac{1}{\delta^{2}_{i,\eta}}) E_{K_{i}(\eta)},&\eta_{i}>\eta_{i+1}\\
E_{\eta} , & \eta_{i}=\eta_{i+1}\\
\frac{1}{\delta_{i,\eta}}E_{\eta}+E_{K_{i}(\eta)} , & \eta_{i}<\eta_{i+1}, 
\end{cases}
\eeq
where $\delta_{i,\eta}=\overline{\eta}_{i}-\overline{\eta}_{i+1}$. In our case, given that we are using a composition in increasing order, we can use successively  the third line of \eqref{permjnosym} and get  
\begin{equation*}
E_{\eta}(x_{1},\ldots,x_{m},x_{N},x_{m+1},\ldots,x_{N-1})= E_{K_{N-1}\ldots K_{m+1}(\eta)}(x_1,\ldots,x_N)+\sum_{\gamma}c_{\lambda,\gamma}E_{\gamma} (x_1,\ldots,x_N).
\end{equation*}
In the last equation, the sum is taken over the compositions $\gamma$ of the form $$\gamma=(\la_m,\ldots,\la_1,\omega(0,\mu_{N-1},\ldots,\mu_{m+1})),$$ where $\omega$ is a permutation given by the composition by a strict subsequence of the transpositions  $K_{N-1},\ldots ,K_{m+1}$; the coefficients $c_{\lambda,\gamma}$ are products of  $1/\delta_{i,j}$.  The important point here is that for any such $\gamma$, we have $\gamma_N\neq 0$.  Moreover, $${K_{N-1}\ldots K_{m+1}(\eta)}=(\la_m,\ldots,\la_1,\mu_{N-1},\ldots,\mu_{m+1},0).$$
   Then, applying the stability property (\ref{stabilitynonsym}), we find 
$$E_{\gamma} (x_1,\ldots,x_N)\big|_{x_N=0}=0\quad\text{and}\quad E_{K_{N-1}\ldots K_{m+1}(\eta)}(x_1,\ldots,x_N)=E_{\eta_-}(x_1,\ldots,x_N),$$
 which completes the proof.  
\end{proof}

\begin{lem}\label{lemsekiguchi}  Let $\gamma$ be a composition.  Then, $E_\gamma(x;\alpha)$ is an eigenfunction of the operators $S^*(u)$ and $ S^\cd(u,v)$ defined in   \eqref{defseki}.  Moreover, let  $\Gamma=\varphi_m(\gamma)$ be the associated superpartition to $\gamma$. 
  Then, 
$$  S^*(u)\,E_\gamma=\varepsilon_{\Gamma^*}(\alpha,u)\,E_\gamma\qquad   S^\cd(u,u)\, E_\gamma=\varepsilon_{\Gamma^\cd}(\alpha,u)\, E_\gamma,  $$
where the eigenvalue  $\varepsilon_{\lambda}(\alpha,u)$ is defined  in  \eqref{eigenseki}. 
\end{lem}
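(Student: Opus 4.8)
The plan is to reduce everything to property (A2$'$), namely that $E_\gamma$ is a joint eigenfunction of all the Cherednik operators, $\xi_j E_\gamma=\overline{\gamma}_j E_\gamma$. Since the $\xi_j$ pairwise commute, every polynomial in them — in particular the ordered products defining $S^*(u)$ and $S^\cd(u,v)$ — acts on $E_\gamma$ by the corresponding product of scalars. Hence $E_\gamma$ is simultaneously an eigenfunction of both operators, with
\[ S^*(u)E_\gamma=\Big(\prod_{i=1}^N(u+\overline{\gamma}_i)\Big)E_\gamma,\qquad S^\cd(u,v)E_\gamma=\Big(\prod_{i=1}^m(u+\overline{\gamma}_i+\alpha)\prod_{i=m+1}^N(v+\overline{\gamma}_i)\Big)E_\gamma. \]
It then remains to identify these scalars with $\varepsilon_{\Gamma^*}(\alpha,u)$ and, for $v=u$, with $\varepsilon_{\Gamma^\cd}(\alpha,u)$; as both sides are products of linear factors in the spectral variable, it suffices to match the corresponding multisets of roots.

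For $S^*(u)$ I would use the rank description of the non-symmetric spectrum. Writing $w_\gamma(i)=\#\{k<i:\gamma_k\ge\gamma_i\}+\#\{k>i:\gamma_k>\gamma_i\}+1$, the map $i\mapsto w_\gamma(i)$ is the bijection of $\{1,\dots,N\}$ that orders the positions of $\gamma$ by weakly decreasing value, ties broken by position; thus $\gamma_i=\gamma^+_{w_\gamma(i)}$ and, by \eqref{eqeigennonsym}, $\overline{\gamma}_i=\alpha\gamma_i-(w_\gamma(i)-1)$. Since $w_\gamma$ is a bijection,
\[ \{\overline{\gamma}_i\}_{i=1}^N=\{\alpha\gamma^+_j-(j-1)\}_{j=1}^N=\{\alpha\Gamma^*_j-j+1\}_{j=1}^N, \]
because $\Gamma^*=\gamma^+$. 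This is precisely the multiset of roots of $\varepsilon_{\Gamma^*}(\alpha,u)$, giving the first eigenvalue formula.

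For $S^\cd(u,u)$ I would compare $\gamma$ with the shifted composition $\tilde\gamma$ defined by $\tilde\gamma_i=\gamma_i+1$ for $i\le m$ and $\tilde\gamma_i=\gamma_i$ for $i>m$, so that $\tilde\gamma^+=\Gamma^\cd$. The point is that $\overline{\gamma}_i+\alpha=\alpha\tilde\gamma_i-(w_\gamma(i)-1)$ for $i\le m$ and $\overline{\gamma}_i=\alpha\tilde\gamma_i-(w_\gamma(i)-1)$ for $i>m$, so the roots of the $S^\cd(u,u)$-eigenvalue are exactly $\{\alpha\tilde\gamma_i-w_\gamma(i)+1\}_{i=1}^N$. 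On the other hand, applying the preceding paragraph to $\tilde\gamma$ shows that the roots of $\varepsilon_{\Gamma^\cd}(\alpha,u)=\varepsilon_{\tilde\gamma^+}(\alpha,u)$ are $\{\alpha\tilde\gamma_i-w_{\tilde\gamma}(i)+1\}_{i=1}^N$. Thus the whole lemma reduces to the combinatorial claim that replacing the $\gamma$-rank $w_\gamma$ by the $\tilde\gamma$-rank $w_{\tilde\gamma}$ leaves the multiset $\{\alpha\tilde\gamma_i-w_\bullet(i)+1\}$ unchanged.

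This last claim is the main obstacle, and I would establish it group by group. As $\alpha$ is generic, the multiset equality is equivalent to: for every value $v$, with $G_v=\{i:\tilde\gamma_i=v\}$, one has $\{w_\gamma(i):i\in G_v\}=\{w_{\tilde\gamma}(i):i\in G_v\}$. The $\tilde\gamma$-ranks of a single $\tilde\gamma$-value group always fill the consecutive block $[L_v+1,\,L_v+|G_v|]$ with $L_v=\#\{i:\tilde\gamma_i>v\}$, so it is enough to prove that the $\gamma$-ranks of $G_v$ fill the same block. Here $G_v=\{i>m:\gamma_i=v\}\cup\{i\le m:\gamma_i=v-1\}$, and in the $\gamma$-order the value-$v$ positions immediately precede the value-$(v-1)$ positions, each block being sorted by position. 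Within the value-$v$ block the members of $G_v$ (those with $i>m$) are the later positions, hence its tail; within the value-$(v-1)$ block the members of $G_v$ (those with $i\le m$) are the earlier positions, hence its head. Because the two blocks are adjacent, tail-then-head is a single contiguous run of $\gamma$-ranks, and a direct count gives its first rank as $\#\{i:\gamma_i>v\}+\#\{i\le m:\gamma_i=v\}+1=L_v+1$. Hence the $\gamma$-ranks of $G_v$ are exactly $[L_v+1,\,L_v+|G_v|]$, which finishes the reduction and the proof. I expect the bookkeeping of the boundary cases (when $G_v$ meets only one of the two value-blocks) to be the only delicate point, and it is handled by the same counting identity for $L_v$.
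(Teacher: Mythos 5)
Your proposal is correct and follows essentially the same route as the paper: reduce to (A2$'$) and the commuting Cherednik operators, identify the multiset $\{\overline{\gamma}_i\}$ with $\{\alpha\Gamma^*_j-j+1\}$ via the sorting permutation, and then handle $S^\cd(u,u)$ through the shifted composition $\tilde\gamma$. The only difference is that where the paper simply asserts that the shifted composition equals $w_\gamma(\Gamma^\cd)$ (i.e.\ that the permutation sorting $\gamma$ also sorts $\tilde\gamma$), you prove the equivalent multiset-of-ranks identity group by group — a correct and welcome filling-in of a step the paper leaves unjustified.
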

\begin{proof}The fact that the non-symmetric Jack polynomials are eigenfunctions of the Sekiguchi operators immediately follows from $\xi_i E_\gamma=\bar \gamma_i E_\gamma$.  Explicitly,
$$S^*(u)\,E_\gamma=\prod_{i=1}(u+\bar\gamma_i)\, E_\gamma,\qquad S^\cd(u,v)\, E_\gamma=\prod_{i=1}^m(u+\bar\gamma_i+\alpha)\prod_{i=m+1}^N(v+\bar\gamma_i) \, E_\gamma.$$

In order to express the eigenvalues in terms of partitions rather than composition, we need to consider permutations on words with $N$ symbols.  Amongst all the permutations $w$ such that $\gamma=w(\gamma^+)$, there exists a unique one, denoted by $w_\gamma$, of minimal length.  Equivalently, $w_\gamma$ is the smallest element of $S_N$ satisfying \begin{equation}\label{permutew} \gamma_{w_\gamma(i)}=\gamma^+_{i} %\quad \Longleftrightarrow\quad \gamma_i=\gamma^+_{w_\gamma^{-1}(i)}.
\end{equation}
 Now, let $\delta^-=(0,1,\ldots,N-1)$.  As is well known, the eigenvalue 
$\bar\gamma_i$ is equal to the $i$th element of the composition $\left(\alpha\gamma-w_\gamma\delta^-\right)$, which means that
 $$\bar \gamma_i=\alpha \gamma^+_{w_\gamma^{-1}(i)}-\delta^-_{w_\gamma^{-1}(i)}$$ or equivalently $$  \bar\gamma_{w(i)}=\alpha \gamma^+_i-(i-1).$$  In our case, $\gamma^+=\Gamma^*$, so that  $$\prod_{i=1}(u+\bar\gamma_i)= \prod_{i=1}(u+\alpha\Gamma^*_i-i+1),$$ which is the first expected eigenvalue.
For the second Sekiguchi operator, we note that the shifted composition $(\gamma_1+1,\ldots,\gamma_m+1,\gamma_{m+1},\ldots,\gamma_N)$ is equal to $w_\gamma(\Gamma^\cd)$.  Consequently,  $$ \prod_{i=1}^m(u+\bar\gamma_i+\alpha)\prod_{i=m+1}^N(u+\bar\gamma_i) = \prod_{i=1}^N(u+\alpha \Gamma^\cd_i-i+1),$$ and the lemma follows.   
\end{proof}

\subsection{Jack polynomials with prescribed symmetry}\label{SectionDefPrescribed}

For any subset $K$ of $\{1,\ldots,N\}$, let $S_K$ denote the subgroup of the permutation group of $\{1,\ldots,N\}$  that leaves the complement of $K$ invariant.  The antisymmetrization and symmetrization operators for $K$ are defined as follows:
\beq \label{asymops}\mathrm{Asym}_K f(x)=\sum_{\sigma\in S_K}(-1)^{ \sigma} f(x_{\sigma(1)},\ldots,x_{\sigma(N)})\quad\text{and}\quad\mathrm{Sym}_K f(x)=\sum_{\sigma\in S_K}  f(x_{\sigma(1)},\ldots,x_{\sigma(N)}). \eeq 
Thus, for any pair $(i,j)$ of elements $K$, we have
$$ K_{i,j} \,\mathrm{Asym}_K f(x)=-\mathrm{Asym}_K f(x) \quad\text{and}\quad K_{i,j} \,\mathrm{Sym}_K f(x)=\mathrm{Sym}_K f(x).$$
Note that in the following paragraphs, the set $K$ will be  replaced by either   $I=\{1,\ldots,m\}$ or $J=\{m+1,\ldots,N\}$.

The vector space $\mathscr{A}_I\otimes \mathscr{S}_J|_n$ is composed of all polynomials of total degree $n$ that are antisymmetric with respect to the set of variables $\{x_1,\ldots,x_m\}$, and symmetric with respect to   $\{x_{m+1},\ldots,x_N\}$.  It is spanned  by all polynomials of the form $ \mathrm{Asym}_I \mathrm{Sym}_J x^\eta$, where $\eta$ is a composition of $n$.   However, by considering the symmetry of the polynomials, we see that $\mathscr{A}_I\otimes \mathscr{S}_J|_n$  is spanned by the following set of linearly independent polynomials:
$$ \{m^\AS_\La\, |\,\La \text{ is a strict superpartition of bi-degree }  (n|m)\},$$
where the monomial $m_\La$ is defined as  
\beq \label{defmonomialAS} m^\AS_\La(x)=a_{\lambda}(x_{1},\ldots,x_{m})m_{\mu}(x_{m+1},\ldots,x_{N}), \quad \lambda=(\La_1,\ldots,\La_m),\quad \mu=(\La_{m+1},\ldots,\La_N).\eeq 
We recall that in the last equation, $a_\lambda$ and $m_\mu$ respectively denote the antisymmetric and symmetric monomial functions.  

Similarly,   the following sets provide bases for the vector spaces $\mathscr{A}_I\otimes \mathscr{A}_J|_n$, $\mathscr{S}_I\otimes \mathscr{A}_J|_n$ , $\mathscr{S}_I\otimes \mathscr{S}_J|_n$, respectively:
\begin{align}  &\{m^\AA_\La\, |\,\La \text{ is a strict superpartition of bi-degree }  (n|m) \text{ such that } \La_{m+1}>\cdots>\La_N\},\\
 &\{m^\SA_\La\, |\,\La \text{ is a   superpartition of bi-degree }  (n|m) \text{ such that } \La_{m+1}>\cdots>\La_N\}, \\
 &\{m^\SS_\La\, |\,\La \text{ is a  superpartition of bi-degree }  (n|m) \}, 
\end{align}
where 
\begin{align}  m^\AA_\La(x)&= a_{\lambda}(x_{1},\ldots,x_{m})a_{\mu}(x_{m+1},\ldots,x_{N}) ,\\
m^\SA_\La(x)&= m_{\lambda}(x_{1},\ldots,x_{m})a_{\mu}(x_{m+1},\ldots,x_{N}) ,\\
m^\SS_\La(x)&= m_{\lambda}(x_{1},\ldots,x_{m})m_{\mu}(x_{m+1},\ldots,x_{N}) .\\
\end{align}

We recall that the Jack polynomials with prescribed symmetry AS, AA, SA, SS have been introduced in Definition \ref{defJackPrescibed}.  They are indexed by a superpartition $\La=(\La_{1}, \dots, \La_{m}; \La_{m+1},\ldots, \La_{N})$ and are defined as follows:  
\beq \label{eqprescribedO}P_\La(x;\alpha)=c_\La\, \mathcal{O}_{I,J} E_\eta, \eeq where $\mathcal{O}_{I,J}$ stands for the appropriate composition of antisymmetrization and/or symmetrization operators, and   \beq\label{etalambda} \eta=(\La_{m}, \dots, \La_{1}, \La_{N},\ldots, \La_{m+1}).\eeq  Moreover, the coefficient $c_\La $ is such 
that the polynomial $P_\La$ is monic, i.e.,  the coefficient of $m_\La$ in $P_\La$ is exactly one.  However, our definition is such that only the non-symmetric monomial $\mathcal{O}_{I,J}x^\eta$ contributes to the coefficient of $m_\La$ , so it is an easy exercise to extract the normalization coefficient:
\begin{align}  \label{coefAS}
c^\AS_\La &=  \frac{(-1)^{m(m-1)/2}}{f_{\mu}},\\
c^\AA_\La &=  {(-1)^{m(m-1)/2}} {(-1)^{(N-m)(N-m-1)/2}} ,\\
c^\SA_\La &=  \frac{(-1)^{(N-m)(N-m-1)/2}}{f_{\lambda}},\\
c^\SS_\La &=  \frac{1}{f_{\lambda}f_{\mu}}, \label{coefSS}
\end{align}
where $\lambda=(\La_1,\ldots,\La_m)$, $\mu=(\La_{m+1},\ldots,\La_N)$, $f_{\lambda}=\prod_{i}n_{\lambda}(i)! $, and $n_{\lambda}(i)$ is the multiplicity of $i$ in $\la$.

We now list some properties of the Jack polynomials with prescribed symmetry that immediately follow from their Definition \eqref{eqprescribedO}.

\begin{lem}[Regularity for generic $\alpha$]  $P_\La(x;\alpha)$ is singular only if $\alpha$ is zero or a negative rational.  
\end{lem}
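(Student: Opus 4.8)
The plan is to reduce the regularity of $P_\La$ to that of the single non-symmetric Jack polynomial $E_\eta$, exploiting that all the remaining data in Definition \eqref{eqprescribedO} carry no $\alpha$-dependence. By \eqref{eqprescribedO} we have $P_\La(x;\alpha)=c_\La\,\mathcal{O}_{I,J}E_\eta(x;\alpha)$, where $\eta$ is the fixed composition \eqref{etalambda}, the operator $\mathcal{O}_{I,J}$ is one of $\mathrm{Asym}_I\mathrm{Sym}_J$, $\mathrm{Asym}_I\mathrm{Asym}_J$, $\mathrm{Sym}_I\mathrm{Asym}_J$, $\mathrm{Sym}_I\mathrm{Sym}_J$, and $c_\La$ is given by \eqref{coefAS}--\eqref{coefSS}. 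The key observation is that neither $\mathcal{O}_{I,J}$ nor $c_\La$ depends on $\alpha$: by \eqref{asymops} the (anti)symmetrizers are $\mathbb{C}$-linear combinations of variable permutations with coefficients $\pm1$, while the normalizations $c_\La$ are ratios of the $\alpha$-free integers $(-1)^{\bullet}$ and $f_\lambda f_\mu$. Viewing the coefficients of $P_\La$ in the monomial basis $\{m_\Ga\}$ as rational functions of $\alpha$, it follows that every pole of $P_\La$ must already be a pole of a coefficient of $E_\eta$. It therefore suffices to prove that $E_\eta(x;\alpha)$ is regular whenever $\alpha$ is neither zero nor a negative rational.

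For this last point I would argue from the defining properties (A1$'$)--(A2$'$). The coefficients $c_{\eta,\nu}(\alpha)$ of $E_\eta=x^\eta+\sum_{\nu\prec\eta}c_{\eta,\nu}x^\nu$ are \emph{a priori} rational functions of $\alpha$, obtained by solving the triangular linear system expressing $\xi_jE_\eta=\bar\eta_jE_\eta$ for all $j$. In this system the matrix entries of each $\xi_j$ in the monomial basis are affine in $\alpha$ (the parameter enters only through the diagonal term $\alpha x_j\partial_{x_j}$), and the recursion determines $c_{\eta,\nu}$ upon dividing by eigenvalue differences $\bar\eta_j-\bar\nu_j=\alpha(\eta_j-\nu_j)-\#\{\cdots\}$; compare the explicit denominators $\delta_{i,\eta}=\bar\eta_i-\bar\eta_{i+1}$ appearing in \eqref{permjnosym}. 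Each such factor is affine in $\alpha$ with integer coefficients, hence vanishes only at a single rational value of $\alpha$ (or is a nonzero constant), so every pole of $c_{\eta,\nu}(\alpha)$ lies at a rational $\alpha$.

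To pin down the sign I would combine this with the existence statement already recorded for $E_\eta$: for every complex $\alpha$ that is nonzero and not a negative rational, (A1$'$)--(A2$'$) admit a unique polynomial solution with complex coefficients. If some $c_{\eta,\nu}(\alpha)$ had a pole at such an $\alpha_0$, then uniqueness at $\alpha_0$ forces the corresponding linear system to be invertible there, and Cramer's rule (with polynomial entries and nonvanishing determinant at $\alpha_0$) identifies $c_{\eta,\nu}$ with the generic solution and keeps it finite at $\alpha_0$ — a contradiction. Hence the poles of $E_\eta$, and therefore of $P_\La$, are confined to $\{0\}\cup\mathbb{Q}_{<0}$, which is the assertion. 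The only genuinely delicate step is this last passage, from ``a solution exists at each good $\alpha_0$'' to ``the generic rational-function solution is regular at $\alpha_0$''; it is handled exactly by the invertibility/uniqueness argument just sketched, and everything else is bookkeeping on the $\alpha$-independence of $\mathcal{O}_{I,J}$ and $c_\La$.
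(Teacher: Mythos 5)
Your reduction to the single polynomial $E_\eta$ is exactly the paper's first step, and it is correct: $\mathcal{O}_{I,J}$ and $c_\La$ in \eqref{eqprescribedO} carry no $\alpha$-dependence, so applying them cannot create poles. Where you diverge is in how the poles of $E_\eta$ itself are localized. The paper disposes of this in one line by invoking the Knop--Sahi integrality theorem: there is $v_\eta(\alpha)\in\mathbb{N}[\alpha]$ with $v_\eta(\alpha)E_\eta(x;\alpha)\in\mathbb{N}[\alpha][x]$, and since $v_\eta$ is (explicitly, as recalled in Section 3.2) the product $\prod_{s\in\eta}d_\eta(s)$ of linear factors $\alpha(a_\eta(s)+1)+\bar l_\eta(s)+1$ with positive integer coefficients, its zeros --- hence the possible poles of $E_\eta$ --- lie in $\mathbb{Q}_{<0}$. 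You instead solve the triangular linear system for the coefficients: the eigenvalue-difference denominators are affine in $\alpha$ with integer coefficients, which confines the poles to rational $\alpha$, and you then exclude positive rationals by combining uniqueness of the solution to (A1$'$)--(A2$'$) at each such $\alpha_0$ with a Cramer's-rule/invertible-subsystem argument. That last passage is handled correctly (injectivity of the system at $\alpha_0$ yields a square subsystem with nonvanishing determinant near $\alpha_0$, whose regular Cramer solution must coincide with the generic rational-function solution), but be aware of what it consumes: the separation of the spectral vectors $(\bar\eta_j)$ and $(\bar\nu_j)$ for $\nu\prec\eta$ at \emph{every} $\alpha_0\notin\{0\}\cup\mathbb{Q}_{<0}$, which the paper asserts when introducing $E_\eta$ but only proves (via Lemma \ref{lemordereigen}) in the symmetric setting. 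Granting that standard fact, your route is self-contained at the level of linear algebra and avoids the integrality theorem; the paper's route is shorter and, more importantly for what follows, yields the explicit denominator $\prod_s d_\eta(s)$ that Section 3.2 (Lemma \ref{regjnonsym}) needs in order to prove regularity at the specific values $\alpha_{k,r}$ for admissible superpartitions --- information your argument does not produce.
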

\begin{proof} All the dependence upon $\alpha$ comes from the non-symmetric Jack polynomials, so it is  sufficient to consider the possible singularities of the latter.  Let us now recall a fundamental result of Knop and Sahi \cite{knop}:  There is a $v_\eta(\alpha)\in\mathbb{N}[\alpha]$ such that all the coefficients in $v_\eta(\alpha)E_\eta(x;\alpha)$ also belong to $\mathbb{N}[\alpha]$.  Thus, the only singularities of $E_\eta(x;\alpha)$ are poles, which can occurs at $\alpha=0$ or $\alpha\in\mathbb{Q}_-$.   
\end{proof}

%The following lemma shows that Jack polynomials with prescribed symmetry decompose  unitriangularly in the monomial basis, which is induced by the  triangularity of the non-symmetric Jack polynomials.  As a consequence,   the Jack polynomials with a given prescribed symmetry form   a basis for the space of polynomials with the same symmetry type. 

\begin{lem}[Simple product] \label{simpleprod} For any superpartition  $\La=(\La_1,\ldots,\La_m;\La_{m+1},\ldots,\La_{N})$, let $$\La_+=(\La_1+1,\ldots,\La_m+1;\La_{m+1}+1,\ldots,\La_{N}+1).$$  
Then, $$  x_1\cdots x_N\, P_\La(x;\alpha)=P_{\La_+}(x;\alpha).$$  \end{lem}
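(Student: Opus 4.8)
The plan is to reduce everything to the corresponding identity for the non-symmetric Jack polynomials and then push it through the (anti)symmetrization operators. Concretely, writing $e=x_1\cdots x_N$ for the product of all the variables and $(1^N)=(1,\dots,1)$ for the all-ones composition, I would first establish the elementary shift relation $e\,E_\eta(x;\alpha)=E_{\eta+(1^N)}(x;\alpha)$ for every composition $\eta$, and then transfer it to $P_\La$ using the fact that multiplication by the symmetric polynomial $e$ commutes with $\mathcal{O}_{I,J}$.

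To prove the non-symmetric identity I would invoke the characterization (A1')--(A2') together with the uniqueness of $E_{\eta+(1^N)}$. For triangularity, note that $e\,E_\eta=\sum_{\nu\preceq\eta}c_{\eta,\nu}\,x^{\nu+(1^N)}$, and that $\nu\prec\eta$ forces $\nu+(1^N)\prec\eta+(1^N)$, since adding $(1^N)$ shifts every partial sum of $\nu^+$ and every partial sum of $\nu$ by the same constant $k$, leaving all the defining inequalities of $\prec$ intact; hence $e\,E_\eta$ is monic with leading monomial $x^{\eta+(1^N)}$, giving (A1'). For (A2') the key computation is the operator identity
$$\xi_j\circ e=e\circ(\xi_j+\alpha),$$
valid on all of $\mathscr{V}$. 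This holds because $e$ is totally symmetric, so $K_{i,j}$ commutes with multiplication by $e$ and every off-diagonal term of $\xi_j$ passes through $e$ untouched, while the diagonal term produces the extra $\alpha$ via $x_j\partial_{x_j}e=e$ (using $\partial_{x_j}e=e/x_j$). Consequently $\xi_j(e\,E_\eta)=e(\xi_j+\alpha)E_\eta=(\overline\eta_j+\alpha)\,e\,E_\eta$, which matches $\overline{(\eta+(1^N))}_j=\overline\eta_j+\alpha$; the latter is read off directly from \eqref{eqeigennonsym}, since adding $1$ to every entry preserves all the comparisons $\eta_i\geq\eta_j$ and $\eta_i>\eta_j$. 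Uniqueness then yields $e\,E_\eta=E_{\eta+(1^N)}$.

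It remains to translate this to the prescribed-symmetry setting. Since $e$ is fixed by every $\sigma\in S_I$ and every $\sigma\in S_J$, each of $\mathrm{Asym}_I,\mathrm{Sym}_I,\mathrm{Asym}_J,\mathrm{Sym}_J$ factors $e$ out, so $\mathcal{O}_{I,J}(e\,g)=e\,\mathcal{O}_{I,J}(g)$. Starting from $P_\La=c_\La\,\mathcal{O}_{I,J}E_\eta$ with $\eta=(\La_m,\dots,\La_1,\La_N,\dots,\La_{m+1})$, I compute $e\,P_\La=c_\La\,\mathcal{O}_{I,J}(e\,E_\eta)=c_\La\,\mathcal{O}_{I,J}E_{\eta+(1^N)}$. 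Inspecting \eqref{etalambda} shows that $\eta+(1^N)$ is exactly the composition attached to $\La_+$ (which is clearly again a superpartition of the same type, of bi-degree $(n+N|m)$), so $\mathcal{O}_{I,J}E_{\eta+(1^N)}=c_{\La_+}^{-1}P_{\La_+}$, and it only remains to check $c_\La=c_{\La_+}$. This is immediate from \eqref{coefAS}--\eqref{coefSS}: with $\lambda=(\La_1,\dots,\La_m)$ and $\mu=(\La_{m+1},\dots,\La_N)$, shifting every part up by one sends the multiplicity $n_\lambda(i)$ to $n_{\lambda_+}(i+1)$ and makes the multiplicity of $0$ vanish, whence $f_{\lambda_+}=f_\lambda$ and $f_{\mu_+}=f_\mu$, while the signs and the integers $m$, $N-m$ are unchanged. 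Combining the pieces gives $e\,P_\La=P_{\La_+}$.

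The only genuinely non-formal ingredient is the commutation relation $\xi_j\circ e=e\circ(\xi_j+\alpha)$, whose verification rests on the two facts that $e$ is totally symmetric (so it commutes with every $K_{i,j}$) and that $x_j\partial_{x_j}e=e$. Everything else---triangularity, the eigenvalue shift, and the behaviour of the normalizations $f_\lambda,f_\mu$ under the shift---is bookkeeping, so I expect no real obstacle beyond being careful with the convention for trailing zeros in $f_\mu$.
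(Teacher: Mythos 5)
Your proof is correct and follows essentially the same route as the paper: reduce to the shift identity $x_1\cdots x_N\,E_\eta=E_{\eta+(1^N)}$, commute $x_1\cdots x_N$ past $\mathcal{O}_{I,J}$, and check $c_\La=c_{\La_+}$ from \eqref{coefAS}--\eqref{coefSS}. The only difference is that the paper cites the non-symmetric shift identity as well known, whereas you supply its proof via the commutation $\xi_j\circ(x_1\cdots x_N)=(x_1\cdots x_N)\circ(\xi_j+\alpha)$ and the uniqueness characterization (A1')--(A2'), which is a correct and welcome addition.
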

\begin{proof}  { First, as is well known, } $x_1\cdots x_NE_\eta(x;\alpha)=E_{(\eta_1+1,\ldots,\eta_N+1)}(x;\alpha)$.  Second,  $x_1\cdots x_N$ commutes with any $\mathcal{O}_{I,J}$.  Thus, $$ x_1\cdots x_N\, P_\La(x;\alpha)=c_\La\, \mathcal{O}_{I,J} E_{(\eta_1+1,\ldots,\eta_N+1)}(x;\alpha)=\frac{c_\La}{c_{\La_+}}P_{\La_+}(x;\alpha).$$
Finally, one easily  verifies from Equations \eqref{coefAS}--\eqref{coefSS}, that  $c_\La=c_{\La_+}$.
\end{proof}

\begin{proposition}[Triangularity]\label{proptriangularity} $P_\La=m_\La+\sum_{\Ga<\La}c_{\La,\Ga}m_\Ga$. \end{proposition}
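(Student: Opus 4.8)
The plan is to push the triangular structure of the non-symmetric Jack polynomial $E_\eta$ through the (anti)symmetrizer $\mathcal{O}_{I,J}$ and then convert the dominance order on compositions into the dominance order on superpartitions by means of Lemma \ref{lemspartcomp}. First I would recall from \eqref{eqprescribedO}--\eqref{etalambda} that $P_\La=c_\La\,\mathcal{O}_{I,J}E_\eta$, where $\eta=(\La_m,\dots,\La_1,\La_N,\dots,\La_{m+1})$, and observe that $\varphi_m(\eta)=\La$. Property (A1$'$) gives $E_\eta=x^\eta+\sum_{\nu\prec\eta}c_{\eta,\nu}x^\nu$, so by linearity $\mathcal{O}_{I,J}E_\eta=\mathcal{O}_{I,J}x^\eta+\sum_{\nu\prec\eta}c_{\eta,\nu}\,\mathcal{O}_{I,J}x^\nu$.

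The key elementary computation is that, for \emph{any} composition $\nu$, the polynomial $\mathcal{O}_{I,J}x^\nu$ is either zero (when an antisymmetrizer meets repeated exponents inside its block) or a nonzero scalar multiple of the prescribed-symmetry monomial $m_{\varphi_m(\nu)}$. This is immediate from the definitions \eqref{asymops} and \eqref{defmonomialAS}: symmetrizing $x^\nu$ over a block yields a positive-integer multiple of the symmetric monomial built from the sorted block exponents, while antisymmetrizing yields $\pm$ the antisymmetric monomial built from them. Hence, after grouping by superpartition, every term $x^\nu$ feeds only into the single monomial $m_{\varphi_m(\nu)}$.

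I would then invoke Lemma \ref{lemspartcomp}: since each $\nu$ appearing in $E_\eta$ satisfies $\nu\prec\eta$, we obtain $\varphi_m(\nu)<\varphi_m(\eta)=\La$, so every monomial other than $m_\La$ that can occur is indexed by a strictly smaller superpartition. For the leading term, $\mathcal{O}_{I,J}x^\eta$ is proportional to $m_\La$ precisely because $\eta$ carries both of its blocks in increasing order, so that applying the definitions of $c_\La$ in \eqref{coefAS}--\eqref{coefSS} normalizes the coefficient of $m_\La$ to one. Collecting like terms then yields $P_\La=m_\La+\sum_{\Ga<\La}c_{\La,\Ga}m_\Ga$ with $c_{\La,\Ga}=c_\La\sum_{\nu:\,\varphi_m(\nu)=\Ga}c_{\eta,\nu}\,(\text{const}_\nu)$.

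The only genuinely delicate point is the bookkeeping for the leading coefficient: one must check that no $\nu\prec\eta$ occurring in $E_\eta$ maps back to $\La$ under $\varphi_m$, for otherwise $m_\La$ would receive several contributions and the identification with the normalization \eqref{coefAS}--\eqref{coefSS} would need re-examination. This is settled by a minimality remark: among all compositions with the same block multisets as $\eta$ (equivalently, all $\nu$ with $\varphi_m(\nu)=\La$), arranging each block in increasing order minimizes every partial sum, while the sum across the block boundary is fixed; hence $\eta$ is the unique dominance-minimal such composition, and no genuine $\nu\prec\eta$ satisfies $\varphi_m(\nu)=\La$. Everything else reduces to linearity and the collection of like terms, so I expect the proof to be short once this observation is in place.
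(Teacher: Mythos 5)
Your proposal is correct and follows essentially the same route as the paper: write $P_\La=c_\La\,\mathcal{O}_{I,J}E_\eta$, use (A1$'$), observe that $\mathcal{O}_{I,J}x^\nu$ is proportional to $m_{\varphi_m(\nu)}$, and apply Lemma \ref{lemspartcomp}. The extra "delicate point" you raise is already subsumed by that lemma, since $\nu\prec\eta$ gives the \emph{strict} inequality $\varphi_m(\nu)<\La$, so no lower term of $E_\eta$ can contribute to $m_\La$.
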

\begin{proof}By definition, $P_\La=c_\La\mathcal{O}_{I,J} E_\eta$, where $\eta$ is given by \eqref{etalambda} and  $ E_{\eta} =x^\eta +\sum_{\nu\prec \eta} c_{\eta ,\nu} x^{\nu}$.  We already know that $c_\La$ guarantees the monocity, i.e., $ c_\La\mathcal{O}_{I,J} x^\eta=m_\La$.   It remains to check that if $\nu\prec\eta$, then $\mathcal{O}_{I,J} x^\nu$ is proportional to $m_\Om$ for some $\Om<\La$. Now,  $\mathcal{O}_{I,J} x^\nu$ is proportional to $m_\Om$, where $\Om=\varphi_m(\nu)$.  Moreover, we know from Lemma \ref{lemspartcomp} that $\nu\prec \eta$, then $\varphi_m(\nu)<\varphi_m(\la)$. This completes the proof.     \end{proof}

\begin{proposition}[Stability for types AS and SS] \label{stability1} Let $\La =(\Lambda_{1},\ldots,\Lambda_{m}; \La_{m+1},\ldots,\La_{N})$ be a superparttion and let
  $\Lambda_{-}  =(\Lambda_{1},\ldots,\Lambda_{m}; \La_{m+1},\ldots,\La_{N-1})$.  Then, the Jack polynomial with prescribed symmetry AS or SS satisfies
\begin{equation*} P_{\Lambda}(x_{1},\ldots,x_{N};\alpha)\big|_{x_{N} =0} = \begin{cases}0,&  \La_{N}>0,\\
P_{\Lambda_{-}}(x_{1},\ldots,x_{N-1};\alpha) ,& \La_{N}=0.\end{cases} \end{equation*} 
\end{proposition}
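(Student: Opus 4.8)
The plan is to reduce the statement about Jack polynomials with prescribed symmetry to the stability property of the non-symmetric Jack polynomials, namely Equation \eqref{stabilitynonsym}, together with the explicit definition $P_\La=c_\La\,\mathcal{O}_{I,J}E_\eta$ from \eqref{eqprescribedO}. Since the symmetrization and antisymmetrization operators $\mathrm{Asym}_I$ and $\mathrm{Sym}_J$ act only by permuting variables, they commute with the operation of setting $x_N=0$ in the sense that one can track how the permutations rearrange the argument $x_N$. The key idea is that the composition $\eta=(\La_m,\ldots,\La_1,\La_N,\ldots,\La_{m+1})$ places the entry $\La_N$ in the last slot of the first block and in the last slot overall (after reindexing), so stability of $E_\eta$ is governed precisely by whether $\La_N$, the smallest entry of the second block, is zero or positive.

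First I would treat the case $\La_N>0$. Here every composition $\nu\preceq\eta$ appearing in the monomial expansion of $E_\eta$, and every permuted copy of it produced by $\mathcal{O}_{I,J}$, has a positive exponent on some variable; more carefully, in the symmetric type SS and the AS type, the operator $\mathrm{Sym}_J$ or its composition with $\mathrm{Asym}_I$ produces a sum of monomials each of which is a permutation of $x^\eta$ (plus lower terms). Because the partition $\mu=(\La_{m+1},\ldots,\La_N)$ has all parts positive when $\La_N>0$, every monomial in $P_\La$ carries a strictly positive power of each of the variables $x_{m+1},\ldots,x_N$ in at least the symmetrized block, so setting $x_N=0$ annihilates the whole polynomial. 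I would make this precise by invoking \eqref{stabilitynonsym} on the suitably reindexed $E_\eta$: since $\La_N>0$ forces the relevant last exponent to be positive, $E_\eta|_{x_N=0}=0$, and this survives the application of $\mathcal{O}_{I,J}$.

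For the case $\La_N=0$, I would use Lemma \ref{stabilitynonsym2} directly: it is designed exactly to handle the restriction $x_N=0$ after reordering the variables so that the zero entry sits in the last position. Writing $\la=(\La_1,\ldots,\La_m)$ and $\mu=(\La_{m+1},\ldots,\La_{N-1})$ with $\mu_{m+1}=\La_{m+1}>0$ (which holds by the hypothesis that $\La_N=0$ is the smallest part, so the remaining parts are positive, and if some are zero one treats them separately in the symmetric block), Lemma \ref{stabilitynonsym2} gives $E_\eta(x_1,\ldots,x_m,x_N,x_{m+1},\ldots,x_{N-1})|_{x_N=0}=E_{\eta_-}(x_1,\ldots,x_{N-1})$. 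Applying the operators $\mathcal{O}_{I,J}$ and noting that $\mathrm{Sym}_J$ on $N-m$ variables restricts to $\mathrm{Sym}$ on the $N-m-1$ surviving variables when $x_N=0$, together with $c_\La=c_{\La_-}$ (which one checks from \eqref{coefAS}--\eqref{coefSS}, using that removing a zero part does not change the multiplicity factor $f_\mu$), yields $P_\La|_{x_N=0}=P_{\La_-}$.

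The main obstacle will be the bookkeeping in the $\La_N=0$ case: one must verify that the symmetrization operator $\mathrm{Sym}_J$ interacts correctly with the restriction, i.e.\ that the sum over the lower-order terms $E_\gamma$ with $\gamma_N\neq 0$ appearing in Lemma \ref{stabilitynonsym2} genuinely drop out after applying $\mathcal{O}_{I,J}$ and not merely before it, and that the normalization constants match so that the result is again monic of the correct prescribed-symmetry type. I expect this to require carefully checking that $\mathcal{O}_{I,J}$ commutes with specialization in the precise sense used, which follows since these operators are defined purely by permutations of the variables \eqref{asymops} and permutations commute with the evaluation $x_N=0$ up to relabeling; the delicate point is ensuring that the permutations in $\mathrm{Sym}_J$ that move $x_N$ into an interior slot do not reintroduce nonzero contributions, which is controlled by the symmetry of the surviving block.
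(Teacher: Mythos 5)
Your overall strategy is the same as the paper's (reduce to the stability of the non-symmetric Jack polynomials via $P_\La=c_\La\,\mathcal{O}_{I,J}E_\eta$), but the step you defer to the end is precisely the substance of the proof, and the two shortcuts you offer in its place do not work. The assertion that ``$E_\eta|_{x_N=0}=0$ \ldots survives the application of $\mathcal{O}_{I,J}$'' is not a valid principle: $\mathrm{Sym}_J$ is a sum of permutations that move $x_N$ into interior argument slots, so $(\mathrm{Sym}_J E_\eta)|_{x_N=0}$ is a sum of restrictions of $E_\eta$ with \emph{other} variables set to zero, about which Equation \eqref{stabilitynonsym} says nothing. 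The paper resolves exactly this by writing $\mathrm{Sym}_J=\mathrm{Sym}_{J_-}(1+K_{m+1,N}+\cdots+K_{N-1,N})$, expanding each $K_{i,N}E_\eta$ through the explicit transposition action \eqref{permjnosym} as a linear combination of polynomials $E_\gamma$ whose last entry $\gamma_N$ can be read off, and only then applying \eqref{stabilitynonsym} term by term: every $\gamma$ with $\gamma_N>0$ dies, and the representatives that move a zero exponent into the last slot each contribute $E_{\eta_-}$. Your alternative argument for the case $\La_N>0$ --- that every monomial of $P_\La$ carries a positive power of $x_N$ --- is likewise unjustified: the triangular expansion of $E_\eta$ a priori contains compositions $\nu\prec\eta$ with $\nu_N=0$, and ruling out the corresponding $m_\Ga$ would require proving a nontrivial combinatorial statement (that no $\Ga<\La$ appearing in the expansion has a zero part in its $J$-block), which you do not do.

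Two further bookkeeping points. First, Lemma \ref{stabilitynonsym2} computes the restriction of $E_\eta$ with the arguments reordered so that $x_N$ sits in slot $m+1$; that is only \emph{one} coset representative inside $\mathrm{Sym}_J$, so invoking it ``directly'' still leaves all the other terms of the symmetrizer to be analyzed, and its hypothesis $\mu_{m+1}>0$ restricts it to the case of a single zero part. (The paper reserves that lemma for Proposition \ref{stability2}, where the specialized variable lies in the antisymmetric block.) Second, your claim that removing a zero part leaves $f_\mu$ unchanged is false when $n_\mu(0)>1$: one has $f_\mu=n_\mu(0)\,f_{\mu_-}$, and this factor is exactly cancelled by the $n_\mu(0)$ surviving coset representatives in the paper's computation $\mathrm{Sym}_JE_\eta|_{x_N=0}=n_\mu(0)\,\mathrm{Sym}_{J_-}E_{\eta_-}$. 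Without that count, the normalization does not match.
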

\begin{proof}The cases AS and SS being similar, we  only give the proof for AS. 

Let $\la=(\Lambda_{1},\ldots,\Lambda_{m})$,  $\mu= (\La_{m+1},\ldots,\La_{N})$,  $\la^-=(\Lambda_{m},\ldots,\Lambda_{1})$,  $\mu^-= (\La_{N},\ldots,\La_{m+1})$.  Let also   $\eta=(\lambda^{-},\mu^{-})$  and   $\eta_{-}=(\lambda^{-},\mu^{-}_{-})$, where $\mu^{-}_{-}=(\mu_{N-1},\ldots,\mu_{m+1})$.
By definition, 
\begin{equation*}
 P_\Lambda^{AS}(x) = \frac{(-1)^{m(m-1)/2}}{f_{\mu}}\mathrm{Asym}_{I}\mathrm{Sym}_{J}E_{\eta}(x;\alpha)
\end{equation*}
The symmetrization operator  can be decomposed  as 
$$ \mathrm{Sym}_{J}=\mathrm{Sym}_{J_-}(1+K_{m+1,N}+K_{m+2,N}+ \ldots + K_{N-1,N}),\qquad J_-=\{m+1,\ldots, N-1\}.$$
It is  more convenient to rewrite the transpositions on the LHS in terms of the elementary transpositions: 
\begin{equation*} K_{i,N}=K_{i}K_{i+1} \ldots K_{N-2} K_{N-1}K_{N-2} \ldots K_{i+1}K_{i}\end{equation*}
By making use of the stability property \eqref{stabilitynonsym} and the action of the symmetric group on the non-symmetric jack polynomials given in \eqref{permjnosym} , we then find that  
$$
K_{N-1}K_{N-2} \ldots K_{i+1}K_{i}E_{\eta}\big|_{x_{N} =0}=
\begin{cases}
0 ,&\eta^{}_{i}>0,\\
E_{\eta^{}_{-}}(x_{1},\ldots,x_{N-1}), &\eta^{}_{i}=0.
\end{cases}
$$
Thus, $\mathrm{Sym}_{J}E_{\eta^{}}(x_{1},\ldots,x_{N})\big|_{x_{N} =0}=0 $ when $\La_N>0$, while 
\begin{multline*} \mathrm{Sym}_{J}E_{\eta^{}}(x_{1},\ldots,x_{N})\big|_{x_{N} =0} \\= \mathrm{Sym}_{J_{-}}\Big(\sum_{\stackrel{i \in \{m+1,\ldots,N-1\}}{\mu^{-}_{i}=0}}K_{i}K_{i+1} \ldots K_{N-2}\Big)E_{\eta^{}_{-}}(x_{1},\ldots,x_{N-1}) = n_\mu(0)\,\mathrm{Sym}_{J_{-}} E_{\eta^{}_{-}}(x_{1},\ldots,x_{N-1})  \end{multline*}  
when $\La_N=0$, and the proposition follows.
\end{proof}

\begin{proposition}[Stability for types SA and SS] \label{stability2} Let $\La =(\Lambda_{1},\ldots,\Lambda_{m}; \La_{m+1},\ldots,\La_{N})$ be a superpartition
and let  $\Lambda_{-}  =(\Lambda_{1},\ldots,\Lambda_{m-1}; \La_{m+1},\ldots,\La_{N})$.  Then, the Jack polynomial with prescribed symmetry SA or SS satisfies
\begin{equation*} P_{\Lambda}(x_{1},\ldots,x_m,\ldots ,x_{N};\alpha)\big|_{x_{m} =0} = \begin{cases}0,&\La_m>0,\\
P_{\Lambda_{-}}(x_{1},\ldots,x_{m-1},x_{m+1},\ldots,x_{N-1};\alpha),&\La_m=0. \end{cases}  \end{equation*} 
\end{proposition}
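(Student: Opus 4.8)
The plan is to run the proof of Proposition~\ref{stability1} with the two blocks interchanged. As there, it is enough to treat the type $\SA$, the type $\SS$ being obtained by replacing $\mathrm{Asym}_J$ with $\mathrm{Sym}_J$ throughout. Writing $\lambda=(\La_1,\ldots,\La_m)$, $\mu=(\La_{m+1},\ldots,\La_N)$ and $\eta=(\La_m,\ldots,\La_1,\La_N,\ldots,\La_{m+1})$, I would start from
$$P^{\SA}_\La=\frac{(-1)^{(N-m)(N-m-1)/2}}{f_\lambda}\,\mathrm{Sym}_I\,\mathrm{Asym}_J\,E_\eta .$$
The first observation is that, since $m\notin J$, the operator $\mathrm{Asym}_J$ commutes both with the restriction $x_m=0$ and with every $\sigma\in S_I$; hence the whole problem collapses to evaluating the purely symmetric object $\bigl(\mathrm{Sym}_I E_\eta\bigr)\big|_{x_m=0}$, after which $\mathrm{Asym}_J$ and the normalization are simply carried along, so that the antisymmetry creates no extra difficulty. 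This is the exact analogue of the quantity $\bigl(\mathrm{Sym}_J E_\eta\bigr)\big|_{x_N=0}$ treated in Proposition~\ref{stability1}, with the single, crucial difference that the symmetric block $I$ now occupies the \emph{first} $m$ slots, so the variable being set to zero, $x_m$, is the last variable of its block but an \emph{interior} variable of the full set.

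To evaluate $\bigl(\mathrm{Sym}_I E_\eta\bigr)\big|_{x_m=0}$ I would decompose the symmetrizer so as to single out the slot $m$,
$$\mathrm{Sym}_I=\mathrm{Sym}_{I_-}\bigl(1+K_{1,m}+\cdots+K_{m-1,m}\bigr),\qquad I_-=\{1,\ldots,m-1\},$$
and use the action \eqref{permjnosym} of the symmetric group to push, in each summand, the corresponding entry of the (increasing) $\lambda$-part toward the boundary. Because $x_m$ is interior, the last-variable stability \eqref{stabilitynonsym} is no longer available; its role is played instead by Lemma~\ref{stabilitynonsym2}, applied with the first block shortened to length $m-1$ so that its distinguished zero sits exactly at the $I$--$J$ boundary. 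Lemma~\ref{stabilitynonsym2} then evaluates each restricted summand: a summand whose $\lambda$-entry is positive vanishes, while a summand whose $\lambda$-entry equals zero collapses to $E_{\eta_-}$ with $\eta_-=(\La_{m-1},\ldots,\La_1,\La_N,\ldots,\La_{m+1})$, the composition attached by Definition~\ref{defJackPrescibed} to $\La_-$.

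Assembling the pieces is then routine. If $\La_m>0$ every part of $\lambda$ is positive, so every summand vanishes and $P^{\SA}_\La\big|_{x_m=0}=0$. If $\La_m=0$, precisely the $n_\lambda(0)$ summands carrying a zero $\lambda$-entry survive, each contributing $\mathrm{Sym}_{I_-}\mathrm{Asym}_J E_{\eta_-}$; since $f_\lambda=n_\lambda(0)\,f_{\lambda_-}$ for $\lambda_-=(\La_1,\ldots,\La_{m-1})$, the multiplicity $n_\lambda(0)$ is absorbed by the normalization \eqref{coefAS}--\eqref{coefSS} and the result is exactly $P^{\SA}_{\La_-}$, the sign being unchanged because it depends only on $\#J=N-m$. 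I expect the only real difficulty to be the interior-slot bookkeeping of the second paragraph: verifying that sending the boundary variable to zero is governed by Lemma~\ref{stabilitynonsym2} rather than by the elementary last-variable stability, and that the permutations involved, threading between the symmetrization over $I$ and the antisymmetrization over $J$, introduce no signs or terms beyond those recorded above.
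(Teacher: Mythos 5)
Your proposal is correct and follows essentially the same route as the paper's proof: the same decomposition $\mathrm{Sym}_I=\mathrm{Sym}_{I_-}(1+K_{1,m}+\cdots+K_{m-1,m})$, the same use of \eqref{permjnosym} together with Lemma~\ref{stabilitynonsym2} to evaluate each summand at $x_m=0$, and the same counting of the $n_\lambda(0)$ surviving terms. The only difference is that you make explicit the absorption of $n_\lambda(0)$ by the normalization via $f_\lambda=n_\lambda(0)f_{\lambda_-}$, a detail the paper leaves implicit.
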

\begin{proof}  The   cases SA and SS are almost identical, so we only prove the first.  Below, we essentially follow the method used for proving Proposition \ref{stability1}, except that we use Lemma \ref{stabilitynonsym2} rather than Equation \eqref{stabilitynonsym}.

Let $\la=(\Lambda_{1},\ldots,\Lambda_{m})$,  $\mu= (\La_{m+1},\ldots,\La_{N})$,  $\la^-=(\Lambda_{m},\ldots,\Lambda_{1})$,  $\mu^-= (\La_{N},\ldots,\La_{m+1})$.  Let also   $\eta=(\lambda^{-},\mu^{-})$  and   $\eta_{-}=(\lambda^{-},\mu^{-}_{-})$, where $\mu^{-}_{-}=(\mu_{N-1},\ldots,\mu_{m+1})$.
By definition, 
\begin{equation*}
 P_\Lambda^{SA}(x) = \frac{(-1)^{(N-m)(N-m-1)/2}}{f_{\lambda}}\mathrm{Sym}_{I}\mathrm{Asym}_{J}E_{\eta}(x;\alpha)
\end{equation*}
Note that $\mathrm{Sym}_{I}$ and $\mathrm{Asym}_{J}$ commute. 
The symmetrization operator  can be decomposed  as 
$$ \mathrm{Sym}_{I}=\mathrm{Sym}_{I_-}(1+K_{1,m}+K_{2,m}+ \ldots + K_{m-1,m}),$$ where $ I_-=\{1,\ldots, m-1\}$ and 
\begin{equation*} K_{i,m}=K_{i}K_{i+1} \ldots K_{m-2} K_{m-1}K_{m-2} \ldots K_{i+1}K_{i}\end{equation*}
Now, recalling \eqref{permjnosym} and the second stability property for the non-symmetric Jack polynomials, given in Lemma \ref{stabilitynonsym2}, we conclude that
$$
K_{m-1}K_{m-2} \ldots K_{i+1}K_{i}E_{\eta}\big|_{x_{m} =0}=
\begin{cases}
0 ,&\eta^{}_{i}>0,\\
E_{\eta_{-}}(x_{1},\ldots,x_{N-1}), &\eta^{}_{i}=0.
\end{cases}
$$
Thus, $\mathrm{Sym}_{I}E_{\eta^{}}(x_{1},\ldots,x_{N})\big|_{x_{m} =0}=0$ when  $\La_m>0$, while
\begin{multline*} \mathrm{Sym}_{I}E_{\eta^{}}(x_{1},\ldots,x_{N})\big|_{x_{m} =0} \\= \mathrm{Sym}_{I_{-}}\Big(\sum_{\stackrel{i \in \{1,\ldots,m-1\}}{\la^{-}_{i}=0}}K_{i}K_{i+1} \ldots K_{m-2}\Big)E_{\eta^{}_{-}}(x_{1},\ldots,x_{m-1},x_{m+1},\dots, x_{N}) \\= n_\la(0)\,\mathrm{Sym}_{I_{-}} E_{\eta_-}(x_{1},\ldots,x_{m-1},x_{m+1},\dots, x_{N}), \end{multline*}  when $\La_m=0$, 
and the proposition follows.
\end{proof}

The next proposition relates Jack polynomials with prescribed symmetry of different bi-degrees. It uses two basic operation on superpartitions.  The first one is the removal of a column:  $$\mathcal{C}(\La_1,\ldots,\La_m;\La_{m+1},\ldots,\La_N)=(\La_1-1,\ldots,\La_m-1;\La_{m+1}-1,\ldots,\La_N-1) \quad\text{if}\quad \La_i>0\quad\forall \,1\leq i\leq N .$$
 The second one is the removal of a circle:  
$$\tilde{\mathcal{C}}(\La_1,\ldots,\La_m;\La_{m+1},\ldots,\La_N)=(\La_1,\ldots,\La_{m-1};\La_{m+1},\ldots,\La_N) \quad\text{if}\quad \La_m=0.$$  The  operators $ \mathcal{C}$ and $\tilde{\mathcal{C}}$ are illustrated in Figure \ref{FigRemove}

\begin{figure}[h]
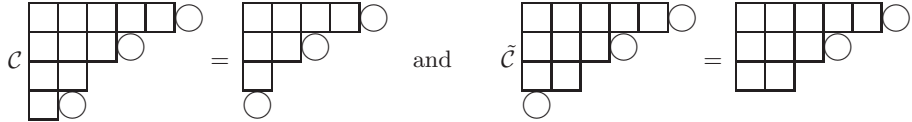

\caption{Operators $ \mathcal{C}$ and $\tilde{\mathcal{C}}$}\label{FigRemove}
{\small$$ \mathcal{C}\; {\tableau[scY]{&&&&&\bl\tcercle{} \\&&&\bl\tcercle{}\\&\\&\bl\tcercle{}\\}}= \;{\tableau[scY]{&&&&\bl\tcercle{} \\&&\bl\tcercle{}\\\\ \bl\tcercle{}\\}} \quad\text{and}\qquad  \tilde{\mathcal{C}}\; {\tableau[scY]{&&&&&\bl\tcercle{} \\&&&\bl\tcercle{}\\&\\\bl\tcercle{}\\}}=\;{\tableau[scY]{&&&&&\bl\tcercle{} \\&&&\bl\tcercle{}\\&\\\bl \\}} $$}
\end{figure}

\begin{proposition}[Removal of a column or a circle] \label{removecircle} Let $\La =(\Lambda_{1},\ldots,\Lambda_{m}; \La_{m+1},\ldots,\La_{N})$ be a superpartition and let $P_{\Lambda}(x_{1},\ldots,x_m,\ldots ,x_{N};\alpha)$ be the associated Jack polynomial with prescribed symmetry AA, AS, SA,or SS. \\

\noindent If $\La_i>0$ for all $1\leq i\leq N$, then  
$$\displaystyle P_{\Lambda}(x_{1},\ldots,x_m,\ldots ,x_{N};\alpha)=(x_1\cdots x_N) P_{\mathcal{C}\Lambda}(x_{1},\ldots,x_m,\ldots ,x_{N};\alpha).$$ 

\noindent If $\La_m=0$, then  
$$\displaystyle P_{\Lambda}(x_{1},\ldots,x_m,\ldots ,x_{N};\alpha)\Big|_{x_m=0}=\epsilon_m P_{\tilde{\mathcal{C}}\Lambda}(x_{1},\ldots,x_{m-1},x_{m+1},\ldots,x_{N-1};\alpha),$$ where 
$\epsilon_m=(-1)^{m(m-1)/2}$ for types AA and AS, while $\epsilon_m=1$ for types SA and SS. 
\end{proposition}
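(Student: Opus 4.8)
The plan is to prove the two identities by quite different means: the column removal is an immediate corollary of Lemma~\ref{simpleprod}, while the circle removal is a specialization (for SA, SS) and an antisymmetric adaptation (for AA, AS) of the stability arguments in Propositions~\ref{stability1} and \ref{stability2}. First I would dispose of the column case. If $\La_i>0$ for every $i$, then $\mathcal{C}\La=(\La_1-1,\ldots,\La_m-1;\La_{m+1}-1,\ldots,\La_N-1)$ is again a superpartition (all parts are nonnegative) and, in the notation of Lemma~\ref{simpleprod}, it satisfies $(\mathcal{C}\La)_+=\La$. Applying that lemma to $\mathcal{C}\La$ gives $x_1\cdots x_N\,P_{\mathcal{C}\La}=P_{(\mathcal{C}\La)_+}=P_\La$, which is exactly the asserted identity; since Lemma~\ref{simpleprod} holds for all four symmetry types, so does this part.

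For the circle removal I would split on the symmetry of the first block $I$. When $\La_m=0$ the superpartition $\tilde{\mathcal{C}}\La=(\La_1,\ldots,\La_{m-1};\La_{m+1},\ldots,\La_N)$ is precisely the $\Lambda_-$ appearing in Proposition~\ref{stability2}, so for the types SA and SS the claimed identity (with $\epsilon_m=1$) is nothing but the $\La_m=0$ branch of that proposition, and there is nothing left to do. For the types AA and AS, which are antisymmetric in $I$, I would rerun the argument of Proposition~\ref{stability2} with $\mathrm{Sym}_I$ replaced by $\mathrm{Asym}_I$. The operator acting on $x_{m+1},\ldots,x_N$ (namely $\mathrm{Sym}_J$ for AS and $\mathrm{Asym}_J$ for AA) commutes both with the specialization $x_m=0$ and with $\mathrm{Asym}_I$, so it is carried along untouched and the problem reduces to evaluating $\mathrm{Asym}_I E_\eta\big|_{x_m=0}$. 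For this I would use the coset decomposition $\mathrm{Asym}_I=\mathrm{Asym}_{I_-}\bigl(1-\sum_{i=1}^{m-1}K_{i,m}\bigr)$, write each $K_{i,m}=K_i\cdots K_{m-2}K_{m-1}K_{m-2}\cdots K_i$, absorb the factor $K_i\cdots K_{m-2}\in S_{I_-}$ into $\mathrm{Asym}_{I_-}$ at the cost of a sign, and then invoke \eqref{permjnosym} together with Lemma~\ref{stabilitynonsym2} exactly as in Proposition~\ref{stability2}. Because $\La$ is strict in $I$ with $\La_m=0$, the condition $\eta_i=0$ holds for a single index, so precisely one term survives the specialization and produces $\mathrm{Asym}_{I_-}E_{\eta_-}$ up to a sign.

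The hard part will be the sign bookkeeping that separates AA and AS from SA and SS. The prefactor $\epsilon_m$ is the product of two contributions: the sign generated by the coset decomposition of $\mathrm{Asym}_I$ (the $-1$ attached to the surviving $K_{i,m}$, together with the sign of the reabsorbed permutation $K_i\cdots K_{m-2}$), and the ratio $c_\La/c_{\tilde{\mathcal{C}}\La}$ of normalization constants read off from \eqref{coefAS}--\eqref{coefSS}. Since the second block $\mu=(\La_{m+1},\ldots,\La_N)$ is left unchanged by $\tilde{\mathcal{C}}$, the factors $f_\mu$ cancel in this ratio and only the $(-1)^{m(m-1)/2}$-type prefactors survive. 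Assembling these two signs---and checking that the lone surviving index is indeed the one carrying $\eta_i=0$---is the only genuine difference from the proof of Proposition~\ref{stability2}, and it is where I expect the bulk of the care to be required in order to arrive at the stated $\epsilon_m$.
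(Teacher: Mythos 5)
Your proposal is correct and follows essentially the same route as the paper: the column case via Lemma~\ref{simpleprod}, the SA/SS circle case as a direct instance of Proposition~\ref{stability2}, and the AA/AS circle case by the same coset decomposition of $\mathrm{Asym}_I$ combined with Lemma~\ref{stabilitynonsym2}, with the single surviving term $K_{m-1}\cdots K_1$ producing the sign. The only difference is cosmetic: you make the normalization-constant ratio $c_\La/c_{\tilde{\mathcal{C}}\La}$ explicit in the sign bookkeeping, which the paper leaves implicit in its final "and the proposition follows."
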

\begin{proof}  The removal of a column follows immediately from Lemma \ref{simpleprod}. For types SA and SS, the removal of a circle follows from the stability property given in Proposition \ref{stability2}.  

It remains to prove the removal of a circle for types AA and AS. Only the AS case is detailed below. Let $\la=(\Lambda_{1},\ldots,\Lambda_{m})$,  $\mu= (\La_{m+1},\ldots,\La_{N})$,  $\la^-=(\Lambda_{m},\ldots,\Lambda_{1})$,  $\mu^-= (\La_{N},\ldots,\La_{m+1})$.  Let also   $\eta=(\lambda^{-},\mu^{-})$  and   $\eta_{-}=(\lambda^{-}_{-},\mu^{-})$, where $\la^{-}_{-}=(\la_{m-1},\ldots,\la_{1})$.
By definition, 
\begin{equation*}
 P_\Lambda^{AS}(x) = \frac{(-1)^{(m)(m-1)/2}}{f_{\mu}}\mathrm{Asym}_{I}\mathrm{Sym}_{J}E_{\eta}(x;\alpha)
\end{equation*}
Note that $\mathrm{Asym}_{I}$ and $\mathrm{Sym}_{J}$ commute. 
The symmetrization operator  can be decomposed  as 
$$ \mathrm{Asym}_{I}=\mathrm{Asym}_{I_-}(1-K_{1,m}-K_{2,m}- \ldots - K_{m-1,m}),$$ where $ I_-=\{1,\ldots, m-1\}$ and 
\begin{equation*} K_{i,m}=K_{i}K_{i+1} \ldots K_{m-2} K_{m-1}K_{m-2} \ldots K_{i+1}K_{i}\end{equation*}
Now, recalling Equation \eqref{permjnosym} and the second stability property for the non-symmetric Jack polynomials, given in Lemma \ref{stabilitynonsym2}, we conclude that
$$
K_{m-1}K_{m-2} \ldots K_{i+1}K_{i}E_{\eta}\big|_{x_{m} =0}=
\begin{cases}
0 ,&\eta^{}_{i}>0,\\
E_{\eta_{-}}(x_{1},\ldots,x_{N-1}), &\eta^{}_{i}=0.
\end{cases}
$$
From the previous line, we can see that the only nonzero contribution comes from the permutation $K_{m-1}K_{m-2} \ldots K_{2}K_{1}$. Thus 
\begin{equation*}
\begin{split}
 \mathrm{Asym}_{I}E_{\eta^{}}(x_{1},\ldots,x_{N})\big|_{x_{m} =0} &=\mathrm{Asym}_{I_{-}}(K_{1}K_{2} \ldots K_{m-2} E_{\eta^{}_{-}}(x_{1},\ldots,x_{m-1},x_{m+1},\dots, x_{N}))\\
&= (-1)^{m-2}\mathrm{Asym}_{I_{-}}E_{\eta^{}_{-}}(x_{1},\ldots,x_{m-1},x_{m+1},\dots, x_{N})
\end{split}
\end{equation*}  
and the proposition follows.
\end{proof}

\begin{proposition}[Eigenfunctions] \label{lemsekiguchi2}  The Jack polynomial with prescribed symmetry,  $P_\La=P_\La(x;\alpha)$,   is an  eigenfunction of the Sekiguchi operators $S^*(u)$ and $  S^\cd(u,v)$ defined in Equation \eqref{defseki}.   Moreover,  
$$  S^*(u)\,P_\La=\varepsilon_{\La^*}(\alpha,u)\,P_\La\qquad   S^\cd(u,u)\, P_\La=\varepsilon_{\La^\cd}(\alpha,u)\, P_\La, $$
where the eigenvalues are given by Equation \eqref{eigenseki}.  
\end{proposition}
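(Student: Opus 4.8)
The plan is to reduce everything to Lemma \ref{lemsekiguchi}, which already computes the action of the Sekiguchi operators on a single non-symmetric Jack polynomial, and then to propagate this through the (anti)symmetrization operators. By construction the composition $\eta=(\La_m,\ldots,\La_1,\La_N,\ldots,\La_{m+1})$ from \eqref{etalambda} satisfies $\varphi_m(\eta)=\La$, so that $\eta^+=\La^*$ and the shifted rearrangement produces $\La^\cd$. Lemma \ref{lemsekiguchi} therefore gives $S^*(u)\,E_\eta=\varepsilon_{\La^*}(\alpha,u)\,E_\eta$ and $S^\cd(u,u)\,E_\eta=\varepsilon_{\La^\cd}(\alpha,u)\,E_\eta$. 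Since $P_\La=c_\La\,\mathcal{O}_{I,J}E_\eta$ and the eigenvalues are scalars, the proposition will follow at once once I show that $\mathcal{O}_{I,J}$ commutes with each Sekiguchi operator.

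The heart of the argument is this commutation, which rests on the degenerate affine Hecke algebra relations $K_i\xi_i=\xi_{i+1}K_i-1$, $K_i\xi_{i+1}=\xi_iK_i+1$, and $K_i\xi_j=\xi_jK_i$ for $j\neq i,i+1$. A direct computation from these relations shows that $K_i$ commutes with $\xi_i+\xi_{i+1}$ and with $\xi_i\xi_{i+1}$, hence with any polynomial invariant under the exchange $\xi_i\leftrightarrow\xi_{i+1}$. For $S^*(u)=\prod_{i=1}^N(u+\xi_i)$, which is fully symmetric in $\xi_1,\ldots,\xi_N$, this yields commutation with every $K_i$, hence with all of $S_N$ and in particular with every $\mathrm{Asym}_K$ and $\mathrm{Sym}_K$. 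For $S^\cd(u,v)=\prod_{i=1}^m(u+\xi_i+\alpha)\prod_{i=m+1}^N(v+\xi_i)$, the first factor is symmetric in $\xi_1,\ldots,\xi_m$ and the second in $\xi_{m+1},\ldots,\xi_N$; moreover each $K_i$ with $i\leq m-1$ commutes with every $\xi_j$, $j\geq m+1$, and each $K_i$ with $i\geq m+1$ commutes with every $\xi_j$, $j\leq m$. Hence $S^\cd(u,v)$ commutes with all generators of $S_I$ and of $S_J$, which are exactly the permutations used to build $\mathcal{O}_{I,J}$.

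The main obstacle, and really the only non-formal point, is precisely this block-wise commutation of $S^\cd(u,v)$ with the symmetrizers and antisymmetrizers: it is essential that $\mathcal{O}_{I,J}$ is assembled solely from permutations internal to $I$ and internal to $J$, so that the boundary transposition $K_m=K_{m,m+1}$ never appears and no commutation across the $I$/$J$ divide is required. With the commutation established, the computation concludes immediately:
$$S^*(u)\,P_\La=c_\La\,\mathcal{O}_{I,J}\,S^*(u)\,E_\eta=\varepsilon_{\La^*}(\alpha,u)\,P_\La,$$
and identically $S^\cd(u,u)\,P_\La=\varepsilon_{\La^\cd}(\alpha,u)\,P_\La$, which is the desired statement.
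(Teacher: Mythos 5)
Your proof is correct and follows essentially the same route as the paper's: reduce to Lemma \ref{lemsekiguchi} via $P_\La\propto\mathcal{O}_{I,J}E_\eta$ and the commutation of $S^*$ and $S^\cd$ with $\mathcal{O}_{I,J}$. The only difference is that you actually justify the commutation via the Hecke-algebra relations $K_i\xi_i=\xi_{i+1}K_i-1$, etc., whereas the paper simply asserts it; your observation that $K_m$ never occurs in $\mathcal{O}_{I,J}$ is exactly the point that makes the block-wise argument for $S^\cd$ work.
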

\begin{proof}This lemma immediate follows from the following three basic facts: 
\begin{itemize}\item[(1)] $P_\La$ is proportional to $\mathcal{O}_{I,J} E_\la$ for any composition $\la$ such that $\La=\varphi_m(\la)$;
\item[(2)]   The operators $S^*$ and $S^\cd$  commute with $\mathcal{O}_{I,J}$.  
\item[(3)]  By virtue of Lemma \ref{lemsekiguchi}, $E_\la$ is an eigenfunction of $S^*(u)$ and $  S^\cd(u,v)$.  Moreover, if $\varphi_m(\la)=\La$, then   $S^*(u)\,E_\la=\varepsilon_{\La^*}(\alpha,u)\,E_\la$ and $S^\cd(u,u)\, E_\la=\varepsilon_{\La^\cd}(\alpha,u)\, E_\la$.
 \end{itemize}  
\end{proof}

\begin{proof}[Proof of Theorem \ref{theo1}] We want to prove that  the Jack polynomials with prescribed symmetry are the unique unitrangular eigenfunctions of   $\mathcal{H}=\sum_{i=1}^N\xi_i^2$ and $\mathcal{I}=\sum_{i=1}^m\xi_i$.  However, according to Propositions \ref{proptriangularity}   and  \ref{lemsekiguchi2},  we already know that the Jack polynomial with prescribed symmetry   $P_\La$ satisfies   
\begin{align*} \text{(B1)} \qquad& P_\Lambda=m_\Lambda + \sum_{\Gamma<\Lambda}c_{\La,\Gamma}m_\Gamma ;\\
\text{(B2)} \qquad&\mathcal{H}\,P_\La=\,d_\La\,P_\La\quad \quad \text{and} \quad \mathcal{I}\,P_\La=e_\La\, P_\La .
\end{align*} 
Thus, it remains to prove that there is no other polynomial that satisfies (B1) and (B2).  

First, we  need to determine precisely the eigenvalues  
$d_{\La}$ and $ e_\La$. We recall that  $m_\Lambda$ is proportional to $\mathcal{O}_{I,J}x^\eta$, where $ \eta=(\La_{m}, \dots, \La_{1}, \La_{N},\ldots, \La_{m+1})$.  Now, as is well known { (e.g., see conditions (A1') and (A2') in Section 2.2)}, 
$$ \xi_i x^\eta=\bar \eta_i x^\eta+\sum_{\ga\prec\eta}f_{\eta,\ga}x^\ga.$$ 
Then, for any polynomial $g$ such that  $g(\xi_1,\ldots,\xi_N)$ commutes with $\mathcal{O}_{I,J}$,  we have
\begin{multline}\label{generaltriangular} g(\xi_1,\ldots,\xi_N) m_\La\propto \mathcal{O}_{I,J} g(\xi_1,\ldots,\xi_N) x^\eta \\
= \mathcal{O}_{I,J}\left( g(\bar\eta_1,\ldots,\bar\eta_N) x^\eta +\sum_{\ga\prec\eta}f'_{\eta,\ga}x^\ga\right)\propto  g(\bar\eta_1,\ldots,\bar\eta_N) m_\La+\sum_{\Ga<\La}f''_{\La,\Om}\, m_\Om\end{multline} 
Consequently, a triangular polynomial  $Q_\Lambda=m_\Lambda + \sum_{\Gamma<\Lambda}c'_{\La,\Gamma}m_\Gamma$, can be an eigenfunction of $g(\xi_1,\ldots,\xi_N)$  only if its eigenvalue is equal to $g(\bar\eta_1,\ldots,\bar\eta_N)$.  In our case, $Q$ is an eigenfunction of  $\mathcal{H}$ and $\mathcal{I}$, with respective eigenvalues $d_\La$ and $e_\La$,   only if 
$$ d_\La=\sum_{i=1}^N\bar \eta_i^2 \quad\text{and}\quad e_\La=\sum_{i=1}^m\bar \eta_i.$$ Now, as explained in Lemma \ref{lemsekiguchi}, 
$ \sum_{i=1}^N\bar \eta_i^2 =\sum_{i=1}^n\big(\alpha \La_i^*-(i-1)\big)^2$. {By comparing the latter equation  with the explicit expression for the quantity $\varepsilon_\La(\alpha)$, introduced in Lemma \ref{lemordereigen},  we  get  
\beq \label{eqeigen1}  d_\La= 2\,\alpha\, \varepsilon_{\La^*}(\alpha)+\alpha^2 |\La^*|+\frac{N(N-1)(2N-1)}{6}.\eeq 
Returning to the second eigenvalue, we note that because $ \eta=(\La_{m}, \dots, \La_{1}, \La_{N},\ldots, \La_{m+1})$, we can write 
$$  \sum_{i=1}^m\bar \eta_i=\sum_{i}^m\left(\alpha\La_i-\#\{j\,|\,\La_j\geq \La_i\}\right).$$ 
From the comparison of the latter expression  with    the quantity $\epsilon_\La(\alpha)$, given in  Lemma \ref{lemordereigen2}, we then conclude that
\beq \label{eqeigen2}   e_\La= \epsilon_\La(\alpha).\eeq }  

Second, we suppose that there is another $Q_\Lambda=m_\Lambda + \sum_{\Gamma<\Lambda}c'_{\La,\Gamma}m_\Gamma$ such that (i) $P_\La-Q_\La\neq 0$, (ii) $\mathcal{H}\,Q_\La=\,d_\La\,Q_\La$, and (iii) $\mathcal{I}\,Q_\La=e_\La\, Q_\La$.  Condition (i) implies that there is superpartition $\Om$ such that $\Om<\La$ and 
$$ P_\La-Q_\La =a_\Om m_\Om+\sum_{\substack{\Ga<\La\\\Ga<_t\Om}}a_{\Om,\Ga} m_\Ga,$$
where $<_t$ denotes some total order compatible with the dominance order. The substitution of the  last equation into conditions (ii) and (iii) then leads to
\begin{align}\label{eqHm}\mathcal{H}\Big(a_\Om m_\Om+\sum_{\substack{\Ga<\La\\\Ga<_t\Om}}a_{\Om,\Ga} m_\Ga\Big)&=d_\La \Big(a_\Om m_\Om+\sum_{\substack{\Ga<\La\\\Ga<_t\Om}}a_{\Om,\Ga} m_\Ga\Big) \\ \label{eqIm}
\mathcal{I}\Big(a_\Om m_\Om+\sum_{\substack{\Ga<\La\\\Ga<_t\Om}}a_{\Om,\Ga} m_\Ga\Big)&=e_\La \Big(a_\Om m_\Om+\sum_{\substack{\Ga<\La\\\Ga<_t\Om}}a_{\Om,\Ga} m_\Ga\Big). 
\end{align}
However, according to Equation \eqref{generaltriangular}, we have $\mathcal{H} m_\Om=d_\La m_\La+\ldots$ and  $\mathcal{I} m_\Om=e_\La m_\La+\ldots$, where the ellipsis $\ldots$ stand for  linear combinations of monomial indexed by superpartitions strictly smaller than $\Om$ in the dominance order. Consequently, Equations \eqref{eqHm} and \eqref{eqIm} can be rewritten as 
\begin{align*} d_\Om \,a_\Om \, m_\Om+\text{independent terms}&=d_\La \, a_\Om \, m_\Om+\text{independent terms},\\ %\label{eqIm}
 e_\Om\, a_\Om\, m_\Om+\text{independent terms}&=e_\La\,  a_\Om \,m_\Om+\text{independent terms}, 
\end{align*}
which is possible only if 
$$ d_\La =d_\Om\quad \text{and}\quad e_\La =e_\Om $$
On the one hand, using Lemma \ref{lemordereigen} and $\La>\Om$, we conclude that the first equality is possible  only if $\La^*=\Om^*$.  On the other hand, Lemma \ref{lemordereigen2} and $\La>\Om$ imply that, the second equality is possible  only if $\La^*>\Om^*$.  We thus have a contradiction.  Therefore, there is no polynomial $Q_\La$ satisfying (i), (ii), and (iii).  We have proved the uniqueness of the polynomial satisfying (B1) and (B2).
\end{proof}

\section{Regularity and uniqueness properties at $\alpha=-(k+1)/(r-1)$}

 As mentioned in the Introduction, regularity and uniqueness are  obvious properties only when $\alpha$ is  generic, which means when $\alpha$ is a complex number that is neither zero nor a negative rational.  On the one hand,  non-symmetric Jack polynomials may have poles only for non-generic values of $\alpha$, and when poles occur, then there is non-uniqueness.   Indeed,  following the  argument  \cite[Lemma 2.4]{FJMM}, one easily sees that if the non-symmetric Jack polynomial $E_\eta$ has a pole at some given value of $\alpha_0$, then there exits a composition $\nu\prec\eta$ such that $\varepsilon_{\eta^+}(\alpha_0,u)=\varepsilon_{\nu^+}(\alpha_0,u)$.  On the other hand,  for non-generic values of $\alpha$, non-uniqueness may be observed even for regular  polynomials.      As a basic example, consider   the compositions $\eta=(2,0)$ and $\nu=(1,1)$, which satisfy $\eta \succ \nu$. One  can verify that $E_{\eta}(x_{1},x_{2};\alpha)$ and
$E_{\nu}(x_{1},x_{2};\alpha)$ are regular at $\alpha=0$.  These polynomials nevertheless share the same eigenvalues, i.e., $\overline{\eta}_{j}|_{\alpha=0}=\overline{\nu}_{j}|_{\alpha=0}$ for $j=1,2.$. Hence,  at $\alpha=0$,   any polynomial of the form $E_{\eta}(x_{1},x_{2};\alpha)+a E_{\nu}(x_{1},x_{2};\alpha)$ complies with conditions (A1') and (A2') of Section 2.2, so uniqueness is lost.

Here we find sufficient conditions that allow to preserve both the regularity and the uniqueness.  We indeed prove that if $\alpha=-(k+1)/(r-1)$ and $\La$ is $(k,r,N)$-admissible, then the associated   Jack polynomial with prescribed symmetry is regular and can be characterized as  the unique triangular eigenfunction to differential operators of Sekiguchi type.   Similar results hold for the non-symmetric Jack polynomials.  We use them at the end of the section  to prove the clustering properties for $k=1$.

\subsection{More on admissible superpartitions} 

\begin{lem} \label{leminequality1} Let $\Lambda$ be a weakly $(k,r,N)$-admissible and strict superpartition.  Then both $\Lambda^*$ and $\Lambda^\cd$ are $(k+1,r,N)$-admissible. 
\end{lem}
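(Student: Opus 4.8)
The plan is to reduce the whole statement to a single interlacing inequality between the two partitions $\Lambda^*$ and $\Lambda^\cd$ attached to a strict superpartition, and then to chain that inequality with the weak admissibility hypothesis. First I would record the two elementary facts valid for every superpartition: $\Lambda^\cd_i-\Lambda^*_i\in\{0,1\}$ for all $i$, and that both $\Lambda^*$ and $\Lambda^\cd$ are genuine (weakly decreasing) partitions. Following the paper's own description of the pair $(\Lambda^*,\Lambda^\cd)$ in Definition \ref{defsparts}, the rows $i$ with $\Lambda^\cd_i-\Lambda^*_i=1$ are exactly the ``circle rows,'' and the values $\Lambda^*_i$ at those rows are precisely the fermionic parts $\Lambda_1\geq\cdots\geq\Lambda_m$. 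The assumption that $\Lambda$ is strict says exactly that these values are distinct; equivalently, no two circle rows carry the same $\Lambda^*$-value.

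The key step is the interlacing inequality $\Lambda^\cd_{i+1}\leq\Lambda^*_i$ for all $1\leq i\leq N-1$, which I would prove by contradiction. Since $\Lambda^\cd_{i+1}\leq\Lambda^*_{i+1}+1\leq\Lambda^*_i+1$, the only way it can fail is $\Lambda^\cd_{i+1}=\Lambda^*_i+1$, and this forces equality throughout: $\Lambda^*_{i+1}=\Lambda^*_i$ and row $i+1$ is a circle row. Monotonicity of $\Lambda^\cd$ then gives $\Lambda^\cd_i\geq\Lambda^\cd_{i+1}=\Lambda^*_i+1$, while $\Lambda^\cd_i\leq\Lambda^*_i+1$ forces $\Lambda^\cd_i=\Lambda^*_i+1$, so row $i$ is a circle row as well. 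Now rows $i$ and $i+1$ are both circle rows with $\Lambda^*_i=\Lambda^*_{i+1}$, contradicting strictness. Strictness is genuinely required here: for the non-strict superpartition $\Lambda=(2,2;)$ one has $\Lambda^\cd=(3,3)$ and $\Lambda^*=(2,2)$, so $\Lambda^\cd_2=3>2=\Lambda^*_1$.

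With the interlacing in hand, both admissibility statements become one-line consequences of the weak admissibility hypothesis $\Lambda^\cd_i-\Lambda^*_{i+k}\geq r$ (valid for $i\leq N-k$). For $\Lambda^*$, taking $i\leq N-k-1$ I would write $\Lambda^*_i-\Lambda^*_{i+k+1}\geq\Lambda^\cd_{i+1}-\Lambda^*_{i+k+1}\geq r$, using interlacing for the first inequality and weak admissibility at row $i+1$ for the second. For $\Lambda^\cd$, I would instead use interlacing in the form $\Lambda^\cd_{i+k+1}\leq\Lambda^*_{i+k}$ together with weak admissibility at row $i$ to obtain $\Lambda^\cd_i-\Lambda^\cd_{i+k+1}\geq\Lambda^\cd_i-\Lambda^*_{i+k}\geq r$. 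In both cases the index range $i\leq N-k-1$ is exactly the range required for $(k+1,r,N)$-admissibility.

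I expect the only real obstacle to be the interlacing inequality, and more specifically the clean identification of ``circle rows have distinct $\Lambda^*$-values'' with strictness; once that equivalence is pinned down (it is essentially built into the paper's encoding of $\Lambda$ as the pair $(\Lambda^\cd,\Lambda^*)$), everything else is bookkeeping on indices. I would read the defining inequalities as the content of ``$(k+1,r,N)$-admissible,'' since the background coprimality condition on the parameters $(k,r)$ plays no role in these estimates.
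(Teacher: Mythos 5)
Your proof is correct and follows essentially the same route as the paper's: both arguments come down to chaining the weak admissibility inequality with the interlacing $\Lambda^\cd_{i+1}\leq\Lambda^*_i$ of a strict superpartition. The only difference is presentational — the paper reaches the bound via a $\geq r-1$ estimate plus a case analysis on equality, asserting the interlacing without proof, whereas you isolate and prove the interlacing first (correctly pinpointing that strictness is exactly what makes it hold, as your $(2,2;)$ example shows) and then conclude in one line for each of $\Lambda^*$ and $\Lambda^\cd$.
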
 
\begin{proof} 
According to the weak admissibility condition, we have   $\Lambda^\cd_{i+1}-\Lambda_{i+1+k}^*\geq r$, so that 
$  \Lambda^*_{i}-\Lambda_{i+1+k}^*\geq \Lambda^*_{i+1}-\Lambda_{i+1+k}^*\geq  r-1$.  Now, the equality $\Lambda^*_{i+1}-\Lambda_{i+1+k}^*= r-1$  holds if and only if  $\Lambda^\cd_{i+1}=\Lambda^*_{i+1}+1$.  However,  in the latter case,  $\Lambda^*_{i}\geq \Lambda^\cd_{i+1}>\Lambda^*_{i+1}$.  We therefore have $\Lambda^*_{i}-\Lambda_{i+k+1}^*\geq  r$.

Similarly, we have $\Lambda^\cd_{i}-\Lambda_{i+k}^\cd \geq  r-1$.  The equality $\Lambda^\cd_{i}-\Lambda_{i+k}^\cd =  r-1$ occurs  if and only if $\Lambda^\cd_{i+k}=\Lambda^*_{i+k}+1$, but in this case, $\Lambda^\cd_{i+k}> \Lambda^*_{i+k}>\Lambda^\cd_{i+k+1}$.  Therefore, $\Lambda^\cd_{i}-\Lambda_{i+k+1}^\cd \geq r$.
\end{proof} 

\begin{lem} \label{leminequality2}{  If $\Lambda$ is  $(k,r,N)$-admissible, then
\begin{equation} \label{eqlrminequality} \Lambda^\cd_{i+1}-\Lambda_{i+\rho(k+1)}^*\geq \rho r, \qquad 1\leq i\leq N-\rho(k+1), \, \rho \in \mathbb{Z}_+, \end{equation} or equivalently, 
\begin{equation} \label{eqlrminequality2} \Lambda^\cd_{i-\rho(k+1)}-\Lambda_{i-1}^*\geq \rho r, \qquad \rho(k+1)\leq i-1\leq  N, \, \rho \in \mathbb{Z}_+. \end{equation}}
{In particular, if $\Lambda$ is moderately $(k,r,N)$-admissible, then Equations \eqref{eqlrminequality} and \eqref{eqlrminequality2} hold.}
\end{lem}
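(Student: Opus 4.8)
The plan is to prove the single inequality \eqref{eqlrminequality}; the form \eqref{eqlrminequality2} then follows at once by the index substitution $i\mapsto i+\rho(k+1)+1$, which turns $\La^\cd_{i+1}$ into $\La^\cd_{i-\rho(k+1)}$ and $\La^*_{i+\rho(k+1)}$ into $\La^*_{i-1}$. Throughout I will use only that $\La^\cd$ and $\La^*$ are partitions (hence weakly decreasing) and that $\La^*_j\le\La^\cd_j$ for every $j$.

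For the main statement I route the telescoping through $\La^*$. The point is that both situations covered by the unqualified term ``$(k,r,N)$-admissible'' supply exactly two ingredients: (a) the weak admissibility inequality $\La^\cd_{i+1}-\La^*_{i+1+k}\ge r$, and (b) the fact that $\La^*$ is $(k+1,r,N)$-admissible, i.e.\ $\La^*_j-\La^*_{j+k+1}\ge r$. If $\La$ is strict and weakly admissible, (a) is the hypothesis and (b) is Lemma \ref{leminequality1}; if $\La$ is strongly admissible, (b) holds by definition, while (a) follows because the moderate condition gives $\La^\cd_{i+1}-\La^*_{i+1+k}\ge\La^\cd_{i+1}-\La^\cd_{i+1+k}\ge r$ using $\La^*_{i+1+k}\le\La^\cd_{i+1+k}$. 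Granting (a) and (b), I write the telescoping sum
\begin{equation*}
\La^\cd_{i+1}-\La^*_{i+\rho(k+1)}=\big(\La^\cd_{i+1}-\La^*_{i+(k+1)}\big)+\sum_{s=1}^{\rho-1}\big(\La^*_{i+s(k+1)}-\La^*_{i+(s+1)(k+1)}\big),
\end{equation*}
bound the first bracket by $r$ using (a) and each of the remaining $\rho-1$ brackets by $r$ using (b), and conclude $\ge\rho r$. A quick check shows the index ranges required by (a) and (b) are all implied by $1\le i\le N-\rho(k+1)$.

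For the ``in particular'' case I cannot reuse this argument: a moderately admissible $\La$ need not have $\La^*$ admissible (for instance $\La^\cd=(3,3,1,1)$, $\La^*=(2,2,1,1)$ is moderately $(2,2,4)$-admissible, yet $\La^*_1-\La^*_4=1<2$). Instead I route the telescoping through $\La^\cd$, where the moderate hypothesis $\La^\cd_j-\La^\cd_{j+k}\ge r$ lives. Iterating it $\rho$ times gives
\begin{equation*}
\La^\cd_{i+1}-\La^\cd_{i+1+\rho k}=\sum_{s=0}^{\rho-1}\big(\La^\cd_{i+1+sk}-\La^\cd_{i+1+(s+1)k}\big)\ge\rho r,
\end{equation*}
and then I convert the final $\La^\cd$ into the required $\La^*$ by spending the index slack $\rho(k+1)=\rho k+\rho\ge\rho k+1$: since $i+\rho(k+1)\ge i+1+\rho k$ and $\La^*$ is decreasing with $\La^*\le\La^\cd$, we get $\La^*_{i+\rho(k+1)}\le\La^*_{i+1+\rho k}\le\La^\cd_{i+1+\rho k}$, whence $\La^\cd_{i+1}-\La^*_{i+\rho(k+1)}\ge\La^\cd_{i+1}-\La^\cd_{i+1+\rho k}\ge\rho r$.

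The main obstacle is recognizing that the two clauses in the definition of admissibility force two genuinely different telescoping routes. The naive hope that $\La^*$ is always $(k+1,r,N)$-admissible fails for non-strict superpartitions, so the moderate case must be handled by iterating the $\La^\cd$-condition in steps of $k$ and then spending, per step, the one unit of index slack between $k+1$ and $k$ to descend from $\La^\cd$ to $\La^*$; verifying that this slack is exactly enough is the crux of that sub-case.
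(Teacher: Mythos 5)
Your proof is correct, and for the main case it coincides in substance with the paper's: for a strict, weakly admissible $\La$ the paper argues by induction on $\rho$, with base case the weak admissibility inequality and inductive step supplied by Lemma \ref{leminequality1} (the $(k+1,r,N)$-admissibility of $\La^*$) — exactly the two ingredients (a) and (b) of your telescoping sum, just unrolled. Where you genuinely add something is in the remaining cases, which the paper dismisses with the single sentence ``the moderately and strongly admissible cases are trivial.'' Your treatment of the strong case (deduce (a) from the $\La^\cd$-condition via $\La^*_{i+1+k}\le\La^\cd_{i+1+k}$, take (b) from the definition) is the intended easy route. For the moderate case your observation that the $\La^*$-route is unavailable — $\La^*$ need not be $(k+1,r,N)$-admissible, as your example $\La^\cd=(3,3,1,1)$, $\La^*=(2,2,1,1)$ shows — is a real point the paper glosses over, and your replacement argument (iterate $\La^\cd_j-\La^\cd_{j+k}\ge r$ in steps of $k$, then spend the index slack $\rho(k+1)\ge \rho k+1$ together with $\La^*\le\La^\cd$ and monotonicity to descend to $\La^*_{i+\rho(k+1)}$) is correct and is presumably what the authors had in mind. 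In short: same skeleton as the paper on the case it actually proves, plus a careful and accurate filling-in of the cases it waves away.
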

\begin{proof} {The moderately and strongly admissible cases are trivial.}  We thus suppose that $\La$ is strict and weakly $(k,r,N)$-admissible.   Firstly,  note that the case $\rho=1$ corresponds to $\Lambda^\cd_{i+1}-\Lambda_{i+k+1}^*\geq r$, which is an immediate consequence of weak admissibility condition. Secondly,  suppose that Eq.\ \eqref{eqlrminequality} is true for some $\rho\geq  1$.
Then, 
%, which means  $\Lambda^\cd_{i+1}-\Lambda_{i+(\rho-1)(k+1)}^*\geq (\rho-1) r$  and we will prove the proposition is true for $\rho$ 
$$\Lambda^\cd_{i+1}-\Lambda_{i+(\rho+1)(k+1)}^*=\Lambda^\cd_{i+1}-\Lambda_{i+\rho(k+1)}^*+\Lambda_{i+\rho(k+1)}^*-\Lambda_{i+(\rho+1)(k+1)}^*  \geq \rho r + \Lambda_{i+\rho(k+1)}^*-\Lambda_{i+(\rho+1)(k+1)}^* .$$
However, according to the previous lemma, $ \Lambda_{i+\rho(k+1)}^*-\Lambda_{i+(\rho+1)(k+1)}^* \geq r$.  Consequently, 
$$\Lambda^\cd_{i+1}-\Lambda_{i+(\rho+1)(k+1)}^*\geq \rho r+r,$$  and the lemma follows by induction. 
\end{proof}

{ 
\subsection{Regularity for non-symmetric Jack polynomials}

To demonstrate that some non-symmetric Jack polynomials have no poles, it is necessary to introduce some notation. Let $\eta$ be a composition.  For each cell $s=(i,j)$ in $\eta$'s diagram, we define
\begin{equation*}
\begin{split}
a_{\eta}(s) &=\eta_{i}-j \\
l^{1}_{\eta}(s) &=\#\{k=1,\ldots,i-1|j\leq \eta_{k}+1\leq \eta_{i}\} \\
l^{2}_{\eta}(s) &=\#\{k=i+1,\ldots,N|j\leq \eta_{k}\leq \eta_{i}\} \\
\overline{l}_{\eta}(s) &=l^{1}_{\eta}(s)+l^{2}_{\eta}(s) \\
d_{\eta}(s) &=\alpha(a_{\eta}(s)+1)+\overline{l}_{\eta}(s)+1. 
\end{split}
\end{equation*}

According to the results given in \cite{knop}, we know that $\left(\prod_{s\in \eta}d_{\eta}(s)\right)E_{\eta}$ belongs to $\mathbb{N}[\alpha,x_{1},\ldots,x_{N}]$. Then, if we want to show that $E_{\eta}(x;\alpha)$ has no poles at $\alpha=\alpha_{k,r}$ is sufficient to prove that 
\begin{equation*} \prod_{s\in \eta}d_{\eta}(s) \neq 0 \quad \text{if}\quad  \alpha=\alpha_{k,r}  \end{equation*}

Note that in what follows,  $\lambda^{+}=(\lambda^{+}_1, \ldots,\lambda^{+}_m)$ and $\mu^{+}=(\mu^{+}_1,\ldots,\mu^+_{N-m})$   denote partitions. This notation is used in order to avoid confusion between partitions and compositions.  Moreover, we denote the composition obtained by the concatenation of $\lambda^{+}$ and $\mu^{+}$, which is  $(\lambda^{+}_1, \ldots,\lambda^{+}_m,\mu^{+}_1,\ldots,\mu^+_{N-m})$, as follows:  \beq \label{assumeeta++} \eta=(\lambda^{+},\mu^{+}) .\eeq  

\begin{lem} Let $\eta$ be as in \eqref{assumeeta++}   and  let $\Lambda=\varphi_{m}(\eta)$ be its associated superpartition.  Moreover,  let $\mathrm{BF}(\Lambda)$ be the set of cells belonging simultaneously to a bosonic row (without circle) and a fermionic column (with circle). Then, 
\begin{equation*} \prod_{s\in \eta}d_{\eta}(s) =\prod_{s' \in \mathrm{BF}(\Lambda)} (\alpha(a_{\Lambda^{*}}(s')+1)+l_{\Lambda^{\circledast}}(s')+1) \prod_{s' \in \Lambda^{*}/\mathrm{BF}(\Lambda)}(\alpha(a_{\Lambda^{*}}(s')+1)+l_{\Lambda^{*}}(s')+1) \end{equation*} \label{d_etaassuperpartition}
\end{lem}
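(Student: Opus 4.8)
The plan is to build a row-sorting bijection between the cells of $\eta$ and the cells of $\Lambda^{*}$ that preserves the arm statistic, and then to match the generalized leg $\overline{l}_\eta$ to a superpartition leg, cell by cell. Since $\eta=(\lambda^{+},\mu^{+})$ is the concatenation of two partitions, its diagram consists of a decreasing block of $m$ rows (the $\lambda^{+}$-rows, which will carry circles) followed by a decreasing block of $N-m$ rows (the $\mu^{+}$-rows). Reordering the rows so that $\Lambda^{*}=\eta^{+}$ is obtained --- with the convention that forces $\Lambda^{\circledast}$ to be a partition, namely that among rows of equal $\Lambda^{*}$-length the $\lambda^{+}$-rows (fermionic) sit above the $\mu^{+}$-rows (bosonic), and that within each block the original index order is kept --- produces a bijection $\phi$ sending a cell $(i,j)$ of $\eta$ to a cell $(i',j)$ of $\Lambda^{*}$ with $\Lambda^{*}_{i'}=\eta_{i}$. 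Because $a_\eta(s)=\eta_i-j=\Lambda^{*}_{i'}-j=a_{\Lambda^{*}}(\phi(s))$, the $\alpha$-linear part of $d_\eta(s)$ is immediately identified with $\alpha(a_{\Lambda^{*}}(\phi(s))+1)$, and it only remains to track the leg.

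I would then split the computation according to whether the cell lies in a $\lambda^{+}$-row or a $\mu^{+}$-row. For a cell in a $\lambda^{+}$-row every row above lies in the same decreasing block, so $l^{1}_\eta(s)=0$; counting $l^{2}_\eta(s)$ against the rows lying below $\phi(s)$ in $\Lambda^{*}$ --- the $\mu^{+}$-rows of length in $[j,\eta_i]$ together with the lower $\lambda^{+}$-rows of length $\ge j$, which under the chosen ordering are exactly the rows below $\phi(s)$ --- gives $\overline{l}_\eta(s)=l_{\Lambda^{*}}(\phi(s))$. Such a cell lies in a fermionic (circled) row, hence never in $\mathrm{BF}(\Lambda)$, so it contributes the factor $\alpha(a_{\Lambda^{*}}+1)+l_{\Lambda^{*}}+1$, matching the second product.

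For a cell in a $\mu^{+}$-row one has $l^{1}_\eta(s)=\#\{k\le m: \lambda^{+}_k\in[j-1,\eta_i-1]\}$ and $l^{2}_\eta(s)=\#\{k>i:\mu^{+}_{k-m}\ge j\}$. Comparing with the diagram, the $\mu^{+}$-rows below $\phi(s)$ of length $\ge j$ reproduce $l^{2}_\eta(s)$, while the $\lambda^{+}$-rows below $\phi(s)$ are those with $\lambda^{+}_k\le \eta_i-1$; measured with the outer shape $\Lambda^{\circledast}$, where these rows have length $\lambda^{+}_k+1$, their count of lengths $\ge j$ is exactly $l^{1}_\eta(s)$, giving $\overline{l}_\eta(s)=l_{\Lambda^{\circledast}}(\phi(s))$, whereas measured with $\Lambda^{*}$ their count drops by $\#\{k\le m:\lambda^{+}_k=j-1\}$. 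The observation that closes the argument is that this defect detects the circles of column $j$: a circle sits at $(\text{fermionic row},\,\lambda^{+}_k+1)$, so column $j$ is fermionic precisely when some $\lambda^{+}_k=j-1$. Hence $\phi(s)\in\mathrm{BF}(\Lambda)$ (bosonic row, fermionic column) exactly when $\#\{k:\lambda^{+}_k=j-1\}\ge 1$, in which case $\overline{l}_\eta(s)=l_{\Lambda^{\circledast}}(\phi(s))$, and otherwise this count is $0$ and $\overline{l}_\eta(s)=l_{\Lambda^{*}}(\phi(s))$.

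Taking the product of $d_\eta(s)$ over all cells and transporting it through $\phi$ then yields the stated factorization, the cells of $\mathrm{BF}(\Lambda)$ carrying $l_{\Lambda^{\circledast}}$ and the remaining cells of $\Lambda^{*}$ carrying $l_{\Lambda^{*}}$. I expect the one delicate point to be the bookkeeping of the tie-breaking order among equal-length rows: one must verify that the fermionic-above-bosonic convention (the one compatible with $\Lambda^{\circledast}$ being a partition) is exactly what places both the $\lambda^{+}$- and the $\mu^{+}$-contributions to the leg on the correct side of $\phi(s)$. Once that convention is fixed, the two case computations above are routine, and the identification $l_{\Lambda^{\circledast}}-l_{\Lambda^{*}}=\#\{k:\lambda^{+}_k=j-1\}$ on $\mu^{+}$-rows is what makes the split into $\mathrm{BF}(\Lambda)$ and its complement precise.
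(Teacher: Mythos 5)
Your proof is correct and follows essentially the same route as the paper's: identify arms under the row-sorting map, split the generalized leg $\overline{l}_\eta$ into the ordinary leg of $\Lambda^*$ plus the count $\#\{k\le m:\lambda^+_k=j-1\}$ of circles in column $j$, and observe that this defect is nonzero precisely on bosonic-row cells in fermionic columns, i.e.\ on $\mathrm{BF}(\Lambda)$. The only difference is organizational — you case-split by row type and make the tie-breaking convention of the sorting bijection explicit, a detail the paper leaves implicit — so there is nothing to add.
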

\begin{proof}
Given a cell $s=(i,j)$ in $\eta$, let $s'=(i',j)$ be the associated cell in $\Lambda$. We want to  express $d_{\eta}(s)$ as a  function of the arm-length and leg-length of the cell $s'$ in $\La$.
For each cell $s=(i,j)$ in $\eta$, we have $a_{\eta}(s)=a_{\Lambda^{*}}(s')$, while we can rewrite $\overline{l}_{\eta}(s)$ as
\begin{multline}\label{eqjess0}
\overline{l}_{\eta}(s)=\#\{k=1,\ldots,i-1|j=\eta_{k}+1\}\\ +\#\{k=1,\ldots,i-1|j\leq \eta_{k}\leq \eta_{i}-1\}
                      +\#\{k=i+1,\ldots,N|j\leq \eta_{k}\leq \eta_{i}\}.
\end{multline}
The two last terms can be easily expressed $\overline{l}_{\eta}(s)$ with the help of the leg-length of the cell $s'$:
\begin{equation}\label{eqjess1}\#\{k=1,\ldots,i-1|j\leq \eta_{k}\leq \eta_{i}-1\}+\#\{k=i+1,\ldots,N|j\leq \eta_{k}\leq \eta_{i}\}=l_{\Lambda^{*}}(s').\end{equation}
However, for the  first term, we have to distinguish two cases: 
\begin{itemize}
	\item [(i)] If $s=(i,j)$ is such that $j=\eta_{k}+1$ for some $1\leq k\leq i-1$, then it is clear that $s' \in BF(\Lambda)$.  Moreover,
	$$\#\{k=1,\ldots,i-1|j=\eta_{k}+1\}=\#\{k=1,\ldots,m|j=\lambda_{k}+1\}.$$
Since $\#\{k=1,\ldots,m|j=\lambda_{k}+1\}$ counts the number of circles that appear in the column $j$ in $\La$ --more specifically, in the leg-length of the cell $s'$-- we conclude that $\overline{l}_{\eta}(s)=l_{\Lambda^{\circledast}}(s')$. Thus, 
\begin{equation}\label{eqjess2}
d_{\eta}(s)=\alpha(a_{\Lambda^{*}}(s')+1)+l_{\Lambda^{\circledast}}(s')+1.
\end{equation}
\item [(ii)] If $s=(i,j)$ is such that $j \neq \eta_{k}+1$ for each $k=1,\ldots,i-1$,  then it is clear that $s' \in \Lambda^{*}/\mathrm{BF}(\Lambda)$ and also $\overline{l}_{\eta}(s)=l_{\Lambda^{*}}(s')$. Hence, we conclude that
\begin{equation}\label{eqjess3}
d_{\eta}(s)=\alpha(a_{\Lambda^{*}}(s')+1)+l_{\Lambda^{*}}(s')+1.
\end{equation}
\end{itemize}
The substitution of Equations  \eqref{eqjess1}--\eqref{eqjess3}  into \eqref{eqjess0} completes the proof.
\end{proof}

\begin{remark}
it can be shown that if $\Lambda$ is a superpartition such that $\Lambda^{\circledast}_{i}-\Lambda_{i+k}^*\geq r-1$ for each $i=1,\ldots,N-k$, then for each cell $s' \in\Lambda^{*}/\mathrm{BF}(\Lambda)$ we have $\alpha_{k,r}(a_{\Lambda^{*}}(s')+1)+l_{\Lambda^{*}}(s')+1\neq 0$.  However,    we do not have necessarily $\alpha_{k,r}(a_{\Lambda^{*}}(s')+1)+l_{\Lambda^{\circledast}}(s')+1 \neq 0$ if $s' \in \mathrm{BF}(\Lambda)$.
\end{remark}

\begin{lem}\label{regjnonsym} Let $\eta$ be as in \eqref{assumeeta++}   and  let $\Lambda=\varphi_{m}(\eta)$ be its associated superpartition. If $\Lambda$ is strict and weakly $(k,r,N)$-admissible or if moderately $(k,r,N)$-admissible, then $E_{\eta}(x;\alpha)$ does not have poles at $\alpha=\alpha_{k,r}$. \end{lem}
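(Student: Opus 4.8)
The plan is to reduce the whole statement to the non-vanishing of a single polynomial at $\alpha=\alpha_{k,r}$. By the result of Knop and Sahi recalled just above the lemma, $\left(\prod_{s\in\eta}d_\eta(s)\right)E_\eta\in\mathbb{N}[\alpha,x_1,\ldots,x_N]$, so $E_\eta$ is regular at $\alpha_{k,r}$ as soon as $\prod_{s\in\eta}d_\eta(s)\neq0$ there. Invoking the factorization of Lemma \ref{d_etaassuperpartition}, it then suffices to show that none of the factors
$$\alpha(a_{\La^*}(s')+1)+l_{\La^\cd}(s')+1\quad (s'\in\mathrm{BF}(\La)),\qquad \alpha(a_{\La^*}(s')+1)+l_{\La^*}(s')+1\quad (s'\in\La^*/\mathrm{BF}(\La))$$
vanishes at $\alpha=\alpha_{k,r}=-(k+1)/(r-1)$.

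I would argue by contradiction on a single factor. Writing $s'=(i,j)$ and denoting by $l$ the relevant leg-length, such a factor equals $\alpha(a+1)+l+1$ with $a=a_{\La^*}(s')$, and it vanishes at $\alpha_{k,r}$ exactly when $(k+1)(a+1)=(r-1)(l+1)$. Since $\gcd(k+1,r-1)=1$ and $a\geq0$, this forces $a+1=\rho(r-1)$ and $l+1=\rho(k+1)$ for some integer $\rho\geq1$. These two equalities translate into diagram data: $a+1=\rho(r-1)$ gives $\La^*_i=j+\rho(r-1)-1$, while $l+1=\rho(k+1)$ says that column $j$ descends exactly to row $i+\rho(k+1)-1$. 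The contradiction will then come from the admissibility inequality \eqref{eqlrminequality2} of Lemma \ref{leminequality2}, applied with its dummy index equal to $i+\rho(k+1)$; this is legitimate because the cell $(i+\rho(k+1)-1,j)$ lies in the diagram of $\La^\cd$, so its row index does not exceed $N$, and both admissibility hypotheses (strict and weakly, or moderately admissible) supply \eqref{eqlrminequality2} by that lemma. The inequality reads $\La^\cd_i-\La^*_{i+\rho(k+1)-1}\geq\rho r$.

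In the generic case $s'\in\La^*/\mathrm{BF}(\La)$ the leg is measured in $\La^*$, so $\La^*_{i+\rho(k+1)-1}\geq j$, while $\La^\cd_i\leq\La^*_i+1=j+\rho(r-1)$. Combining these with the displayed inequality gives $j+\rho r\leq\La^\cd_i\leq j+\rho r-\rho$, hence $\rho\leq0$, a contradiction, so these factors never vanish.

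The delicate case, and what I expect to be the main obstacle, is $s'\in\mathrm{BF}(\La)$, where the leg is measured in $\La^\cd$ and therefore swallows the circles of column $j$; this is exactly the configuration the Remark flags as uncontrollable under the weaker bound $\La^\cd_i-\La^*_{i+k}\geq r-1$. Here I would use crucially that $s'$ sits in a \emph{bosonic} row, so $\La^\cd_i=\La^*_i=j+\rho(r-1)-1$, together with $l_{\La^\cd}(s')=\rho(k+1)-1$, which forces $\La^\cd_{i+\rho(k+1)-1}\geq j$ and hence $\La^*_{i+\rho(k+1)-1}\geq j-1$. Substituting $\La^\cd_i=j+\rho(r-1)-1$ into the inequality of the previous paragraph yields $\La^*_{i+\rho(k+1)-1}\leq j-\rho-1$, which against $\La^*_{i+\rho(k+1)-1}\geq j-1$ again gives $\rho\leq0$, a contradiction. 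This rules out the vanishing of every factor, so $\prod_{s\in\eta}d_\eta(s)\neq0$ at $\alpha_{k,r}$, and the regularity of $E_\eta$ there follows. The only routine points left to verify are the elementary index-range conditions in Lemma \ref{leminequality2} and the passage from $\La^\cd_i\leq\La^*_i+1$ and the bosonic-row identity to the two chains of inequalities above.
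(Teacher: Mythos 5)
Your proof is correct and follows essentially the same route as the paper's: reduce regularity to the non-vanishing of the Knop--Sahi product $\prod_s d_\eta(s)$, rewrite it via Lemma \ref{d_etaassuperpartition}, and rule out each vanishing factor by playing $a+1=\rho(r-1)$, $l+1=\rho(k+1)$ against the admissibility inequalities of Lemma \ref{leminequality2} in the two cases $s'\in\mathrm{BF}(\La)$ and $s'\in\La^*/\mathrm{BF}(\La)$. The only differences from the paper are cosmetic rearrangements of the final chains of inequalities.
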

\begin{proof} 
As we have mentioned earlier (see \cite{knop}), to prove that $E_{\eta}(x;\alpha)$ has no poles at $\alpha=\alpha_{k,r}$, it  is sufficient to show that 
$\prod_{s\in \eta}d_{\eta}(s) \neq 0$ if $\alpha=\alpha_{k,r}$.  

Let us  suppose that $\prod_{s\in \eta}d_{\eta}(s)= 0 $ when $\alpha=\alpha_{k,r}$. From the equality obtained in Corollary \ref{d_etaassuperpartition}, we have $\prod_{s\in \eta}d_{\eta}(s)=0$ iff
\begin{equation*}\prod_{s \in \mathrm{BF}(\Lambda)} (\alpha(a_{\Lambda^{*}}(s)+1)+l_{\Lambda^{\circledast}}(s)+1)=0 \quad \text{or} \quad \prod_{s \in \Lambda^{*}/\mathrm{BF}(\Lambda)}(\alpha(a_{\Lambda^{*}}(s)+1)+l_{\Lambda^{*}}(s)+1)=0.\end{equation*}
Now, this is possible iff there exists a cell $s \in \mathrm{BF}(\Lambda)$ such that $\alpha(a_{\Lambda^{*}}(s)+1)+l_{\Lambda^{\circledast}}(s)+1=0$ or if there exists a cell $s \in \Lambda^{*}/\mathrm{BF}(\Lambda)$ such that $\alpha(a_{\Lambda^{*}}(s)+1)+l_{\Lambda^{*}}(s)+1=0$.

First, we suppose  that $s=(i,j) \in \mathrm{BF}(\Lambda)$. Now $\alpha(a_{\Lambda^{*}}(s)+1)+l_{\Lambda^{\circledast}}(s)+1=0$ iff there exists a $\rho \in \mathbb{Z_{+}}$ such that $a_{\Lambda^{*}}(s)+1=\rho(r-1)$ and $l_{\Lambda^{\circledast}}(s)+1=\rho(k+1)$. Using both relations and expressing them in terms of the components of $\Lambda$, we get
\begin{equation*} \Lambda^{*}_{i}-\Lambda^{\circledast}_{i+\rho(k+1)-1}+1=\rho(r-1). \end{equation*}
Moreover, we have by hypothesis, $\Lambda^{*}_{i}=\Lambda^{\circledast}_{i}$ (bosonic row), so that the previous line can be rewritten as
\begin{equation*} \rho(r-1)-1=\Lambda^{\circledast}_{i}-\Lambda^{\circledast}_{i+\rho(k+1)-1}.\end{equation*}
However, by using Lemma \ref{leminequality2}, we also get
\begin{equation*}\Lambda^{\circledast}_{i}-\Lambda^{\circledast}_{i+\rho(k+1)-1}\geq \rho r-1,\end{equation*}
which contradicts  the previous equality.

Second, we suppose that there is a cell $s=(i,j) \in \Lambda^{*}/\mathrm{BF}(\Lambda)$ such that $\alpha(a_{\Lambda^{*}}(s)+1)+l_{\Lambda^{*}}(s)+1=0$. This is possible iff there exists a $\rho \in \mathbb{Z_{+}}$ such that $a_{\Lambda^{*}}(s)+1=\rho(r-1)$ and $l_{\Lambda^{*}}(s)+1=\rho(k+1)$. As in the previous case, using both relations and expressing them in terms of the components of $\Lambda$, we obtain 
\begin{equation*} \rho(r-1)-1\geq \Lambda^{*}_{i}-\Lambda^{*}_{i+\rho(k+1)-1}\geq \rho r-1  \end{equation*}
which is in contradiction with the admissibility condition of $\Lambda$ (see Lemma \ref{leminequality2}).

Therefore, whenever $\alpha=\alpha_{k,r}$ and $\La$ is $(k,r,N)$-admissible, we have  $\prod_{s\in \eta}d_{\eta}(s)\neq 0 $, as expected.  
\end{proof}

\subsection{Regularity for Jack polynomials with prescribed symmetry}

We recall that $\lambda^{+}=(\lambda^{+}_1, \ldots,\lambda^{+}_m)$ and $\mu^{+}=(\mu^{+}_1,\ldots,\mu^+_{N-m})$  are partitions.  Similarly, $\lambda^{-}=(\lambda^{+}_m, \ldots,\lambda^{+}_1)$ and $\mu^{-}=(\mu^{+}_{N-m},\ldots,\mu^+_{1})$ denote compositions whose elements are written in increasing order.  The concatenation of $\lambda^{-}$ and $\mu^{-}$ is given by
$$   (\lambda^{+},\mu^{+})=(\lambda^{+}_m, \ldots,\lambda^{+}_1, \mu^{+}_{N-m},\ldots,\mu^+_{1}).$$

As shown below, the regularity for Jack polynomials with prescribed symmetry cannot be established directly from Definition \ref{defJackPrescibed}.  Indeed, a non-symmetric Jack polynomials indexed by a composition $\eta$ of the form $(\lambda^{-},\mu^{-})$  is in general singular at $\alpha=\alpha_{k,r}$, even if $\eta$ is associated to an admissible superpartition.  We  thus need to use another normalization for the Jack polynomials with prescribed symmetry. 

{
\begin{proposition}\label{normalizAS} Let $\eta=(\lambda^{+},\mu^{+})$ and $\Lambda=\varphi_{m}(\eta)$. Suppose that $\alpha$ is generic.  Then 
\begin{align*}
P_{\La}^{\AS}(x;\alpha)&=\frac{c^\AS_\La}{C^{\AS}_{\Lambda}}\mathrm{Asym}_{I}\mathrm{Sym}_{J}E_{\eta}\,,& 
P_{\La}^{\SS}(x;\alpha)&=\frac{c^\SS_\La}{C^{\SS}_{\Lambda}}\mathrm{Sym}_{I}\mathrm{Sym}_{J}E_{\eta}\,,\\
P_{\La}^{\SA}(x;\alpha)&=\frac{c^\SA_\La}{C^{\SA}_{\Lambda}}\mathrm{Sym}_{I}\mathrm{Asym}_{J}E_{\eta}\,,&
 P_{\La}^{\AA}(x;\alpha)&=\frac{c^\AA_\La}{C^{\AA}_{\Lambda}}\mathrm{Asym}_{I}\mathrm{Asym}_{J}E_{\eta}\,,
\end{align*}
where 
{\small\begin{align*} C^{\AS}_{\Lambda}&=(-1)^{m(m-1)/2}\prod_{s \in \mathrm{FF}^{*}(\Lambda)}\frac{\alpha a_{\Lambda^{\circledast}}(s)+l_{\Lambda^{\circledast}}(s)-1}{\alpha a_{\Lambda^{\circledast}}(s)+l_{\Lambda^{\circledast}}(s)}\prod_{\stackrel{s=(i,j) \in \mathrm{BRD} B}{0\leq \gamma\leq \#\{t>i|\Lambda^{\circledast}_{t}-\Lambda^{*}_{t}=0,\;\Lambda^{*}_{t}=i\}-1}}\frac{\alpha a_{\Lambda^{*}}(s)+l_{\Lambda^{*}}(s)-\gamma+1}{\alpha a_{\Lambda^{*}}(s)+l_{\Lambda^{*}}(s)-\gamma},\\
C^{\SS}_{\Lambda}&=\prod_{\stackrel{s=(i,j) \in \mathrm{FF}^{*}(\Lambda)}{0\leq \gamma\leq \#\{t>i|\Lambda^{\circledast}_{t}-\Lambda^{*}_{t}=1,\; \Lambda^{\circledast}_{t}=i\}-1}}\!\!\!\! \frac{\alpha a_{\Lambda^{\circledast}}(s)+l_{\Lambda^{\circledast}}(s)-\gamma+1}{\alpha a_{\Lambda^{\circledast}}(s)+l_{\Lambda^{\circledast}}(s)-\gamma}\!\!\!\! \prod_{\stackrel{s=(i,j) \;\mathrm{BRD} B}{0\leq \gamma'\leq \#\{t>i|\Lambda^{\circledast}_{t}-\Lambda^{*}_{t}=0,\;\Lambda^{*}_{t}=i\}-1}} \!\!\!\!\frac{\alpha a_{\Lambda^{*}}(s)+l_{\Lambda^{*}}(s)-\gamma'+1}{\alpha a_{\Lambda^{*}}(s)+l_{\Lambda^{*}}(s)-\gamma'},\\
C^{\SA}_{\Lambda}&=(-1)^{(N-m)(N-m-1)/2}\!\!\!\!\prod_{\stackrel{s=(i,j) \in \mathrm{FF}^{*}(\Lambda)}{0\leq \gamma\leq \#\{t>i|\Lambda^{\circledast}_{t}-\Lambda^{*}_{t}=1,\;\Lambda^{\circledast}_{t}=i\}-1}} \!\!\!\! \frac{\alpha a_{\Lambda^{\circledast}}(s)+l_{\Lambda^{\circledast}}(s)-\gamma+1}{\alpha a_{\Lambda^{\circledast}}(s)+l_{\Lambda^{\circledast}}(s)-\gamma}\prod_{s\in\mathrm{BRD} B}\frac{\alpha a_{\Lambda^{*}}(s)+l_{\Lambda^{*}}(s)-1}{\alpha a_{\Lambda^{*}}(s)+l_{\Lambda^{*}}(s)},\\
C^{\AA}_{\Lambda}&=(-1)^{m(m-1)/2}(-1)^{(N-m)(N-m-1)/2}\prod_{s \in \mathrm{FF}^{*}(\Lambda)}\frac{\alpha a_{\Lambda^{\circledast}}(s)+l_{\Lambda^{\circledast}}(s)-1}{\alpha a_{\Lambda^{\circledast}}(s)+l_{\Lambda^{\circledast}}(s)}\prod_{s\in\mathrm{BRD} B}\frac{\alpha a_{\Lambda^{*}}(s)+l_{\Lambda^{*}}(s)-1}{\alpha a_{\Lambda^{*}}(s)+l_{\Lambda^{*}}(s)}\, .
\end{align*}}Note that $\mathrm{FF}(\Lambda)$ denotes the set of cells belonging to a fermionic row and a fermionic column, while  $\mathrm{FF}^{*}(\Lambda)=\mathrm{FF}(\Lambda)\setminus\{s|s \in \Lambda^{\circledast}/\Lambda^{*}\}$. The set $\mathrm{BRD}{B}$   contains all cells $(i,j)$ such that $i$ is a bosonic row, $j$ is the length of some other bosonic row $ i'$ satisfying $\La_i^*>\La^*_{i'}$.  
\end{proposition}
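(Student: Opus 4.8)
The plan is to prove the proposition for each symmetry type by directly computing the single scalar $C_\La$ relating the two normalizations. Writing $\mathcal{O}_{I,J}$ for the relevant composition of $\mathrm{Asym}_I$ and $\mathrm{Sym}_J$, the original Definition \ref{defJackPrescibed} reads $P_\La=c_\La\,\mathcal{O}_{I,J}E_{(\lambda^-,\mu^-)}$, where $(\lambda^-,\mu^-)$ has each block arranged in \emph{increasing} order, whereas the asserted identity uses $E_{(\lambda^+,\mu^+)}$ with each block in \emph{decreasing} order. Since $(\lambda^-,\mu^-)$ and $(\lambda^+,\mu^+)$ rearrange to the same partition inside each block, it suffices to establish $\mathcal{O}_{I,J}E_{(\lambda^+,\mu^+)}=C_\La\,\mathcal{O}_{I,J}E_{(\lambda^-,\mu^-)}$ and to identify $C_\La$ with the stated cell products; dividing by $c_\La$ then yields the claimed formula for $P_\La$.

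First I would establish two elementary recursions from \eqref{permjnosym} together with the operator identities $\mathrm{Sym}_J K_i=\mathrm{Sym}_J$ and $\mathrm{Asym}_I K_i=-\mathrm{Asym}_I$ (valid when the indices of the adjacent transposition $K_i$ lie in $J$, resp.\ $I$). Applying $\mathrm{Sym}_J$, resp.\ $\mathrm{Asym}_I$, to the third line of \eqref{permjnosym} (the ascending case $\eta_i<\eta_{i+1}$) and solving for the term indexed by $K_i\eta$ gives
\begin{equation*}
\mathrm{Sym}_J E_{K_i\eta}=\frac{\delta_{i,\eta}-1}{\delta_{i,\eta}}\,\mathrm{Sym}_J E_\eta,
\qquad
\mathrm{Asym}_I E_{K_i\eta}=-\frac{\delta_{i,\eta}+1}{\delta_{i,\eta}}\,\mathrm{Asym}_I E_\eta,
\end{equation*}
where $\delta_{i,\eta}=\overline\eta_i-\overline\eta_{i+1}$ and $K_i\eta$ moves the larger entry to the left. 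These relations let me sort each block from increasing to decreasing order one adjacent transposition at a time, accumulating a rational factor at every strict ascent and contributing nothing when $\eta_i=\eta_{i+1}$ (the middle line of \eqref{permjnosym}).

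Next I would fix a reduced word sorting $(\lambda^-,\mu^-)$ into $(\lambda^+,\mu^+)$ within each block and multiply the resulting factors. The signs combine to $(-1)^{m(m-1)/2}$ from the $I$-block (one sign per inversion of the $m$ distinct fermionic parts) and, for the types $\AA$ and $\SA$, to an additional $(-1)^{(N-m)(N-m-1)/2}$ from the $J$-block, reproducing exactly the prefactors in $C^\AS_\La,\dots,C^\AA_\La$. For the magnitudes I would rewrite each $\delta_{i,\eta}=\overline\eta_i-\overline\eta_{i+1}$ via the explicit eigenvalue formula \eqref{eqeigennonsym} and translate it into the $\alpha\,(\text{arm})+(\text{leg})$ statistics of the diagram via the correspondence between composition cells and diagram cells developed in Lemma \ref{d_etaassuperpartition}. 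Inversions among the fermionic ($I$) rows then yield the product over $\mathrm{FF}^*(\La)$ in the $\La^\cd$-statistics, inversions among bosonic ($J$) rows of distinct length yield the product over $\mathrm{BRD}\,B$ in the $\La^*$-statistics, and repeated parts inside a \emph{symmetrized} block are accounted for by the $\gamma$-indexed subproducts (present for $\mathrm{FF}^*$ in the $\SS,\SA$ cases and for $\mathrm{BRD}\,B$ in the $\AS,\SS$ cases). A small check at $m=N=2$ confirms the mechanism: there the single factor $-\tfrac{\delta+1}{\delta}$ equals $-\tfrac{\alpha a_{\La^\cd}(s)+l_{\La^\cd}(s)-1}{\alpha a_{\La^\cd}(s)+l_{\La^\cd}(s)}$ at the unique cell $s\in\mathrm{FF}^*(\La)$, in agreement with the stated $C^\AS_\La$.

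The main obstacle is this last step: showing that the product of eigenvalue-difference factors, whose values depend on the intermediate compositions visited during the sort, collapses to the stated \emph{static} products over $\mathrm{FF}^*(\La)$ and $\mathrm{BRD}\,B$ with the correct $\La^\cd$ versus $\La^*$ arm/leg statistics and the correct $\gamma$-ranges. This demands a careful telescoping argument that attaches each transposition factor to a definite cell of the diagram, handling repeated parts through the multiplicity data $f_\lambda,f_\mu$ already present in $c_\La$ and through the $\gamma$-indexed factors. Independence of the chosen reduced word is not an extra burden: by Theorem \ref{theo1} the polynomial $\mathcal{O}_{I,J}E_\eta$ is, up to a scalar, the unique prescribed-symmetry eigenfunction attached to $\La$, so the scalar $C_\La$ is well defined regardless of the sorting order, and the computation only needs to produce it in one convenient way.
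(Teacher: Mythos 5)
Your proposal follows essentially the same route as the paper's (sketched) proof: both compute the scalar $C_\Lambda$ relating $\mathcal{O}_{I,J}E_{(\lambda^{+},\mu^{+})}$ to $\mathcal{O}_{I,J}E_{(\lambda^{-},\mu^{-})}$ by sorting each block with adjacent transpositions, absorbing each $K_i$ into $\mathrm{Sym}_J$ or $\mathrm{Asym}_I$ via the third line of \eqref{permjnosym}, collecting the signs into $(-1)^{m(m-1)/2}$ and $(-1)^{(N-m)(N-m-1)/2}$, and finally rewriting the eigenvalue-difference factors in arm/leg statistics. The ``main obstacle'' you flag is milder than you fear: each swap merely permutes the spectral vector, so every accumulated factor equals $1\mp 1/(\overline{\Lambda}_i-\overline{\Lambda}_j)$ for an inversion pair $(i,j)$ and the product is manifestly independent of the intermediate compositions and of the chosen reduced word --- which is exactly how the paper organizes the computation before the final (also only sketched) repackaging into the $\mathrm{FF}^{*}$ and $\mathrm{BRD}\,B$ products.
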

\begin{proof}[Sketch of proof]Let  $\eta^{-}=(\lambda^{-},\mu^{-})$.  The proof consists in calculating the constant of proportionality $C_\La$ such that
$$ \mathcal{O}_{I,J}E_\eta=C_\La \mathcal{O}_{I,J}E_{\eta^-}.$$    
Our method follows  general arguments that are independent of the symmetry type of the polynomials,  so we give the general idea of the proof only for the polynomials of type AS. 

We first note that we can recover $\eta$ from $\eta^{-}$ through the following sequence of transpositions:
\begin{equation*}\eta=\tau_{2} \ldots \tau_{m-1}\tau_{m}\omega_{m+2} \ldots \omega_{N}(\eta^{-})\end{equation*}
where $\tau_{r}= K_{r-1}K_{r-2}\ldots K_{1}$ and $\omega_{r}=K_{r-1}K_{r-2}\ldots K_{m+1}$, except that in  $\omega_{r}$,  we do not consider  transpositions $K_{i}$ such that  $\mu_{i}=\mu_{i+1}$. Thus, we have 
\begin{equation*} E_{\eta}=E_{\tau_{2} \ldots \tau_{m-1}\tau_{m}\omega_{m+2} \ldots \omega_{N}(\eta^{-})} \end{equation*}
Now, given that we are considering $\eta^{-}$ a composition in increasing order, we can use successively the third line of \eqref{permjnosym}.  This yields an expression of the form  $$E_{\tau_{2} \ldots \tau_{m-1}\tau_{m}\omega_{m+2} \ldots \omega_{N}(\eta^{-})}=\mathcal{O}'_I\mathcal{O}'_J \omega_{N} E_{\eta^{-}},$$ where the operators  $\mathcal{O}'_I$ and $\mathcal{O}'_J$ are such that   $$  \mathrm{Asym}_{I}\mathcal{O}'_I=C'_I,\quad \mathrm{Sym}_{J}\mathcal{O}'_I=\mathcal{O}'_I\mathrm{Sym}_{J}, \quad \mathrm{Sym}_{J}\mathcal{O}'_J=C'_J,\quad \mathrm{Asym}_{I}\mathcal{O}'_J=\mathcal{O}'_J\mathrm{Asym}_{J}.$$ The coefficients $C'_I$ and $C'_J$ are obtained by considering all possible combinations of differences of eigenvalues $\overline{\Lambda}_{i}-\overline{\Lambda}_{j}$ with $i<j$, $i,j \in \{1,\ldots,m\}$ and $\Lambda_{i} \neq \Lambda_{j}$ or $i,j \in \{m+1,\ldots,N\}$ and $\Lambda_{i} \neq \Lambda_{j}$. More specifically, 
\begin{equation*}C'_I=(-1)^{m(m-1)/2}\prod_{\stackrel{i<j, \Lambda_{i} \neq \Lambda_{j}}{i,j \in \{1,\ldots,m\}}}\left(1-\frac{1}{\overline{\Lambda}_{i}-\overline{\Lambda}_{j}}\right)\end{equation*}
while  
\begin{equation*}C'_J=\prod_{\stackrel{i<j,\Lambda_{i} \neq \Lambda_{j}}{i,j \in \{m+1,\ldots,N\}}}\left(1+\frac{1}{\overline{\Lambda}_{i}-\overline{\Lambda}_{j}}\right)\end{equation*}
Rewriting the product $C'_I\cdot C'_J$ in a more compact form finally gives the desired expression for $C^{\AS}_{\Lambda}$.
\end{proof}

\begin{lem} Let $\eta=(\lambda^{+},\mu^{+})$ and $\Lambda=\varphi_m(\eta)$. \label{CLambda}

\noindent (i) If $\Lambda$ is strict and weakly $(k,r,N)$-admissible, then $C^{\AS}_{\Lambda}$ has neither  zeros nor singularities at   $\alpha=\alpha_{k,r}$.  

\noindent (ii) If $\Lambda$ is moderately $(k,r,N)$-admissible, then $C^{\SS}_{\Lambda}$ has neither  zeros nor singularities at  $\alpha=\alpha_{k,r}$.

\noindent (iii) If $\Lambda$ is moderately $(k,r,N)$-admissible, then $C^{\SA}_{\Lambda}$ has neither  zeros nor singularities at  $\alpha=\alpha_{k,r}$.

\noindent (iv) If $\Lambda$ is strict and weakly $(k,r,N)$-admissible, then $C^{\AA}_{\Lambda}$ has neither  zeros nor singularities at $\alpha=\alpha_{k,r}$.

\end{lem}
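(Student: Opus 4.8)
The plan is to view each of the four constants as a finite product of elementary factors $\alpha a + b$ (numerators and denominators), where $a$ is a non-negative arm-length and $b$ is an integer built from a leg-length and a shift $\gamma$, and to prove that none of these linear-in-$\alpha$ factors vanishes at $\alpha = \alpha_{k,r}$. Establishing this for every factor at once yields both regularity (no denominator vanishes) and non-vanishing (no numerator vanishes), which is the claim. Moreover, since Proposition \ref{normalizAS} exhibits each $C_\La$ as a well-defined, nonzero rational function of $\alpha$ for generic $\alpha$, no factor is identically zero; in particular, any factor that could possibly vanish at $\alpha_{k,r}$ must have arm-length $a \geq 1$.

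First I would record the arithmetic reduction. Because $\alpha_{k,r} = -(k+1)/(r-1)$ and $\gcd(k+1,r-1)=1$, a factor $\alpha a + b$ with $a \geq 1$ and $b \in \mathbb{Z}$ vanishes at $\alpha_{k,r}$ if and only if $a = \rho(r-1)$ and $b = \rho(k+1)$ for some integer $\rho \geq 1$. Applying this to a denominator $\alpha a_\bullet(s) + l_\bullet(s) - \gamma$ and to a numerator $\alpha a_\bullet(s) + l_\bullet(s) - \gamma + 1$ (with $\bullet = \cd$ for the $\mathrm{FF}^{*}$ factors and $\bullet = *$ for the $\mathrm{BRD} B$ factors) turns each possible pole or zero into the pair of equations $a_\bullet(s) = \rho(r-1)$ and $l_\bullet(s) = \rho(k+1) + \gamma$ (resp. $l_\bullet(s) = \rho(k+1) + \gamma - 1$).

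The core step is to convert these arm and leg equalities into inequalities between the entries of $\La$ and contradict admissibility. For a cell $s=(i,j)$ the arm equation fixes $j = \La^\bullet_i - \rho(r-1)$, while the leg equation forces column $j$ of $\La^\bullet$ to reach at least row $i + \rho(k+1)$, whence $\La^\bullet_i - \La^\bullet_{i+\rho(k+1)} \leq \rho(r-1)$ (for the $\gamma = 0$ numerators one replaces $\rho(k+1)$ by $\rho(k+1)-1$). Against this I would set the opposite bound $\geq \rho r$ coming from admissibility. For the $\mathrm{FF}^{*}$ factors, where $\bullet = \cd$, this bound follows by iterating either the $(k+1,r,N)$-admissibility of $\La^\cd$ in cases (i),(iv) (Lemma \ref{leminequality1}) or the moderate inequality $\La^\cd_i - \La^\cd_{i+k} \geq r$ in cases (ii),(iii), both giving $\La^\cd_i - \La^\cd_{i+\rho(k+1)} \geq \rho r$ and hence the contradiction. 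For the $\mathrm{BRD} B$ factors, where $\bullet = *$, the cell $s=(i,j)$ has $i$ a bosonic row (so $\La^\cd_i = \La^*_i$) and $j = \La^*_{i'} < \La^*_i$ for some bosonic row $i'$, forcing $a = \La^*_i - j \geq 1$; I would then feed the mixed inequality $\La^\cd_{i+1} - \La^*_{i+\rho(k+1)} \geq \rho r$ of Lemma \ref{leminequality2} (namely \eqref{eqlrminequality}), together with $\La^\cd_{i+1} \leq \La^\cd_i = \La^*_i$, to obtain $\La^*_i - \La^*_{i+\rho(k+1)} \geq \rho r$, contradicting the upper bound. The boundary subcase of the $\gamma = 0$ numerator, where the leg equation reads $l_{\La^*}(s) = \rho(k+1) - 1$, is handled in the same spirit by the companion inequality \eqref{eqlrminequality2} of Lemma \ref{leminequality2}, which yields $\La^\cd_i - \La^*_{i+\rho(k+1)-1} \geq \rho r$ and hence $\La^*_i - \La^*_{i+\rho(k+1)-1} \geq \rho r$ after using $\La^\cd_i = \La^*_i$.

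The main obstacle is the bookkeeping that separates $\La^*$ from $\La^\cd$ --- tracking how the circles augment the leg-lengths and deciding precisely which cells enter $\mathrm{FF}^{*}(\La)$ rather than $\mathrm{BRD} B$ --- together with the $\gamma$-shifted boundary cases, where iterating the admissibility inequality is off by one and the finer Lemma \ref{leminequality2} must be invoked. This is also exactly where the split in hypotheses appears: for the types SS and SA that symmetrize over $I$, the ranges of $\gamma$ are dictated by multiplicities of repeated bosonic (resp.\ fermionic) parts, so weak admissibility no longer controls the relevant differences and moderate admissibility is genuinely needed, whereas for the types AS and AA there are no such repetitions and strict plus weak admissibility suffices.
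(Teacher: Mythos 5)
Your proposal is correct and follows essentially the same route as the paper's (sketched) proof: each linear factor $\alpha a+b$ of $C_\Lambda$ can vanish at $\alpha_{k,r}$ only if $a=\rho(r-1)$ and $b=\rho(k+1)$ (using $\gcd(k+1,r-1)=1$), and these equalities translate into differences of parts of $\Lambda^*$ or $\Lambda^\cd$ that contradict the admissibility bounds of Lemmas \ref{leminequality1} and \ref{leminequality2}. If anything, you treat the numerators and the $\gamma$-shifted boundary cases more explicitly than the paper, which details only the pole analysis for type AS and dismisses the rest as analogous.
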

\begin{proof}[Sketch of proof]
This follows almost immediately from the explicit formulas for the coefficient $C_\La$ given in the last proposition.  All cases are similar.  The only noticeable differences are the type of admissibility for each symmetry type and the additional parameter $\gamma$,  which can be controlled with admissibility condition.  Once again, we restrict our demonstration to symmetry  type AS.

Consider $C^{\AS}_{\Lambda}$ and suppose that it has singularities a pole $\alpha=\alpha_{k,r}$.  This happens iff there exists a cell $s \in \mathrm{FF}^{*}$ such that $\alpha a_{\Lambda^{\circledast}}(s)+l_{\Lambda^{\circledast}}(s)=0$ or a cell $s \in \mathrm{BRD} B$  such that $\alpha a_{\Lambda^{*}}(s)+l_{\Lambda^{*}}(s)-\gamma=0$ for some $0\leq \gamma\leq \#\{t>i|\Lambda^{*}_{t}=\Lambda^{*}_{i}\}$.

First, assume that $s=(i,j) \in \mathrm{FF}^{*}$.  Note that $\alpha a_{\Lambda^{\circledast}}(s)+l_{\Lambda^{\circledast}}(s)=0$ iff there exists a positive integer  $\rho$ such that $a_{\Lambda^{\circledast}}(s)=\rho(r-1)$ and $l_{\Lambda^{\circledast}}(s)=\rho(k+1)$.  Using these two relations and expressing them in terms of the components of $\Lambda$, we find 
\begin{equation}\label{regeq1}
\Lambda^{\circledast}_{i}-\Lambda^{\circledast}_{i+\rho(k+1)}=\rho(r-1).
\end{equation}
Now, the weak admissibility condition and Lemma \ref{leminequality1} imply that 
\begin{equation}\label{regeq2}
\rho(r-1)=\Lambda^{\circledast}_{i}-\Lambda^{\circledast}_{i+\rho(k+1)}\geq \rho r.
\end{equation}
Equations \eqref{regeq1} and \eqref{regeq2}  are contradictory.  Hence, the first factor of $C^{\AS}_{\Lambda}$ does not have singularities.

Now, assume  $s \in \mathrm{BRD}B$.   Following a similar  argument, we conclude that the second factor has no singularity. 

In the same way, one can show that $C_{\Lambda}$ has no zero.
\end{proof}

%\begin{definition} Let $\Lambda=(\lambda_{1},\ldots,\lambda_{m};\mu_{m+1},\ldots,\mu_{N})$ be a superpartition. We define the Jack polynomials with prescribed symmetry $P'_{\Lambda}^{(\alpha)}$ for each family (AS, SS, AA, SA) as \label{defnewJackPrescribed}
%\begin{itemize}
	%\item [i)] $P'_{\Lambda}^{AS}= C_{\Lambda}^{AS}P_{\Lambda}^{AS}$
	%\item [ii)] $P'_{\Lambda}^{SS}= C_{\Lambda}^{SS}P_{\Lambda}^{SS}$
	%\item [iii)] $P'_{\Lambda}^{AA}= C_{\Lambda}^{AA}P_{\Lambda}^{AA}$
	%\item [iv)] $P'_{\Lambda}^{SA}= C_{\Lambda}^{SA}P_{\Lambda}^{SA}$
%\end{itemize}
%\end{definition}

\begin{proposition}[Regularity] \label{regularityJackPresc}Let $\Lambda$ be a $(k,r,N)$-admissible superpartition. Then, $P_{\Lambda}(x_1,\ldots,x_N;\alpha)$ is regular at $\alpha=\alpha_{k,r}$.
\end{proposition}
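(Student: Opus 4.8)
The plan is to obtain regularity by inspecting the alternative normalization of $P_\Lambda$ furnished by Proposition~\ref{normalizAS}. For each of the four symmetry types that proposition writes
\[
P_\Lambda(x;\alpha)=\frac{c_\Lambda}{C_\Lambda}\,\mathcal{O}_{I,J}\,E_\eta(x;\alpha),\qquad \eta=(\lambda^+,\mu^+),\ \ \Lambda=\varphi_m(\eta),
\]
where $\mathcal{O}_{I,J}$ is the appropriate (anti)symmetrizer. First I would note that the two ``harmless'' factors carry no pole: the constant $c_\Lambda$ of \eqref{coefAS}--\eqref{coefSS} is a product of signs and factorials $f_\lambda,f_\mu$, hence a nonzero element of $\mathbb{C}$ independent of $\alpha$, while $\mathcal{O}_{I,J}$ is a finite $\mathbb{C}$-linear combination of coordinate permutations and thus introduces no dependence on $\alpha$ at all. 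Therefore $P_\Lambda$ can fail to be regular at $\alpha=\alpha_{k,r}$ only if either the non-symmetric Jack polynomial $E_\eta$ has a pole there, or $C_\Lambda$ vanishes there (so that $1/C_\Lambda$ becomes singular).

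The second step is to rule out both possibilities using the two preceding results. By Lemma~\ref{regjnonsym}, $E_\eta$ is free of poles at $\alpha_{k,r}$ whenever $\Lambda$ is moderately $(k,r,N)$-admissible, or strict and weakly $(k,r,N)$-admissible; since any $(k,r,N)$-admissible $\Lambda$ is by definition either strongly admissible (and strong admissibility yields moderate admissibility) or strict and weakly admissible, one of these two hypotheses always holds, so $E_\eta$ is regular. By Lemma~\ref{CLambda}, the coefficient $C_\Lambda$ has neither a zero nor a pole at $\alpha_{k,r}$ under the admissibility hypothesis attached to the symmetry type in question, whence $1/C_\Lambda$ is regular there. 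Multiplying the regular factors $c_\Lambda$, $1/C_\Lambda$ and $\mathcal{O}_{I,J}E_\eta$ then produces a polynomial that is regular at $\alpha_{k,r}$, which is the claim.

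The delicate point, and the step I expect to require the most care, is verifying that the single hypothesis ``$\Lambda$ is $(k,r,N)$-admissible'' really supplies, in each case, the precise form of admissibility demanded by Lemma~\ref{CLambda}. I would organize this by splitting on the symmetry type. For the types AS and AA the polynomial is antisymmetric in $x_1,\ldots,x_m$, which forces $\Lambda$ to be strict; for a strict $\Lambda$ the omnibus admissibility collapses to ``strict and weakly admissible'', because the strong-admissibility alternative already implies weak admissibility (from $\Lambda^*_{i+k}\le\Lambda^\cd_{i+k}$ one gets $\Lambda^\cd_i-\Lambda^*_{i+k}\ge \Lambda^\cd_i-\Lambda^\cd_{i+k}\ge r$). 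This is exactly the hypothesis under which Lemma~\ref{CLambda}(i),(iv) guarantees $C^{\AS}_\Lambda$ and $C^{\AA}_\Lambda$ to be regular and nonvanishing. For the types SS and SA, where no strictness is imposed, I would instead read off the needed condition from the strong-admissibility branch of the definition: strong $(k,r,N)$-admissibility gives $\Lambda^\cd_i-\Lambda^\cd_{i+k}\ge r$, i.e.\ moderate admissibility, which is precisely the hypothesis of Lemma~\ref{CLambda}(ii),(iii) for $C^{\SS}_\Lambda$ and $C^{\SA}_\Lambda$. Making this case distinction explicit is what legitimizes each invocation of Lemma~\ref{CLambda}; once it is in place, the regularity conclusion of the two previous paragraphs applies uniformly across the four types.
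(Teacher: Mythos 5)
Your proposal is correct and follows essentially the same route as the paper's own proof: invoke Proposition \ref{normalizAS} to write $P_\Lambda=\frac{c_\Lambda}{C_\Lambda}\mathcal{O}_{I,J}E_\eta$ with $\eta=(\lambda^+,\mu^+)$, then combine the $\alpha$-independence of $c_\Lambda$, Lemma \ref{CLambda} for the non-vanishing of $C_\Lambda$, and Lemma \ref{regjnonsym} for the regularity of $E_\eta$. Your final paragraph, matching each symmetry type to the precise admissibility hypothesis required by Lemma \ref{CLambda}, is actually more explicit than the paper's one-line appeal to ``$\Lambda$ is admissible,'' but it is the same argument.
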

\begin{proof} Let $\eta=(\lambda^{+},\mu^{+})$ and $\Lambda=\varphi_{m}(\eta)$.
According to Proposition \ref{normalizAS}, for any symmetry type, there are coefficients $c_\La$ and $C_\La$ such that 
$$ P_{\Lambda}(x;\alpha) = \frac{c_\La}{C_\La}\mathcal{O}_{I,J}E_\eta(x;\alpha) $$

The coefficient $c_\La$ is independent of $\alpha$, so it is trivially regular $\alpha=\alpha_{k,r}$.  Given that $\La$ is admissible,  Lemma \ref{CLambda} implies that $C_\La^{-1}$ is also regular at $\alpha=\alpha_{k,r}$.  Finally, by Lemma \ref{regjnonsym}, the non-symmetric Jack polynomial $E_\eta(x;\alpha)$ is  regular at $\alpha=\alpha_{k,r}$. Therefore,  limit 
$$ \lim_{\alpha\to\alpha_{k,r}}\frac{c_\La}{C_\La}\mathcal{O}_{I,J}E_\eta(x;\alpha)$$ is well defined and the proposition follows. 
\end{proof}

\subsection{Uniqueness for Jack polynomials with prescribed symmetry}

\begin{lem} \label{lemabase}
{Let $\Lambda$ be weakly $(k,r,N)$-admissible and strict.  Suppose that for some $\sigma\in S_N$, the superpartition $\Gamma$ satisfies $$\Gamma_i^*=\La^*_{\sigma(i)}+\frac{r-1}{k+1}(\sigma(i)-i),%\qquad \Gamma_i^\cd=\La^\cd_{\sigma(i)}+\frac{r-1}{2}(\sigma(i)-i)$$ 
$$   Then,
$$ \La_i^*<\Ga_i^*\,\Longrightarrow \,\sigma(i)<i \, ,  \qquad  \La_i^*=\Ga_i^*\,\Longrightarrow \,\sigma(i)=i\, , \qquad  \La_i^*>\Ga_i^*\,\Longrightarrow \,\sigma(i)>i.$$
Moreover,
$$ \sigma(i)=\begin{cases}i-k-1& \text{if}\quad  \La_i^*<\Ga_i^* \quad\text{and}\quad \La_{i-1}^*\geq\Ga_{i-1}^*\\ 
i+k+1& \text{if}\quad  \La_i^*>\Ga_i^* \quad\text{and}\quad \La_{i+1}^*\leq\Ga_{i+1}^*.\end{cases}$$} 
\end{lem}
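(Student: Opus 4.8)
The plan is to convert the stated identity $\Gamma_i^*=\Lambda^*_{\sigma(i)}+\tfrac{r-1}{k+1}\bigl(\sigma(i)-i\bigr)$ into information about the displacement $\sigma(i)-i$, using only two inputs: that the parts of $\Lambda^*$ and $\Gamma^*$ are integers, and that $\Lambda^*$ is $(k+1,r,N)$-admissible. Write $t=\tfrac{r-1}{k+1}$. The first and decisive step is an integrality observation: since $\Gamma_i^*$ and $\Lambda^*_{\sigma(i)}$ are integers, so is $t\bigl(\sigma(i)-i\bigr)$, and because $\gcd(k+1,r-1)=1$ this forces $(k+1)\mid\bigl(\sigma(i)-i\bigr)$. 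Hence I may write $\sigma(i)-i=\rho_i(k+1)$ with $\rho_i\in\mathbb Z$, and the identity rearranges to $\Gamma_i^*-\Lambda_i^*=\bigl(\Lambda^*_{\sigma(i)}-\Lambda_i^*\bigr)+\rho_i(r-1)$. It is precisely this congruence that excludes the ``small'' fractional displacements and drives the whole lemma.

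Next I record the admissibility estimate. By Lemma~\ref{leminequality1} the partition $\Lambda^*$ is $(k+1,r,N)$-admissible, so $\Lambda^*_a-\Lambda^*_{a+(k+1)}\ge r$; telescoping gives $\Lambda^*_a-\Lambda^*_{a+\rho(k+1)}\ge\rho r$ for every $\rho\ge 0$ (indices in range). Substituting into the rearranged identity yields the trichotomy. If $\rho_i\ge 1$ then $\Lambda^*_{\sigma(i)}-\Lambda_i^*=-\bigl(\Lambda_i^*-\Lambda^*_{i+\rho_i(k+1)}\bigr)\le-\rho_i r$, so $\Gamma_i^*-\Lambda_i^*\le-\rho_i r+\rho_i(r-1)=-\rho_i<0$; if $\rho_i\le-1$ the same computation gives $\Gamma_i^*-\Lambda_i^*\ge|\rho_i|>0$; and if $\rho_i=0$ then $\Gamma_i^*=\Lambda_i^*$. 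Since the three cases $\sigma(i)<i$, $\sigma(i)=i$, $\sigma(i)>i$ are exhaustive and mutually exclusive and produce the three mutually exclusive conclusions on the sign of $\Gamma_i^*-\Lambda_i^*$, the three equivalences of the first display follow immediately; as a by-product one also obtains the quantitative bound $|\rho_i|\le|\Gamma_i^*-\Lambda_i^*|$.

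For the ``moreover'' part I would treat the first case, $\Lambda_i^*<\Gamma_i^*$ together with $\Lambda_{i-1}^*\ge\Gamma_{i-1}^*$; the second case is entirely parallel, using $\Gamma_i^*\ge\Gamma_{i+1}^*$ in place of $\Gamma_{i-1}^*\ge\Gamma_i^*$. By the trichotomy we may write $\sigma(i)=i-\rho(k+1)$ with $\rho\ge 1$ and $\sigma(i-1)=i-1+\rho'(k+1)$ with $\rho'\ge 0$, and the goal is $\rho=1$. Feeding the defining identity into the single partition inequality $\Gamma_{i-1}^*\ge\Gamma_i^*$ gives $\Lambda^*_{\sigma(i)}-\Lambda^*_{\sigma(i-1)}\le(r-1)(\rho+\rho')$, whereas applying the admissibility estimate to the index gap $\sigma(i-1)-\sigma(i)=(\rho+\rho')(k+1)-1$ gives $\Lambda^*_{\sigma(i)}-\Lambda^*_{\sigma(i-1)}\ge(\rho+\rho'-1)r$. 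Combining the two inequalities forces $\rho+\rho'\le r$.

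The main obstacle is that this single comparison controls only the \emph{total} displacement $\rho+\rho'$ and does not by itself isolate $\rho=1$. To finish I would make $\sigma$ fully explicit through the ``height'' functions $L_j=\Lambda_j^*+tj$ and $G_i=\Gamma_i^*+ti$, for which $G_i=L_{\sigma(i)}$: the estimate $L_a-L_{a+(k+1)}\ge 1$ shows the $L_j$ are pairwise distinct, so $\sigma$ is the unique value-matching bijection and it permutes each residue class modulo $k+1$ among itself. Assuming $\rho\ge 2$, the value $L_{i-(k+1)}$ is then forced onto some other position, and I would run the chain of partition inequalities of $\Gamma^*$ across the window $\{\sigma(i),\dots,i\}$: each such inequality, squeezed against the corresponding admissibility inequality for $\Lambda^*$, collapses to an equality, and propagating these forced equalities produces a violation of admissibility (this is exactly what happens in small examples, where $\rho\ge 2$ collapses at once). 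The delicate point, and where I expect the real work to lie, is that the interleaving positions in this window belong to other residue classes, so the full weakly-decreasing condition on $\Gamma^*$ couples the classes and one cannot analyse the class of $i$ in isolation; organizing this rigidity argument, rather than the sign trichotomy, is the crux of the lemma.
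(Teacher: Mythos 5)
Your proof of the sign trichotomy is correct and is essentially the paper's argument: reduce to $\sigma(i)=i+\rho(k+1)$ by integrality and $\gcd(k+1,r-1)=1$, then play the rearranged identity against the $(k+1,r,N)$-admissibility of $\Lambda^*$ (Lemma \ref{leminequality1}). That half needs no changes.

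The ``moreover'' half, however, is not proved, and you say so yourself: the comparison of $\Gamma^*_{i-1}\ge\Gamma^*_i$ with the admissibility of $\Lambda^*$ yields only $\rho+\rho'\le r$, and the concluding ``rigidity'' paragraph is a plan rather than an argument. The missing idea is that you are working exclusively with the $(k+1,r,N)$-admissibility of $\Lambda^*$, which is a strictly weaker consequence of the hypothesis than what the lemma actually needs. The weak admissibility of $\Lambda$ itself, iterated as in Lemma \ref{leminequality2}, gives $\Lambda^{\circledast}_{i+1}-\Lambda^*_{i+\rho(k+1)}\ge\rho r$ --- note the circled diagram and the shift to row $i+1$. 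Over the same index gap your estimate gives only $\Lambda^*_{i+1}-\Lambda^*_{i+\rho(k+1)}\ge(\rho-1)r$, which is weaker by essentially a full $r$, and that difference is the whole point. Concretely, in the case $\Lambda^*_i>\Gamma^*_i$ and $\Lambda^*_{i+1}\le\Gamma^*_{i+1}$ (your case is the mirror image): write $\Gamma^*_i=\Lambda^*_{i+\rho(k+1)}+\rho(r-1)$ with $\rho\ge1$, $\Gamma^*_{i+1}=\Lambda^*_{i+1}+\delta$ with $\delta\ge0$, and $\Lambda^*_{i+1}=\Lambda^{\circledast}_{i+1}-\epsilon$ with $\epsilon\in\{0,1\}$. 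The single monotonicity inequality $\Gamma^*_{i+1}\le\Gamma^*_i$ then reads $\Lambda^{\circledast}_{i+1}-\Lambda^*_{i+\rho(k+1)}\le\rho(r-1)-\delta+\epsilon$, and Lemma \ref{leminequality2} forces $\rho r\le\rho(r-1)-\delta+\epsilon$, i.e.\ $\rho+\delta\le\epsilon\le1$, whence $\rho=1$, $\delta=0$, $\epsilon=1$ in one line. Your route, which never looks at where the circles sit (i.e.\ at the relation between $\Lambda^{\circledast}$ and $\Lambda^*$), cannot extract this unit of slack, and the residue-class/height-function machinery you sketch does not substitute for it as written; to complete the lemma you should import Lemma \ref{leminequality2} at exactly this step.
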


\begin{proof}

Obviously, the equality   
$\Gamma_i^*=\La^*_{\sigma(i)}+\frac{r-1}{k+1}(\sigma(i)-i)$ holds 
only if there is $\rho \in \mathbb{Z}$ such that $\sigma(i)=i+\rho(k+1)$. 

First, we assume that $\La_i^*=\Ga_i^*$. Then, $\La_i^*=\La^*_{i\pm\rho(k+1)}\pm\rho(r-1)$ for some $\rho\geq0$.  Lemma \ref{leminequality1} implies however that $\La_i^*-\La^*_{i+\rho(k+1)}\geq \rho r$ and $\La^*_{i-\rho(k+1)}-\La_i^*\geq \rho r$.  Combining the last relations, we get $\rho(r-1)\geq \rho r$, which implies $\rho=0$.  Consequently, $\La_i^*=\Ga_i^*$ only if $\sigma(i)=i$.

Next, we assume that $\La_i^*>\Ga_i^*$.   We have three possible cases:  
\begin{enumerate}	
	\item $\sigma(i)=i$.  This implies  that $\La_i^*=\Ga_i^*$, which  contradicts our assumption.
	\item  $\sigma(i)=i-\rho(k+1)$ for some positive integer $\rho$.   We then have
	$\La_i^*>\La^*_{i-\rho(k+1)}-\rho(r-1)$.  
	%This implies that $\rho(r-1)>\La^*_{i-\rho(k+1)}-\La_i^*$.  
	However, according to Lemma \ref{leminequality2} ,we have $\La^\cd_{i-\rho(k+1)}-\La_i^*\geq \rho r$, so that $\La^*_{i-\rho(k+1)}-\La_i^*\geq \rho r-1$.  Combining these equations, we get $\rho(r-1)>\rho r-1$, which contradicts the fact that $\rho\geq 1$.
	\item $\sigma(i)=i+\rho(k+1)$ for some positive integer $\rho$.  In this case, we do not obtain a contradiction. Hence,   $\sigma(i)>i$.	
\end{enumerate}

Similar arguments can be used to prove that if $\La_i^*<\Ga_i^*$, then  $\sigma(i)<i$.

To prove the second part of proposition, we  suppose that $\La_i^*>\Ga_i^*$ while  $\La_{i+1}^*\leq\Ga_{i+1}^*$.  Now, we know that $\Ga_{i+1}^* \leq\Ga_{i}^*$, where $\Ga_{i+1}^*=\La_{i+1}^*+\delta$ for some $\delta\geq 0$, and  $\Ga_{i}^*=\La^*_{i+\rho(k+1)}+\rho(r-1)$ for some $\rho \in \mathbb{Z}_{+}$.  Combining these inequalities, we get  $\La_{i+1}^*+\delta\leq \La^*_{i+\rho(k+1)}+\rho(r-1)$.  However,     $\La_{i+1}^*=\La_{i+1}^\cd-\epsilon$ where $\epsilon=0, 1$. Thus,  $\La_{i+1}^\cd-\La^*_{i+\rho(k+1)}\leq \rho(r-1)-\delta+\epsilon$.  By making use of Lemma \ref{leminequality2}, we get $\rho r\leq \rho(r-1)-\delta+\epsilon$, which implies that $\epsilon=1$, $\delta=0$ and $\rho=1$.  Therefore,  $\La_i^*>\Ga_i^*$ and  $\La_{i+1}^*\leq\Ga_{i+1}^*$ imply  $\sigma(i)=i-k-1$. The case where $\La_i^*<\Ga_i^*$ and $\La_{i+1}^*\geq\Ga_{i+1}^*$ is proved analogously.  
%Analogously, shows that if $\La_i^*<\Ga_i^*$ then $\sigma(i)<i$  ie  $\Ga_i^*=\La^*_{i-\overline{\rho}(k+1)}-\overline{\rho}(r-1)$ for some $\overline{\rho} \in \mathbb{Z}_{-}$.
%Now we will suppose $\La_i^*<\Ga_i^*$ and also we will require $\La_{i-1}^*\geq\Ga_{i-1}^*$ which is equivalent to say $\Ga_{i-1}^*=\La_{i-1}^*-\delta$ for some $\delta\geq 0$. We know that $\Ga_{i-1}^*\geq \Ga_{i}^*$, combining these equations we get $\overline{\rho}(r-1)-\delta\geq \La^*_{i-\overline{\rho}(k+1)}-\La_{i-1}^*$, but by definition $\La^*_{i-\overline{\rho}(k+1)}=\La^\cd_{i-\overline{\rho}(k+1)}-\overline{\epsilon}$ with $\overline{\epsilon}=0, 1$. Replacing the last equation and using lemma \ref{leminequality} we obtain $\overline{\rho}(r-1)-\delta+\epsilon\geq \La^\cd_{i-\overline{\rho}(k+1)}-\La_{i-1}^*\geq \overline{\rho}r$ we concluded that  $\epsilon=1$, $\delta=0$ and $\overline{\rho}=1$, which finally implies  $\sigma(i)=i+k+1$.
\end{proof}

%\begin{lem} {\color{blue} Let $\Lambda$ be $(k,r,N)$-admissible and let $\Gamma$ satisfy $$  \Gamma_i^\cd=\La^\cd_{\sigma(i)}+\frac{r-1}{k+1}(\sigma(i)-i)$$ for some $\sigma\in S_N$.  %Then, 
%$$ \sigma(i)=\begin{cases}i-k-1& \text{if}\quad  \La_i^\cd<\Ga_i^\cd \quad\text{and}\quad \La_{i-1}^\cd\geq\Ga_{i-1}^\cd\\ i& \text{if}\quad \La_i^\cd=\Ga_i^\cd \\ i+k+1& \text{if}\quad  \La_i^\cd>\Ga_i^\cd \quad\text{and}\quad \La_{i+1}^\cd\leq\Ga_{i+1}^\cd.\end{cases}$$} 
%\end{lem}
%\begin{proof}
%\end{proof}

\begin{lem} \label{lemabase2}
Let $\Lambda$ be moderately or strongly $(k,r,N)$-admissible.  Suppose that for some $\omega\in S_N$, the superpartition $\Gamma$ satisfies $$\Gamma_i^\cd=\La^\cd_{\omega(i)}+\frac{r-1}{k+1}(\omega(i)-i),%\qquad \Gamma_i^\cd=\La^\cd_{\sigma(i)}+\frac{r-1}{2}(\sigma(i)-i)$$ 
$$   Then,
$$ \La_i^\cd<\Ga_i^\cd\,\Longrightarrow \,\omega(i)<i \, ,  \qquad  \La_i^\cd=\Ga_i^\cd\,\Longrightarrow \,\omega(i)=i\, , \qquad  \La_i^\cd>\Ga_i^\cd\,\Longrightarrow \,\omega(i)>i.$$
Moreover,
$$ \omega(i)=\begin{cases}i-k-1& \text{if}\quad  \La_i^\cd<\Ga_i^\cd \quad\text{and}\quad \La_{i-1}^*\geq\Ga_{i-1}^* \\ 
i+k+1& \text{if}\quad  \La_i^\cd>\Ga_i^\cd \quad\text{and}\quad \La_{i+1}^*\leq\Ga_{i+1}^*.\end{cases}$$ 
\end{lem}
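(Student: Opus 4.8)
The plan is to mirror the proof of Lemma \ref{lemabase} almost verbatim, interchanging the roles of $\La^*$ and $\La^\cd$ and replacing weak admissibility by moderate admissibility (which is also satisfied whenever $\Lambda$ is strongly admissible). The essential preliminary observation is that moderate admissibility says precisely that $\La^\cd$ satisfies $\La^\cd_j-\La^\cd_{j+k}\geq r$, i.e.\ $\La^\cd$ is itself a $(k,r,N)$-admissible partition. Telescoping this over blocks of size $k+1$ and using that $\La^\cd$ is decreasing (so $\La^\cd_j-\La^\cd_{j+k+1}\geq\La^\cd_j-\La^\cd_{j+k}\geq r$) yields the pure-$\cd$ analogue of Lemma \ref{leminequality2} that I will use throughout:
$$\La^\cd_i-\La^\cd_{i+\rho(k+1)}\geq \rho r\qquad(\rho\in\mathbb{Z}_+),$$
together with its index-shifted form $\La^\cd_{i-\rho(k+1)}-\La^\cd_i\geq\rho r$. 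I will also record, exactly as in Lemma \ref{lemabase}, that the defining relation $\Ga_i^\cd=\La^\cd_{\omega(i)}+\frac{r-1}{k+1}(\omega(i)-i)$ can hold only if $\frac{r-1}{k+1}(\omega(i)-i)$ is an integer; since $\gcd(k+1,r-1)=1$, this forces $\omega(i)=i+\rho(k+1)$ for some $\rho\in\mathbb{Z}$.

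First I would prove the three implications on the first line by a sign analysis. If $\La_i^\cd=\Ga_i^\cd$ and $\omega(i)=i+\rho(k+1)$ with $\rho\neq0$, then $\La^\cd_i-\La^\cd_{i+\rho(k+1)}=\rho(r-1)$ (or its reverse), which contradicts the telescoping bound $\geq|\rho|r$; hence $\rho=0$ and $\omega(i)=i$. If $\La_i^\cd>\Ga_i^\cd$, the value $\omega(i)=i$ is impossible, while $\omega(i)=i-\rho(k+1)$ with $\rho\geq1$ gives $\La^\cd_{i-\rho(k+1)}-\La_i^\cd<\rho(r-1)$, again contradicting the telescoping bound; only $\omega(i)=i+\rho(k+1)$ survives, so $\omega(i)>i$. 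The case $\La_i^\cd<\Ga_i^\cd$ is symmetric.

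For the sharper second statement I would argue as in the last part of Lemma \ref{lemabase}, the new feature being that the two hypotheses mix $\cd$ and $*$, so the crux is to pass between them using the superpartition relations $\Ga^*_j\leq\Ga^\cd_j$, $\Ga^\cd_j-\Ga^*_j\leq1$ and $\La^*_j=\La^\cd_j-\epsilon_j$ with $\epsilon_j\in\{0,1\}$. Assume $\La_i^\cd>\Ga_i^\cd$ and $\La_{i+1}^*\leq\Ga_{i+1}^*$; by the first part $\omega(i)=i+\rho(k+1)$ with $\rho\geq1$ and $\Ga_i^\cd=\La^\cd_{i+\rho(k+1)}+\rho(r-1)$. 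Writing $\Ga_{i+1}^*=\La_{i+1}^*+\delta$ with $\delta\geq0$ and chaining $\Ga_{i+1}^*\leq\Ga_{i+1}^\cd\leq\Ga_i^\cd$ together with $\La_{i+1}^*=\La_{i+1}^\cd-\epsilon$, I obtain
$$\La_{i+1}^\cd-\La^\cd_{i+\rho(k+1)}\leq\rho(r-1)-\delta+\epsilon.$$
Comparing with the telescoping lower bound $\La_{i+1}^\cd-\La^\cd_{i+\rho(k+1)}\geq\rho r$ forces $\rho+\delta\leq\epsilon\leq1$, hence $\rho=1$ (with $\delta=0$, $\epsilon=1$) and $\omega(i)=i+k+1$. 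The companion case $\La_i^\cd<\Ga_i^\cd$, $\La_{i-1}^*\geq\Ga_{i-1}^*$ uses the mirror-image chain $\Ga_{i-1}^*\geq\Ga_i^*\geq\Ga_i^\cd-1$, which introduces one extra unit and ends with $\rho+\delta+\epsilon\leq1$, again forcing $\rho=1$ and $\omega(i)=i-k-1$.

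The main obstacle is exactly this final bookkeeping: because the hypotheses pair $\cd$ (the quantity being permuted) with $*$ (the monotonicity tested at the neighboring index), one must thread $\Ga^*\leq\Ga^\cd$, $\Ga^\cd-\Ga^*\leq1$ and $\La^\cd-\La^*\in\{0,1\}$ in the correct direction so that the $\pm1$ slack lands on the right side, and then verify that the telescoping bound is strong enough to pin $\rho$ to $1$. This works only because moderate admissibility delivers the full $\rho r$ (rather than $(\rho-1)r$, which a weaker telescoping would give); the first part, by contrast, is a routine sign analysis identical in shape to that of Lemma \ref{lemabase}.
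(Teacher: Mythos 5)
Your proposal is correct and follows exactly the route the paper intends: the paper's own proof of Lemma \ref{lemabase2} consists of the single remark that one repeats the argument of Lemma \ref{lemabase}, and you have carried out precisely that adaptation, correctly supplying the pure-$\circledast$ telescoping bound from moderate admissibility (which strong admissibility also provides) and threading the $\pm1$ slack between $\Gamma^*$, $\Gamma^\circledast$ and $\Lambda^*$, $\Lambda^\circledast$ in the right direction to pin $\rho=1$. No gaps.
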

\begin{proof}  One essentially follows the same steps as in the  proof of Lemma \ref{lemabase}.
\end{proof}

\begin{lem}\label{sigmaomega}
 {Let $\Lambda$ be a $(k,r,N)$-admissible superpartition and let $\Gamma$ satisfy
$$\Gamma_i^*=\La^*_{\sigma(i)}+\frac{r-1}{k+1}(\sigma(i)-i),\qquad \Gamma_i^\cd=\La^\cd_{\omega(i)}+\frac{r-1}{k+1}(\omega(i)-i)$$ 
for some $\sigma,\omega \in S_{N}$. Then, $\sigma=\omega$.}
\end{lem}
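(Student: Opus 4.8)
The plan is to reduce everything to a sorting problem inside each residue class modulo $k+1$, after which the coincidence $\sigma=\omega$ becomes an elementary consequence of the admissibility gaps being at least $r$. First I would note that $\Ga_i^*-\La^*_{\sigma(i)}=\tfrac{r-1}{k+1}(\sigma(i)-i)$ is an integer, so $\gcd(k+1,r-1)=1$ forces $(k+1)\mid(\sigma(i)-i)$, and likewise $(k+1)\mid(\omega(i)-i)$. Thus both $\sigma$ and $\omega$ stabilise each residue class $\{c,c+(k+1),c+2(k+1),\dots\}\cap\{1,\dots,N\}$, and it is enough to prove $\sigma=\omega$ on a single class. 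I would also record at the outset that in either admissibility case $\La^*$ and $\La^\cd$ are $(k+1,r,N)$-admissible: for strict, weakly admissible $\La$ this is Lemma \ref{leminequality1}, while for strongly admissible $\La$ the statement for $\La^*$ is built into the definition and the one for $\La^\cd$ follows from $\La^\cd_i-\La^\cd_{i+k+1}\ge \La^\cd_i-\La^\cd_{i+k}\ge r$.

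Next I would fix a class, relabel its positions $1,\dots,L$, and denote by $A_j,A^\cd_j$ and $G_j,G^\cd_j$ the corresponding entries of $\La^*,\La^\cd$ and $\Ga^*,\Ga^\cd$. Admissibility yields $A_j-A_{j+1}\ge r$ and $A^\cd_j-A^\cd_{j+1}\ge r$ throughout the class. Let $\pi,\pi'$ be the restrictions of $\sigma,\omega$. Adding $j(r-1)$ to both defining relations turns them into
\[
H_j:=G_j+j(r-1)=A_{\pi(j)}+\pi(j)(r-1)=h_{\pi(j)},\qquad H^\cd_j:=G^\cd_j+j(r-1)=h^\cd_{\pi'(j)},
\]
where $h_l:=A_l+l(r-1)$ and $h^\cd_l:=A^\cd_l+l(r-1)$. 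The gap bounds make $l\mapsto h_l$ and $l\mapsto h^\cd_l$ strictly decreasing, hence injective; consequently the $H_j$ (being $\{h_l\}$ reindexed) are pairwise distinct, as are the $H^\cd_j$, and $\pi$ (resp.\ $\pi'$) is precisely the permutation sorting the $H_j$ (resp.\ $H^\cd_j$) into decreasing order, so that $\pi(j)<\pi(j')\Leftrightarrow H_j>H_{j'}$ and similarly for $\pi'$.

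The decisive step is to show that these two orderings agree. Since $\Gamma$ is a superpartition, $c_j:=H^\cd_j-H_j=G^\cd_j-G_j\in\{0,1\}$. If $H_j>H_{j'}$, then $H_j-H_{j'}\ge1$ as integers, so $H^\cd_j-H^\cd_{j'}=(H_j-H_{j'})+(c_j-c_{j'})\ge1-1=0$; distinctness of the $H^\cd$ upgrades this to $H^\cd_j>H^\cd_{j'}$. The reverse implication is symmetric, using $H_j-H_{j'}=(H^\cd_j-H^\cd_{j'})-(c_j-c_{j'})$ and distinctness of the $H$. Therefore $H_j>H_{j'}\Leftrightarrow H^\cd_j>H^\cd_{j'}$, and for every $j$
\[
\pi(j)=1+\#\{\,j':H_{j'}>H_j\,\}=1+\#\{\,j':H^\cd_{j'}>H^\cd_j\,\}=\pi'(j).
\]
As the class was arbitrary, this gives $\sigma=\omega$.

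The one subtle point — which I would flag as the crux — is the ``tie'' $H^\cd_j=H^\cd_{j'}$ left open by the crude bound $c_j-c_{j'}\ge-1$; it is killed exactly because the admissibility gap is $r$ rather than merely $r-1$, which is what forces the $h^\cd_l$ (and $h_l$), and hence the $H^\cd_j$ (and $H_j$), to be strictly separated. A more computational alternative would instead combine the sign equivalences $\sigma(i)\gtrless i\Leftrightarrow\La^*_i\gtrless\Ga^*_i$ and $\omega(i)\gtrless i\Leftrightarrow\La^\cd_i\gtrless\Ga^\cd_i$ of Lemmas \ref{lemabase} and \ref{lemabase2} with the relations $\La^\cd-\La^*,\,\Ga^\cd-\Ga^*\in\{0,1\}$ to match first the signs and then, via the refined ``$\pm(k+1)$'' clauses, the magnitudes of $\sigma(i)-i$ and $\omega(i)-i$; but the sorting argument sidesteps the boundary case analysis that route requires.
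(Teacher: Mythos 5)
Your proof is correct, and it takes a genuinely different route from the paper's. Both arguments start identically: integrality of $\Gamma_i^*-\La^*_{\sigma(i)}$ together with $\gcd(k+1,r-1)=1$ forces $\sigma(i)\equiv i$ and $\omega(i)\equiv i \pmod{k+1}$, and both rest on $\La^*$ and $\La^\cd$ being $(k+1,r,N)$-admissible (Lemma \ref{leminequality1} in the strict/weak case, immediate from the definition in the strong case). After that the strategies diverge. The paper first shows, by a case analysis on the sign of $\omega(i)-\sigma(i)$ combined with $0\le\Ga_i^\cd-\Ga_i^*\le 1$, that any mismatch must be exactly $\omega(i)=\sigma(i)\pm(k+1)$ together with a list of rigid structural consequences; it then runs a propagation argument, showing that a single mismatch forces an unbounded ascending chain $\sigma(i)+n(k+1)$ of further mismatches (impossible inside $\{1,\ldots,N\}$) or a configuration contradicting Lemma \ref{leminequality1}. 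You instead work one residue class modulo $k+1$ at a time and observe that, in class coordinates, the defining relations exhibit $\sigma$ and $\omega$ as the sorting permutations of the two integer sequences $H_j$ and $H_j^\cd$ obtained by adding $j(r-1)$ to the class-restricted entries of $\Ga^*$ and $\Ga^\cd$; these sequences have pairwise distinct terms because the admissibility gap $\ge r$ strictly exceeds the shift $r-1$, and they differ entrywise by $\Ga^\cd-\Ga^*\in\{0,1\}$, which forces the two total orders, and hence the two sorting permutations, to coincide. Your argument is shorter, avoids the chain analysis entirely, and isolates exactly where the gap $r$ versus $r-1$ enters; the paper's argument is longer but records along the way the precise local structure of a hypothetical mismatch (by-products that are, in fact, not used elsewhere). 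Both proofs are complete and cover both admissibility cases encompassed by the unqualified term ``$(k,r,N)$-admissible.''
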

\begin{proof}
{The cases for which  $\La$ is a strict and weakly $(k,r,N)$-admissible superpartition or for which $\La$ is strongly $(k,r,N)$-admissible superpartition are almost identical, so we only prove the first.}
We deduce from the hypothesis that $\sigma(i)\equiv i \mod (k+1)$ and $\omega(i)\equiv i  \mod (k+1)$, so that $\omega(i)=\sigma(i)+t(k+1)$ for some $t \in \mathbb{Z}$.

First, we suppose that $\sigma(i)<\omega(i)$, which implies that $\omega(i)=\sigma(i)+t(k+1)$ for some $t \in \mathbb{Z}_{+}$. Then,
%$$\Gamma_i^\cd=\La^\cd_{\sigma(i)+t(k+1)}+\frac{r-1}{k+1}(\sigma(i)+t(k+1)-i), $$
%and so
$$\Gamma_i^\cd-\Gamma_i^*=\La^\cd_{\sigma(i)+t(k+1)}-\La^*_{\sigma(i)}+ t(r-1).$$
By Lemma \ref{leminequality1}, we know that $\Lambda^*$ is $(k+1,r,N)$-admissible, which means that $\La^*_{\sigma(i)}-\La^*_{\sigma(i)+t(k+1)}\geq tr$ and  $\La^*_{\sigma(i)}-\La^\cd_{\sigma(i)+t(k+1)}\geq tr-1$.  
Combining the inequalities previously obtained, we get $$0\leq \Gamma_i^\cd-\Gamma_i^*\leq 1-tr+t(r-1)=1-t.$$ This inequality is possible only if $t=1$. We have thus shown that
$$ (i)\quad \Ga_i^\cd=\Ga_{i}^* \qquad(ii)\quad \omega(i)=\sigma(i)+k+1 \qquad (iii)\quad \Lambda_{\sigma(i)}^*-\Lambda_{\omega(i)}^\cd=r-1 .$$
Note that if $\La_{\sigma(i)}^\cd=\La_{\sigma(i)}^*$, then  $\La_{\sigma(i)}^\cd-\La_{\sigma(i)+k+1}^\cd=r-1\geq r$, which is a contradiction. Similarly, one gets a contradiction by supposing $\La_{\omega(i)}^\cd=\La_{\omega(i)}^*$.  Thus, we also have
$$ (iv)\quad \La_{\sigma(i)}^\cd=\La_{\sigma(i)}^*+1  \qquad (v)\quad \La_{\omega(i)}^\cd=\La_{\omega(i)}^*+1.$$

Second, we suppose that $\sigma(i)>\omega(i)$, which implies that $\sigma(i)=\omega(i)+t(k+1)$ for some $t \in \mathbb{Z}_{+}$. Then 
$$\Gamma_i^\cd-\Gamma_i^*=\La^\cd_{\omega(i)}-\La^*_{\omega(i)+t(k+1)}- t(r-1).$$ 
By Lemma \ref{leminequality2} we know that $\La^\cd_{\omega(i)}-\La^*_{\omega(i)+t(k+1)}\geq tr$, so that
$$1\geq \Gamma_i^\cd-\Gamma_i^*\geq tr-t(r-1)=t.$$
The latter inequality holds only if $t=1$. We have thus proved that
$$ (vi)\quad \Ga_i^\cd=\Ga_{i}^*+1 \qquad(vii)\quad \sigma(i)=\omega(i)+k+1 \qquad (viii)\quad \Lambda_{\omega(i)}^\cd-\Lambda_{\sigma(i)}^*=r . $$
Moreover, we deduce from (vi) and the admissibility condition, that
$$ (ix)\quad \La_{\omega(i)}^\cd=\La_{\omega(i)}^*\qquad (x) \quad\La_{\sigma(i)}^\cd=\La_{\sigma(i)}^*.$$

Now, assume that $\sigma$ and $\omega$ do not coincide.  Then, there exists a positive integer  $i$ such that $\omega(i)>\sigma(i)$, which by virtue of the above discussion, implies that $\omega(i)=\sigma(i)+k+1$.  Let $j$ be such that $\omega(i)=\sigma(i)+k+1=\sigma(j)$.  Obviously, $i\neq j$ and  $\sigma(j)\neq \omega(j)$.  Then,  according to conclusions (ii) and (vii) above, only cases can occur: $\omega(j)=\sigma(i)+k+1 \pm(k+1)$. \begin{itemize}
\item  Suppose that $\omega(j)=\sigma(i)+2(k+1)$ and let $j_2$ be such that $\sigma(j_2)=\sigma(i)+2(k+1)$, so that $j_2\neq j$.  Then, conclusions (ii) and (vii) above imply that  $\omega(j_2)=\sigma(i)+2(k+1)\pm(k+1)$. However, only the case $\omega(j_2)=\sigma(i)+3(k+1)$ is possible, since the equality $\omega(j_2)=\sigma(i)+k+1$  implies the contradiction $j_2=i$. %Note that if $\omega(l)=\sigma(i)+k+1=\omega(i)$, then we must have $i=l$, which is impossible because $\omega(l)<\sigma(l)$). 
Similarly, if  $j_3$ is  such that $\sigma(j_3)=\sigma(i)+3(k+1)$, then $\omega(j_3)=\sigma(i)+4(k+1)$.  Continuing in this way, one eventually finds a positive integer $\ell<N$ such that $\omega(\ell)>N$, which  clearly contradicts the fact that $\omega$ is a permutation of $\{1,\ldots,N\}$.  
\item Suppose that $\omega(j)=\sigma(i)$.  Recall that by definition, $\sigma(j)=\sigma(i)+k+1$.  Hence, $\omega(j)=\sigma(j)-k-1<\sigma(j)$. Conclusion  (viii) above then implies that $\La_{\omega(j)}^\cd-\La_{\sigma(j)}^*=r$, which is equivalent to   $\La_{\sigma(i)}^\cd-\La_{\omega(i)}^*=r$.  However, conclusion (iv) implies that  $\La_{\sigma(i)}^\cd=\La_{\sigma(i)}^*+1$.  Combination of the  last two equations finally leads to 
$$r-1=\La_{\sigma(i)}^*-\La_{\omega(i)}^*=\La_{\sigma(i)}^*-\La_{\sigma(i)+k+1}^*.$$ 
This equation contradicts   Lemma \ref{leminequality1}.
\end{itemize}
Therefore, the permutations $\sigma$ and $\omega$  must coincide, as expected.

\end{proof}

\begin{theorem}[Uniqueness at $\alpha=\alpha_{k,r}$] Let $\La$ be a $(k,r,N)$-admissible superpartition.    Assume moreover that $\alpha=\alpha_{k,r}$. Then, the Jack polynomial with prescribed symmetry, here denoted by $P_\Lambda$,  is the unique polynomial  satisfying: 
\begin{enumerate} \item $\displaystyle P_\Lambda=m_\Lambda + \sum_{\Gamma<\Lambda}c_{\La,\Gamma}m_\Ga,\qquad c_{\La,\Gamma}\in \mathbb{C}$,
\item $  S^*\,P_\La=\varepsilon_{\La^*}(\alpha,u)\,P_\La$ and $S^\cd\, P_\La=\varepsilon_{\La^\cd}(\alpha,u)\, P_\La$.
\end{enumerate}
\end{theorem}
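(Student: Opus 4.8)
The plan is to follow the architecture of the proof of Theorem~\ref{theo1}, but with the generic-$\alpha$ separation of eigenvalues (Lemmas~\ref{lemordereigen} and \ref{lemordereigen2}) replaced by the admissibility analysis carried out in Lemmas~\ref{lemabase}, \ref{lemabase2}, and \ref{sigmaomega}. First I would dispose of existence. Since $\La$ is $(k,r,N)$-admissible, Proposition~\ref{regularityJackPresc} shows that $P_\La$ is regular at $\alpha=\alpha_{k,r}$, so its specialization is a genuine polynomial with coefficients in $\mathbb{C}$; Proposition~\ref{proptriangularity} gives property~(1), and Proposition~\ref{lemsekiguchi2} (taken at $v=u$ and specialized at $\alpha_{k,r}$) gives property~(2). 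Thus $P_\La$ is a solution, and only uniqueness remains.

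For uniqueness I would argue by contradiction, exactly as in Theorem~\ref{theo1}. Suppose $Q_\La$ also satisfies (1) and (2), and set $R=P_\La-Q_\La\neq0$. Both polynomials are monic and triangular with respect to the dominance order, so $R=\sum_{\Ga<\La}b_\Ga\,m_\Ga$; fixing a total order refining dominance, let $m_\Om$ be the maximal monomial occurring in $R$, so that $\Om<\La$. Because $S^*(u)$ and $S^\cd(u,u)$ are symmetric (respectively, separately symmetric in the two blocks) in the Cherednik operators, they commute with $\mathcal{O}_{I,J}$, and the computation behind \eqref{generaltriangular} shows they act triangularly on the monomial basis, with diagonal entries $\varepsilon_{\Om^*}(\alpha,u)$ and $\varepsilon_{\Om^\cd}(\alpha,u)$ as given in \eqref{eigenseki}. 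Since $R$ is an eigenfunction carrying the eigenvalues of $\La$, comparing the coefficient of the maximal monomial $m_\Om$ on the two sides of $S^*(u)R=\varepsilon_{\La^*}(\alpha_{k,r},u)R$ and of $S^\cd(u,u)R=\varepsilon_{\La^\cd}(\alpha_{k,r},u)R$ forces the polynomial identities $\varepsilon_{\Om^*}(\alpha_{k,r},u)=\varepsilon_{\La^*}(\alpha_{k,r},u)$ and $\varepsilon_{\Om^\cd}(\alpha_{k,r},u)=\varepsilon_{\La^\cd}(\alpha_{k,r},u)$.

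I would then translate these two identities into combinatorics. By \eqref{eigenseki}, equality of the $u$-polynomials means equality of the multisets of their roots, so there are $\sigma,\omega\in S_N$ with $\Om_i^*=\La^*_{\sigma(i)}+\tfrac{r-1}{k+1}(\sigma(i)-i)$ and $\Om_i^\cd=\La^\cd_{\omega(i)}+\tfrac{r-1}{k+1}(\omega(i)-i)$, using $\alpha_{k,r}=-(k+1)/(r-1)$. Integrality of $\Om_i^*$ together with $\gcd(k+1,r-1)=1$ gives $\sigma(i)\equiv i$ and $\omega(i)\equiv i\pmod{k+1}$, and Lemma~\ref{sigmaomega} then yields $\sigma=\omega=:\tau$. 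It therefore suffices to prove $\tau=\mathrm{id}$, for then $\Om^*=\La^*$ and $\Om^\cd=\La^\cd$, i.e.\ $\Om=\La$, contradicting $\Om<\La$ and completing the proof.

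The final step is the crux, and I expect it to be the main obstacle. Setting $v_j=\La^*_j+\tfrac{r-1}{k+1}j$, the sequence $v$ is strictly decreasing with consecutive gaps at least $\tfrac{r-1}{k+1}$ (because $\La^*$ is weakly decreasing), while $\Om_i^*=v_{\tau(i)}-\tfrac{r-1}{k+1}i$ must itself be weakly decreasing. A descent of $\tau$ at some $i$ would make $\tau(i)-\tau(i+1)$ a positive integer congruent to $k\pmod{k+1}$, hence at least $k$, which produces a jump of $v$ of size at least $k\cdot\tfrac{r-1}{k+1}$; this destroys monotonicity of $\Om^*$ when $k\geq2$, and when $k=1$ it can survive only if two consecutive parts of $\La^*$ coincide. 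The admissibility inequality $\La^\cd_i-\La^*_{i+1}\geq r$ combined with $\La^\cd_i-\La^*_i\leq1$ gives $\La^*_i-\La^*_{i+1}\geq r-1\geq1$, so equal consecutive parts are impossible, and likewise for $\La^\cd$ using Lemma~\ref{leminequality1}. Hence $\tau$ has no descent and $\tau=\mathrm{id}$. The delicacy here lies precisely in merging the sign and displacement information of Lemmas~\ref{lemabase} and \ref{lemabase2} with the requirement that $\Om^*$ and $\Om^\cd$ be honest partitions; the two admissibility regimes of Definition~\ref{defadm} (strict-and-weak versus moderate/strong) are absorbed by invoking the matching versions of those lemmas.
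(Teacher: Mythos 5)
Your overall architecture is the same as the paper's: existence via regularity plus Propositions \ref{proptriangularity} and \ref{lemsekiguchi2}, and uniqueness by comparing the leading monomial $m_\Om$ of the difference of two solutions, translating the coincidence of the Sekiguchi eigenvalues into the two permutation equations for $\sigma$ and $\omega$, and invoking Lemma \ref{sigmaomega} to get $\sigma=\omega$. Up to that point your proposal is correct and matches the paper. The gap is exactly where you expected it: the final step. First, a sign slip: with $v_j=\La^*_j+\tfrac{r-1}{k+1}\,j$ and $\La^*$ merely weakly decreasing, $v_j-v_{j+1}=\La^*_j-\La^*_{j+1}-\tfrac{r-1}{k+1}$ can be negative, so $v$ is not strictly decreasing with gaps $\geq\tfrac{r-1}{k+1}$. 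More seriously, the descent argument does not close. If $\tau(i)>\tau(i+1)$ then indeed $\tau(i)-\tau(i+1)=\rho(k+1)-1$ for some $\rho\geq1$, and a direct computation gives
\begin{equation*}
\Om^*_i-\Om^*_{i+1}=\La^*_{\tau(i)}-\La^*_{\tau(i+1)}+\rho(r-1)\leq r-\rho ,
\end{equation*}
using Lemma \ref{leminequality1}. For the minimal descent $\rho=1$ this bound is $r-1>0$, so monotonicity of $\Om^*$ is \emph{not} violated for $k\geq 2$, contrary to your claim. For $k=1$ the same computation shows a descent survives precisely when $\La^*_a-\La^*_{a+1}=r-1$ (which forces a circle in row $a$ and is allowed by weak admissibility), not when two consecutive parts of $\La^*$ coincide; so your dismissal of the $k=1$ case is also wrong.

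What actually makes the paper's proof work, and what is missing from yours, is the interplay between $\Ga^*$ and $\Ga^\cd$ together with the structure of the competing superpartition. The paper splits into $\Ga^*=\La^*$ (where Lemma \ref{lemabase} forces $\sigma=\mathrm{id}$, hence $\omega=\mathrm{id}$ and $\Ga^\cd=\La^\cd$) and $\Ga^*<\La^*$; in the latter case it takes the \emph{first} row $j$ with $\Ga^*_j>\La^*_j$, uses Lemma \ref{lemabase} to pin $\sigma(j)=j-k-1$ and to force $\Ga^\cd_j=\Ga^*_j+1$, $\Ga^*_{j-1}=\La^*_{j-1}$, $\La^\cd_{j-k-1}=\La^*_{j-k-1}+1$, and then derives the contradiction from $\Ga^*_{j-1}\geq\Ga^\cd_j=\La^\cd_{j-k-1}-r+1$ (using strictness of $\Ga$ in the weak regime, or the circled inequality in the strong regime) against the admissibility bound $\La^\cd_{j-k-1}-\La^*_{j-1}\geq r$ of Lemma \ref{leminequality2}. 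A monotonicity argument on $\Om^*$ alone cannot reproduce this; you need the circled diagram, the minimality of $j$, and the specific admissibility inequality to finish.
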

\begin{proof}Proceeding as in Theorem \ref{theo1}, we know that there are more than one polynomials satisfying (1) and (2)  only if we can find a superpartition of type $\mathrm{T}$, say $\Ga$, such that   $\La>\Ga$,  $ \varepsilon_{\Ga^*}(\alpha,u)=\varepsilon_{\La^*}(\alpha,u) $, and $\varepsilon_{\Ga^\cd}(\alpha,u)=\varepsilon_{\La^\cd}(\alpha,u)$.  Consequently, in order to prove the uniqueness, it is sufficient to show that if $\Ga<\La$, then 
$ \varepsilon_{\Ga^*}(\alpha,u)\neq \varepsilon_{\La^*}(\alpha,u) $ or $\varepsilon_{\Ga^\cd}(\alpha,u)\neq \varepsilon_{\La^\cd}(\alpha,u)$.

 Let us  assume that we are given a superpartition $\Gamma<\La$ such that $ \varepsilon_{\Ga^*}(\alpha,u)=\varepsilon_{\La^*}(\alpha,u) $ and $\varepsilon_{\Ga^\cd}(\alpha,u)= \varepsilon_{\La^\cd}(\alpha,u)$. Obviously, the last two equality holds if and only if  there are $\sigma,\omega \in S_{N}$ such that \begin{equation} \label{eqsigmaomega} \Gamma_i^*=\La^*_{\sigma(i)}+\frac{r-1}{k+1}(\sigma(i)-i),\qquad \Gamma_i^\cd=\La^\cd_{\omega(i)}+\frac{r-1}{k+1}(\omega(i)-i)\qquad \forall \, i.\end{equation}
 %In order to simplify the   proof, we will subdivide it into two parts: weak and strong admissible cases.\\

%\noindent \emph{Part I:  $\La$ is strict and weakly $(k,r,N)$-admissible.}

  According to Lemma \ref{sigmaomega}, Equation  \eqref{eqsigmaomega} holds only if $\sigma=\omega$.  Now, we recall that by hypothesis, either $\Ga^*<\La^*$ or $\Ga^*=\La^*$ and $\Ga^\cd<\La^\cd$.   Only the former case is nontrivial however.  Indeed, Lemma \ref{lemabase} implies that if $\La_i^*=\Ga_i^*$ for all $i$, then $\sigma$ is  the identity, and so is $\omega$.   In short, whenever Equation \eqref{eqsigmaomega}  and $\Ga^*=\La^*$ hold, we have $\Ga^\cd=\La^\cd$, which is in contradiction with $\Ga^\cd<\La^\cd$.  Thus, we must assume that  $\Ga^*<\La^*$, which implies that there exist  integers $j>1$ and $\epsilon>0$ such that \begin{equation}\label{defj}\Ga^*_j=\La^*_j+\epsilon\quad\text{and}\quad  \Ga^*_i\leq \La^*_i,\quad \forall\, i<j\,.\end{equation}
As a consequence of \eqref{eqsigmaomega} and Lemma \ref{sigmaomega}, there is a permutation  $\sigma$ such that $\sigma(j)\neq j$, 
\begin{equation}\label{sigmaj}\Gamma_j^*=\La^*_{\sigma(j)}+\frac{r-1}{k+1}(\sigma(j)-j),\qquad \Gamma_j^\cd=\La^\cd_{\sigma(j)}+\frac{r-1}{k+1}(\sigma(j)-j), \end{equation}
which is possible only if $\sigma(j)=j \mod(k+1)$.  
\begin{enumerate}
\item If $\sigma(j)=j+\rho(k+1)$ for some positive integer $\rho$, then  $\Ga_j^*=\La^*_j+\epsilon=\La^*_{j+\rho(k+1)}+\rho(r-1)$.  However, the latter equation contradicts  the hypothesis $\epsilon>0$ and Lemma \ref{leminequality2}, according to which $\La^*_j-\La^*_{j+\rho(k+1)}\geq \rho r-1$.   
\item If $\sigma(j)=j-\rho(k+1)$ for some positive integer $\rho$, then $\Ga_j^*=\La^*_{j-\rho(k+1)}-\rho(r-1)$.   Moreover,  we know that $\Ga^*_{j-1}=\La^*_{j-1}-\delta$, for some $\delta\geq0$, and that $\Ga^*_{j-1} \geq \Ga^*_j$.  Combining these equations, we get $\rho(r-1)\geq\delta+ \La^*_{j-\rho(k+1)}-\La^*_{j-1}$.  But by definition, $\La_{j-\rho(k+1)}^*=\La_{j-\rho(k+1)}^\cd-\tilde\epsilon $, where  $\tilde \epsilon=0,1$.   The use of Lemma \ref{leminequality2}  then leads to $\rho(r-1)\geq \delta+\rho r-\tilde\epsilon$.  Hence $\delta=0$, $\tilde\epsilon=1$, and $\rho=1$. In short, we have shown that
$$ \qquad \Ga^*_j=\La^*_{j-k-1}-r+1, \quad \Ga^\cd_j=\La^\cd_{j-k-1}-r+1, \quad  \Ga^*_{j-1}=\La_{j-1}^*,\quad \Ga^\cd_{j-1}=\La_{j-1}^\cd,\quad \La^\cd_{j-k-1}=\La^*_{j-k-1}+1.$$
{Now, if $\La$ is strict and weakly $(k,r;N)$-admissible, then $\Ga^\cd_j=\Ga^*_j+1$ implies $\Ga^*_{j-1}=\La_{j-1}^*\geq \Ga^\cd_j$. Combining the previous equations, we get $\La_{j-1}^*\geq \La^\cd_{j-k-1}-r+1$, which contradicts the weak admissibility condition. 

On the other hand, if $\La$ is strongly $(k,r;N)$-admissible, then $\Ga^\cd_{j-1}=\La_{j-1}^\cd\geq \Ga^\cd_j$ implies $\La^\cd_{j-1}\geq \La^\cd_{j-k-1}-r+1$, which contradicts the strong admissibility condition. 
}
\end{enumerate}
Therefore, whenever $\La$ is $(k,r,N)$-admissible, we cannot find a superpartition $\Ga<\La$ such that  $ \varepsilon_{\Ga^*}(\alpha,u)=\varepsilon_{\La^*}(\alpha,u) $ and $\varepsilon_{\Ga^\cd}(\alpha,u)=\varepsilon_{\La^\cd}(\alpha,u)$.

\end{proof}

\subsection{Uniqueness for non-symmetric Jack polynomials}

\begin{definition}\label{defadmcompo} Let $\lambda=(\lambda_1,\ldots,\lambda_N)$ be a composition and let $\Lambda=\varphi_m(\la)$ be its associated superpartition.  
We say that $\lambda$ is  weakly, moderately, or strongly $(k,r,N|m)$-admissible if and only if $\La$ is respectively  weakly, moderately,  or strongly $(k,r,N)$-admissible.  
\end{definition}

\begin{theorem}[Uniqueness for $k=1$: weak admissibility]\label{jacknonsymk1weak} Let $\lambda=(\eta_1,\ldots,\eta_m,\mu_1,\ldots,\mu_{N-m})$ be a composition formed by the concatenation of the partitions $\eta=(\eta_1,\ldots,\eta_m)$ and $\mu=(\mu_1,\ldots,\mu_{N-m})$.  Assume that $\lambda$ is weakly $(1,r,N|m)$-admissible and $\eta$ is strictly decreasing.   Assume moreover that $\alpha=\alpha_{1,r}$.   Then, the non-symmetric Jack polynomial $E_\lambda$ is the unique polynomial satisfying: 
\begin{enumerate} \item $\displaystyle E_\lambda=x^\lambda + \sum_{\gamma \prec\lambda}c_{\la,\gamma}x^\gamma,\qquad c_{\la,\gamma}\in \mathbb{C}$,
\item $\displaystyle \xi_i\, E_\la=\bar\la_i \, E_\la \quad \forall \, 1\leq i\leq N$,
\end{enumerate}
where the $\bar \la_i$'s denote the eigenvalues introduced in (A2') and \eqref{eqeigennonsym}.

\end{theorem}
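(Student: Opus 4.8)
Because $\lambda=(\lambda^{+},\mu^{+})$ is the concatenation of two partitions and its associated superpartition $\Lambda=\varphi_m(\lambda)$ is strict and weakly $(1,r,N)$-admissible, Lemma~\ref{regjnonsym} ensures that $E_\lambda(x;\alpha)$ has no pole at $\alpha=\alpha_{1,r}$. Hence its coefficients specialize to elements of $\mathbb{C}$, and conditions (1) and (2) are exactly the defining properties (A1') and (A2') read at $\alpha=\alpha_{1,r}$. It therefore remains to prove that $E_\lambda$ is the only such polynomial.

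\textbf{Reduction to a coincidence of eigenvalues.} Suppose $Q$ is a second polynomial obeying (1) and (2). Then $Q-E_\lambda=\sum_{\gamma\prec\lambda}d_\gamma x^\gamma$ is nonzero and is still a simultaneous eigenfunction of every $\xi_i$ with eigenvalue $\bar\lambda_i$. Pick $\gamma$ maximal, for a total order refining $\prec$, among those with $d_\gamma\neq0$; using the triangularity $\xi_i x^\gamma=\bar\gamma_i x^\gamma+(\text{lower terms})$ recorded in the proof of Theorem~\ref{theo1}, the coefficient of $x^\gamma$ on both sides of $\xi_i(Q-E_\lambda)=\bar\lambda_i(Q-E_\lambda)$ gives $\bar\gamma_i=\bar\lambda_i$ for all $i$. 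Thus it suffices to show that no composition $\gamma\prec\lambda$ satisfies $\bar\gamma_i=\bar\lambda_i$ for every $i$.

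\textbf{Passing to superpartitions.} Assume such a $\gamma$ exists and set $\Gamma=\varphi_m(\gamma)$; by Lemma~\ref{lemspartcomp} we have $\Gamma\le\Lambda$. Since the equalities $\bar\gamma_i=\bar\lambda_i$ hold in order, the products $\prod_{i=1}^N(u+\bar\gamma_i)$ and $\prod_{i=1}^m(u+\bar\gamma_i+\alpha)\prod_{i=m+1}^N(u+\bar\gamma_i)$ agree with their $\lambda$-counterparts, so Lemma~\ref{lemsekiguchi} yields $\varepsilon_{\Gamma^*}(\alpha,u)=\varepsilon_{\Lambda^*}(\alpha,u)$ and $\varepsilon_{\Gamma^\cd}(\alpha,u)=\varepsilon_{\Lambda^\cd}(\alpha,u)$. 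As $\Lambda$ is strict and weakly $(1,r,N)$-admissible, hence $(1,r,N)$-admissible, the preceding theorem (Uniqueness at $\alpha=\alpha_{k,r}$) excludes any superpartition strictly below $\Lambda$ carrying both of these eigenvalues. Therefore $\Gamma=\Lambda$, which in turn forces $\gamma^{+}=\lambda^{+}=\Lambda^*=:\nu$ and makes $\gamma$ a rearrangement of $\nu$ whose $I$- and $J$-blocks have the same parts as those of $\lambda$.

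\textbf{The residual case and the main obstacle.} It remains to show that $\Gamma=\Lambda$ and $\bar\gamma=\bar\lambda$ force $\gamma=\lambda$. By Lemma~\ref{lemsekiguchi} the vector $\bar\gamma$ is a rearrangement of the fixed multiset $M=\{\alpha\nu_i-(i-1)\}_{i=1}^N$, and the identity $\bar\gamma_{w_\gamma(i)}=\alpha\nu_i-(i-1)$ shows that $\gamma_p$ is determined by $\bar\gamma_p$ whenever the entry of $M$ equal to $\bar\gamma_p$ is unique; thus if $M$ is multiplicity-free then $\bar\gamma=\bar\lambda$ gives $\gamma=\lambda$ at once. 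A collision $\alpha\nu_i-(i-1)=\alpha\nu_j-(j-1)$ with $i<j$ means $2(\nu_i-\nu_j)=(r-1)(j-i)$, and since $\gcd(2,r-1)=1$ this can occur only with $j-i$ even and $(r-1)\mid(\nu_i-\nu_j)$; in particular the adjacent collisions $j=i+1$ are impossible, so half of the potential degeneracies are killed purely by $r$ being even. Feeding this into Lemma~\ref{leminequality2} pins every surviving collision to $j=i+2$ with gap exactly $\nu_i-\nu_{i+2}=r-1$, a tight configuration that moreover forces $\nu_i=\nu_{i+1}$ with the intermediate row fermionic. The main obstacle is to verify that this rigid pattern cannot be realized by a block-preserving rearrangement different from $\lambda$ without destroying the ordered equality $\bar\gamma=\bar\lambda$; I expect to settle it by the same position-by-position comparison of spectral entries used in Lemmas~\ref{lemabase}--\ref{sigmaomega}, now tracking which part of $\nu$ occupies each position and using the strict decrease of $\eta$ to forbid the swap across the fermionic row.
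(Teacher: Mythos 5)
There is a genuine gap, and it sits exactly where your argument leans on the superpartition-level uniqueness theorem. That theorem (and its proof) excludes a coincidence of both Sekiguchi eigenvalues only for superpartitions $\Gamma<\Lambda$ \emph{of the relevant symmetry type} — for the weakly admissible branch this means \emph{strict} $\Gamma$: the step ``$\Gamma^\cd_j=\Gamma^*_j+1$ implies $\Gamma^*_{j-1}\geq\Gamma^\cd_j$'' fails when two fermionic rows of $\Gamma$ may share the same length. But here $\Gamma=\varphi_m(\gamma)$ for an arbitrary composition $\gamma\prec\lambda$, and nothing forces $\Gamma$ to be strict. The paper's own proof shows this is not a technicality: assuming $\Gamma^*<\Lambda^*$ and equality of both Sekiguchi eigenvalues, one derives (their facts (i)--(vi)) that the rows $j-2,\,j-1,\,j$ of $\Gamma$ are all fermionic with equal $*$-parts — a manifestly non-strict configuration that weak admissibility of $\Lambda$ does \emph{not} rule out at the symmetric-function level. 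So your conclusion ``therefore $\Gamma=\Lambda$'' does not follow, and the whole reduction to a block-preserving rearrangement of $\lambda$ collapses.

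What the paper does instead — and what is missing from your proposal — is to exploit the full \emph{ordered} vector of eigenvalues rather than its elementary symmetric functions. Having pinned down the structure of a hypothetical $\Gamma$ via (i)--(vi), it sets $l=w_\gamma(j)$ (where $j$ is the first row with $\Gamma^*_j>\Lambda^*_j$) and computes $\bar\lambda_l-\bar\gamma_l=\alpha(\Lambda^*_{j'}-\Lambda^*_j-r+1)+j-j'$ with $j'=w_\lambda^{-1}(l)$, then checks case by case (using the admissibility inequalities and $\alpha=-2/(r-1)$) that this is nonzero — except in one tight sub-case $\rho=2$, where one must move to position $l-1$ and verify $\bar\lambda_{l-1}\neq\bar\gamma_{l-1}$. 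This position-by-position comparison is the heart of the proof. Your final paragraph gestures toward such an analysis for the residual case $\Gamma=\Lambda$, $\gamma\neq\lambda$, but explicitly leaves it unresolved; combined with the invalid invocation above, the proposal does not establish the theorem.
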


\begin{proof}There are more than one polynomials satisfying (1) and (2)  only if there are compositions $\ga$ such that $\ga\prec\la$ and
$(\bar \ga_1,\ldots,\bar \ga_N)=(\bar \la_1,\ldots,\bar \la_N)$.  We can thus establish the uniqueness by showing show that the latter equality is impossible.  Our task will be simplified by working with the associated superpartitions $$\La=\varphi_m(\lambda),\qquad \Gamma=\varphi_m(\gamma).$$  We indeed know that $\Ga<\La$ whenever  $\ga\prec\la$.  Moreover,   according to Lemma \ref{lemsekiguchi}, the equality $(\bar \ga_1,\ldots,\bar \ga_N)=(\bar \la_1,\ldots,\bar \la_N)$ holds only if $ \varepsilon_{\Ga^*}(\alpha,u)=\varepsilon_{\La^*}(\alpha,u) $, and $\varepsilon_{\Ga^\cd}(\alpha,u)=\varepsilon_{\La^\cd}(\alpha,u)$.

 Let us now assume that we are given a superpartition $\Gamma$ such that $ \varepsilon_{\Ga^*}(\alpha,u)=\varepsilon_{\La^*}(\alpha,u) $ and $\varepsilon_{\Ga^\cd}(\alpha,u)= \varepsilon_{\La^\cd}(\alpha,u)$. The last two equalities hold  if and only if there are permutations $\sigma$ and $\omega$ such that \begin{equation} \label{eqsigmaomega2} \Gamma_i^*=\La^*_{\sigma(i)}+\frac{r-1}{2}(\sigma(i)-i),\qquad \Gamma_i^\cd=\La^\cd_{\omega(i)}+\frac{r-1}{2}(\omega(i)-i)\qquad \forall \, i.\end{equation} We recall that by hypothesis, $\La$ is strict and $(1,r,N)$-admissible and $\Ga<\La$, which means that either $\Ga^*<\La^*$ or $\Ga^*=\La^*$ and $\Ga^\cd< \La^\cd$. 

The simplest case is  when  $\Ga^*=\La^*$ and $\Ga^\cd< \La^\cd$.  Indeed, $\Ga^*_i=\La^*_i$ for all $i$ implies $\sigma=\mathrm{id}$, while Lemma \ref{sigmaomega} yields $\sigma=\omega$, so that  $\omega=id$ and  $\Ga^\cd=\La^\cd$.  This contradicts the assumption $\Lambda \neq \Gamma$.  Thus, the equations $\Ga^*=\La^*$, $\Ga^\cd< \La^\cd$, $ \varepsilon_{\Ga^*}(\alpha,u)=\varepsilon_{\La^*}(\alpha,u) $, and $\varepsilon_{\Ga^\cd}(\alpha,u)=\varepsilon_{\La^\cd}(\alpha,u)$ cannot be satisfied simultaneously if $\La$ is strict and $(1,r,N)$-admissible.

We now assume that 
 $\Ga^*<\La^*$.  This condition  implies that there exists  an integer  $j>1$  such that $$\Ga^*_j>\La^*_j \quad\text{and}\quad  \Ga^*_{i}\leq \La^*_i,\quad \forall\, i<j\,.$$  
According to Lemma \ref{lemabase}, satisfying  the first equality in \eqref{eqsigmaomega2} is possible  only if $\sigma(j)=j-2$.  Thus
$$ \Ga_j^*=\La_{j-2}^*-r+1 $$
Now, $\La_{j-2}^\cd=\La_{j-2}^*+\epsilon$ for some $0\leq \epsilon\leq 1$, and $\Ga_{j}^*=\La_{j-1}^*-\delta$ for some $\delta\geq0$. Hence,
$$ \epsilon+r=\La_{j-2}^\cd-\La_{j-1}^*+\delta+1,$$
which is compatible with the admissibility only if $\epsilon=1$ and $\delta=0$. Combining all the previous results, we get  
$$ (i)\quad \Ga_j^*=\La_{j-2}^*-r+1 \qquad(ii)\quad \Ga^*_{j}=\Ga^*_{j-1}=\La^*_{j-1}\qquad (iii)\quad \La_{j-2}^\cd=\La_{j-2}^*+1 \qquad %\Ga_{j}^\cd=\Ga_j^*+1 
$$
By making use of Lemma \ref{lemabase} {together with} $\La^*_{j-2}\geq \Ga^*_{j-2}$ and (ii), we also conclude that either $\sigma(j-2)=j-2$ or $\sigma(j-2)=j$.  The first case is obviously impossible since it contradicts $\sigma(j)=j-2$.   The second case implies  $\Ga^*_{j-2}=\La_j^*+r-1$.
 Lemma \ref{sigmaomega} and (i) imply that $\Ga_j^\cd=\La_{j-2}^\cd-r+1$.  Then, combining this equation with (iii), we get $$ (iv)\qquad \Ga_j^\cd=\Ga_j^*+1.$$  Moreover, Lemma \ref{sigmaomega} and (ii)  imply that  $\Ga^\cd_{j-1}=\La^\cd_{j-1}$.   From this and result (iv), we get $\Ga^\cd_{j-1}=\Ga^*_{j-1}+1$, i.e. the row $j-1$ in $\Ga$ also contains a circle. Combining $\Ga^*_{j-2}\geq\Ga^*_{j-1}$, the admissibility condition and $\Ga^*_{j-2}=\La_j^*+r-1$ we obtain $\Ga^*_{j-2}=\Ga^*_{j-1}$.   Finally, Lemma \ref{sigmaomega} and the last equation yields $\Ga^\cd_{j-2}=\Ga^*_{j-2}+1$. Consequently, 
 $$(v)\quad\Ga_{j-2}^*=\Ga_{j-1}^*=\Ga_{j}^* \qquad (vi)\quad \La_{j-2}^*=\La_{j-1}^*+r-1=\La_{j}^*+2(r-1).$$

Let us recapitulate what we have obtained so far.  We have shown that there exist compositions $\lambda$ and $\gamma$ as in the statement of the theorem such that their associated superpartitions   $\La=\varphi_m(\lambda)$ and $\Gamma=\varphi_m(\gamma)$ satisfy   $ \varepsilon_{\Ga^*}(\alpha,u)=\varepsilon_{\La^*}(\alpha,u) $ and $\varepsilon_{\Ga^\cd}(\alpha,u)=\varepsilon_{\La^\cd}(\alpha,u)$.  However, this occurs only if the equations (i) to (vi) are also satisfied.  We will now make use of this information to prove that   the equality $(\bar \ga_1,\ldots,\bar \ga_N)=(\bar \la_1,\ldots,\bar \la_N)$ is incompatible with the admissibility of $\lambda$.      

Before doing so, we  need to recall how relate the eigenvalues $\bar \la_i$ and $\bar \ga_i$ to the elements of the superpartitions $\La$ and $\Ga$.  Let $w_\gamma$ be the smallest permutation such that $\gamma=w_\gamma(\gamma^+)=w_\gamma(\Gamma^*)$.  Then, 
$\bar\gamma_i$ is equal to the $i$th element of the composition $\left(\alpha\gamma-w_\gamma\delta^-\right)$.  More explicitly, 
 $\bar \gamma_i=\left(w_\gamma(\alpha \Gamma^*-\delta^-)\right)_i$ or equivalently, $\bar \gamma_{w_\gamma (i)}=\alpha \Gamma^*_i-(i-1).$  Similarly, there is a minimal permutation $w_\la$ such that $\la=w_\la(\La^*)$, so that  $\bar \la_{w_\la (i)}=\alpha \La^*_i-(i-1)$.  We stress that in our case $\La^*\neq\Ga^*$, which implies that $w_\la\neq w_\ga$.

Now, let $j$ be the largest integer such that $\Gamma_{j}^*>\Lambda_{j}^*$ and $\Gamma_{j-1}^*\leq\Lambda_{j-1}^*$.  Let also $l=w_{\gamma}(j)$.  Then, according to the above discussion,    $$ \bar \gamma_l=\alpha \Ga^*_j-(j+1).$$ From (i) and (vi) above, we deduce that the last equation can be rewritten as \begin{equation}\label{eqbargamma}\bar \gamma_l =\alpha (\La_{j}^*+r-1)-(j-1) .\end{equation} Moreover, let $j'$ be defined as $w_\lambda^{-1}(l)$.  This implies that 
\begin{equation}\label{eqbarlambda} \bar \la_l =\alpha \La_{j'}^*-(j'-1) .\end{equation}
    Combining Equations \eqref{eqbargamma} and \eqref{eqbarlambda}, we get 
		\begin{equation}\label{eqvalbar} \bar\la_{l}-\bar\gamma_{l}=\alpha(\La_{j'}^*-\La_{j}^*-r+1)+j-j'.\end{equation}
 We are going to use the last equation and prove $ \bar\la_{l}-\bar\gamma_{l}\neq 0$.  Three cases must be analyzed separately:  \\
\begin{itemize}
	\item[(1)]  $\lambda_l=\Lambda_{j}^*$.  Then,  
$
\bar\la_{l}-\bar\gamma_{l} =-\alpha(r-1)
$, 
which is clearly different from $0$.

  \item[(2)]   $\lambda_{l}<\Lambda_{j}^*$.   Then,  $\Lambda_{j'}^*<\Lambda_{j}^*$ and  $j'>j$.  By the admissibility condition, we have $\Lambda_{j}^*-\Lambda_{j'}^*\geq \rho(r-1)$, where  $\rho=j'-j$. Thereby, $\Lambda_{j}^*-\Lambda_{j'}^*= \rho(r-1)+\delta$ for some $\delta\geq 0$. Now,
$$
\bar\la_{l}-\bar\gamma_{l} 
  =-\alpha((\rho+1)(r-1)+\delta)-\rho.$$
Substituting  $\alpha=\alpha_{1,r}=- {2}/({r-1})$ into the last equation, we wee that it is equal to zero  if and only if   $2((\rho+1)(r-1)+\delta)=\rho(r-1)$. This is impossible.

\item[(3)]  $\lambda_{l}>\Lambda_{j}^*$.  Then,  $\Lambda_{j'}^*>\Lambda_{j}^*$ and $j'<j$. Let $\rho=j-j'$. The admissibility condition implies that $\Lambda_{j'}^*-\Lambda_{j}^*\geq \rho(r-1)$.   Thereby, $\Lambda_{j'}^*-\Lambda_{j}^*= \rho(r-1)+\overline{\delta}$ for some $\overline{\delta}\geq 0$. Thus,
$$
\bar\la_{l}-\bar\gamma_{l} =\alpha((\rho-1)(r-1)+\overline{\delta})+\rho.$$
The last equation is zero when $\alpha=\alpha_{1,r}=-{2}/({r-1})$ if and only if $2((\rho-1)(r-1)+\overline{\delta})=\rho(r-1)$, which is equivalent to $(\rho-2)(r-1)+2\overline{\delta}=0$. It is clear that if $\rho>2$, we have  $\bar\la_{l}\neq \bar\gamma_{l}$.   Therefore we have only to analyze the cases for which  $\rho=1$ and $\rho=2$.

On the one hand, if $\rho=1$, then $j'=j-1$ and  $\Lambda_{j'}^*=\Lambda_{j-1}^*$.   Substituting the last equality and (vi) into \eqref{eqvalbar}, we get   
$
\bar\la_{l}-\bar\gamma_{l}  =1
$.

On other hand, if $\rho=2$, then $j'=j-2$ and $\Lambda_{j'}^*=\Lambda_{j-2}^*$. Using once again (vi) and \eqref{eqvalbar}, we find 
$$
\bar\la_{l}-\bar\gamma_{l}  = \alpha(r-1) +2. $$
Replacing $\alpha$ by $\alpha_{1,r}=-\frac{2}{r-1}$ into the last equation,  we get $\bar\la_{l}-\bar\gamma_{l}=0$.  Thus, we have not reached the desired conclusion yet.   However, given that in the present case, we have  $\la_{l-1}>\lambda_{l}=\Lambda_{j-2}^*$ and  $\ga_{l-1}=\gamma_{l}=\Gamma_{j}^*=\Gamma_{j-1}^*=\Gamma_{j-2}^*$, we know that $w_\la^{-1}(l-1)=\bar\j<j-2$, so that   $\Lambda_{\overline{\j}}^*>\Lambda_{j-2}^*$. Let $\overline{\rho}:=j-2-\overline{\j}$.  The admissibility condition then gives $\Lambda_{\overline{\j}}^*-\Lambda_{j-2}^*\geq \overline{\rho}(r-1)$, which is equivalent to $\Lambda_{\overline{\j}}^*=\Lambda_{j-2}^*+ \overline{\rho}(r-1)+\epsilon$ for some  $\epsilon\geq 0$. Then
\begin{equation*}
\begin{split}
\bar\la_{l-1}-\bar\gamma_{l-1}  &= \alpha(\Lambda_{j-2}^*+\overline{\rho}(r-1)+\epsilon)-(\overline{\j}-1)-\alpha \Gamma_{j-1}^* +(j-2)\\
 &=\alpha(\Lambda_{j-2}^*+\overline{\rho}(r-1)+\epsilon)-\alpha(\Lambda_{j-2}^*-(r-1))+\overline{\rho}+1\\
 &=\alpha((\overline{\rho}+1)(r-1)+\epsilon)+\overline{\rho}+1
\end{split}
\end{equation*}	
Finally, the substitution of  $\alpha=\alpha_{1,r}=- {2}/({r-1})$ into the last equation implies that  $ \bar\la_{l-1}=\bar\gamma_{l-1}$ iff $2(\overline{\rho}+1)(r-1)+2\epsilon=(\overline{\rho}+1)(r-1)$, which is impossible. 
\end{itemize}
  
 We have thus shown that there could exist compositions,  $\la$ and  $\gamma$,  such that $\La$ is $(1,r,N)$-admissible,  $\Ga^*<\La^*$ and $ \varepsilon_{\Ga^*}(\alpha,u)=\varepsilon_{\La^*}(\alpha,u) $.  However, when it happens, we also have $(\bar\la_1, \ldots, \bar\la_N)\neq(\bar\ga_1, \ldots, \bar\ga_N)$ and the theorem follows.  
\end{proof}

\begin{theorem}[Uniqueness for $k=1$: moderate admissibility] \label{jacknonsymk1moderate}Let $\lambda=(\eta_1,\ldots,\eta_m,\mu_1,\ldots,\mu_{N-m})$ be a composition formed by the concatenation of the partitions $\eta=(\eta_1,\ldots,\eta_m)$ and $\mu=(\mu_1,\ldots,\mu_{N-m})$.  Assume that $\lambda$ is moderately $(1,r,N|m)$-admissible.   Assume moreover that $\alpha=\alpha_{1,r}$.   Then, the non-symmetric Jack polynomial $E_\lambda$ is the unique polynomial satisfying: 
\begin{enumerate} \item $\displaystyle E_\lambda=x^\lambda + \sum_{\gamma\prec\lambda}c_{\la,\gamma}x^\gamma,\qquad c_{\la,\gamma}\in \mathbb{C}$,
\item $\displaystyle \xi_i\, E_\la=\bar\la_i \, E_\la \quad \forall \, 1\leq i\leq N$,
\end{enumerate}
where the $\bar \la_i$'s denote the eigenvalues introduced in (A2') and \eqref{eqeigennonsym}.

\end{theorem}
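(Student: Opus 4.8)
The plan is to avoid repeating the lengthy analysis of Theorem~\ref{jacknonsymk1weak} and instead to show that, for $k=1$, moderate admissibility already forces the two hypotheses of that theorem. By Definition~\ref{defadmcompo}, the assumption is that $\La=\varphi_m(\lambda)$ is moderately $(1,r,N)$-admissible, i.e.\ $\La^\cd_i-\La^\cd_{i+1}\ge r$ for every $i\le N-1$. Since $r\ge 2$, this means $\La^\cd$ is strictly decreasing, and the whole argument rests on exploiting this rigidity.

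From the strict decrease of $\La^\cd$ I would read off both ingredients needed to invoke Theorem~\ref{jacknonsymk1weak}. First, weak admissibility is automatic: as $\La^*_{i+1}\le\La^\cd_{i+1}$, one gets $\La^\cd_i-\La^*_{i+1}\ge\La^\cd_i-\La^\cd_{i+1}\ge r$, so $\lambda$ is weakly $(1,r,N|m)$-admissible. Second, $\eta$ must be strictly decreasing: since $\eta$ is a partition, a repetition $\eta_i=\eta_{i+1}$ would place the value $\eta_i+1$ twice in the multiset $\{\eta_1+1,\dots,\eta_m+1,\mu_1,\dots,\mu_{N-m}\}$ whose decreasing rearrangement is $\La^\cd$, producing two equal consecutive parts and contradicting the strict decrease of $\La^\cd$. (Equivalently, the fermionic entries of $\La$ are the values $\La^*_i$ on the circled rows, and two circled rows with equal $\La^*$-value would force equal $\La^\cd$-values.) Thus $\eta$ is strictly decreasing, and with $\alpha=\alpha_{1,r}$ every hypothesis of Theorem~\ref{jacknonsymk1weak} is met; applying it gives the uniqueness of $E_\lambda$ and completes the proof.

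The point I expect to be delicate is purely conceptual rather than computational: one must notice that for $k=1$ the moderate condition $\La^\cd_i-\La^\cd_{i+1}\ge r$ collapses into the strict--weak setting, so that no fresh case analysis is required. If instead a self-contained proof is wanted, the argument of Theorem~\ref{jacknonsymk1weak} can be transcribed unchanged, because all of its inputs survive verbatim here: the inequalities of Lemma~\ref{leminequality2} hold for moderately admissible $\La$, one still has $\La^*_i-\La^*_{i+1}\ge r-1$ (hence $\La^*$ is $(2,r,N)$-admissible, as Lemma~\ref{leminequality1} supplies in the strict--weak case), and Lemmas~\ref{lemabase} and \ref{sigmaomega} apply directly since $\La$ is now strict and weakly admissible. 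In either route the only genuinely subtle moment is the borderline subcase already met in Theorem~\ref{jacknonsymk1weak}, where $\bar\la_l-\bar\gamma_l=\alpha(r-1)+2$ vanishes at $\alpha_{1,r}=-2/(r-1)$ and is resolved by descending to the index $l-1$.
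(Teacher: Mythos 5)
Your proof is correct, and it takes a genuinely different --- and far more economical --- route than the paper's. The pivotal observation is sound: for $k=1$ the moderate condition $\La^\cd_i-\La^\cd_{i+1}\ge r\ge 2$ forces all parts of $\La^\cd$ to be distinct, so a repetition $\eta_i=\eta_{i+1}$ (which would put $\eta_i+1$ twice into the multiset rearranged into $\La^\cd$) is impossible, and weak admissibility follows from $\La^*_{i+1}\le\La^\cd_{i+1}$; hence every hypothesis of Theorem \ref{jacknonsymk1weak} holds and the moderate statement is a corollary of the weak one. The paper does not take this shortcut: it reruns the whole uniqueness argument, splitting into the cases $\Ga^*=\La^*$ with $\Ga^\cd<\La^\cd$ and $\Ga^*<\La^*$, and in each case derives from Lemmas \ref{lemabase}, \ref{lemabase2} and \ref{sigmaomega} an equality such as $\La^\cd_{j-2}-\La^\cd_{j}=r$ or $\La^\cd_{j-3}-\La^\cd_{j-1}=r$ that contradicts the moderate bound $\ge 2r$. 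What the paper's longer route buys is the structural fact that under moderate admissibility the Sekiguchi eigenvalues alone already separate $\La$ from every smaller $\Ga$ --- it never needs the delicate descent to the individual Cherednik eigenvalues $\bar\la_l$ that rescues the borderline $\rho=2$ subcase of the weak proof --- and it is the style of argument that would survive for $k>1$, where moderate admissibility no longer implies strictness. What your route buys is a one-paragraph proof that also makes transparent why, for $k=1$, the moderately admissible superpartitions form a subclass of the strict and weakly admissible ones. (A minor point in your fallback self-contained sketch: the $(2,r,N)$-admissibility of $\La^*$ should be deduced as $\La^*_i-\La^*_{i+2}\ge 2(r-1)\ge r$ from the stated $\La^*_i-\La^*_{i+1}\ge r-1$; but since your reduction already closes the proof, this is immaterial.)
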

\begin{proof} We proceed as in Theorem \ref{jacknonsymk1weak}. We start by introducing the associated superpartitions $\La=\varphi_m(\lambda)$ and $\Gamma=\varphi_m(\gamma)$.  We then assume that we are given a superpartition $\Gamma$ such that $ \varepsilon_{\Ga^*}(\alpha,u)=\varepsilon_{\La^*}(\alpha,u) $ and $\varepsilon_{\Ga^\cd}(\alpha,u)= \varepsilon_{\La^\cd}(\alpha,u)$, which is possible if and only if Equation \eqref{eqsigmaomega2} is satisfied for some $\sigma,\omega\in S_N$. We recall that by hypothesis, $\La$ is moderately $(1,r,N)$-admissible and $\Ga<\La$, which means that either $\Ga^*<\La^*$ or $\Ga^*=\La^*$ and $\Ga^\cd< \La^\cd$.

First, we assume  that $\Ga^*=\La^*$ and $\Ga^\cd< \La^\cd$. This obviously implies that $\Ga^*_i=\La^*_i$ for all $i$, but also that there exists an integer $j>1$  such that $$\Ga^*_j=\La^*_j=\La^\cd_j,\quad \Ga^\cd_j=\La^\cd_j+1\quad\text{and}\quad  \Ga^\cd_i=\La^\cd_i-\delta_i,\quad \delta_i=0,1\quad\forall\, i<j\,.$$
By making use of Lemma \ref{lemabase2}, $\La^\cd_{j}< \Ga^\cd_{j}$ and  $\Ga^*_{j-1}=\La^*_{j-1}$, we conclude that $\omega(j)=j-2$. This implies $\Ga^\cd_j=\La^\cd_{j-2}-r+1$ and $\Ga^\cd_j=\La^\cd_j+1$, so we get $\La^\cd_{j-2}-\La^\cd_j=r$,  which is in contradiction with the admissibility.  

Second, we assume that  $\Ga^*<\La^*$, which implies that there exists a $j>1$  such that $$\Ga^*_j>\La^*_j \quad\text{and}\quad  \Ga^*_{i}\leq \La^*_i,\quad \forall\, i<j\,.$$  
According to Lemma \ref{lemabase}, the first equality in \eqref{eqsigmaomega2} is   possible when $i=j$  only if $\sigma(j)=j-2$.  Thus
$$ \Ga_j^*=\La_{j-2}^*-r+1 $$
Now $\La_{j-2}^\cd=\La_{j-2}^*+\epsilon$ for some $0\leq \epsilon\leq 1$, and $\Ga_{j}^*=\La_{j-1}^*-\delta$ for some $\delta\geq0$. Hence,
$$ \epsilon+r=\La_{j-2}^\cd-\La_{j-1}^*+\delta+1,$$
which is compatible with the admissibility only if $\epsilon=1$ and $\delta=0$. Combining all the previous results, we get  
$$ (i)\quad \Ga_j^*=\La_{j-2}^*-r+1 \qquad(ii)\quad \Ga^*_{j}=\Ga^*_{j-1}=\La^*_{j-1}\qquad (iii)\quad \La_{j-2}^\cd=\La_{j-2}^*+1 \qquad (iv)\quad \La_{j-1}^\cd=\La_{j-1}^* \,.$$

We now turn our attention to  second equality in \eqref{eqsigmaomega2} when $i=j$.   
By assumption we know that $\Ga^*_j>\La^*_j$,  so that $\Ga^\cd_j\geq\La^\cd_j$. By making use of Lemma  \ref{lemabase2},  we get the following two options:\\
\begin{enumerate}
	\item If $\Ga_j^\cd=\La^\cd_{j}$, then $\omega(j)=j$. However by assumption $\Ga_j^*=\La^*_j+\epsilon$ which implies $\Ga_j^\cd=\La^\cd_{j}=\La^*_j+1$ and $\Ga_j^*=\La^*_j+1$, and then $\Ga_j^\cd=\Ga_j^*$. Now, as $\Ga_j^*=\La^\cd_{j-2}-r$ we get $\La^\cd_{j-2}-r=\La_j^\cd$, which is clearly in contradiction with the admissibility.
	\item If $\Ga_j^\cd>\La^\cd_{j}$, then $\omega(j)<j$. Now, from $\Ga_j^\cd>\La^\cd_{j}$ and (ii), we know using Lemma \ref{lemabase2}, that $\omega(j)=j-2$, i.e. 
	$\Ga_j^\cd=\La_{j-2}^\cd-r+1$. Thus, the row $j$ in $\Gamma$ contains a circle.  This in turn implies that $\Ga_{j-1}^\cd=\Ga_{j-1}^*+1$, and also that the row $j-1$ in $\Gamma$ contains a circle. 
	
	So far, considering the row $j$, we have obtained 
	$$ (v)\quad \Ga_j^\cd=\Ga^*_{j}+1 \qquad(vi)\quad \Ga_{j-1}^\cd=\Ga_{j-1}^*+1. $$
	Now, considering (ii), (iv) and (vi), we obtain $\Ga_{j-1}^\cd>\La_{j-1}^\cd$.   Moreover, from  $\Ga^*_{j-2}\leq \La^*_{j-2}$ and Lemma \ref{lemabase2}, we get $\omega(j-1)=j-3$ and
		$\Ga_{j-1}^\cd=\La_{j-3}^\cd-r+1$.  However,  (ii), (iv), and (vi) imply that $\Ga_{j-1}^\cd=\La_{j-1}^\cd+1$.   Combining these equations, we conclude that  $\La_{j-3}^\cd-\La_{j-1}^\cd=r$.
		This violates our assumptions, because the moderate admissibility condition implies that $\La_{j-3}^\cd-\La_{j-1}^\cd\geq 2r$.
\end{enumerate}

We have shown that whenever $\La>\Ga$ and $\La$ is moderately (1,r,N)-admissible, then $(\bar\la_1, \ldots, \bar\la_N)\neq(\bar\ga_1, \ldots, \bar\ga_N)$, and the proof is complete.  
\end{proof}

\subsection{Clustering properties for $k=1$ } \label{SectionClusterk1}

We start by establishing $k=1$ clustering properties for the non-symmetric Jack polynomials.  We then use these results and prove similar properties for the Jack polynomials with prescribed symmetry.

{
\begin{definition} Let $\Lambda$ be a superpartition and let $\lambda$ be a partition. We formally define the superpartition $\Omega=\Lambda+\lambda$ where $\Omega=(\Omega^*,\Omega^{\circledast})$ as $\Omega^*=\Lambda^*+\lambda$ and $\Omega^{\circledast}=\Lambda^{\circledast}+\lambda$.  In terms of the diagrams, $\Omega$ is interpreted as the associated superpartition to the diagram obtained by adding diagram $\Lambda$ and $\lambda$.
\end{definition}
}

Let us illustrate the last definition by computing $\Lambda+\lambda$ when $\Lambda=(5,3,1,0; 4,2,1)$ and $\lambda=\delta_{7}=(6,5,4,3,2,1,0)$.  Obviously, we have 
$$\Lambda= {}{} 
\tableau[scY]{ &  &  & & &\bl \tcercle{} \\& & &\\& & & \bl \tcercle{} \\ & \\ & \bl \tcercle{}   \\\\\bl \tcercle{}} \qquad \Rightarrow \Lambda^{*}= {}{}\tableau[scY]{ &  &  & & &\bl  \\& & &\\& & & \bl  \\ & \\ & \bl    \\\\\bl } \qquad \text{and} \qquad  \Lambda^{\circledast}= {}{} \tableau[scY]{ &  &  & & & \\& & &\\& & &  \\ & \\ &  \\  \\ \\  }$$
Then, 
$$\Lambda^*+\lambda= {}{}  \tableau[scY]{ &  &  & &  &\tf &\tf &\tf &\tf &\tf &\tf\\& & & &\tf &\tf &\tf &\tf &\tf\\& &   &\tf &\tf &\tf &\tf \\ &  &\tf &\tf &\tf \\  &\tf &\tf\\  &\tf\\  } \qquad \text{and} \qquad  \Lambda^{\circledast}+\lambda= {}{} \tableau[scY]{ &  &  & & & &\tf &\tf &\tf &\tf &\tf &\tf\\& & & &\tf &\tf &\tf &\tf &\tf\\& & &   &\tf &\tf &\tf &\tf \\ &  &\tf &\tf &\tf \\ & &\tf &\tf\\  &\tf\\ \\  }$$
Thus, the diagram obtained by adding the diagrams associated to $\Lambda$ and $\lambda$ is given by
$$\Lambda+\delta= {}{} \tableau[scY]{ &  &  & & &\tf &\tf &\tf &\tf &\tf &\tf &\tf \bl \tcercle{}\\& & & &\tf &\tf &\tf &\tf &\tf\\& & & \tf   &\tf &\tf &\tf &\tf \bl \tcercle{} \\ &  &\tf &\tf &\tf \\ &  \tf &\tf &\tf\bl \tcercle{}\\  &\tf\\ \bl \tcercle{} }$$
which is equivalent to say that $ \Lambda+\delta=(11,7,3,0; 9,5,2)$.

{ 
\begin{proposition} Let $r$ be even and positive. Let also  $\kappa = (\lambda^{+},\mu^{+})$, where $\lambda^{+}$ is a partition with $m$ parts while $\mu^{+}$ is a strictly decreasing partition with $N-m$ parts. Then
\begin{equation*}
E_{\kappa+(r-1)\delta'}(x_{1},\ldots,x_{N};-2/(r-1)) \propto \prod_{1\leq i<j \leq N}(x_{i}-x_{j})^{r-1} E_{\kappa}(x_{1},\ldots,x_{N};2/(r-1)).
\end{equation*}
In the above equation, $\delta'=\omega_{\kappa}(\delta)$,  where $\delta=(N-1,N-2,\ldots,1,0)$ and    $\omega_{\kappa}$ is the smallest permutation such that $\kappa=\omega_{\kappa}(\kappa^+)$.
\label{agrupk1} 
\end{proposition}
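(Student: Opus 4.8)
The plan is to realize the prefactor $\Delta^{r-1}$, where $\Delta=\prod_{1\le i<j\le N}(x_i-x_j)$, as an intertwiner between the Cherednik operators at the reciprocal parameters $\alpha_{1,r}=-2/(r-1)$ and $-\alpha_{1,r}=2/(r-1)$, and then to conclude with the uniqueness theorem for non-symmetric Jack polynomials. Write $\xi_j|_\alpha$ for the operator of Section 2.2 evaluated at parameter $\alpha$. Since $r$ is even, $r-1$ is odd, so $\Delta^{r-1}$ is antisymmetric and $K_{ij}\Delta^{r-1}=-\Delta^{r-1}$. The central step is the operator identity
\begin{equation*}
\Delta^{-(r-1)}\,\bigl(\xi_j|_{\alpha_{1,r}}\bigr)\,\Delta^{r-1}=-\,\xi_j|_{-\alpha_{1,r}}-2(N-1),\qquad 1\le j\le N.
\end{equation*}
This is a direct computation: conjugating $\alpha x_j\partial_{x_j}$ by $\Delta^{r-1}$ produces the extra term $(r-1)\alpha\sum_{i\ne j}x_j/(x_j-x_i)$ arising from $\partial_{x_j}\log\Delta=\sum_{i\ne j}(x_j-x_i)^{-1}$, conjugating each $K_{ij}$ flips its sign, and one then uses $\alpha_{1,r}(r-1)=-2$ together with $x_i/(x_j-x_i)=x_j/(x_j-x_i)-1$ to reassemble the pieces. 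That $r$ is even is exactly what makes the sign flip occur (and it is simultaneously what gives $\gcd(2,r-1)=1$, so that $(1,r,N)$-admissibility is meaningful).

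Since $-\alpha_{1,r}>0$, the polynomial $E_\kappa(x;-\alpha_{1,r})$ is regular (non-symmetric Jack polynomials have poles only at non-positive rational $\alpha$), so $P:=\Delta^{r-1}E_\kappa(x;-\alpha_{1,r})$ is a genuine nonzero polynomial, and by the identity above it is a joint eigenfunction of all $\xi_j|_{\alpha_{1,r}}$ with eigenvalues $-\overline\kappa_j-2(N-1)$, where the $\overline\kappa_j$ are the eigenvalues of $E_\kappa$ at $-\alpha_{1,r}$. Put $\nu=\kappa+(r-1)\delta'$. A short check shows $\nu^+_i=\kappa^+_i+(r-1)(N-i)$, so that $\nu^+$ is strictly decreasing with consecutive gaps at least $r-1$, and that the minimal sorting permutation of $\nu$ is again $\omega_\kappa$ (because $\delta'$ is comonotone with $\kappa$). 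Feeding this into the formula $\overline\eta_{w(i)}=\alpha\eta^+_i-(i-1)$ from the proof of Lemma \ref{lemsekiguchi}, and using $\alpha_{1,r}(r-1)=-2$, shows $-\overline\kappa_j-2(N-1)=\overline\nu_j$ at $\alpha=\alpha_{1,r}$ for every $j$; thus $P$ and $E_\nu(x;\alpha_{1,r})$ carry the same eigenvalue tuple.

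To pin down the leading structure, observe that every monomial $x^{\beta+\gamma}$ of $P$ (with $x^\beta$ from $\Delta^{r-1}$ and $x^\gamma$ from $E_\kappa$) satisfies $(\beta+\gamma)^+\le\beta^++\gamma^+\le(r-1)\delta+\kappa^+=\nu^+$ by majorization, so every exponent $\zeta$ of $P$ obeys $\zeta^+\le\nu^+$. The key elementary lemma is that $\zeta\mapsto(\overline\zeta_j)_j$ is injective on rearrangements of the fixed partition $\nu^+$ at $\alpha=\alpha_{1,r}$: a coincidence would force $\tfrac{2}{r-1}|\nu^+_\rho-\nu^+_{\rho'}|=|\rho-\rho'|$, impossible since the gaps $\ge r-1$ make the left-hand side at least $2|\rho-\rho'|$. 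Reading eigenvalues off any dominance-maximal exponent $\zeta^{(0)}$ of $P$ gives $\overline{\zeta^{(0)}}_j=\overline\nu_j$; were $(\zeta^{(0)})^+<\nu^+$ we would obtain $\zeta^{(0)}\prec\nu$, hence $\varphi_m(\zeta^{(0)})<\varphi_m(\nu)$ by Lemma \ref{lemspartcomp}, with equal Sekiguchi eigenvalues, contradicting the uniqueness theorem. Hence the top layer $\{\zeta^+=\nu^+\}$ of $P$ is nonzero; by the injectivity lemma its unique dominance-maximal exponent is $\nu$, and all remaining exponents of $P$ lie strictly below $\nu$. After normalization $P$ therefore satisfies conditions (1)--(2) of the uniqueness theorem, so $P\propto E_\nu(x;\alpha_{1,r})$, as claimed.

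The main obstacle is this last paragraph: one must control the full dominance structure of $\Delta^{r-1}E_\kappa$, not merely its sorted degree, and verify that $\nu=\kappa+(r-1)\delta'$ is admissible so that a uniqueness theorem may be invoked. Here one checks that the staircase shift $(r-1)\delta'$ both separates all fermionic rows (making $\varphi_m(\nu)$ strict, even when $\lambda^{+}$ has repeated parts) and produces gaps $\Lambda^{\circledast}_i-\Lambda^*_{i+1}\ge r$; the subtlety is that when two parts of $\kappa$ are equal the upper of the two corresponding rows is always the fermionic one, which is precisely what yields weak admissibility. Thus $\nu$ is strict and weakly $(1,r,N|m)$-admissible and Theorem \ref{jacknonsymk1weak} applies. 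The operator identity and the eigenvalue matching, by contrast, are mechanical once the conjugation is set up.
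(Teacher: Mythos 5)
Your proposal is correct and follows essentially the same route as the paper's proof: the conjugation identity $\Delta^{-(r-1)}\,\xi_j|_{\alpha_{1,r}}\,\Delta^{r-1}=-\,\xi_j|_{-\alpha_{1,r}}-2(N-1)$, the matching of the eigenvalue tuples of $\Delta^{r-1}E_\kappa(x;2/(r-1))$ and $E_{\kappa+(r-1)\delta'}(x;-2/(r-1))$, and an appeal to the uniqueness theorem for weakly $(1,r,N|m)$-admissible compositions. The only difference is presentational: you verify the eigenvalue match via the permutation formula $\overline\eta_{w(i)}=\alpha\eta^+_i-(i-1)$ rather than via arm- and leg-colengths, and you spell out the dominance/triangularity step that the paper dismisses with ``little work also shows,'' which is a welcome amount of extra care but not a different argument.
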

\begin{proof} In what follows, we  set $\La=\varphi_m(\kappa)$ and use the  shorthand notation  $\Delta_{N}=\prod_{1\leq i<j \leq N}(x_{i}-x_{j})$.\\
First, we consider the action of $\xi_{j}$ on  the polynomial $\Delta_{N}^{(r-1)} E_{\kappa}(x;2/(r-1))$: 
\begin{multline*} 
\xi_{j} (\Delta_{N}^{(r-1)} E_{\kappa}(x;2/(r-1))) = \alpha \Delta_{N}^{(r-1)} \left((r-1) \sum_{i=1,i\neq j}^{N}\frac{x_{j}}{x_{j}-x_{i}} E_{\kappa}(x;2/(r-1))+  x_{j}\partial_{x_{j}} E_{\kappa}(x;2/(r-1)) \right)\\ 
+ \Delta_{N}^{(r-1)} \left(\sum_{i<j}\frac{x_{j}}{x_{j}-x_{i}}(1+ K_{ij})E_{\kappa}(x;2/(r-1))+ \sum_{i>j}\frac{x_{i}}{x_{j}-x_{i}}(1+K_{ij})E_{\kappa}(x;2/(r-1))\right) \\
-(j-1)\Delta_{N}^{(r-1)}E_{\kappa}(z;2/(r-1)).
\end{multline*}
Second, we restrict  $\xi_{j}$ by imposing ${\alpha=-2/(r-1)}$, which gives
\begin{multline*} 
\xi_{j}|_{\alpha=-2/(r-1)} (\Delta_{N}^{(r-1)} E_{\kappa}(x;2/(r-1))) = -\frac{2}{r-1} \Delta_{N}^{(r-1)}  x_{j}\partial_{x_{j}} E_{\kappa}(x;2/(r-1)) -(N-1)\Delta_{N}^{(r-1)} E_{\kappa}(z;2/(r-1)) \\
- \Delta_{N}^{(r-1)}\sum_{i=1,i\neq j}^{N}\frac{x_{j}}{x_{j}-x_{i}}(1- K_{ij})E_{\kappa}(z;2/(r-1))-\Delta_{N}^{(r-1)}\sum_{i>j}K_{ij}E_{\kappa}(x;2/(r-1)). 
\end{multline*}
By reordering the terms, we also get 
\begin{equation*} 
\xi_{j}|_{\alpha=-2/(r-1)} (\Delta_{N}^{(r-1)} E_{\kappa}(x;2/(r-1)))=-\Delta_{N}^{(r-1)}\left(\xi_{j}|_{\alpha=2/(r-1)}+2(N-1)\right)E_{\kappa}(x;2/(r-1)).
\end{equation*}
Now, the use of   (A2'), allows us to write 
\begin{equation} \label{eqcheredVanderJack}
\xi_{j}|_{\alpha=-2/(r-1)} (\Delta_{N}^{(r-1)} E_{\kappa}(x;2/(r-1)))=-\left(\overline{\kappa}_{j}|_{\alpha=2/(r-1)}+2(N-1)\right)\Delta_{N}^{(r-1)}E_{\kappa}(x;2/(r-1)).
\end{equation}

We have proved  that $(\Delta_{N}^{(r-1)} E_{\kappa}(x;2/(r-1)))$ is an eigenfunction of $\xi_{j}|_{\alpha=-2/(r-1)}$ for each $j$.  The eigenvalue can be reorganized as follows.  
On the one hand, we know from Equation \eqref{eqeigennonsym} that the eigenvalues ​​associated to $E_{\kappa}(x;2/(r-1))$ restricted to $\alpha=2/(r-1)$ are given by
\begin{equation*} 
\overline{\kappa}_{j}|_{\alpha=2/(r-1)} = \frac{2}{r-1}\kappa_{j} - \#\{i<j|\kappa_{i}\geq \kappa_{j}\} - \#\{i>j|\kappa_{i}> \kappa_{j}\}.
\end{equation*}
Now, given $\kappa_{j}$ in $\kappa$, we know that to $\kappa_{j}$ corresponds a cell in diagram of $\kappa$  and moreover, this cell has an associated cell $s$  in diagram of $\Lambda$. Then, we can express the eigenvalues $\overline{\kappa}_{j}$ in terms of arm-colength and leg-colength of cell $s$ in $\Lambda$. Given that 
$$ a'_{\La^*}(s)= \kappa_{j} - 1 \qquad \text{and} \qquad l'_{\La^*}(s)= \#\{i<j|\kappa_{i}\geq \kappa_{j}\} + \#\{i>j|\kappa_{i}> \kappa_{j} \},$$
 we can rewrite the eigenvalue as 
\begin{equation} \label{eqeigenvaluenonsymKappa}
\overline{\kappa}_{j}|_{\alpha=2/(r-1)} = \frac{2}{r-1} (a'_{\La^*}(s)+1) - l'_{\La^*}(s).
\end{equation}
On the other hand, from Equation \eqref{eqeigennonsym} and considering the composition $\kappa + (r-1)\delta'$, we have
\begin{multline*} 
\overline{(\kappa + (r-1)\delta')}_{j}= \alpha(\kappa_{j} + (r-1)\delta'_{j} ) - \\
\#\{i<j|\kappa_{i}+ (r-1)\delta'_{i}\geq \kappa_{j}+ (r-1)\delta'_{j}\} - \#\{i>j|\kappa_{i}+ (r-1)\delta'_{i}> \kappa_{j}+ (r-1)\delta'_{j} \}.
\end{multline*}
However, we can simplify this expression if we rewrite the eigenvalue in terms of $\Lambda':=\Lambda+(r-1)\delta$ the associated superpartition to $\kappa+(r-1)\delta'$. 
The same way as before, given $(\kappa + (r-1)\delta')_{j}$ in the composition $\kappa + (r-1)\delta'$, we know that to $(\kappa + (r-1)\delta')_{j}$ corresponds a cell in diagram of $\kappa + (r-1)\delta'$  and moreover, this cell has a cell $s'$ associated in diagram of $\Lambda'$. So, we have
\begin{equation*}
\begin{split}
a'_{\Lambda'^{*}}(s') &= \kappa_{j} - 1  + (r-1)\delta'_{j} \\
l'_{\Lambda'^{*}}(s') &= \#\{i<j|\kappa_{i}+ (r-1)\delta'_{i}\geq \kappa_{j}+ (r-1)\delta'_{j}\} + \#\{i>j|\kappa_{i}+ (r-1)\delta'_{i}> \kappa_{j}+ (r-1)\delta'_{j} \}
\end{split}
\end{equation*}
Hence, 
\begin{equation} \label{eqeigenvaluenonsymKappaprime}
\overline{(\kappa+(r-1)\delta')}_{j}|_{\alpha=-2/(r-1)} = -\frac{2}{(r-1)}(a'_{\Lambda'^{*}}(s')+1)- l'_{\Lambda'^{*}}(s').
\end{equation}
Now,  comparing the arm-colenght and leg-colenght of $\La$ and $\La'$, we get 
\begin{equation} \label{armandleg} a'_{\Lambda'^{*}}(s')=a'_{\La^*}(s) + N-l'_{\Lambda'^{*}}(s)-1 \qquad \text{and} \qquad l'_{\Lambda'^{*}}(s')=l'_{\La^*}(s) \end{equation}
Hence, by combining the Equations \eqref{eqcheredVanderJack}, \eqref{eqeigenvaluenonsymKappa}, \eqref{eqeigenvaluenonsymKappaprime} and \eqref{armandleg}, we conclude that 
$$E_{\kappa+(r-1)\delta'}(x;-2/(r-1)) \qquad \text{and} \qquad \Delta_N^{r-1} E_{\kappa}(x;2/(r-1))$$
have the same eigenvalues for each $\xi_{j}$ with $j=1,\ldots,N$.

In brief, we have   proved  that $(\Delta_{N}^{(r-1)} E_{\kappa}(x;2/(r-1)))$ as the same eigenvalues than $E_{\kappa+(r-1)\delta'}(x;-2/(r-1)) $.  Little work also shows that both polynomials exhibit triangular with dominant term $x^{\kappa+(r-1)\delta'}$.  Moreover, because of the form of $\kappa$, the composition $\kappa+(r-1)\delta'$ is weakly $(1,r,N|m)$-admissible.   Therefore, we can make use of Theorem \ref{jacknonsymk1weak} and conclude that
\begin{equation*}
E_{\kappa+(r-1)\delta'}(x_{1},\ldots,x_{N};-2/(r-1)) \propto \prod_{1\leq i<j \leq N}(x_{i}-x_{j})^{r-1}E_{\kappa}(x_{1},\ldots,x_{N};2/(r-1)),
\end{equation*}
i.e.,  the polynomials are equal up to a multiplicative numerical  factor.
\end{proof}

}

\begin{coro}\label{simpleclusternonsym} Let $r>0$ even and let $\lambda$ a partition with $\ell(\lambda)\leq N$. Then 
\begin{equation*}
E_{\lambda+(r-1)\delta_{N}}(z_{1},\ldots,z_{N};-2/(r-1))= \prod_{1\leq i<j \leq N}(x_{i}-x_{j})^{r-1} E_{\lambda}(z_{1},\ldots,z_{N};2/(r-1)).
\end{equation*} \label{CoroClusterPartition}
\end{coro}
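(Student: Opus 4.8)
The plan is to recognize Corollary~\ref{simpleclusternonsym} as the partition case of Proposition~\ref{agrupk1} and then to upgrade the proportionality proved there to a genuine equality by fixing the constant. Concretely, I would pad $\lambda$ with zeros to length $N$ and apply Proposition~\ref{agrupk1} with $m=N$, so that $\lambda^+=\lambda$, the block $\mu^+$ is empty (vacuously a strictly decreasing partition), and hence $\kappa=\lambda$ is itself a partition. Since a partition coincides with its own decreasing reordering, the minimal permutation $\omega_\kappa$ with $\kappa=\omega_\kappa(\kappa^+)$ is the identity, whence $\delta'=\omega_\kappa(\delta)=\delta=\delta_N$. Proposition~\ref{agrupk1} then yields immediately
\[
E_{\lambda+(r-1)\delta_N}(x;-2/(r-1)) = c\,\prod_{1\leq i<j\leq N}(x_i-x_j)^{r-1}\,E_{\lambda}(x;2/(r-1))
\]
for some nonzero scalar $c$, all regularity at $\alpha=\pm 2/(r-1)$ being already guaranteed by the proposition together with Theorem~\ref{jacknonsymk1weak} (note $\lambda+(r-1)\delta_N$ is weakly $(1,r,N)$-admissible precisely because of the form of $\kappa$).

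The only remaining task is to show $c=1$, which I would do by comparing the coefficient of the dominant monomial $x^{\lambda+(r-1)\delta_N}$ on both sides. On the left, this coefficient equals $1$ because non-symmetric Jack polynomials are monic in the sense of condition (A1'). On the right, writing $\Delta_N:=\prod_{1\leq i<j\leq N}(x_i-x_j)$, the dominant monomial of $\Delta_N$ is $x^{\delta_N}$ with coefficient $1$, obtained by selecting $x_i$ from each factor $x_i-x_j$ to produce $x_1^{N-1}x_2^{N-2}\cdots x_N^{0}$; hence the dominant monomial of $\Delta_N^{\,r-1}$ is $x^{(r-1)\delta_N}$ with coefficient $1$. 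Since $E_\lambda$ is monic with dominant monomial $x^\lambda$, the product of the two leading terms contributes $x^{\lambda+(r-1)\delta_N}$ with coefficient $1$, and matching against the left-hand side forces $c=1$.

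The one genuine obstacle is to ensure that this reading of the leading coefficient is correct, i.e. that no secondary pair of monomials also lands on $x^{\lambda+(r-1)\delta_N}$ in the product on the right. The cleanest resolution is to work with a multiplicative total order refining the dominance order on compositions (for instance lexicographic order with $x_1>\cdots>x_N$): under such an order the leading monomial of a product is the product of the leading monomials, with coefficient the product of the leading coefficients, so the right-hand side has leading term $x^{\lambda+(r-1)\delta_N}$ with coefficient $1$, giving $c=1$ at once. Equivalently, one may check directly, using superadditivity of sorted partial sums $\sum_{i=1}^k(\beta+\nu)^+_i\le\sum_{i=1}^k\beta^+_i+\sum_{i=1}^k\nu^+_i$ together with $\beta^+\le(r-1)\delta_N$ (for any $x^\beta$ in $\Delta_N^{\,r-1}$) and $\nu^+\le\lambda$ (for any $x^\nu$ in $E_\lambda$), that $\beta+\nu=\lambda+(r-1)\delta_N$ forces $\beta=(r-1)\delta_N$ and $\nu=\lambda$. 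Either route fixes $c=1$ and completes the proof.
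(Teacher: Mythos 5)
Your argument is correct and follows the same route the paper takes: the corollary is exactly the specialization of Proposition~\ref{agrupk1} to $m=N$, $\mu^+=\emptyset$, $\kappa=\lambda$, $\omega_\kappa=\mathrm{id}$, $\delta'=\delta_N$. The paper leaves the proportionality constant implicit, so your leading-coefficient computation (via a multiplicative monomial order refining $\prec$) is a welcome, and correct, way of pinning down $c=1$.
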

\begin{remark}\label{remarkvalidity} The clustering property corresponding  to Corollary \ref{simpleclusternonsym} was first obtained in \cite[Proposition 2]{BarFor}.  The proof given in the latter reference used  the characterization of the non-symmetric Jack polynomials as the unique polynomials satisfying (A1') and (A2').  However,  the problem of the validity of this characterization at $\alpha=\alpha_{k,r}$ was not addressed by the authors.  Our result about the regularity and uniqueness, respectively given in Proposition \ref{regjnonsym} and Theorem \ref{jacknonsymk1weak}, now firmly establish the demonstration proposed in \cite{BarFor}.  
\end{remark}

%\subsection{Clustering properties for $k=1$ for Jack polynomials with prescribed symmetry}

Before stating the clustering properties for the polynomials with prescribed, we recall two useful formulas.
For this, let 
$$I=\{i_{1}, i_{2},\ldots,i_{n}\},\quad J=\{j_{1}, j_{2},\ldots,j_{m}\},\quad  
  \Delta_I=\prod_{\substack{i,j\in I\\i<j}}(x_i-x_j),\qquad  \Delta_J=\prod_{\substack{i,j\in J\\i<j}}(x_i-x_j).$$
Then, obviously, 
\begin{equation} \label{SymVandermondef}
\begin{split}
\mathrm{Sym}_{I} \Big(\Delta_{I} f(x_{1},\ldots,x_{N})\Big) &= \Delta_{I} \mathrm{Asym}_{I} \Big(f(x_{1},\ldots,x_{N})\Big),\\
\mathrm{Asym}_{J} \Big(\Delta_{J} f(x_{1},\ldots,x_{N})\Big) &= \Delta_{J} \mathrm{Sym}_{J}  \Big(f(x_{1},\ldots,x_{N})\Big).
\end{split}
\end{equation}

\begin{proposition} [Clustering $k=1$] 
Let $r$ be   positive and even. Let also $\Lambda$ be a superpartition of bi-degree $(n|m)$ with $\ell(\Lambda)\leq N$.  
\begin{itemize}
	\item [(i)] If $\Lambda$ is strict and weakly $(1,r,N)$-admissible, then
\begin{equation*}P_{\Lambda}^{\AS}(x_{1},\ldots,x_{N};-2/(r-1))= \prod_{m+1\leq i<j \leq N}(x_{i}-x_{j})^{r} Q(x_{1},\ldots,x_{N}).\end{equation*}
  \item [(ii)] If $\Lambda$ is moderately $(1,r,N)$-admissible, then
\begin{equation*}P_{\Lambda}^{\SS}(x_{1},\ldots,x_{N};-2/(r-1))= \prod_{1\leq i<j \leq m}(x_{i}-x_{j})^{r}\prod_{m+1\leq i<j \leq N}(x_{i}-x_{j})^{r} Q(x_{1},\ldots,x_{N}).\end{equation*} 
  \item [(iii)] If $\Lambda$ is moderately $(1,r,N)$-admissible and it is such that $\La_{m+1}>\ldots>\La_{N}$, then
\begin{equation*}P_{\Lambda}^{\SA}(x_{1},\ldots,x_{N};-2/(r-1))= \prod_{1\leq i<j \leq m}(x_{i}-x_{j})^{r} Q(x_{1},\ldots,x_{N}).\end{equation*}
  \item [(iv)] If $\Lambda$ is strict and weakly $(1,r,N)$-admissible, and it is such that $\La_{m+1}>\ldots>\La_{N}$, then
\begin{equation*}P_{\Lambda}^{\AA}(x_{1},\ldots,x_{N};-2/(r-1))= \prod_{1\leq i<j \leq N}(x_{i}-x_{j})^{r-1} Q(x_{1},\ldots,x_{N}).\end{equation*} 
\end{itemize}  \label{clusteringk1}
In the above equations,  $Q(x_{1},\ldots,x_{N})$ denotes some polynomial, which varies from one symmetry type to another. 
\end{proposition}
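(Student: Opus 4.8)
The plan is to reduce everything to the non-symmetric clustering of Proposition \ref{agrupk1} and then push the factored form through the (anti)symmetrizer $\mathcal{O}_{I,J}$. First I would fix $\eta=(\lambda^{+},\mu^{+})$ with $\Lambda=\varphi_m(\eta)$ and invoke Proposition \ref{normalizAS} to write $P_\Lambda=(c_\Lambda/C_\Lambda)\,\mathcal{O}_{I,J}E_\eta$; by Proposition \ref{regularityJackPresc} (equivalently Lemmas \ref{regjnonsym} and \ref{CLambda}) every factor is regular at $\alpha=\alpha_{1,r}=-2/(r-1)$, so it suffices to factor $\mathcal{O}_{I,J}E_\eta$ there. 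The key input is that admissibility lets me write $\eta=\kappa+(r-1)\delta'$ in the sense of Proposition \ref{agrupk1}, where $\kappa=(\bar\lambda^{+},\bar\mu^{+})$ with $\bar\lambda^{+}$ a partition and $\bar\mu^{+}$ strictly decreasing: subtracting $(r-1)\delta'$ from a weakly decreasing block forces gaps $\geq r-1>0$, so the reduced $J$-block is automatically strict, while the inequalities of Lemmas \ref{leminequality1}--\ref{leminequality2} guarantee that $\kappa$ is a genuine composition of the required shape. Proposition \ref{agrupk1} then gives $E_\eta(x;-2/(r-1))\propto \Delta_N^{\,r-1}E_\kappa(x;2/(r-1))$, where $\Delta_N=\prod_{1\leq i<j\leq N}(x_i-x_j)$.

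Next I would apply $\mathcal{O}_{I,J}$ to this product. Since $r$ is even, $\Delta_N^{\,r-1}$ is \emph{totally} antisymmetric, so for $K\in\{I,J\}$ one has $\mathrm{Asym}_K(\Delta_N^{\,r-1}g)=\Delta_N^{\,r-1}\mathrm{Sym}_K(g)$ and $\mathrm{Sym}_K(\Delta_N^{\,r-1}g)=\Delta_N^{\,r-1}\mathrm{Asym}_K(g)$; each operator in $\mathcal{O}_{I,J}$ thus flips while $\Delta_N^{\,r-1}$ is pulled out front. Writing $\Delta_I,\Delta_J,\Delta_{IJ}$ for the Vandermonde products over $I$, over $J$, and over pairs $(i,j)\in I\times J$, so that $\Delta_N=\Delta_I\Delta_J\Delta_{IJ}$, the outcome is
\begin{align*}
\mathrm{Asym}_I\mathrm{Asym}_J(\Delta_N^{\,r-1}E_\kappa)&=\Delta_N^{\,r-1}\,\mathrm{Sym}_I\mathrm{Sym}_J E_\kappa,\\
\mathrm{Asym}_I\mathrm{Sym}_J(\Delta_N^{\,r-1}E_\kappa)&=\Delta_N^{\,r-1}\,\mathrm{Sym}_I\mathrm{Asym}_J E_\kappa,\\
\mathrm{Sym}_I\mathrm{Asym}_J(\Delta_N^{\,r-1}E_\kappa)&=\Delta_N^{\,r-1}\,\mathrm{Asym}_I\mathrm{Sym}_J E_\kappa,\\
\mathrm{Sym}_I\mathrm{Sym}_J(\Delta_N^{\,r-1}E_\kappa)&=\Delta_N^{\,r-1}\,\mathrm{Asym}_I\mathrm{Asym}_J E_\kappa.
\end{align*}
Now every surviving inner $\mathrm{Asym}_K$ makes its argument divisible by $\Delta_K$ with symmetric quotient (this is the content of \eqref{SymVandermondef}), contributing one extra power of $\Delta_K$. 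For $\AS$ this upgrades $\Delta_J^{\,r-1}$ to $\Delta_J^{\,r}$, giving $\Delta_N^{\,r-1}\Delta_J=\Delta_J^{\,r}\,\Delta_I^{\,r-1}\Delta_{IJ}^{\,r-1}$ and hence $P_\Lambda^{\AS}=\prod_{m+1\leq i<j\leq N}(x_i-x_j)^r\,Q$; symmetrically $\SA$ yields $\Delta_I^{\,r}$, $\SS$ yields $\Delta_I^{\,r}\Delta_J^{\,r}$, and $\AA$ yields simply $\Delta_N^{\,r-1}=\prod_{1\leq i<j\leq N}(x_i-x_j)^{r-1}$, with $Q$ absorbing the remaining (polynomial) factors in each case. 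The hypotheses $\Lambda_{m+1}>\cdots>\Lambda_N$ for $\AA$ and $\SA$ are exactly what keeps $\mathrm{Asym}_J$ from annihilating the polynomial, and the split between weak/strict admissibility ($\AS$, $\AA$) and moderate admissibility ($\SS$, $\SA$) is dictated by which of Lemma \ref{CLambda}(i)--(iv) controls regularity.

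I expect the main obstacle to be the index decomposition $\eta=\kappa+(r-1)\delta'$: one must check, separately for the strict/weakly admissible and the moderately admissible cases, that removing $(r-1)\delta'$ leaves a bona fide base composition $\kappa$ fitting the template of Proposition \ref{agrupk1} (in particular that $\bar\lambda^{+}$ stays a partition and that no entry becomes negative), and that the permutation $\omega_\kappa$ distributes $\delta$ compatibly within the two blocks. The Vandermonde bookkeeping is then routine once the parity of $r$ is used, and polynomiality of $Q$ is automatic because every manipulation is an identity between polynomials.
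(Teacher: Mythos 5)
Your proposal follows essentially the same route as the paper's proof: normalize via Proposition \ref{normalizAS}, decompose $\eta=\kappa+(r-1)\delta'$, invoke Proposition \ref{agrupk1} to extract $\Delta_N^{r-1}$, and then push the Vandermonde factor through $\mathcal{O}_{I,J}$ using the parity of $r-1$ and identity \eqref{SymVandermondef}, picking up one extra power of $\Delta_K$ from each surviving inner antisymmetrization. Your global "flip" of both operators at once (rather than the paper's factor-by-factor handling of $\Delta_I$, $\Delta_J$, $\Delta_{IJ}$) is only a cosmetic repackaging, and the step you flag as the main obstacle — verifying that $\kappa$ has the shape required by Proposition \ref{agrupk1} — is asserted at the same level of detail in the paper itself.
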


\begin{proof} 
Once again, all cases are similar, so we only provide the demonstration for the symmetry type AS, which corresponds to (i) above. 

As before,  we set $I=\{1,\ldots,m\}$ and $J=\{m+1,\ldots,N\}$. According to Definition \ref{defJackPrescibed} and Proposition \ref{normalizAS}, there is  a composition $\eta$, obtained by the concatenation of two partitions,  such that
$$P_{\Lambda}^{\AS}(x_{1},\ldots,x_{N};\alpha) \propto \mathrm{Asym}_{I} \mathrm{Sym}_{J} (E_{\eta}(x_{1},\ldots,x_{N};\alpha)) . $$
Given that  $\La$ is $(1,r,N)$-admissible,  then $\eta$ has the form $\kappa+(r-1)\delta'$ where $\kappa=(\la^+,\mu^+)$ is the composition obtained from $\eta$ after subtraction of the composition $(r-1)\delta'$. Moreover, since $\La$ is strict and weakly $(1,r,N)$-admissible, we know that $\kappa$ is such that $\mu^+$ is  strictly decreasing. Thus,
\begin{equation}\label{longproof1}P_{\Lambda}^{\AS}(x_{1},\ldots,x_{N};-2/(r-1)) \propto \mathrm{Asym}_{I} \mathrm{Sym}_{J} (E_{\kappa+(r-1)\delta'}(x_{1},\ldots,x_{N};-2/(r-1))).\end{equation}
Now, by Proposition \ref{agrupk1}, we also have
\begin{equation}\label{longproof2} E_{\kappa+(r-1)\delta'}(x_{1},\ldots,x_{N};-2/(r-1)) \propto \prod_{1\leq i<j \leq N}(x_{i}-x_{j})^{r-1}E_{\kappa}(x_{1},\ldots,x_{N};2/(r-1)).\end{equation}
The substitution of \eqref{longproof2} into \eqref{longproof1}, followed by the use of \eqref{SymVandermondef},  leads to 
\begin{multline*}
P_{\Lambda}^{\AS}(x_{1},\ldots,x_{N};-2/(r-1)) \propto \\  (\Delta_{J})^{(r-1)} (\Delta_{I})^{(r-1)} \mathrm{Sym}_{I}\left(\prod_{i=1}^{m}\prod_{j=m+1}^{N}(x_{i}-x_{j})^{(r-1)} \mathrm{Asym}_{J} E_{\kappa}(x_{1},\ldots,x_{N};2/(r-1))\right)
%\\
% &= (\Delta_{J})^{(r-1)} (\Delta_{I})^{(r-1)}  \prod_{i=1}^{m}\prod_{j=m+1}^{N}(x_{i}-x_{j})^{(r-1)}  \mathrm{Sym}_{I} \mathrm{Asym}_{J} E_{\kappa}(x_{1},\ldots,x_{N};2/(r-1))\\
 %&= \prod_{m+1\leq i<j \leq N}(x_{i}-x_{j})^{r}  Q(x_{1},\ldots,x_{N})
%\end{split}
\end{multline*}
Now, we know that $\mathrm{Asym}_{J} E_{\kappa}(x_{1},\ldots,x_{N};2/(r-1))$ is antisymmetric with respect to the set of variables indexed by $J$,  so we can factorize the antisymmetric factor $\prod_{m+1\leq i<j \leq N}(x_{i}-x_{j})$. Exploiting once again \eqref{SymVandermondef}, we  finally obtain   
\begin{equation*}
P_{\Lambda}^{\AS}(x_{1},\ldots,x_{N};-2/(r-1)) \propto \prod_{m+1\leq i<j \leq N}(x_{i}-x_{j})^{r}  Q(x_{1},\ldots,x_{N}),
\end{equation*}
where
\begin{multline}\label{exactQ}
Q(x_{1},\ldots,x_{N})=\prod_{1\leq i<j \leq m}(x_{i}-x_{j})^{r-1}\prod_{i=1}^{m}\prod_{j=m+1}^{N}(x_{i}-x_{j})^{(r-1)}
\\
\times \mathrm{Sym}_{I}\left(\frac{\mathrm{Asym}_{J} E_{\kappa}(x_{1},\ldots,x_{N};2/(r-1))}{\prod_{m+1\leq i<j \leq N}(x_{i}-x_{j})}\right).\end{multline}
\end{proof}

\begin{remark} The case (i) was first conjectured in \cite{dlm_cmp2}  in the context of symmetric polynomials in superspace.  All other cases are new.
\end{remark}

\begin{coro} Let $\alpha=-\frac{2}{r-1}$ and let $r$ be positive and even. Moreover, for any positive integer $\rho$, let 
$$ \rho \delta_N=\big(\rho(N-1),\rho(N-2),\ldots,\rho,0\big).$$
 Then, the   antisymmetric Jack polynomial satisfies 
\begin{equation*}
S_{(r-1)\delta_{N}}(x_1,\ldots,x_N;\alpha)=\prod_{1\leq i<j \leq N}(x_{i}-x_{j})^{(r-1)},
\end{equation*}
while the  symmetric Jack polynomial satisfies 
$$ P_{ r\delta_N}(x_1,\ldots,x_N;\alpha)=\prod_{1\leq i<j \leq N}(x_{i}-x_{j})^{r}.$$
\end{coro}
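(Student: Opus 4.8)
The plan is to obtain both identities as the $m=N$ specializations (equivalently, empty second block $J=\emptyset$) of the $k=1$ clustering result, and then to pin down the leftover factor by a degree count. Write $\Delta=\prod_{1\le i<j\le N}(x_i-x_j)$. When $m=N$ the (anti)symmetrization over $J$ is trivial, so the completely antisymmetric Jack polynomial is $S_{(r-1)\delta_N}=P^{\AA}_{\La}$ with $\La=\big((r-1)\delta_N;\,\big)$, and the completely symmetric one is $P_{r\delta_N}=P^{\SS}_{\La}$ with $\La=\big(r\delta_N;\,\big)$; both are regular at $\alpha_{1,r}=-2/(r-1)$ by Proposition \ref{regularityJackPresc}. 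Note also that $\gcd(k+1,r-1)=\gcd(2,r-1)=1$ holds automatically because $r$ is even, so $\alpha_{1,r}$ is an admissible value of $\alpha$, and $\ell\big((r-1)\delta_N\big)=\ell\big(r\delta_N\big)=N-1\le N$.

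For the antisymmetric identity I would take $\La=\big((r-1)\delta_N;\,\big)$, so that $\La^*=(r-1)\delta_N$ and $\La^\cd_i=\La^*_i+1$ for every $i$. First I would check that $\La$ is strict, which holds because $(r-1)\delta_N$ is strictly decreasing for $r\ge 2$, and weakly $(1,r,N)$-admissible, since $\La^\cd_i-\La^*_{i+1}=(r-1)+1=r$. As $J=\emptyset$, the auxiliary hypothesis $\La_{m+1}>\cdots>\La_N$ in Proposition \ref{clusteringk1}(iv) is vacuous, so that proposition yields $P^{\AA}_{\La}(x;-2/(r-1))=\Delta^{\,r-1}\,Q(x)$ for some polynomial $Q$. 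Comparing degrees, $|\La|=(r-1)\binom{N}{2}=\deg\Delta^{\,r-1}$, so $Q$ is a constant; and since $\Delta^{\,r-1}$ is antisymmetric (because $r-1$ is odd) with leading antisymmetric monomial $a_{(r-1)\delta_N}=m_\La$ of coefficient one, the monic normalization of $P^{\AA}_{\La}$ forces that constant to equal $1$. This gives $S_{(r-1)\delta_N}=\Delta^{\,r-1}$. The same conclusion may alternatively be read off from Corollary \ref{CoroClusterPartition} with $\lambda$ the empty partition, using $E_{\emptyset}=1$ and antisymmetrizing the already antisymmetric $\Delta^{\,r-1}$.

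For the symmetric identity I would take $\La=\big(r\delta_N;\,\big)$, so that $\La^\cd=(r(N-1)+1,\ldots,r+1,1)$ has all consecutive gaps equal to $r$; hence $\La^\cd_i-\La^\cd_{i+1}=r$ and $\La$ is moderately $(1,r,N)$-admissible. Applying Proposition \ref{clusteringk1}(ii) with $m=N$, the product $\prod_{1\le i<j\le m}$ runs over all pairs while $\prod_{m+1\le i<j\le N}$ is empty, so $P_{r\delta_N}(x;-2/(r-1))=\Delta^{\,r}\,Q(x)$. Again a degree count, $|r\delta_N|=r\binom{N}{2}=\deg\Delta^{\,r}$, shows $Q$ is constant, and since $r$ is even $\Delta^{\,r}$ is symmetric with coefficient one on $m_{r\delta_N}=m_\La$; the monic normalization then gives $Q=1$, i.e.\ $P_{r\delta_N}=\Delta^{\,r}$.

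The argument involves no genuine analytic difficulty, since regularity at $\alpha_{1,r}$ is already established; the only points requiring care are the bookkeeping that the $m=N$ superpartitions $\big((r-1)\delta_N;\,\big)$ and $\big(r\delta_N;\,\big)$ satisfy the precise admissibility hypotheses of the two relevant cases of Proposition \ref{clusteringk1}, and the verification that the residual factor $Q$ is a scalar equal to $1$. The latter is exactly where the matching of total degrees and the coefficient-one convention on $m_\La$ do the work, so the main obstacle is really just confirming that $\Delta^{\,r-1}$ and $\Delta^{\,r}$ lie in the correct symmetry classes (using the parity of $r$) and carry the correct leading monomial.
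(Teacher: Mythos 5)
Your argument is correct, and it reaches the two identities by a slightly different route than the paper. The paper deduces both statements from the $\mathrm{AS}$ case of Proposition \ref{clusteringk1} together with the explicit expression \eqref{exactQ} for the residual factor: the antisymmetric identity comes from $m=N$, $\kappa=\emptyset$ (so $E_\emptyset=1$ and \eqref{exactQ} collapses to $\Delta^{r-1}$ up to a constant), and the symmetric identity from $m=0$, $\kappa=\delta_N$. You instead invoke cases (iv) and (ii) of the same proposition (types $\AA$ with $m=N$ and $\SS$ with $m=N$, which for an empty second block coincide with the antisymmetric and symmetric Jack polynomials), and you pin down the leftover factor $Q$ by the homogeneity count $|\La|=\deg\Delta^{r-1}$ (resp.\ $\deg\Delta^{r}$) plus the coefficient-one normalization on $x^{\La}$. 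The net effect is the same, but your determination of the constant is self-contained and does not require unpacking \eqref{exactQ}; on the other hand it does require the admissibility bookkeeping you carry out (strict and weak admissibility of $\big((r-1)\delta_N;\,\big)$, moderate admissibility of $\big(r\delta_N;\,\big)$), which the paper's choice of specializations sidesteps. Both verifications are correct as you state them, and the parity observations (that $\Delta^{r-1}$ is antisymmetric and $\Delta^{r}$ symmetric, with unit coefficient on the dominant monomial) are exactly the points that need checking.
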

\begin{proof}
We recall that if $\ell(\la)=N$, then $$S_\la(x;\alpha)=P^{\AS}_{(\la;\emptyset)}(x;\alpha)\quad\text{and}\quad P_{ \lambda}(x,\alpha)=P^{\AS}_{(\emptyset;\; \lambda)}(x,\alpha).$$  
The first result then follows  from Proposition \ref{clusteringk1} and Equation \eqref{exactQ} for the case with $m=N$ and $\kappa=\emptyset$.  The second result also follows from Proposition \ref{clusteringk1} and Equation \eqref{exactQ}, but this time,  with $m=0$ and $\kappa=\delta_N$.
\end{proof}

\section{Translation invariance and some clustering properties for $k\geq 1$}

In this section, we first generalize the work of Luque and Jolicoeur about translationally invariant Jack polynomials \cite{jl}.  We indeed find the necessary and sufficient conditions for the translational invariance of the Jack polynomial with prescribed symmetry AS.  To be more precise, let  
\beq  P_\La=P^\AS_\La(x_1,\ldots,x_N;\alpha) ,\label{eqInvJack} \eeq
and suppose that
\begin{align} \alpha&=\alpha_{k,r}\label{eqInvAlpha},\\ 
\La &\quad \text{is a strict and weakly} \,(k,r,N)\text{-admissible superpartition.}\label{eqInvAdm}
\end{align}
Then, as was stated in Theorem \ref{TheoTransla},  $P_\La$ is invariant under translation  if and only if   conditions (D1) and (D2) are satisfied.  The latter conditions concern  the corners in the diagram of $\La$. The proof relies on combinatorial formulas obtained in \cite{dlm_cmp2} that generalize Lassalle's results \cite{lassalle,lassalle2}  about the action of the operator 
\beq \label{opL+}L_+=\sum_{i=1}^N\frac{\partial}{\partial x_i}.\eeq 
We then apply the result about the translationally invariant polynomials and prove that certain Jack polynomials with prescribed symmetry AS admit clusters of size $k$ and order $r$.

\subsection{Generators of translation}

The action of $L_+$ on a Jack polynomial with prescribed symmetry AS, $P^\AS_\La(x;\alpha)$, is in general very complicated.  However it can be decomposed in terms of two basic operators, $Q_\ocircle$ and $Q_\Box$.  Their respective action  on $P^\AS_\La(x;\alpha)$ can be translated into simple transformations of the diagram of $\La$, namely the removal of a circle  and the conversion of a box into a circle.

Now, let $I=\{1,\ldots,m\}$, $I_+=\{1,\ldots,m+1\}$, $I_-=\{1,\ldots,m-1\}$,  $J=\{m+1,\ldots, N\}$, $J_+=\{m,\ldots, N\}$, and $J_-=\{m+2,\ldots, N\}$. We define $Q_\ocircle$ and $Q_\Box$ as follows:   
\begin{align}
Q_\ocircle\;: \qquad & \;\mathscr{A}_I\otimes \mathscr{S}_J\longrightarrow \mathscr{A}_{I_-}\otimes \mathscr{S}_{J_+}\; ;  &\hspace{-0ex} f&\longmapsto (1+\sum_{i=m+1}^N K_{i,m})f\; ,&&1\leq m\leq N,\\
Q_{\Box}\;:  \qquad & \;\mathscr{A}_I\otimes \mathscr{S}_J\longrightarrow \mathscr{A}_{I_+}\otimes \mathscr{S}_{J_-}\; ;  &\hspace{-0ex}  f&\longmapsto (1-\sum_{i=1}^m K_{i,m+1})\circ\frac{\partial f}{\partial x_{m+1}}\;, &&0\leq m\leq N-1.
\end{align}
Note that for the extreme case $m=0$, we set $Q_\ocircle=0$.  Similarly, for $m=N$, we set $Q_{\Box}=0$. 

\begin{lem} \label{QQL+}On the space $\mathscr{A}_I\otimes \mathscr{S}_J$, we have $\displaystyle  Q_\ocircle\circ Q_{\Box}+Q_{\Box}\circ Q_\ocircle=L_+$. 
\end{lem}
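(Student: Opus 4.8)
The plan is to make the hidden dependence on $m$ explicit and then expand the two composites directly. Writing $Q_\ocircle^{(m)}$ and $Q_\Box^{(m)}$ for the operators attached to the splitting $\{1,\dots,m\}\sqcup\{m+1,\dots,N\}$, I would first check that the source and target spaces match, so that on $\mathscr{A}_I\otimes\mathscr{S}_J$ the statement reads $Q_\ocircle^{(m+1)}\circ Q_\Box^{(m)}+Q_\Box^{(m-1)}\circ Q_\ocircle^{(m)}=L_+$. For a general $f\in\mathscr{A}_I\otimes\mathscr{S}_J$ I would expand
\[
Q_\ocircle^{(m+1)}Q_\Box^{(m)}f=\Big(1+\sum_{i=m+2}^N K_{i,m+1}\Big)\Big(1-\sum_{j=1}^m K_{j,m+1}\Big)\partial_{x_{m+1}}f,
\]
\[
Q_\Box^{(m-1)}Q_\ocircle^{(m)}f=\Big(1-\sum_{j=1}^{m-1}K_{j,m}\Big)\partial_{x_m}\Big(1+\sum_{i=m+1}^N K_{i,m}\Big)f,
\]
and push every derivative to the right using the elementary commutation rule $K_{i,j}\,\partial_{x_a}=\partial_{x_{(ij)a}}\,K_{i,j}$, so that each summand becomes a word in the $K$'s applied to a single $\partial_{x_k}f$.

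The first key step is to isolate the \emph{diagonal} contributions, namely those in which the transposition stays inside a single block. Using the symmetry of $f$ in $J$ one gets $K_{i,m+1}\partial_{x_{m+1}}f=\partial_{x_i}f$ for $i\in J$, and using the antisymmetry of $f$ in $I$ one gets $K_{j,m}\partial_{x_m}f=-\partial_{x_j}f$ for $j\in I$. Collecting these I expect the first composite to contribute $\sum_{i=m+1}^N\partial_{x_i}f$ and the second to contribute $\sum_{j=1}^m\partial_{x_j}f$, whose sum is exactly $L_+f$. A short separate check then shows that the lone single-transposition cross terms cancel between the two composites: the $j=m$ term $-K_{m,m+1}\partial_{x_{m+1}}f$ of the first exactly kills the $i=m+1$ term $+K_{m,m+1}\partial_{x_{m+1}}f$ of the second.

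It remains to prove that all the genuinely cross-block terms cancel, and this I expect to be the main obstacle, since the double sums produce three-cycles mixing the two blocks. The plan is to bring every residual term to the common form $(\text{word in }K)\,\partial_{x_{m+1}}f$, rewriting $\partial_{x_i}f=K_{i,m+1}\partial_{x_{m+1}}f$ for $i\ge m+2$ and using the identity $K_{i,m}K_{i,m+1}=K_{i,m+1}K_{m,m+1}$. After one round of cancellation (the terms carrying $K_{i,m+1}K_{m,m+1}$ drop out in pairs), the leftover should factor as $-\sum_{j=1}^{m-1}\big(1+\sum_{i=m+2}^N K_{i,m+1}\big)\big(K_{j,m+1}+K_{j,m}K_{m,m+1}\big)\partial_{x_{m+1}}f$. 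The final and crucial ingredient is the antisymmetry relation $K_{j,m+1}f=-K_{j,m}K_{m,m+1}f$ for $j<m$, which itself follows from $K_{j,m}K_{j,m+1}f=-K_{m,m+1}f$ (swapping the two antisymmetric-block entries sitting at positions $j$ and $m$). This makes each inner factor $(K_{j,m+1}+K_{j,m}K_{m,m+1})\partial_{x_{m+1}}f$ vanish, so the residual is zero and the identity $Q_\ocircle\circ Q_\Box+Q_\Box\circ Q_\ocircle=L_+$ follows.
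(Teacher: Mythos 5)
Your proposal is correct and is essentially the paper's own argument: both expand the two composites directly, peel off the diagonal terms that assemble into $L_+f$, and cancel the cross-block terms using the antisymmetry of $f$ in $I$ and its symmetry in $J$. The only difference is bookkeeping — the paper rewrites every cross term as $\partial_{x_i}(K_{i,j}f)$ and cancels the two resulting double sums against each other, whereas you normalize everything onto $\partial_{x_{m+1}}f$ and cancel pairwise via $K_{j,m+1}f=-K_{j,m}K_{m,m+1}f$ — but the underlying computation is the same.
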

\begin{proof} Let $f$ be an element of  $\mathscr{A}_I\otimes \mathscr{S}_J$, which means that $f$ is a polynomial in the variables $x_{1},\ldots,x_{N}$ that is antisymmetric with respect to $x_1,\ldots, x_m$ and symmetric with respect to $x_{m+1},\ldots, x_N$. We must show that 
\beq  \label{eqlemQ0}(Q_\ocircle\circ Q_{\Box})( f) +(Q_{\Box}\circ Q_\ocircle)(f) =\sum_{i=1}^N\frac{\partial f }{\partial x_{i}}.\eeq

On the one hand, 
\begin{multline}   \label{eqlemQ1}(Q_\ocircle\circ Q_{\Box})(f)=\\ \frac{\partial f}{\partial x_{m+1}}-\sum_{i=1}^{m}K_{i,m+1}\frac{\partial f}{\partial x_{m+1}}+\sum_{j=m+2}^{N}K_{j,m+1}\frac{\partial f}{\partial x_{m+1}}-\sum_{j=m+2}^{N}K_{j,m+1} \sum_{i=1}^{m}K_{i,m+1}\frac{\partial f}{\partial x_{m+1}}.\end{multline} 
However, the  symmetry  properties of  $f$ imply 
$$\sum_{j=m+2}^{N}K_{j,m+1}\frac{\partial f}{\partial x_{m+1}}=\sum_{j=m+2}^{N}\frac{\partial f}{\partial x_{j}} \quad \text{and} \quad \sum_{j=m+2}^{N}K_{j,m+1} \sum_{i=1}^{m}K_{i,m+1}\frac{\partial f}{\partial x_{m+1}}=\sum_{i=1}^{m}\sum_{j=m+2}^{N}\frac{\partial}{\partial x_{i}}(K_{i,j} f).$$
By substituting the last equalities into \eqref{eqlemQ1}, we obtain 
\beq   \label{eqlemQ1b} (Q_\ocircle\circ Q_{\Box})(f)=\frac{\partial f}{\partial x_{m+1}}+\sum_{j=m+2}^{N}\frac{\partial f}{\partial x_{j}}-\sum_{i=1}^{m}\sum_{j=m+1}^{N}\frac{\partial}{\partial x_{i}}(K_{i,j} f).\eeq

On the other hand, 
\beq \label{eqlemQ2} (Q_{\Box}\circ Q_\ocircle)(f)=\frac{\partial f}{\partial x_{m}}+\sum_{j=m+1}^{N}\frac{\partial}{\partial x_{m}}(K_{j,m}f)-\sum_{i=1}^{m-1}K_{i,m}\frac{\partial f}{\partial x_{m}}-\sum_{i=1}^{m-1}K_{i,m}\frac{\partial}{\partial x_{m}} \left( \sum_{j=m+1}^{N}K_{j,m}f \right).\eeq
Once again, the symmetry properties of $f$ allow to simplify this equation.  Indeed,  
$$\sum_{i=1}^{m-1}K_{i,m}\frac{\partial f}{\partial x_{m}}=-\sum_{i=1}^{m-1}\frac{\partial f}{\partial x_{i}} \quad \text{and} \quad \sum_{i=1}^{m-1}K_{i,m}\frac{\partial}{\partial x_{m}} \left( \sum_{j=m+1}^{N}K_{j,m}f \right)=-\sum_{i=1}^{m-1}\sum_{j=m+1}^{N}\frac{\partial}{\partial x_{i}}(K_{i,j}f). $$
Then, 
\beq   \label{eqlemQ2b} (Q_{\Box}\circ Q_\ocircle)(f) = \frac{\partial f}{\partial x_{m}}+\sum_{i=1}^{m-1}\frac{\partial f}{\partial x_{i}}+\sum_{i=1}^{m}\sum_{j=m+1}^{N}\frac{\partial}{\partial x_{i}}(K_{i,j}f).\eeq 

We finally sum Equations \eqref{eqlemQ1b}  and \eqref{eqlemQ2b}.  This yields Equation \eqref{eqlemQ0}, as expected.
\end{proof}

The explicit action of $Q_\ocircle$ and $Q_\Box$ on the polynomial  $P^\AS_\Lambda(x;\alpha)$ can be read off from Proposition 9 of \cite{dlm_cmp2}.  Indeed, this proposition is concerned with the action of differential operators --related to the super-Virasoro algebra-- on the Jack superpolynomials, denoted by  $P_\Lambda(x;\theta;\alpha)$, which contain Grassmann variables $\theta_1,\ldots,\theta_N$.  Among the operators studied in \cite{dlm_cmp2}, there are $$Q^\perp=\sum_i\frac{\partial}{\partial \theta_i}\quad\text{and}\quad q=\sum_i\theta_i\frac{\partial}{\partial x_i} .$$  Now,  a Jack superpolynomial of degree $m$ in the variables $\theta_i$, can be decomposed as follows \cite{dlm_cmp}: 
$$  P_\Lambda(x;\theta;\alpha)=\sum_{1\leq j_1<\ldots<j_m\leq N}\theta_{j_1}\cdots\theta_{j_m}f^{ j_1,\ldots,j_m}(x;\alpha), $$
where $f^{ j_1,\ldots,j_m}(x;\alpha)$ belongs to the space $\mathscr{A}_ {\{j_1,\ldots,j_m\}}\otimes \mathscr{S}_{\{1,\ldots,N\}\setminus \{j_1,\ldots,j_m\}}$ and is an eigenfunction of the operator $D$ defined in \eqref{defD}.  This means in particular that  $f^{ 1,\ldots,m}(x;\alpha)$ is exactly equal to our $P^\AS_\La(x;\alpha)$. It is then an easy exercise to show that the formula for the action of $Q^\perp$ on $  P_\Lambda(x;\theta;\alpha)$ provides the formula for  the action of $Q_\ocircle$ on $   P^\AS_\La(x;\alpha)$.  Similarly,   $q  P_\Lambda(x;\theta;\alpha)$  is related to $Q_\Box P^\AS_\La(x;\alpha)$.  Note that the formulas obtained in \cite{dlm_cmp2} are given in terms of the following upper and lower-hook lengths: 
\begin{equation}
\begin{split}
h_\Lambda^{(\alpha)}(s)& = l_{\Lambda^{\circledast}}(s) + \alpha(a_{\Lambda^{\ast}}(s)+ 1)\\
h_\alpha^{(\Lambda)}(s)& = l_{\Lambda^{\ast}}(s) + 1 + \alpha(a_{\Lambda^{\circledast}}(s))
\end{split}
\end{equation} 

\begin{proposition}\label{actionqQ}\cite{dlm_cmp2}
The action of the operators $Q_\ocircle$ and $Q_{\Box}$ on the Jack polynomial with prescribed symmetry $P_\La=P_\Lambda^\AS{(x;\alpha)}$ is 
\begin{equation} \label{Qcirle}
Q_\ocircle(P_\Lambda )=\sum_{\Omega}(-1)^{\#\Omega^{\circ}}\left(\prod_{s \in row _{\Omega^{\circ}}}\frac{h_\alpha^{(\Omega)}(s)}{h_\alpha^{(\Lambda)}(s)}\right)(N+1-i+\alpha(j-1))P_\Omega 
\end{equation}
\begin{equation} \label{Qsquare}
Q_{\Box}(P_\Lambda )=\sum_{\Omega}(-1)^{\#\Omega^{\circ}}\left(\prod_{s \in row _{\Omega^{\circ}}}\frac{h_\Lambda^{(\alpha)}(s)}{h_\Omega^{(\alpha)}(s)}\right) P_\Omega 
\end{equation}
where the sum is taken in \eqref{Qcirle} over all $\Omega's$ obtained by removing a circle from $\Lambda$; while the sum is taken in \eqref{Qsquare} over all $\Omega's$ obtained by
converting a box of $\Lambda$ into a circle. Also, in each case $\Lambda$ and $\Omega$ differ in exactly one cell which we call the marked cell and whose position is denoted in the formulas by $(i,j)$. 
The symbol $\#\Omega^{\circ}$ stands for the number of circles in $\Omega$ above the marked cell.  
The symbol $row _{\Omega^{\circ}}$ stands for the row of $\Omega$ and $\Lambda$ to the left of the marked cell.
\end{proposition}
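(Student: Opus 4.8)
The plan is to obtain Proposition~\ref{actionqQ} from the superspace computations of \cite{dlm_cmp2} by projecting onto a single fermionic sector. I would start from the decomposition of the Jack superpolynomial recalled above,
\[
  P_\Lambda(x;\theta;\alpha)=\sum_{1\le j_1<\cdots<j_m\le N}\theta_{j_1}\cdots\theta_{j_m}\,f^{\,j_1,\ldots,j_m}(x;\alpha),
\]
in which each $f^{\,j_1,\ldots,j_m}$ lies in $\mathscr{A}_{\{j_1,\ldots,j_m\}}\otimes\mathscr{S}_{\{1,\ldots,N\}\setminus\{j_1,\ldots,j_m\}}$ and, crucially, $f^{1,\ldots,m}=P^{\AS}_\Lambda$. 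The one structural fact I need is the diagonal invariance $K^{x}_{i,j}K^{\theta}_{i,j}P_\Lambda=P_\Lambda$ defining the super-symmetry; comparing coefficients of the monomial $\theta_1\cdots\theta_{m-1}\theta_p$ on both sides gives the component relation $f^{1,\ldots,m-1,p}=K_{m,p}\,f^{1,\ldots,m}$ for every $p>m$, and similarly every component reachable from $\{1,\ldots,m+1\}$ by deleting one index is a signed $K$-image of $f^{1,\ldots,m}$.

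Next I would check that $Q_\ocircle$ and $Q_\Box$ are precisely the realizations of $Q^\perp=\sum_i\partial_{\theta_i}$ and $q=\sum_i\theta_i\partial_{x_i}$ on this fixed sector. Expanding $Q^\perp P_\Lambda$ and collecting the coefficient of $\theta_1\cdots\theta_{m-1}$ produces $(-1)^{m-1}\sum_{p\ge m}f^{1,\ldots,m-1,p}$; substituting the component relation of the previous paragraph turns this into $(-1)^{m-1}\bigl(1+\sum_{i=m+1}^{N}K_{i,m}\bigr)f^{1,\ldots,m}=(-1)^{m-1}Q_\ocircle P^{\AS}_\Lambda$. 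The analogous expansion of $qP_\Lambda$ gives, for the coefficient of $\theta_1\cdots\theta_{m+1}$, the sum $\sum_{i=1}^{m+1}(-1)^{i-1}\partial_{x_i}f^{\{1,\ldots,m+1\}\setminus\{i\}}$, which after the same substitutions collapses to a constant sign times $\bigl(1-\sum_{i=1}^{m}K_{i,m+1}\bigr)\partial_{x_{m+1}}f^{1,\ldots,m}=Q_\Box P^{\AS}_\Lambda$. A convenient consistency check here is that on the fermionic Fock space $\{Q^\perp,q\}=L_+$, which projects to exactly the identity $Q_\ocircle Q_\Box+Q_\Box Q_\ocircle=L_+$ of Lemma~\ref{QQL+}.

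With the dictionary established I would invoke Proposition~9 of \cite{dlm_cmp2}, which writes $Q^\perp P_\Lambda$ as a sum over the superpartitions $\Omega$ obtained from $\Lambda$ by erasing one circle, and $qP_\Lambda$ as a sum over the $\Omega$ obtained by promoting one box to a circle, with coefficients built from the hook lengths $h_\alpha^{(\Lambda)}$, $h_\alpha^{(\Omega)}$ (respectively $h^{(\alpha)}_\Lambda$, $h^{(\alpha)}_\Omega$) and, in the first case, the extra factor $N+1-i+\alpha(j-1)$ attached to the marked cell $(i,j)$. Since $\Omega$ has fermionic degree $m\mp1$, the coefficient of $\theta_1\cdots\theta_{m\mp1}$ in $P_\Omega$ is $P^{\AS}_\Omega$, so taking that coefficient on both sides of each super-identity and using the dictionary of the second paragraph reproduces the right-hand sides of \eqref{Qcirle} and \eqref{Qsquare}; the hook-length data and the marked cell transfer verbatim because they depend only on the diagrams of $\Lambda$ and $\Omega$ and are blind to the fermionic grading.

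I expect the only real labour to be the sign bookkeeping. The constant signs $(-1)^{m\mp1}$ coming from the projection, the internal signs of the coefficients in \cite{dlm_cmp2}, and the reordering signs incurred when $\theta_1\cdots\theta_{m\mp1}$ is extracted from $P_\Omega$ must be shown to reassemble into the single uniform factor $(-1)^{\#\Omega^{\circ}}$ of the statement. The cleanest way I foresee is to track, case by case for the ``erase a circle'' and the ``box-to-circle'' moves, the position of the marked row relative to the fermionic rows of $\Omega$, and to verify that the number of circles lying strictly above the marked cell equals the parity of the transposition needed to bring the relevant Grassmann monomial into increasing order. This is a finite, diagram-local check, but it is the step most prone to error and is where I would concentrate the careful writing.
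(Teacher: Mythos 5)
Your proposal is correct and follows essentially the same route as the paper, which states this proposition as a consequence of Proposition 9 of \cite{dlm_cmp2} and sketches precisely the dictionary you describe: decompose the Jack superpolynomial into fermionic sectors, identify $f^{1,\ldots,m}=P^{\AS}_\La$, and recognize $Q_\ocircle$ and $Q_{\Box}$ as the projections of $Q^\perp$ and $q$ onto that sector. Your write-up simply fills in the details (the component relation from the diagonal $K^x K^\theta$-invariance, the extraction of the relevant Grassmann coefficient, and the sign bookkeeping) that the paper leaves as an exercise.
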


\begin{remark} Let $\Lambda$ be a superpartition such that in the corresponding diagram, all  corners are boxes.    Then, in Equation \eqref{Qcirle}, we cannot remove any circle from the diagram of $\La$ and we are forced to conclude that $Q_\ocircle P_\Lambda=0$.  This is coherent with the fact that in such case, $P_\Lambda^\AS{(x;\alpha)}$ is a symmetric polynomial and  according with our convention, $Q_\ocircle f=0$ for all $f\in  \mathscr{S}_{\{1,\ldots,N\}}$.   

Similarly, if $\Lambda$ be a superpartition such that in its diagram, all corners  are circles.  Then,   we cannot transform a box in the diagram of $\La$ into a circle.  This complies with our convention. Indeed,  in such case,  $P_\Lambda^\AS{(x;\alpha)}$ is an  antisymmetric polynomial and we have set $Q_{\Box}f=0$ for all $f\in  \mathscr{A}_{\{1,\ldots,N\}}$. \end{remark}

\subsection{General invariance}

We will determine whether a Jack polynomial with prescribed symmetry is translationally invariant by looking at the shape of the diagram associated to the  indexing superpartition.  We will pay a special attention to the corners in the diagram.

\begin{definition} \label{defcorner} Let $D$ be  the diagram associated to the superpartition  $\Lambda$. The cell $(i,j)\in D$ is a corner if $(i+1,j)\notin D$ and $(i,j+1) \notin D$.   We say that the corner  $(i,j)$ is an outer corner if the row $i-1$ and the column $j-1$ do not have corners.  On the contrary,   the corner  $(i,j)$ is an inner   corner  if the row $i-1$ and the column $j-1$ have corners. A corner that neither outer nor inner is a bordering corner. Note that in the above definitions, it is assumed that each point of the form $(0,j)$ or $(i,0)$ is a corner. 
\end{definition}

\begin{lem}\label{LemCorners} Let $D'$ be the diagram obtained by removing the corner $(i,j)$ from diagram $D$, which contains $c$ corners.  Then, the number of corners in $D'$ is:
\begin{itemize} \item  $c-1$ if $(i,j)$ is an inner corner;
\item $c$ if  $(i,j)$ is a bordering corner;
\item $c+1$ if $(i,j)$ is an outer corner.
\end{itemize}   
\end{lem}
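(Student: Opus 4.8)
The plan is to reduce the global count of corners to a purely local computation around the removed corner. First I would observe that passing from $D$ to $D'=D\setminus\{(i,j)\}$ toggles the membership of exactly one cell, namely $(i,j)$ itself. Since whether a cell $(a,b)$ is a corner depends only on its own membership together with the membership of its eastern neighbour $(a,b+1)$ and its southern neighbour $(a+1,b)$, the only cells whose corner status can be affected by the removal are $(i,j)$, its western neighbour $(i,j-1)$, and its northern neighbour $(i-1,j)$. Hence the change $\Delta c$ in the number of corners is the sum of the changes of corner status at these three cells; this is the one structural fact the whole argument rests on, and no information about the circle/box decoration is needed, as corner counting depends only on the underlying shape.

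Next I would evaluate each contribution. The cell $(i,j)$ is a corner of $D$ and is absent from $D'$, contributing $-1$. Because $(i,j)\in D$, neither $(i-1,j)$ nor $(i,j-1)$ is a corner of $D$. In $D'$ the cell $(i-1,j)$ becomes a corner precisely when $(i-1,j+1)\notin D$, and $(i,j-1)$ becomes a corner precisely when $(i+1,j-1)\notin D$; here I use that $(i,j)$ being a corner forces the $i$th row to have length exactly $j$ and the $j$th column to have height exactly $i$, so that $(i-1,j)$ and $(i,j-1)$ do lie in $D'$. This yields $\Delta c = \chi[(i-1,j+1)\notin D] + \chi[(i+1,j-1)\notin D] - 1$, where $\chi[\cdot]$ equals $1$ when the condition holds and $0$ otherwise.

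The key step is then to translate these two membership conditions into the language of Definition \ref{defcorner}. Writing $\lambda_a$ for the length of row $a$ and using $\lambda_i=j$, one checks that the only candidate corner in row $i-1$ sits at its rightmost cell, so row $i-1$ carries a corner if and only if $\lambda_{i-1}>j$, i.e. if and only if $(i-1,j+1)\in D$; the transposed statement shows that column $j-1$ carries a corner if and only if $(i+1,j-1)\in D$. The boundary instances $i=1$ or $j=1$ are absorbed by the convention that every $(0,j)$ and every $(i,0)$ is a corner, which makes row $0$ and column $0$ always carry a corner and is consistent with treating row $0$ and column $0$ as infinite. Substituting these equivalences gives $\Delta c = \chi[\text{row } i-1 \text{ has no corner}] + \chi[\text{column } j-1 \text{ has no corner}] - 1$.

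Finally I would read off the three cases directly from this formula. For an inner corner both row $i-1$ and column $j-1$ carry a corner, so both indicators vanish and $\Delta c=-1$, giving $c-1$; for an outer corner neither carries a corner, so $\Delta c=+1$, giving $c+1$; and for a bordering corner exactly one of them carries a corner, so $\Delta c=0$, giving $c$. I expect the only delicate point to be the verification of the two equivalences in the third paragraph together with their boundary instances at the first row and first column; everything else is bookkeeping of at most three local cells, so I would keep those checks short and sanity-test them against a small staircase example such as $(3,2,1)$ before finalizing.
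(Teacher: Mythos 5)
Your argument is correct: the paper itself only says the lemma ``follows immediately from the definitions,'' and your local count at the three cells $(i,j)$, $(i-1,j)$, $(i,j-1)$, together with the translation of the two membership conditions into ``row $i-1$ (resp.\ column $j-1$) has a corner,'' is exactly the verification being left implicit. The boundary convention for $(0,j)$ and $(i,0)$ is handled consistently, so nothing further is needed.
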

\begin{proof}This follows immediately from the above definitions.
\end{proof}

\begin{lem} \label{lemaQbox} Assume   \eqref{eqInvJack}, \eqref{eqInvAlpha}, and \eqref{eqInvAdm}.   Then, $Q_{\Box}(P_\La)=0$    if and only if  $\Lambda$ is   such that all the corners in its diagram are circles.
\end{lem}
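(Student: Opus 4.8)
The plan is to read off $Q_{\Box}(P_\La)$ directly from the explicit formula \eqref{Qsquare} of Proposition \ref{actionqQ} and then to use the linear independence of the Jack polynomials with prescribed symmetry of type AS. Converting a box into a circle yields a valid (strict) superpartition only at a \emph{box corner} of $\La$ (a bosonic corner, in the sense of Definition \ref{defcorner}): the terminal box of the affected row must have no cell of the diagram beneath it, which forces $\La^*_i>\La^*_{i+1}$ and $\La^\cd_{i+1}<\La^*_i$. Hence the superpartitions $\Omega$ indexing the right-hand side of \eqref{Qsquare} are in bijection with the box corners of $\La$, they are pairwise distinct, and by triangularity (Proposition \ref{proptriangularity}) the corresponding $P_\Omega$ are linearly independent. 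Therefore $Q_\Box(P_\La)=0$ if and only if every coefficient occurring in \eqref{Qsquare} vanishes. The ``if'' direction is then immediate: if all corners of $\La$ are circles, there is no box corner to convert, the sum in \eqref{Qsquare} is empty, and $Q_\Box(P_\La)=0$.

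For the ``only if'' direction I would assume that $\La$ possesses at least one box corner and produce a single $\Omega$ whose coefficient is a nonzero number at $\alpha=\alpha_{k,r}$; linear independence then forces $Q_\Box(P_\La)\neq 0$. The natural candidate is the dominant $\Omega$, obtained by converting the box corner lying in the lowest row $i$, so that $\Omega^*=\La^*-e_i$ has the largest possible partial sums and its $P_\Omega$ cannot be cancelled by the remaining terms. Writing the marked cell as $(i,j)$ with $j=\La^*_i$, one has $\Omega^\cd=\La^\cd$, and the coefficient equals $(-1)^{\#\Omega^\circ}\prod_{j'=1}^{j-1}\frac{h_\La^{(\alpha)}(i,j')}{h_\Omega^{(\alpha)}(i,j')}$ with $h_\La^{(\alpha)}(i,j')=l_{\La^\cd}(i,j')+\alpha(j-j'+1)$ and $h_\Omega^{(\alpha)}(i,j')=l_{\La^\cd}(i,j')+\alpha(j-j')$.

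Exactly as in the proof of Lemma \ref{CLambda}, I would first show that no numerator $h_\La^{(\alpha)}(i,j')$ vanishes at $\alpha_{k,r}$. Indeed, since $\gcd(k+1,r-1)=1$, a zero would require $a_{\La^*}(i,j')+1=\rho(r-1)$ and $l_{\La^\cd}(i,j')=\rho(k+1)$ for some $\rho\geq 1$, hence $\La^\cd_{i+\rho(k+1)}\geq j'=\La^*_i-\rho(r-1)+1$. But Lemma \ref{leminequality1} gives that $\La^*$ is $(k+1,r,N)$-admissible, so $\La^*_i-\La^*_{i+\rho(k+1)}\geq \rho r$, and together with $\La^\cd_{i+\rho(k+1)}\leq \La^*_{i+\rho(k+1)}+1$ this yields $\La^*_i-\La^\cd_{i+\rho(k+1)}\geq \rho r-1\geq\rho(r-1)$, whence $\La^\cd_{i+\rho(k+1)}<j'$, a contradiction (the case $i+\rho(k+1)>N$ being immediate since then $l_{\La^\cd}(i,j')<\rho(k+1)$).

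The main obstacle is the denominators $h_\Omega^{(\alpha)}(i,j')$, which genuinely \emph{can} vanish at $\alpha_{k,r}$: already for $k=1,r=2$ one finds a zero when a fermionic row $i+k+1$ satisfies $\La^*_i-\La^*_{i+k+1}=r$. Such an apparent pole cannot survive, however, because $Q_\Box$ carries no dependence on $\alpha$ while $P_\La$ is regular at $\alpha_{k,r}$ by Proposition \ref{regularityJackPresc}, so $Q_\Box(P_\La)$ is regular and any pole in \eqref{Qsquare} must cancel against a singularity of a non-admissible $P_\Omega$. I would resolve this by telescoping the product over each maximal block of columns on which $l_{\La^\cd}(i,\cdot)$ is constant: using $a_{\La^*}(i,j')+1=a_{\La^*}(i,j'-1)$, each block collapses to a single ratio of upper hook lengths attached to the boundary cells, i.e.\ to the columns whose heights coincide with the lengths of the lower rows — precisely the $\mathrm{FF}$/$\mathrm{BF}$-type cells of Lemma \ref{CLambda}. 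Invoking Lemmas \ref{leminequality1} and \ref{leminequality2} on these surviving factors, just as in Lemma \ref{CLambda}, shows that the reduced coefficient is both finite and nonzero at $\alpha_{k,r}$. Hence the dominant coefficient has a nonzero limit, giving $Q_\Box(P_\La)\neq 0$ and completing the proof.
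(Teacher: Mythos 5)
Your core argument is the same as the paper's: read off $Q_\Box(P_\La)$ from \eqref{Qsquare}, observe that the terms are indexed by the box corners of $\La$ (so the ``if'' direction is the empty sum), and kill the possibility of a vanishing coefficient by showing that no numerator hook length $h_\La^{(\alpha_{k,r})}(i,j')$ can be zero, using the iterated admissibility inequalities of Lemmas \ref{leminequality1}--\ref{leminequality2}. Your numerator computation is correct and matches the paper's (the paper phrases it as $\La^\cd_i-\La^*_{i+\bar k(k+1)}\le \bar k r-\bar k$ versus $\ge \bar k r$ from Lemma \ref{leminequality2}). Your extra care about linear independence and about singling out the dominant $\Omega$ is harmless but not needed: one nonzero coefficient suffices, and the paper simply shows all of them are nonzero.

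The one place where you genuinely diverge is the ``main obstacle'' about the denominators $h_\Omega^{(\alpha)}(i,j')$, and this is also the weak point of your write-up. First, the premise is false: under hypothesis \eqref{eqInvAdm} these denominators cannot vanish at $\alpha_{k,r}$. A zero would require $j-j'=\rho(r-1)$ and $l_{\La^\cd}(i,j')=\rho(k+1)$, i.e.\ $\La^\cd_{i+\rho(k+1)}\ge j'$; but since $(i,j)$ is a corner we have $\La^\cd_{i+1}\le j-1$, and Lemma \ref{leminequality2} then gives $\La^\cd_{i+\rho(k+1)}\le \La^*_{i+\rho(k+1)}+1\le \La^\cd_{i+1}-\rho r+1\le j-\rho r<j-\rho(r-1)=j'$, a contradiction. (Your claimed $k=1$, $r=2$ instance with $\La^*_i-\La^*_{i+k+1}=r$ is incompatible with weak admissibility plus strictness: it forces $\La^*_{i+1}=j-1$ and simultaneously $\La^*_{i+1}\le\La^*_{i+k}=j-r$.) Second, the telescoping resolution you offer in its place is only a sketch --- ``invoking Lemmas \ref{leminequality1} and \ref{leminequality2} on these surviving factors \ldots shows that the reduced coefficient is both finite and nonzero'' is precisely the statement that needs proof, not a proof. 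As written, the ``only if'' direction therefore rests on an unproven step; the repair is simply to replace the telescoping detour by the direct denominator estimate above, after which your argument coincides with the paper's.
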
 
\begin{proof}[Proof]  
According to Proposition \ref{actionqQ},   $Q_{\Box} (P_{\Lambda})$ vanishes if and only if each corner of $\Lambda$ is either a circle or a box located at $(i,j')$ such that for some  $j<j'$, we have 
$$h_\Lambda^{(\alpha_{k,r})}(i,j) = l_{\Lambda^{\circledast}}(i,j) + \alpha_{k,r}(a_{\Lambda^{\ast}}(i,j)+ 1)=0$$

  Now, $h_\Lambda^{(\alpha_{k,r})}(i,j)=0$ only if for some positive integer $\bar k$, we have  $a_{\Lambda^{\ast}}(i,j)+ 1=\bar k(r-1)$ and $l_{\Lambda^{\circledast}}(i,j)=(k+1)\bar k$. This implies
 \begin{equation}
\label{1eradesigualdad}\Lambda^{\circledast}_i- \Lambda^{*}_{i+\bar k(k+1)}\leq \bar k r-\bar k.
\end{equation}  

On the other hand, Lemma  \ref{leminequality2} implies that $ \Lambda^{\circledast}_{i+1}- \Lambda^{*}_{i+\bar k(k+1)}\geq \bar k r$.  Moreover, $\Lambda_{i}^{\circledast}\geq\Lambda_{i+1}^{\circledast}$, so that $\Lambda^{\circledast}_i- \Lambda^{*}_{i+\bar k(k+1)}\geq \bar k r$. This inequality contradicts \eqref{1eradesigualdad}.

Therefore, if $\Lambda$ is a  $(k,r,N)$-admissible superpartition, $Q_{\Box} (P_\Lambda)$ vanishes if and only if  all the corners in  $\Lambda$ are circles.
\end{proof}

The conditions for the vanishing of the action of $Q_{\ocircle}$ on a Jack polynomial with prescribed symmetry are more involved.  They require a finer characterization of the different types of hooks formed from the corners of the diagrams.  

\begin{definition} Let $D$ be the diagram associated to the superpartition $\Lambda$.  Let  $(i,j)\in D$ be a circled corner. We say that   $(i,j)$ is the upper corner of a hook of type:
\begin{itemize} \item[a)] $C_{k,r}$ if the box $(i,j-r)\in D$ and it satisfies $l_{\Lambda^{\ast}}(i,j-r)=l_{\Lambda^{\circledast}}(i,j-r)=k$; 
 \item[b)] $\tilde C_{k,r}$ if the box $(i,j-r)\in D$ and it satisfies $l_{\Lambda^{\ast}}(i,j-r)=k$  together with $l_{\Lambda^{\circledast}}(i,j-r)=k+1$. 
\end{itemize}
Similarly, when $(i,j)\in D$ is a boxed corner, we  say $(i,j)$ is the upper corner of a hook of type:
\begin{itemize} \item[c)] $B_{k,r}$ if the box $(i,j-r)\in D$ and it satisfies $l_{\Lambda^{\ast}}(i,j-r)=l_{\Lambda^{\circledast}}(i,j-r)=k$. 
 \item[d)] $\tilde B_{k,r}$ if the box $(i,j-r)\in D$ and it satisfies $l_{\Lambda^{\ast}}(i,j-r)=k$  together with $l_{\Lambda^{\circledast}}(i,j-r)=k+1$. 
\end{itemize}
The hooks are illustrated in Figure \ref{fige_scuadras}.
\end{definition}

Let us consider a concrete example.  For this we fix $k=4$, $r=3$ and $N=18$.  The following diagram is associated to a strict  and weakly $(k,r,N)$-admissible superpartition:

{\small
$$\tableau[scY]{  &&&&&&&&&&\bl\tcercle{$\star$}\\&&&&&&&&&\\&&&&&&&\\&&&&&&&\\&&&&&&&\\&&&&&&&\bl\tcercle{$\star$}\\&&&&&\\&&&&&\\&&&&&{\star}\\&&&&{\star}\\&&\\&&\\&&\\&&\bl\tcercle{}}$$
}

\noindent Each cell marked with a star is the upper corner of one of the four types of hooks.    The first one,   located at the position $(1,11)$, is the upper corner of a  hook of type $\tilde C_{4,3}$.  The  one, located at the position $(6,8)$, belongs to a hook of type $C_{4,3}$.   Similarly, the third and the fourth corners, are the upper corners  of hooks of type $\tilde B_{4,3}$ and   $B_{4,3}$, respectively.

\begin{lem} \label{lemaQcircle}  Assume   \eqref{eqInvJack}, \eqref{eqInvAlpha}, and \eqref{eqInvAdm}.  Then, $Q_{\ocircle}(P_\La)=0$ if and only if each corner in the diagram of $\Lambda$ is either:
\begin{itemize}
	\item [(i)]  a box;  
	\item [(ii)]  a circle and the upper corner of a hook of type $C_{k,r}$ or $\tilde C_{k,r}$;
	\item [(iii)]   a circle with coordinates  $(i,j)$ such that $i=N+1-\bar k(k+1)$ and $j=\bar k(r-1)+1$ for some positive integer $\bar k$.    
\end{itemize}
Note that there is at most one corner $(i,j)$ satisfying the criterion (iii).  
\end{lem}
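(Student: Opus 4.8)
The plan is to expand $Q_\ocircle(P_\Lambda)$ in the basis of prescribed-symmetry polynomials via Proposition \ref{actionqQ} and to decide, corner by corner, when every coefficient vanishes at $\alpha=\alpha_{k,r}$. Since $Q_\ocircle$ acts by removing a circle, a box corner never produces a term, which is exactly why criterion (i) imposes no condition; thus $Q_\ocircle(P_\Lambda)=0$ is to be read as a statement about the circle corners only. For a circle corner at $(i,j)$, with $\Omega$ the superpartition obtained by deleting that circle, Equation \eqref{Qcirle} gives the coefficient
\[ c_\Omega(\alpha)=(-1)^{\#\Omega^\circ}\Big(\prod_{j'=1}^{j-1}\frac{h_\alpha^{(\Omega)}(i,j')}{h_\alpha^{(\Lambda)}(i,j')}\Big)\big(N+1-i+\alpha(j-1)\big). \]
I will first check that, because $\Lambda$ is admissible, all denominators $h_{\alpha_{k,r}}^{(\Lambda)}(i,j')$ are nonzero (so each $c_\Omega$ is regular at $\alpha_{k,r}$); combined with the regularity of $P_\Lambda$ (Proposition \ref{regularityJackPresc}) and the triangularity of the $P_\Omega$ in the monomial basis, this reduces $Q_\ocircle(P_\Lambda)=0$ to the vanishing of $c_\Omega(\alpha_{k,r})$ for every circle corner.

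The coefficient factors as a sign times a row product times the linear removal factor $N+1-i+\alpha(j-1)$. Substituting $\alpha_{k,r}=-(k+1)/(r-1)$, this factor vanishes precisely when $(N+1-i)(r-1)=(k+1)(j-1)$; since $\gcd(k+1,r-1)=1$ this forces $N+1-i=\bar k(k+1)$ and $j-1=\bar k(r-1)$ for a positive integer $\bar k$, which is exactly criterion (iii). The claim that at most one corner can satisfy (iii) will follow from admissibility: if two circle corners $(i_1,j_1)$ and $(i_2,j_2)$ with $i_1<i_2$ both obeyed (iii), with parameters $\bar k_1>\bar k_2$, then $\Lambda^*_{i_2}=\bar k_2(r-1)$, while $(i_1,j_1)$ being a corner gives $\Lambda^\circledast_{i_1+1}\le \bar k_1(r-1)$, and $i_2=i_1+(\bar k_1-\bar k_2)(k+1)$; hence $\Lambda^\circledast_{i_1+1}-\Lambda^*_{i_2}\le(\bar k_1-\bar k_2)(r-1)$, contradicting the lower bound $(\bar k_1-\bar k_2)r$ of Lemma \ref{leminequality2}.

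It remains to analyze the row product. Removing the circle leaves $\Omega^*=\Lambda^*$ and shortens row $i$ of $\Lambda^\circledast$ by one cell, so for $s=(i,j')$ with $j'<j$ one gets $h_\alpha^{(\Lambda)}(i,j')=l_{\Lambda^*}(i,j')+1+\alpha(j-j')$ and $h_\alpha^{(\Omega)}(i,j')=l_{\Lambda^*}(i,j')+1+\alpha(j-1-j')$. Evaluating at $\alpha_{k,r}$ and using $\gcd(k+1,r-1)=1$, the numerator at $(i,j')$ vanishes exactly when $l_{\Lambda^*}(i,j')=\bar k(k+1)-1$ and $j'=j-1-\bar k(r-1)$ for some $\bar k\ge1$. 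The case $\bar k=1$ is the cell $(i,j-r)$ with $l_{\Lambda^*}(i,j-r)=k$, which is precisely the defining condition of a hook of type $C_{k,r}$ or $\tilde C_{k,r}$ (the two being distinguished only by $l_{\Lambda^\circledast}(i,j-r)$, a datum absent from the row product), i.e.\ criterion (ii); a one-line computation gives the corresponding denominator the nonzero value $\alpha_{k,r}$, so this numerator zero really makes the product vanish. To finish I will rule out the remaining cases with Lemmas \ref{leminequality1} and \ref{leminequality2}: since $\Lambda^*$ is $(k+1,r,N)$-admissible one always has $l_{\Lambda^*}(i,j-r)\le k$, so if the $\bar k=1$ numerator is nonzero then $\Lambda^*_i-\Lambda^*_{i+k}\ge r$, and iterating admissibility then forces $\Lambda^*_{i+\bar k(k+1)-1}\le\Lambda^*_i-\bar k r<\Lambda^*_i-\bar k(r-1)$, which kills every $\bar k\ge2$ numerator. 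Likewise no denominator vanishes. Hence the product vanishes iff (ii) holds, and $c_\Omega(\alpha_{k,r})=0$ iff (ii) or (iii) holds, which is the lemma.

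The main obstacle is the bookkeeping of the first paragraph rather than the combinatorics: a priori the superpartitions $\Omega$ obtained by deleting a circle from an admissible $\Lambda$ need not be admissible, so an individual $P_\Omega$ may be singular at $\alpha_{k,r}$ even though the scalar $c_\Omega$ is regular there. The delicate step is therefore to pass from the identity valid for generic $\alpha$ to a clean coefficient-by-coefficient criterion at $\alpha_{k,r}$; I expect to handle this by comparing coefficients in the ($\alpha$-independent) monomial basis and peeling off leading terms triangularly, using the regularity of each $c_\Omega$ established above together with Proposition \ref{regularityJackPresc}. Once Proposition \ref{actionqQ} is fed the admissibility inequalities of Lemmas \ref{leminequality1} and \ref{leminequality2}, the rest is routine.
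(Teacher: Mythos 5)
Your proposal is correct and follows essentially the same route as the paper: both reduce the statement via Proposition \ref{actionqQ} to the vanishing of each circled corner's coefficient at $\alpha_{k,r}$, identify the linear factor $N+1-i+\alpha(j-1)$ with criterion (iii), match the $\bar k=1$ zero of the hook-length product with criterion (ii), and use Lemmas \ref{leminequality1} and \ref{leminequality2} to exclude $\bar k\geq 2$ and to show at most one corner satisfies (iii). Your additional attention to the nonvanishing of the denominators and to the passage from generic $\alpha$ to $\alpha_{k,r}$ (where the intermediate $P_\Omega$ need not be regular) is a point the paper's proof passes over silently, but it does not change the argument's structure.
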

\begin{proof}
According to Proposition \ref{actionqQ},  $Q_\ocircle (P_{\Lambda})=0$  iff, each corner $(i,j)$ satisfies at least one of the following criteria:
\begin{enumerate} \item the cell $(i,j)$ is a box; \item the cell $(i,j)$ is a circle and there is a $j'<j$ such that  $h_\alpha^{(\Omega)}(i,j')=0$, where $h_\alpha^{(\Omega)}(i,j') =   l_{\Omega^{\ast}}(i,j') + 1 + \alpha_{k,r}(a_{\Omega^{\circledast}}(i,j'))$ and $\Omega$ is the diagram obtained from $\Lambda$ by removing the circle in $(i,j)$ ;\item the cell $(i,j)$ is a circle and it is such that $N+1-i+\alpha_{k,r}(j-1)=0$.      
\end{enumerate}
The first criterion being trivial, we turn to the second.  Obviously, $h_\alpha^{(\Omega)}(i,j')=0$ iff there exists a positive integer $\bar k$ such that 
$a_{\Omega^{\circledast}}(i,j')=\bar k(r-1)$ and $ l_{\Omega^{\ast}}(i,j')=\bar k(k+1)-1$. The first condition is equivalent to { $j-j'=\bar k(r-1)-1$}.  The second is equivalent to say that $\Lambda^*_{i+\bar k (k+1)-1}\geq j'$ and that the cell $(i+\bar k k+\bar k,j')$ is empty or a circle.  Suppose further that $\bar k=1$.  Then, we have  shown that $h_\alpha^{(\Omega)}(i,j')=0$ iff $j'=j-r+2$, $\Lambda^*_{i+k}\geq j'$ and $\Lambda^*_{i+k+1}<j'$ (i.e.,  $\Lambda^\circledast_{i+k+1}< j'$ or $\Lambda^\circledast_{i+k+1}=j'$ ), this corresponds to the two hooks given above.  Now, suposse  $\bar k=2$.  On the one hand,  we have $\Lambda^*_{i+2k+1}\geq j'=j-2(r-1)+1=\Lambda_i^\circledast-2(r-1)+1$, i.e, \begin{equation}\label{inegual1}\Lambda_i^\circledast-\Lambda^*_{i+2k+1}\leq {2r-3}.\end{equation}  On the other hand, the admissibility requires $ \Lambda_i^\circledast-\Lambda^*_{i+k}\geq r$ and $\Lambda_{i+k}^*-\Lambda^*_{i+2k}\geq r-1$.  Then, \begin{equation} \label{inegual2}\Lambda_i^\circledast-\Lambda^*_{i+2k+1}\geq\Lambda_i^\circledast-\Lambda^*_{i+2k}\geq 2r-1.\end{equation}  
Inequalities \eqref{inegual1} and \eqref{inegual2} are contradictory, so we conclude that $\bar k$ cannot be equal to $2$.  In the same way, one  easily shows that $\bar k$ cannot be greater than 2.
 
Now consider the third criterion.   As $N+1-i>0$, the factor $N+1-i+\alpha_{k,r}(j-1)$ vanishes iff $j=\bar k(r-1)+1$ and $N=i+\bar k(k+1)-1$, for some positive integer $\bar k$.  Now suppose there is another corner $(i',j')$ such that $N+1-i'+\alpha_{k,r}(j'-1)$.  Then, $j'=\bar k'(r-1)+1$ y $N=i'+\bar k'(k+1)-1$, for some positive integer $\bar k'$.  Without loss of generality, we can assume $i<i'$, which implies $j>j'$, i.e. $\bar k>\bar k'$. Let $n=\bar k-\bar k'$.  Then, $j-j'=\Lambda^{\circledast}_i- \Lambda^{\circledast}_{i'}=n(r-1)$, which implies  $\Lambda^{\circledast}_i- \Lambda^{*}_{i'}=n(r-1)+1$. Using $N=i+\bar k(k+1)-1=i'+\bar k'(k+1)-1$, we get $i'=i+n(k+1)$. 
Also, $\Lambda^{\circledast}_i- \Lambda^{*}_{i+n(k+1)} >\Lambda^{\circledast}_i- \Lambda^{*}_{i+nk}$, thus 
\begin{equation} \label{deseq_1}\Lambda^{\circledast}_i- \Lambda^{*}_{i+nk}\leq n(r-1).\end{equation}  However, by using the admissibility and the fact that  
\begin{equation} \Lambda^{\circledast}_i- \Lambda^{*}_{i+nk}=\Lambda^{\circledast}_i-\Lambda^{*}_{i+k}+\Lambda^{*}_{i+k}-\Lambda^{*}_{i+2k}+\Lambda^{*}_{i+2k}+\ldots+ \Lambda^{*}_{i+(n-1)k} -\Lambda^{*}_{i+nk},
\end{equation} 
one easily shows that
\begin{equation} \label{deseq_2} \Lambda^{\circledast}_i- \Lambda^{*}_{i+nk}\geq r+(n-1)(r-1)=n r-n+1.
\end{equation}
Obviously, Equations \eqref{deseq_1} and \eqref{deseq_2} are contradictory.  Therefore no more than one corner is such that
$N+1-i+\alpha_{k,r}(j-1)=0$.
\end{proof}

\begin{coro} \label{ultimaesquina}  Assume   \eqref{eqInvJack}, \eqref{eqInvAlpha}, and \eqref{eqInvAdm}.   Suppose moreover that the last corner in $\Lambda$'s diagram   is a circle. Let $(\ell, j)$ the coordinates of the last corner.  Then, $Q_\ocircle (P_\La )=0$ only if $N=\ell+k$ and $j=r$. \end{coro}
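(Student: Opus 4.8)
The plan is to obtain the statement as a direct consequence of Lemma~\ref{lemaQcircle}, whose characterization of the vanishing of $Q_\ocircle$ already singles out the three possible types of corners, and then to use the admissibility inequalities of Lemma~\ref{leminequality2} to eliminate all but one value of the free parameter $\bar k$.

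First I would apply Lemma~\ref{lemaQcircle}: since $Q_\ocircle(P_\La)=0$, every corner of $\La$ is either (i) a box, (ii) a circle that is the upper corner of a hook of type $C_{k,r}$ or $\tilde C_{k,r}$, or (iii) a circle with coordinates $(i,j)$ satisfying $i=N+1-\bar k(k+1)$ and $j=\bar k(r-1)+1$ for some $\bar k\in\mathbb{Z}_+$. By hypothesis the last corner $(\ell,j)$ is a circle, so it is not of type (i). I would then argue it cannot be of type (ii). Because $(\ell,j)$ is the bottommost corner, rows $\ell+1,\dots,N$ must be empty: a box below row $\ell$ would enlarge the shape, and a circle below would itself be a corner of larger row index, either way contradicting that $(\ell,j)$ is the last corner. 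Hence every cell $(\ell,j-r)$ in row $\ell$ has leg-length $l_{\La^*}(\ell,j-r)=0$, whereas a hook of type $C_{k,r}$ or $\tilde C_{k,r}$ requires leg-length $k\geq1$. Therefore $(\ell,j)$ must satisfy criterion (iii).

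It remains to force $\bar k=1$, which is where the admissibility enters. Being strict and weakly $(k,r,N)$-admissible, $\La$ is $(k,r,N)$-admissible, so Lemma~\ref{leminequality2} applies; I would use \eqref{eqlrminequality2} with $i-1=N$ and $\rho=\bar k$, noting that the index constraint $\bar k(k+1)\leq N$ holds since $N=\ell-1+\bar k(k+1)$. This gives $\La^\cd_{\ell}-\La^*_N\geq\bar k r$. Since $(\ell,j)$ is the last corner and $N-\ell=\bar k(k+1)-1\geq1$, row $N$ is empty, so $\La^*_N=0$ and thus $\La^\cd_\ell\geq\bar k r$. On the other hand, the circle at the corner $(\ell,j)$ means $\La^\cd_\ell=j=\bar k(r-1)+1=\bar k r-(\bar k-1)$, which is strictly below $\bar k r$ as soon as $\bar k\geq2$. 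This contradiction yields $\bar k=1$, whence $\ell=N-k$ (equivalently $N=\ell+k$) and $j=r$, as required.

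Most of the work is bookkeeping once Lemma~\ref{lemaQcircle} is available; I expect the only delicate point — the main obstacle — to be the careful justification that nothing lies below the last corner. This is what simultaneously rules out type (ii) and guarantees $\La^*_N=0$, so I would state explicitly that a circle below row $\ell$ would be a corner of larger index while boxes are excluded by the weak decrease of the rows, making the emptiness of rows $\ell+1,\dots,N$ precise.
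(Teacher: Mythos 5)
Your proof is correct and follows essentially the same route as the paper: reduce via Lemma~\ref{lemaQcircle} to criterion (iii) — the hook types $C_{k,r}$, $\tilde C_{k,r}$ being impossible for the bottommost corner since their legs require cells below row $\ell$ — and then use admissibility to force $\bar k=1$. The only difference is in the last step and is minor: the paper extracts $\bar k=1$ from the length bound $\ell+k\geq N$ that weak admissibility imposes on the diagram, whereas you compare $\La^\cd_\ell=\bar k(r-1)+1$ against the inequality $\La^\cd_\ell\geq \bar k r$ obtained from Lemma~\ref{leminequality2}; both are valid, and your explicit justification that rows $\ell+1,\ldots,N$ are empty is actually more careful than the paper's one-line assertion.
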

\begin{proof} 
  According to the previous proposition, as  $(\ell, j)$ cannot be the upper corner of a hook, $Q_\ocircle (P_\Lambda)=0$ only if the condition (iii) is met for the corner $(\ell, j)$.  This means that $Q_\ocircle (P_\Lambda)=0$  only if $\ell=N+1-\bar k(k+1)$ and $j=\bar k(r-1)+1$ for some positive integer  $\bar k$. Now, the admissibility condition requires $\ell+k\geq N$, i.e.,   $$N+1-\bar k(k+1)+k\geq N.$$ This is true iff $\bar k=1$.  Thus, $Q_\ocircle (P_\Lambda )=0$ only if $\ell=N-k$ and $j=r$.    
\end{proof}

\begin{proposition}  \label{decompoL+}   Assume   \eqref{eqInvJack}, \eqref{eqInvAlpha}, and \eqref{eqInvAdm}. Then, $P_\La$ is invariant under translation if and only if $Q_{\Box}(Q_\ocircle P_\La)=0$ and $Q_\ocircle(Q_{\Box} P_\La)=0$.
\end{proposition}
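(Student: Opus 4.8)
The plan is to convert translation invariance into the single equation $L_+P_\La=0$ and then to read off the two summands from the factorization of $L_+$ supplied by Lemma \ref{QQL+}. A polynomial $P$ is invariant under translation exactly when $P(x_1+t,\dots,x_N+t)$ is constant in $t$; differentiating in $t$ and setting $t=0$ shows this is equivalent to $L_+P=0$. Since $P_\La\in\mathscr A_I\otimes\mathscr S_J$, Lemma \ref{QQL+} gives $L_+P_\La=Q_\ocircle(Q_\Box P_\La)+Q_\Box(Q_\ocircle P_\La)$, so the assertion becomes: $L_+P_\La=0$ if and only if both $Q_\Box(Q_\ocircle P_\La)=0$ and $Q_\ocircle(Q_\Box P_\La)=0$. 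The implication $(\Leftarrow)$ is immediate, being the vanishing of a sum of two zeros.

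For $(\Rightarrow)$ I would set $A=Q_\Box(Q_\ocircle P_\La)$ and $B=Q_\ocircle(Q_\Box P_\La)$, so that $A+B=0$, and expand both in the basis $\{P_\Ga\}$ of $\mathscr A_I\otimes\mathscr S_J$ of bi-degree $(n-1\,|\,m)$; this expansion and the accompanying eigenvalue calculus are legitimate at $\alpha=\alpha_{k,r}$ thanks to the regularity and uniqueness already secured for admissible indices (Proposition \ref{regularityJackPresc} and the uniqueness theorem at $\alpha_{k,r}$). The structural input I would exploit is that the two intermediate polynomials sit in sharply different eigenspaces of the Sekiguchi operators: by Proposition \ref{actionqQ}, removing a circle leaves $\La^*$ unchanged, so $Q_\ocircle P_\La$ is an eigenfunction of $S^*(u)$ with eigenvalue $\varepsilon_{\La^*}(\alpha_{k,r},u)$, whereas converting a box into a circle leaves $\La^\cd$ unchanged, so $Q_\Box P_\La$ is an eigenfunction of $S^\cd(u,u)$ with eigenvalue $\varepsilon_{\La^\cd}(\alpha_{k,r},u)$ (cf.\ Proposition \ref{lemsekiguchi2}). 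Consequently every $\Ga$ occurring in $A$ has $\Ga^\cd$ obtained from $\La^\cd$ by deleting a cell that is a \emph{circle} of $\La$, while in $B$ the deleted cell of $\La^\cd$ may instead be a cell of $\La^*$; tracking the pair $(\Ga^*,\Ga^\cd)$ — which, by admissibility, distinguishes distinct $\Ga$ through distinct Sekiguchi eigenvalue pairs — already isolates those contributions that can occur in only one of $A$ and $B$, and forces their coefficients to vanish.

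The main obstacle is exactly the remaining part where the supports of $A$ and $B$ genuinely overlap: when $\La$ has a circle corner and a box corner lying in well-separated rows, both orders of the two elementary operations reach the same $\Ga$, so the eigenvalue pair alone cannot separate the two pieces. Here the abstract identity $\{Q_\ocircle,Q_\Box\}=L_+$ is insufficient — in the de Rham model $Q_\ocircle\sim\iota_v$, $Q_\Box\sim d$ with $v=\sum_i\partial_{x_i}$, a harmonic form can be simultaneously $d$-exact and $\iota_v$-exact, so $L_+P_\La=0$ does not split $A$ from $B$ by formal manipulation — and this is precisely the point at which the Jack structure must enter. I would therefore compare, on each overlapping transition, the explicit hook-length coefficient produced by $Q_\Box\circ Q_\ocircle$ with the one produced by $Q_\ocircle\circ Q_\Box$, using the formulas of Proposition \ref{actionqQ} together with the admissibility inequalities (the same inequalities that drive Lemmas \ref{lemaQbox} and \ref{lemaQcircle}), to show that these coefficients add rather than cancel. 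Once this coefficient comparison is established, $A+B=0$ forces every coefficient of $A$, and hence of $B=-A$, to vanish, which completes the proof; I expect this coefficient-matching step to be the technical heart of the argument.
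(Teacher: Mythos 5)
Your plan follows the paper's proof essentially verbatim: reduce translation invariance to $L_+P_\La=0$, invoke the anticommutator identity of Lemma \ref{QQL+}, dispose of the easy direction, and resolve the possible cancellation between $Q_{\Box}\circ Q_\ocircle$ and $Q_\ocircle\circ Q_{\Box}$ on a common target superpartition by comparing the explicit hook-length coefficients from Proposition \ref{actionqQ}. The one step you defer --- showing those two coefficients cannot cancel --- is precisely the computation the paper carries out, splitting into the cases where the boxed corner lies above or below the chosen circle and checking that equality of the relevant products of hook-lengths would force $\alpha=0$.
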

\begin{proof}  
Clearly, $P_\La$ is translationally invariant iff $L_+(P_\La)=0$. Moreover, we know from Lemma \ref{QQL+} that $L_+(P_\La)=Q_{\Box}(Q_\ocircle P_\La)+Q_\ocircle(Q_{\Box} P_\La)$.  Thus,  if $Q_{\Box}(Q_\ocircle P)=0$ and $Q_\ocircle(Q_{\Box} P)=0$ then $L_{+} P=0$. 

It remains to show  that if $L_{+} P=0$, then $Q_{\Box}(Q_\ocircle P_\La)=0$ and $Q_\ocircle(Q_{\Box} P_\La)=0$. In fact, we are going to prove the contrapositive: if $Q_{\Box}(Q_\ocircle P_\La) \neq 0$ or $Q_\ocircle(Q_{\Box} P_\La) \neq 0$ then $L_{+} P \neq 0$.   However, if $Q_{\Box}(Q_\ocircle P_\La) \neq 0$ and  $Q_\ocircle(Q_{\Box} P_\La) =0$, or if $Q_{\Box}(Q_\ocircle P_\La) = 0$ and  $Q_\ocircle(Q_{\Box} P_\La) \neq 0$, then automatically $L_{+} P_\La\neq 0$.  Consequently, we need to prove the following statement: 
\beq \label{proofQQ} Q_{\Box}(Q_\ocircle P_\La) \neq 0\quad\text{and}\quad  Q_\ocircle(Q_{\Box} P_\La) \neq 0\quad \Longrightarrow \quad Q_{\Box}Q_\ocircle(P_\Lambda) + Q_\ocircle Q_{\Box}(P_\Lambda) \neq 0.\eeq

 We assume that $Q_{\Box}(Q_\ocircle P_\La) \neq 0$ and $Q_\ocircle(Q_{\Box} P_\La) \neq 0$. Then,  $Q_\ocircle P_\La \neq 0$ and $Q_\Box P_\La \neq 0$. According to Lemma   \ref{lemaQcircle},  the first equation  implies that there is at least one circle  in the diagram of $\Lambda$  that does not satisfy the conditions (ii) and (iii).  Let $(i,j)$ denote the position of such a circle.  Moreover, according to Lemma \ref{lemaQbox}, the second equation implies that there must be at least one boxed corner in the diagram of $\La$.    Let    $(\bar i, \bar j)$ be its position.   

Let $\Upsilon$ be the superpartition obtained from $\La$  by removing the  circle $(i,j)$ and by converting a box $(\bar i, \bar j)$ into a circle. There is only one way to get $P_\Upsilon$ by acting with $Q_{\Box}Q_\ocircle$ on $P_\Lambda$   by acting with $Q_\ocircle Q_{\Box}$ on $P_\Lambda$. 
%; this is, acting exactly in the marked cells $(i,j)$ and $(\bar i, \bar j)$. 
Thus, it is enough to verify that the coefficients of the polynomial $P_\Upsilon$  in the expansions of   $Q_{\Box}(Q_\ocircle P_\La)$ and $Q_\ocircle Q_{\Box}(P_\La)$ are not the same (up to a sign).%; in this case we get the coefficient of $P_\Upsilon$ in $Q_{\Box}(Q_\ocircle P) + Q_\ocircle(Q_{\Box} P)$ is not zero and we conclude $L_{+} P \neq 0$. 

Let   $\Omega^{1}$ be the superpartition obtained from $\Lambda$ by removing the   circle in $(i,j)$.  Clearly, the coefficient of $P_\Upsilon$ in $Q_{\Box}(Q_\ocircle P_\La)$ is equal to the product of two coefficients:   $c_{\Lambda,\Omega^1}$, the  coefficient  of $P_{\Omega^{1}}$ in  $Q_\ocircle(P_\Lambda)$, and   $ b_{\Omega^1,\Upsilon} $, the coefficient of   $P_\Upsilon$  in   $Q_{\Box}(P_{\Omega^{1}})$.  Similarly, if ${\Omega^{2}}$ denotes the superpartition obtained from $\La$ by converting the box $(\bar i,\bar j)$ into a circle, then the the coefficient of $P_\Upsilon$ in $Q_\ocircle(Q_{\Box} P_\La)$ is the product of the two following coefficients:    $b_{\Lambda,\Omega^{2}}$, the coefficient of $P_{\Omega^{2}}$ in $Q_{\Box} P_\La$, and   $c_{\Omega^{2},\Upsilon}$, the coefficient of $P_\Upsilon$ in $Q_\ocircle(P_{\Omega^{2}})$.  In short, 
\begin{equation}
Q_{\Box} Q_\ocircle(P_\Lambda)= c_{\Lambda, \Omega^{1}} \, b_{\Omega^{1},\Upsilon} \,   P_\Upsilon + \ldots
\end{equation}
\begin{equation}
Q_\ocircle Q_{\Box}(P_\Lambda)=   b_{\Lambda, \Omega^{2}} \, c_{\Omega^{2},\Upsilon} \,  P_\Upsilon  + \ldots
\end{equation}
where $\ldots$ indicates terms linearly independent from $P_\Upsilon$. We recall that the coefficients $b$ and $c$ can be read off the equations in Proposition \ref{actionqQ}.

Now, we need to distinguish two cases: (1) the box is located above the circle in the  diagram of $\Lambda$, which means $\bar i < i$, and (2)  the box is located under the circle in the  diagram of $\Lambda$, which means $\bar i > i$. 

Suppose first that the box is located above the circle,  i.e.,  $\bar i < i$. Obviously, $b_{\Lambda, \Omega^{2}}$ is not zero.   Moreover,  $c_{\Omega^{2},\Upsilon}$ is equal to $c_{ \Lambda, \Omega^{1}}$ .  This can be understood as follows.  These coefficients depend only on $N$, the coordinates of the marked cell, which are $(i,j)$ in both cases, and on ratios of hook-lengths for the cells in the row to the left of the marked cell.   Given that the marked cell is below the cell $(\bar i,\bar j)$, the hook-lengths involved in the coefficients are not affected by any prior transformation $\La\to\Omega^{2}$, so the coefficients are equal.  The situation is not so simple for $b_{\Lambda, \Omega^{2}}$ and $b_{\Omega^{1},\Upsilon} $, so explicit formulas for these coefficients are required.  Up to a sign, they are  
\begin{equation}
d_{\Lambda,\Omega^{2}}  = \left(\prod_{1\leq l \leq \bar j-1}\frac{h_\Lambda^{(\alpha)}(\bar i,l)}{h_{\Omega^2}^{(\alpha)}(\bar i,l)}\right),\qquad
d_{\Omega^{1},\Upsilon}  = \left(\prod_{1\leq l \leq \bar j-1}\frac{h_{\Omega^1}^{(\alpha)}(\bar i,l)}{h_{\Upsilon}^{(\alpha)}(\bar i,l)}\right)
\end{equation}
 It is important to note that
$$h_\Lambda^{(\alpha)}(\bar i,l) = h_{\Omega^1}^{(\alpha)}(\bar i,l) \quad \forall \; 1\leq l \leq \bar j-1, \; \; l\neq j $$
and for $l=j$ we have 
\begin{equation}
\begin{split}
h_\Lambda^{(\alpha)}(\bar i,j)  & = (i- \bar i) + \alpha(\bar j - j +1)\\
h_{\Omega^1}^{(\alpha)}(\bar i,j) & = (i- \bar i-1) + \alpha(\bar j - j +1)
\end{split}
\end{equation}
Also, for $l\neq j$,
$$h_{\Omega^2}^{(\alpha)}(\bar i,l) = h_{\Upsilon}^{(\alpha)}(\bar i,l) \quad \forall \; 1\leq l \leq \bar j-1 , $$
while for $l=j$, 
\begin{equation}
\begin{split}
h_{\Omega^2}^{(\alpha)}(\bar i,j)  & = (i- \bar i) + \alpha(\bar j - j )\\
h_{\Upsilon}^{(\alpha)}(\bar i,j) & = (i- \bar i-1) + \alpha(\bar j - j ).
\end{split}
\end{equation}
After having made basic calculations,  we see that the coefficients $b_{\Lambda, \Omega^{2}}$ and $b_{\Omega^{1},\Upsilon} $ are equal iff $\alpha=0$. We thus conclude  conclude that $b_{\Lambda, \Omega^{2}}\neq\pm b_{\Omega^{1},\Upsilon} $, which in turn implies that $c_{\Lambda, \Omega^{1}} \, b_{\Omega^{1},\Upsilon}   \pm b_{\Lambda, \Omega^{2}} \, c_{\Omega^{2},\Upsilon}\neq 0$.   

The second case,  for which  the square is located under the circle in the $\Lambda$ diagram, is very similar to the case just analyzed.  The only difference for the second case is  that  $b_{\Lambda, \Omega^{2}}= \pm b_{\Omega^{1},\Upsilon} $  and $c_{\Omega^{2},\Upsilon}\neq \pm c_{ \Lambda, \Omega^{1}}$.  Nevertheless, this implies once again that $c_{\Lambda, \Omega^{1}} \, b_{\Omega^{1},\Upsilon}   \pm b_{\Lambda, \Omega^{2}} \, c_{\Omega^{2},\Upsilon}\neq 0$.   

In conclusion, we have proved equation \eqref{proofQQ} and the proposition follows. 
\end{proof}

%\begin{theorem}[Translation invariance] \label{theoinvariante}Let $\Lambda$ be a superpartition of bi-degree $(n|m)$ and weakly $(k,r,N)$-admissible. Then,  the Jack polynomial with prescribed symmetry $P^\AS_{\Lambda}(x; \alpha_{k,r})$ is invariant under translation if and on if  one of the following two conditions is satisfied: 
%\begin{itemize} \item[(D1)] all corners (circles or boxes) of $\Lambda$ are located  at the upper corner of a hook of type $B_{k,r}$, $\tilde B_{k,l}$, $C_{k,r}$, or $\tilde C_{k,l}$, except for one corner, which must be located at the point $(N-k,r)$;
%\item[(D2)] all corners of  $\Lambda$ are circles such that if they are not interior, they are located at the upper corner of a hook of type  $C_{k,r}$ or $\tilde C_{k,l}$, except for at most one non-interior corner $(i,j)$, which is such that  $i=N+1-\bar k(k+1)$ y $j=\bar k(r-1)+1$ for some $\bar k$. 
%\end{itemize}
%Types of hooks are given in Figure \ref{fige_scuadras}.  Interior and non-interior corners are defined in Definition \ref{defcorner}.
%\end{theorem}
\begin{proof}[Proof of Theorem \ref{TheoTransla}] 
In what follows,  $P_\Lambda=P_\Lambda(x_1,\ldots,x_N; {\alpha_{k,r}})$, where $\Lambda$ is as in \eqref{eqInvAdm}.  We suppose moreover that the diagram of $\La$ contains exactly $m$ circles.  

According to Proposition \ref{decompoL+}, $P_\La$ is invariant under translation iff it belongs simultaneously to the kernel of $Q_{\Box}\circ Q_\ocircle$ and that of $ Q_\ocircle\circ Q_{\Box}$.
  
Consider first $Q_{\Box}\circ Q_\ocircle(P_\Lambda)=0$.    It is clear that $Q_{\Box}\circ Q_\ocircle(P_\Lambda)=0$ iff  $Q_\ocircle(P_\Lambda)=0$ or, according to lemma \ref{lemaQbox}, $Q_\ocircle(P_\Lambda) $  generates Jack polynomials indexed by superpartitions whose corners are all circles. On the one hand, $Q_\ocircle(P_\Lambda)=0$ iff $\Lambda$ belongs to the set $\mathcal{B}$ formed by all superpartitions satisfying conditions (i),(ii) and (iii) of Lemma \ref{lemaQcircle}.  On the other hand,   $Q_\ocircle(P_\Lambda)\neq 0$ and  $Q_{\Box}\circ Q_\ocircle(P_\Lambda)=0$   iff each corner of $\Lambda$  is a circle such that if we delete it, we  obtain a new superpartition whose corners are all circles, except possibly some that satisfy the conditions ii) or iii) of Lemma \ref{lemaQcircle}  (by assumption  not all circles of $\Lambda$ satisfy these conditions).    We call $\mathcal{C}$ the set of all such superpartitions. Now, by Lemma \ref{LemCorners}, the elimination of a circle does not create a corner with  box iff  the circle is an inner corner.  Then, $\mathcal{C}$ is given by the set of all superpartitions whose corners are all inner circles except possibly some  that satisfy the conditions ii) or iii).  It is interesting to note that the only superpartition having only circled inner corners   is the staircase  $\delta_{m}=(m-1,m-2, \ldots, 1, 0;\emptyset),$  which is $(k,r,N)$-admissible if $N\leq k$, or $N>k$ and $k\geq r-1$.   Therefore,  $Q_{\Box}\circ Q_\ocircle (P_\Lambda)=0$ iff $\Lambda$  belongs to the set $\mathcal{B} $, or the set $\mathcal{C}$.

So far, we have shown that $Q_{\Box}\circ Q_\ocircle (P_\Lambda)=0$  iff $\Lambda \in \mathcal{B} \cup \mathcal{C}$.  It remains to determine the subset  $\mathcal{A} \subset \mathcal{B} \cup \mathcal{C}$ such that  $\Lambda \in \mathcal{A} \Longrightarrow L_+(P_\Lambda)=0$.  The simplest case is $\Lambda\in \mathcal{C}$.  Indeed, since all corners of $\Lambda$ are circles,  we automatically  have $Q_{\Box} (P_\Lambda)=0$, which implies  $Q_\ocircle\circ Q_{\Box}(P_\Lambda)=0$ and $L_+(P_\Lambda)=0$.

We now suppose that $\Lambda\in \mathcal{B}$.  We want to determine the necessary and sufficient criteria for $Q_\ocircle\circ Q_{\Box} (P_\Lambda)=0$.  On the one hand, we know that $Q_{\Box} (P_\Lambda)=0$ iff   all corners of $\Lambda$ are circles. Therefore, $Q_{\Box} (P_\Lambda)=0$ and  $\Lambda\in \mathcal{B}$ iff  all corners are circles that satisfy   conditions (ii) and (iii) of Lemma \ref{lemaQcircle}.   Now, if $\Lambda\in \mathcal{B}$ and has at least one boxed corner in $(i,j)$, then  $Q_{\Box} (P_\Lambda)$ does not vanish and generates $P_\Omega$, where $\Omega$ is the superpartition obtained from $\Lambda$ by converting the box  $(i,j)$ into a circle.    Now, $Q_\ocircle(P_\Omega)$ vanishes  iff all corners of $\Omega$ satisfy any of the three conditions of Lemma \ref{lemaQcircle}.   Since by hypothesis   $\Lambda$  already complies with  these conditions,  $Q_\ocircle(P_\Omega)=0$  iff $(i,j)$ in $\Omega$ is the upper corner of the hook $C_{k,r}$ or $\tilde C_{k,r}$, or it is such that   $i=N+1-\bar k(k+1)$ and $j=\bar k(r-1)+1$ for some positive integer $\bar k$ (what is possible only once).   Applying this result to each boxed corner of $\Lambda$, we get $Q_\ocircle(Q_{\Box} (P_\Lambda))=0$ iff each boxed corner of $\Lambda$ is the upper corner of a  hook $B_{k,r}$ or $\tilde B_{k,r}$, or it is such that   $i=N+1-\bar k(k+1)$ and $j=\bar k(r-1)+1$ for some positive integer $\bar k$.

Finally, let $(\ell,j')$ the coordinates of the last corner $\Lambda\in \mathcal{B}$.  Obviously, if there is a circle in $(\ell,j')$, this circle also corresponds to the last corner of any superpartition $\Omega$ indexing the Jack polynomials generated by $Q_{\Box} (P_\Lambda)$.  According to Corollary \ref{ultimaesquina},  we know that $ Q_\ocircle\circ Q_{\Box} (P_\Lambda)=0$ only if $\ell=N-k$ and $j=r$. On the other hand, if the last corner $\Lambda$ is a box, it is known that $Q_{\Box} (P_\Lambda)$  generates a $P_\Omega$ such that the last corner of $\Om$	is a circle,  so we have once again that  $ Q_\ocircle\circ Q_{\Box} (P_\Lambda)=0$ only if $\ell=N-k$ and $j=r$.  

In summary,     $Q_{\Box}\circ Q_\ocircle(P_\Lambda)=0$ and  $ Q_\ocircle\circ Q_{\Box} (P_\Lambda)=0$ iff: 1) all corners of $\Lambda$ are circles, which are inner corners, except possibly for some circles that satisfy the conditions (ii) and (iii) of Lemma  \ref{lemaQcircle};  or 2) the last corner of $\Lambda$ is located in $(N-k,r)$ and all other corners of $\Lambda$ are the upper corners of hooks type $B_{k,r}$, $\tilde B_{k,r}$, $C_{k,r}$ or $\tilde C_{k,r}$. 
\end{proof}

\subsection{Special cases of invariance}

The previous theorem clearly shows that for $n$, $m$, $k$, $r$, and $N$, the number of  ways to construct  superpartitions that lead to invariant polynomials could be enormous. In general such superpartitions do not have a explicit and compact form.  There are two  notable exceptions however:  (1) when we are dealing with conventional partitions (no circle in the diagrams), and (2) when the maximal length $N$ of the superpartition is limited as $N\leq 2k$.  The first case was studied by Jolicoeur and Luque \cite{jl} .  Below, we rederive very simply  one of their results. For the second case, we  identify  three simple forms of superpartitions associated with invariant  polynomials.  
 
\begin{coro} \label{lempartitioninv}  Let $P_\la=P_{\lambda}(x_1,\ldots,x_N; \alpha_{k,r})$, where $\lambda$ is a $(k,r,N)$-admissible  partition .   
The polynomial $P_{\lambda} $ is invariant under translation if and only if $$\la=\big(((\beta+1)r)^{l}, (\beta r)^{k}, \ldots, r^{k}\big),$$
where $0<\beta$, $ 0\leq l \leq k$,   and  $N=k(\beta+1) + l$. 
\end{coro}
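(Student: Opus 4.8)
The plan is to deduce this corollary from the translation‑invariance criterion of Theorem~\ref{TheoTransla}. First I would observe that a symmetric Jack polynomial is the special case $P_\la=P^\AS_{(\emptyset;\la)}$ with $m=0$: the associated superpartition $\La=(\emptyset;\la)$ is strict (vacuously, there being no fermionic parts) and, since $\La^*=\La^\cd=\la$, its weak $(k,r,N)$-admissibility reduces exactly to the $(k,r,N)$-admissibility of the partition $\la$. Hence the hypotheses \eqref{eqInvJack}–\eqref{eqInvAdm} hold and Theorem~\ref{TheoTransla} is available. Next I would specialize conditions (D1) and (D2) to this circle‑free case. Every corner of $\la$ is a box, so (D2) (which requires all corners to be circles) is impossible; in (D1) the types $C_{k,r},\tilde C_{k,r}$ are excluded because they need circled corners, and $\tilde B_{k,r}$ is excluded because it demands $l_{\La^\cd}=l_{\La^*}+1$, which cannot happen when $\La^*=\La^\cd$. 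Thus $P_\la$ is translationally invariant \emph{iff} every corner of $\la$ is the upper corner of a hook of type $B_{k,r}$, with the single exception of one corner located at $(N-k,r)$.

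The core is then to show that this combinatorial condition forces the staircase shape. Write the distinct nonzero values of $\la$ as $v_1>\cdots>v_t$ with multiplicities $m_s$ and cumulative multiplicities $M_s=m_1+\cdots+m_s$, so the corners are the cells $(M_s,v_s)$. A corner of value $r$ cannot carry a $B_{k,r}$ hook, since its column $r-r=0$ does not exist; it must therefore be the unique exceptional corner, which forces $v_t=r$, $M_t=N-k$, and (by admissibility at $i=N-k$) $\la_N=0$. Likewise no corner of value $<r$ can exist, and since only one exception is allowed every corner of value $>r$ is a genuine $B_{k,r}$ upper corner. For each such $s<t$ the $B_{k,r}$ condition reads $l_\la(M_s,v_s-r)=k$, i.e.\ $\la'_{v_s-r}=M_s+k$; combined with admissibility $\la_{M_s}-\la_{M_s+k}\ge r$ this gives $\la_{M_s+k}=v_s-r$ and $\la_{M_s+k+1}<v_s-r$, so that $(M_s+k,\,v_s-r)$ is again a corner.

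The main obstacle is to rule out corners lying off this ``chain.'' I would handle it by a descending orbit argument: iterating the map $(M_s,v_s)\mapsto(M_s+k,\,v_s-r)$ produces corners whose values drop by exactly $r$ at each step. If some corner had a value that is not a multiple of $r$, repeatedly applying the $B_{k,r}$ step (each remaining step is legitimate as long as the value exceeds $r$) would eventually produce a corner of value strictly between $0$ and $r$, contradicting that $r$ is the least nonzero value. Hence all corner values are multiples of $r$, necessarily $\{r,2r,\ldots,(\beta+1)r\}$ with $t=\beta+1$ and $v_1=(\beta+1)r$; consecutive values then differ by exactly $r$, whence $M_{s+1}=\la'_{v_{s+1}}=\la'_{v_s-r}=M_s+k$, so $m_s=k$ for $2\le s\le t$, while admissibility at $i=1$ forces $m_1=l\le k$. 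Counting gives $N-k=M_t=l+\beta k$, that is $N=k(\beta+1)+l$, and $\la=\big(((\beta+1)r)^l,(\beta r)^k,\ldots,r^k\big)$.

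Finally I would verify the converse by direct inspection: for the displayed staircase the corners are $(M_s,v_s)$ with $v_s=(\beta+2-s)r$ and $M_{s+1}-M_s=k$, so for $s\le\beta$ the cell $(M_s,v_s-r)$ has leg length $M_{s+1}-M_s=k$ in both $\La^*$ and $\La^\cd$, making each such corner the upper corner of a $B_{k,r}$ hook; the last corner sits at $(M_{\beta+1},r)=(N-k,r)$, the allowed exception. Thus (D1) holds and $P_\la$ is invariant. I expect the delicate point throughout to be precisely this exclusion of intermediate corners, for which the orbit argument—tracking corner values modulo $r$ and using that no corner may have value below $r$—is the key device.
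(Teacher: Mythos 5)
Your proposal is correct and follows essentially the same route as the paper: specialize Theorem \ref{TheoTransla} to the circle-free superpartition $(\emptyset;\la)$, observe that only $B_{k,r}$ hooks and the exceptional corner at $(N-k,r)$ can occur, and deduce the staircase shape. The paper states this reduction and then simply asserts the resulting form of $\la$; your chain/orbit argument tracking corner values modulo $r$ supplies the combinatorial details that the paper leaves implicit, and it is sound.
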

\begin{proof}
As a consequence of Theorem \ref{TheoTransla}, we have that $P_{\lambda}$ is invariant under translation iff the last corner of $\lambda$'s diagram is located at position $(N-k,r)$ and all remaining corners are upper corners of  hooks $B_{k,r}$. Thus, $P_{\lambda}$ is invariant iff $\lambda=(((\beta+1)r)^{l}, (\beta r)^{k}, \ldots, r^{k})$ with $0<\beta$. The admissibility condition requires $0\leq l\leq k$.  Finally,  the condition on the position for the last corner imposes $N=k(\beta+1) + l$.
\end{proof}

\begin{coro}   Assume   \eqref{eqInvJack}, \eqref{eqInvAlpha}, and \eqref{eqInvAdm}.    Suppose moreover that $\Lambda$'s diagram contains  $m$ circles and that $N\leq 2k$. Then, $P_\La$ is invariant under translation if and on if $\Lambda$ has one of the following forms:
\begin{itemize}
 \item[(F1)] $\Lambda= (\emptyset; r^{N-k})$;
 \item[(F2)] $\Lambda= (m-1, m-2, \ldots, 1, 0; \emptyset)$, where  $m\leq N\leq k$ or $N-1\geq k\geq N-m+r-1$ ;
 \item[(F3)] $\Lambda= (r+f-1, r+f-2, \ldots, r-1, g-1, g-2, \ldots, 1, 0; r^{N-k-m})$ \; where $m=f+g+1$, $0\leq f \leq N-k-1$, \; $0\leq g \leq min(k,r-1)$ and  $f\geq g+N-2k-1$.
\end{itemize}
These forms are respectively illustrated in Figures \ref{FigFormF1x}, \ref{FigFormF2}, \ref{FigFormF3} below.
\end{coro}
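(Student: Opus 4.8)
The plan is to read this off Theorem \ref{TheoTransla} and then perform a combinatorial classification tailored to the regime $N\le 2k$. By that theorem, under the standing hypotheses \eqref{eqInvJack}, \eqref{eqInvAlpha}, \eqref{eqInvAdm} the polynomial $P_\La$ is translationally invariant precisely when the corners of $\La$'s diagram satisfy (D1) or (D2). So the entire task is to describe which strict, weakly $(k,r,N)$-admissible superpartitions with $N\le 2k$ meet one of these corner patterns, and then to match the answer with (F1), (F2), (F3).

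First I would extract two rigidity statements from $N\le 2k$. The arithmetic one: an exceptional corner of the form $(N+1-\bar k(k+1),\,\bar k(r-1)+1)$ appearing in (D1) or (D2) needs $\bar k(k+1)\le N\le 2k<2(k+1)$, which forces $\bar k=1$; hence such a corner, when present, sits at $(N-k,r)$ and is unique. The geometric one: a hook of type $B_{k,r}$ or $C_{k,r}$ (leg length $k$), respectively $\tilde B_{k,r}$ or $\tilde C_{k,r}$ (leg length $k+1$), with upper corner at row $i$ requires its leg to fit, so $i+k\le N$, i.e. $i\le N-k$. Since distinct corners occupy distinct rows and the bottommost nonzero row is always a corner, the presence of the exceptional corner at $(N-k,r)$ forces the diagram to have exactly $N-k$ nonzero rows; this is the origin of the repeated parts $r^{N-k}$ and $r^{N-k-m}$ in (F1) and (F3). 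Combining the arm/leg-length equalities defining each hook type with the admissibility bounds of Lemmas \ref{leminequality1} and \ref{leminequality2}, each admissible hook condition becomes a saturation of the inequality $\La_i^\cd-\La_{i+k}^*\ge r$; translating these equalities into relations among the parts $\La_i^\cd,\La_i^*$ is routine bookkeeping.

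With these reductions in hand I would enumerate by the nature of the corners. If every corner is a box, (D1) retains only the single exceptional box corner at $(N-k,r)$, which forces the rectangle $\La=(\emptyset;r^{N-k})$, giving (F1). If every corner is an interior circle, then $\La$ must be the staircase $\delta_m$ already singled out inside the proof of Theorem \ref{TheoTransla}, and the admissibility conditions for $\delta_m$ translate into the stated ranges $m\le N\le k$ or $N-1\ge k\ge N-m+r-1$, which is (F2). The remaining, genuinely mixed case carries a top block of fermionic rows whose circles are interior in $\La^\cd$, a height-$r$ repeated part $r^{N-k-m}$ ending in the exceptional corner $(N-k,r)$, and a short bottom staircase $(g-1,\ldots,1,0)$; matching the hook conditions at the fermionic corners with the sharpened admissibility inequalities yields exactly $m=f+g+1$, $0\le f\le N-k-1$, $0\le g\le\min(k,r-1)$ and $f\ge g+N-2k-1$, which is (F3). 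The converse is then settled by checking directly that each of (F1), (F2), (F3) is strict, weakly $(k,r,N)$-admissible, and has corners obeying (D1) or (D2).

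The main obstacle I anticipate is the mixed case (F3): proving that no shape other than "top fermionic block $+$ height-$r$ repeated part $+$ bottom short staircase" can satisfy (D1)/(D2) when $N\le 2k$, and that the three parameters are constrained by precisely the stated inequalities and nothing weaker. This needs a careful interplay of the hook leg-length equalities (distinguishing the tilded from the untilded hooks through the extra circle in $\La^\cd$), the corner-counting of Lemma \ref{LemCorners}, and the saturated admissibility bounds. The inequality $f\ge g+N-2k-1$ in particular encodes the compatibility between the top block and the bottom staircase, and getting it with the sharp constant is the delicate point.
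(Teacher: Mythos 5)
Your plan coincides with the paper's own proof: both reduce the classification to the corner criteria (D1)/(D2) of Theorem \ref{TheoTransla}, use $N\le 2k$ to force $\bar k=1$ (so the exceptional corner sits at $(N-k,r)$ and is unique) and to exclude all four hook types because a leg of length $k$ or $k+1$ cannot fit below any row of a diagram whose last corner lies in row $N-k\le k$, and then enumerate the surviving corner patterns to obtain the rectangle (F1), the staircase (F2), and the mixed shape (F3). The only difference is organizational --- the paper splits on whether the last corner is inner versus bordering/outer and then on the nature of the first corner, while you split on all-boxes / all-interior-circles / mixed --- and your deferral of the parameter bookkeeping for (F3) matches in substance the (rather terse) verification the paper itself carries out.
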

\begin{proof}
Let us start with the sufficient condition.  According to Theorem \ref{TheoTransla}, if $\La$ is of the form (F1), (F2) or (F3), then $P_\La$ is invariant under translation.  Indeed, 
(F1) trivially satisfies (D1); the only corners in (F2) are inner circles, so (F2) satisfies (D2); in (F3), all corners are inner circles, except one circle located at $(N-k,r)$, so it satisfies  (D2) with $\bar k=1$.

We now tackle the non-trivial part of the demonstration, which is the necessary condition.  For this, let $(\ell,j)$ be the last corner of the $\Lambda$ diagram. There are two obvious cases, depending on whether $(\ell,j)$  is  an inner corner or not.

First, we suppose  that $(\ell,j)$ is a  bordering corner or an outer corner. According to Theorem \ref{TheoTransla}, $P_{\Lambda}$ is invariant under translation only if $N+1-\ell+\alpha_{k,r}(j-1)=0$, where $\alpha_{k,r}=-(k+1)/(r-1)$.  Since $N+1-\ell>0$, we must assume that $j-1=\bar j(r-1)$, where $\bar j$ is a positive integer. Then, the invariance condition requires $N=\ell+\bar j (k+1)-1$.  However, by hypothesis, $N\leq 2k$, so $\bar j =1$ (i.e., $j=r$).  Therefore, the invariance condition and $N\leq 2k$ impose $j=r$ and $\ell=N-k\leq k$, which is compatible with the admissibility.   Now, let $(i,\ell')$ be  the first corner of $\Lambda$ diagram.  Once again, two cases are possible:

\begin{enumerate} 
\item  $(i,\ell')$ is a box.  Suppose $(i,\ell')\neq (\ell,j)$. According to Theorem \ref{TheoTransla},  $P_{\Lambda} $ can be invariant only if we can form a hook $B_{k,r}$ or $\tilde B_{k,r}$ whose respective lengths are either $k+1$ or $k+2$, which is impossible because $\ell\leq k$.  Then, the only possible squared corner is the last corner.  Thus, the invariance and admissibility conditions  impose that the diagram is made of $N-k$ rows with $r$ boxes, corresponding to the first form of the proposition.

 \item  $(i,\ell')$ is a circle.  Referring again to Theorem \ref{TheoTransla} and recalling that  $\ell\leq k$, we see that  $P_{\Lambda}$ is  invariant under translation only if $(i,\ell')=(\ell,j)$ or if  $(i,\ell')$ is a inner circled corner.  The first condition imposes $\La=(r-1;r^{N-k-1})$.  The second imposes that only criterion (D2) can be considered, so all remaining  corners must be circled inner corners.    Consequently, $\Lambda= (r+m-2, r+m-3, \ldots, r, r-1; r^{N-k-m})$ for some $1\leq m\leq N-k$.  This is illustrated in Figure \ref{FigFormF3special}

\end{enumerate}

\begin{figure}[h]
\caption{Form (F1)}\label{FigFormF1x}
\begin{center}
\setlength{\unitlength}{4.5pt}
\vspace{0.5cm}
\begin{picture}(14,16)(0,0)
\put(6,7.8){{$$\tableau[scY]{    &&\bl \ldots    & & \\ \bl \vdots\\ \bl \bl \\ &&\bl \ldots    & & \\   &&\bl \ldots    & & } $$} }

          \put(5.3,14.9){ {$\overbrace{\phantom{xxxxxxxxxx}}^{r} $} }
  \put(-2.5,7.8){{$ N-k
    \left\{\rule[16pt]{0pt}{19pt}\right.$}}  %}
\end{picture} \phantom{xxxxxxx}
\end{center}
\end{figure}
\vspace{0.0cm}

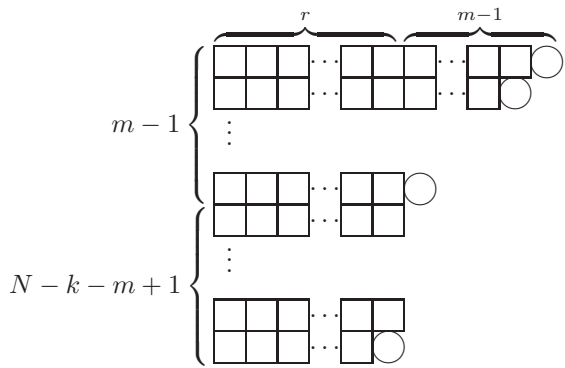
\begin{figure}[h]
\caption{Form (F3) with $g=0$}
\label{FigFormF3special}
\begin{center}
\setlength{\unitlength}{4.3pt}
\vspace{0.5cm}
\begin{picture}(14,26)(0,0)
\put(6,10.5){{$$\tableau[scY]{ &&&\bl \ldots&& &&\bl \ldots &&&\bl \tcercle{} \\   & &&\bl \ldots&& &&\bl \ldots &&\bl \tcercle{} \\\bl \vdots\\ \bl \bl \\ &   &&\bl \ldots&&&\bl \tcercle{}\\ &&&\bl \ldots&& \\\bl \vdots\\ \bl \bl \\    & &&\bl \ldots& & \\  & &&\bl \ldots&&\bl \tcercle{}  } $$} }

          \put(5.3,24.2){ {$\overbrace{\phantom{xxxxxxxxxxxx}}^{r} $} }
					\put(22,24.2){ {$\overbrace{\phantom{xxxxxxxxxx}}^{m-1} $} }
  \put(-12,3.3){{$ N-k-m+1
    \left\{\rule[17pt]{0pt}{17pt}\right.$}}  %}
		\put(-3,17.5){{$ m-1
    \left\{\rule[17.5pt]{0pt}{17pt}\right.$}}  %}
\end{picture}
\phantom{xxxxxxxxxxxx}
\end{center}
\end{figure}
\vspace{0.5cm}

Second, we suppose  that  $(\ell,j)$ is an inner corner.  This implies that $j=1$ and as a consequence, criterion (D1) of Theorem \ref{TheoTransla} cannot be satisfied.  Thus, the only option is that the last corner is a circle and criterion (D2) must be satisfied:   all other corners must be inner circles, except for at most one corner, which can be a bordering or outer circle, located  at $(\bar \i,\bar \j)$,  and such that $\bar \i =N+1-\bar k(k+1)$ and $\bar \j=\bar k(r-1)+1$ for some positive integer $\bar k$.  However, we know that $\bar \i<\ell\leq 2k$, so that $\bar k=1$.  In short, if $(\ell,j)$ is an inner corner, then all corners are inner circles, except for at most one non-inner corner, which could be a circle located at $(N-k,r)$.  If all corners are inner ones, without exception, then the only possible superpartition is $$\La=(m-1,m-2,\ldots,1,0;\emptyset),\quad m\leq N, $$ which is the form (F2) illustrated in Figure~\ref{FigFormF2}.   Finally, if there is one exceptional corner, the all possibles superpartitions can be written as $$\La=(r+f-1,r+f-2,\ldots,r, r-1, g-1,g-2,\ldots,0 ;r^{N-k-f}),$$where $$f+g+1=m,\quad g<r,\quad g\leq k,\quad f<N-k.$$     This is the last possible form and it is illustrated in Figure~\ref{FigFormF3}.  Note that the admissibility imposes some additional  restrictions on the forms (F2) and (F3).  The form (F2) is admissible whenever $N\leq k$, while for  $N>k$, it is admissible if $N+r-m-1\leq k$.  
In the case of (F3), the admissibility also requires $f\geq g+N-2k-1$.

\begin{figure}[h]
\caption{Form (F2)}
\label{FigFormF2}
\begin{center}
\setlength{\unitlength}{4.0pt}
\vspace{0.5cm}
\begin{picture}(14,14)(0,0)
\put(6,7.8){{$$\tableau[scY]{    &&\bl \ldots    & &&\bl \tcercle{} \\ &&\bl \ldots    & &\bl \tcercle{} \\ \bl \vdots\\ \bl \bl \\&\bl \tcercle{}\\\bl \tcercle{} } $$} }

          %\put(5.1,15.2){ {$\overbrace{\phantom{xxxxxxxxxx}}^{r} $} }
  \put(1.0,7.8){{$ m
    \left\{\rule[23pt]{0pt}{19pt}\right.$}}  %}
\end{picture} \phantom{xxxxxxxxx}
\end{center}
\end{figure}

\begin{figure}[h]\caption{Form (F3)}\label{FigFormF3}
\begin{center}
{\footnotesize
\setlength{\unitlength}{3.8pt}
\vspace{0.5cm}
\begin{picture}(16,36)(-4,-8)
\put(6,7.8){{$$\tableau[scY]{ &&\bl \ldots    & &&\bl \ldots&& &&\bl \ldots &&&\bl \tcercle{} \\ &&\bl \ldots    & &&\bl \ldots&& &&\bl \ldots &&\bl \tcercle{} \\\bl \vdots\\ \bl \bl \\&&\bl \ldots    & &&\bl \ldots&&&\bl \tcercle{}\\&&\bl \ldots    & &&\bl \ldots&& \\\bl \vdots\\ \bl \bl \\ &&\bl \ldots    & &&\bl \ldots& & \\  &&\bl \ldots    & &&\bl \ldots&&\bl \tcercle{} \\ &&\bl \ldots    & &\bl \tcercle{} \\ \bl \vdots\\ \bl \bl \\&\bl \tcercle{}\\\bl \tcercle{} } $$} }

          \put(5.4,29){ {$\overbrace{\phantom{xxxxxxxxxxxxxxxx}}^{r} $} }
					\put(29,29){ {$\overbrace{\phantom{xxxxxxxxxx}}^{f} $} }
  \put(-3,14.8){{$ N-k
    \left\{\rule[49pt]{0pt}{5pt}\right.$}}  %}
		
	\put(2.2,-6.4){{$ g
    \left\{\rule[18pt]{0pt}{6pt}\right.$}}  %}
\end{picture}
\phantom{xxxxxxxxxxxxxxxxxxx}
}
\end{center}
\end{figure}
 \vspace{0cm}

 We have demonstrated that only three forms of admissible superpartitions lead to invariant polynomials when $N\leq 2k$.\end{proof}

\subsection{The clustering condition for $k>1$}

Baratta and Forrester have shown that if symmetric Jack polynomials are also invariant under translation, then they almost automatically admit clusters \cite{BarFor}.   In what follows, we generalize their approach   to the case of Jack polynomials with prescribed symmetry.

\begin{proposition} Let $P_{\Lambda}(x_1,\ldots,x_N;\alpha_{k,r})$ be a Jack polynomial with prescribed symmetry AS, where $\Lambda$ is as in \eqref{eqInvAdm} and of bi-degree $(n|m)$ and such that  $N\geq k+m+1$. Suppose moreover that $\Lambda$ is such that $P_{\Lambda}(x_1,\ldots,x_N;\alpha_{k,r})$ is translationally invariant. 
\begin{itemize}
	\item [(i)] If $\ell(\Lambda)>N-k$ then 
\begin{equation*}
P_{\Lambda}(x_1,\ldots,x_N;\alpha_{k,r})\Big|_{x_{N-k+1}=\ldots=x_{N}=z}=0\, .
\end{equation*}
  \item [(ii)] If $\ell(\Lambda)=N-k$ then
\begin{equation*}
P_{\Lambda}(x_1,\ldots,x_N;\alpha_{k,r})\Big|_{x_{N-k+1}=\ldots=x_{N}=z}=\prod_{i=m+1}^{N-k} (x_{i}-z)^{r}Q(x_{1},\ldots,x_{N-k},z)
\end{equation*}
for some polynomial $Q$ of degree $n-(N-k-m)r$.
\end{itemize}
\end{proposition}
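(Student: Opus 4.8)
The plan is to exploit the two mechanisms singled out in the introduction: translation invariance, to normalise the cluster value to $0$, and the stability and factorisation properties of Propositions~\ref{stability1} and~\ref{removecircle}. Because $P_\La=P^\AS_\La$ is translationally invariant, I would first write
\[
P_\La(x_1,\ldots,x_{N-k},z,\ldots,z;\alpha_{k,r})=P_\La(x_1-z,\ldots,x_{N-k}-z,0,\ldots,0;\alpha_{k,r}),
\]
so that it suffices to evaluate $P_\La$ with its last $k$ arguments set to $0$ and then undo the shift $x_i\mapsto x_i-z$. Since $P^\AS_\La$ is symmetric in the block $J=\{m+1,\ldots,N\}$, I may set $x_N,x_{N-1},\ldots,x_{N-k+1}$ to $0$ one at a time, applying Proposition~\ref{stability1} at each step; every intermediate polynomial is again of type $\AS$ in fewer variables, so the proposition keeps applying.

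The decisive bookkeeping is that $\ell(\La)=m+\#\{i>m:\La_i>0\}$, so $\La$ has exactly $N-\ell(\La)$ vanishing bosonic parts, of largest index. In case~(i), $\ell(\La)>N-k$ forces $\La_{N-k+1}>0$, and the first positive part reached while climbing from $x_N$ makes Proposition~\ref{stability1} return $0$; this proves~(i). In case~(ii), $\ell(\La)=N-k$ means precisely the last $k$ bosonic parts vanish with $\La_{N-k}>0$, so the non-vanishing branch of Proposition~\ref{stability1} applies $k$ times and yields $P_\La(x_1,\ldots,x_{N-k},0,\ldots,0)=P_{\tilde\La}(x_1,\ldots,x_{N-k})$ with $\tilde\La=(\La_1,\ldots,\La_m;\La_{m+1},\ldots,\La_{N-k})$, a strict superpartition of the same bi-degree and of full length $N-k$. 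Everything then hinges on $\tilde\La$ having all parts positive: putting $i=N-k$ in the weak admissibility condition $\La^\cd_i-\La^*_{i+k}\ge r$ and using $\La^*_N=0$ gives $\La^\cd_{N-k}\ge r$, hence $\La^\cd_i\ge r$ for every non-empty row, so each bosonic part of $\tilde\La$ is $\ge r$ while each fermionic part is $\ge r-1$.

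Restoring $z$, it remains to prove that $P_{\tilde\La}(y)$ is divisible by $\prod_{i=m+1}^{N-k}y_i^{\,r}$. I would extract this through the column removal of Proposition~\ref{removecircle}: since every part stays positive, $r-1$ removals give $P_{\tilde\La}=(y_1\cdots y_{N-k})^{r-1}P_{\mathcal{C}^{r-1}\tilde\La}$, after which the bosonic parts are still $\ge1$ while the fermionic parts are $\ge0$. One last use of the stability of Proposition~\ref{stability1}, legitimate because those bosonic parts remain positive, shows that $P_{\mathcal{C}^{r-1}\tilde\La}$ vanishes whenever a bosonic variable equals $0$, hence is divisible by $\prod_{i>m}y_i$. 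Multiplying, $P_{\tilde\La}$ carries the factor $\prod_{i\le m}y_i^{\,r-1}\prod_{i>m}y_i^{\,r}$, so $\prod_{i=m+1}^{N-k}(x_i-z)^r$ factors out of $P_\La(x_1,\ldots,x_{N-k},z,\ldots,z)$, with cofactor $Q$ of degree $n-(N-k-m)r$ by a degree count.

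The true crux, I expect, is the interplay of the two admissibility thresholds just used. Column removal is blind to the circle/box distinction and only produces order $r-1$; it is exactly the unit gap between bosonic parts ($\ge r$) and fermionic parts ($\ge r-1$) that allows a single further stability step to raise the bosonic exponent — and only the bosonic exponent — to $r$. Establishing this gap cleanly from the admissibility inequalities of Lemmas~\ref{leminequality1} and~\ref{leminequality2}, and verifying that the iterated stability of the first step never stalls before all $k$ coincident variables have been absorbed, are the two delicate points; the remainder is bookkeeping and a degree count.
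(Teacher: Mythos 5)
Your argument is correct, and while it opens exactly as the paper does (translation invariance to shift by $z$, then $k$ applications of the stability of Proposition~\ref{stability1} to dispose of case (i) and to reduce case (ii) to $P_{\tilde\La}(x_1-z,\ldots,x_{N-k}-z)$), your mechanism for extracting the factor $\prod_{i=m+1}^{N-k}(x_i-z)^r$ is genuinely different. The paper instead invokes Theorem~\ref{TheoTransla} and Corollary~\ref{ultimaesquina} to place the last corner of $\La$ at $(N-k,r)$, and then argues monomial by monomial: by triangularity every $m_\Ga$ with $\Ga\le\La$ surviving the restriction has all bosonic parts $\ge r$, so each term of the expansion in the shifted variables already carries the required factor. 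Your route replaces this by the structural identities: the inequality $\La^\cd_{N-k}\ge\La^\cd_{N-k}-\La^*_N\ge r$ (which needs only weak admissibility and $\ell(\La)=N-k$, not the corner-location theorem) gives bosonic parts $\ge r$ and fermionic parts $\ge r-1$, so $r-1$ column removals (Proposition~\ref{removecircle}) followed by one more stability step produce the divisibility directly. This buys two things: it decouples the factorization from the translation-invariance classification (invariance is used only for the initial shift), and it actually yields the slightly stronger statement that $\prod_{i\le m}(x_i-z)^{r-1}\prod_{i>m}(x_i-z)^{r}$ divides the restricted polynomial. The two ``delicate points'' you flag at the end are in fact already settled in your own argument: the unit gap between bosonic and fermionic parts is exactly your computation $\La^\cd_i\ge r$ for $i\le N-k$, and the iterated stability cannot stall because the number of vanishing bosonic parts equals $N-\ell(\La)=k$. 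The one hypothesis worth making explicit is $N\ge k+m+1$, which guarantees that the $k$ specialized variables all lie in the symmetric block and that position $N-k$ is bosonic, so that the final stability step and the symmetry argument for the extra factor $\prod_{i>m}y_i$ both apply.
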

\begin{proof}
From the admissibility condition, we know that $P_{\Lambda}(x;\alpha_{k,r})$ is well defined.  Moreover, the condition   $N\geq k+m+1$ ensures that the specialization of the $k$ variables takes place in the set of variables in  which $P_{\Lambda}$  is symmetric. In other words, if $\alpha$ is not a negative rational nor zero, then 
$$   P_{\Lambda}(x;\alpha)\Big|_{x_{N-k+1}=\ldots=x_{N}=z}\neq 0\, .$$
Thus, property (i) is not trivial.  However, if we suppose that $P_{\Lambda}(x;\alpha_{k,r})$ is translationally invariant, then    
\begin{equation}\label{eqtranslation}  
P_{\Lambda}(x_{1},\ldots,x_{N};\alpha_{k,r})\Big|_{x_{N-k+1}=\ldots=x_{N}=z} =P_{\Lambda}(x_{1}-z,\ldots,x_{N-k}-z,0,\ldots,0;\alpha_{k,r})
\end{equation}
Now, by the stability property given in Lemma \ref{stability1}, the last equality can rewritten   as 
\begin{equation}\label{eqtranslation2}  
P_{\Lambda}(x_{1},\ldots,x_{N};\alpha_{k,r})\Big|_{x_{N-k+1}=\ldots=x_{N}=z} =P_{\Lambda}(x_{1}-z,\ldots,x_{N-k}-z;\alpha_{k,r}).
\end{equation}
From this point, two cases are possible:
\begin{itemize}
	\item [(i)] If $\ell(\Lambda)>N-k$, Lemma \ref{stability1} also implies  that the RHS of \eqref{eqtranslation2} is zero, as expected. 
  \item [(ii)] If $\ell(\Lambda)=N-k$, then the RHS of \eqref{eqtranslation2} is not zero.  From the triangularity property of the Jack polynomials with prescribed symmetry in the monomial basis, we can write 
\begin{multline*}
\qquad\qquad P_{\Lambda}(x_{1}-z,\ldots,x_{N-k}-z;\alpha_{k,r})\\=m_{\Lambda}(x_{1}-z,\ldots,x_{N-k}-z)+\sum_{\Gamma<\Lambda}c_{\Lambda,\Gamma}m_{\Gamma}(x_{1}-z,\ldots,x_{N-k}-z).
\end{multline*}
Moreover, according to Theorem  \ref{TheoTransla} and Lemma \ref{ultimaesquina},  the last corner in $\La's$ diagram is located at $(N-k,r)$. This fact, together with $\ell(\Gamma)=N-k$ and $N\geq k+m+1$,   impose that $$\La_{N-k}\geq r\quad \text{and}\quad \Gamma_{N-k}\geq r \quad\text{for all}\quad\Gamma< \Lambda.$$ Hence, $\prod_{i=m+1}^{N-k} (x_{i}-z)^{r}$ divides $ m_{\Gamma}$ and each $m_{\Gamma}$ such that $\Ga<\La$.  This finally implies that $\prod_{i=m+1}^{N-k} (x_{i}-z)^{r}$ divides $P_{\Lambda}(x_{1}-z,\ldots,x_{N-k}-z;\alpha_{k,r})$, 
and the proposition follows. 
\end{itemize}
\end{proof}

The last proposition establishes the clustering properties conjectured in \cite{dlm_cmp2} in the case of translationally invariant polynomials. The next proposition shows that in this case, it is also possible to get more explicit clustering properties  involving only Jack polynomials and not some an indeterminate polynomials $Q$ as before. Note that in some instances, we only form cluster of order $r-1$.  We stress that this is not in contradiction with the previous proposition.  Indeed, more variables could be collected to get order $r$, but  this factorization would not allow us to write explicit formulas in terms of Jack polynomials with prescribed symmetry.

\begin{proposition}\label{propexplicit}  Let $P_{\Lambda}(x_1,\ldots,x_N)$ be a Jack polynomial with prescribed symmetry AS at $\alpha=\alpha_{k,r}$, where  $\Lambda=(\La_1,\ldots,\La_m;\La_{m+1},\ldots,\La_N)$ is as in \eqref{eqInvAdm} and of length $\ell\leq N$.  Suppose that the partition $(\La_{m+1},\ldots,\La_N)$ contains  $f_{0}$ parts equal to $0$. Suppose moreover that $\Lambda$ is such that  $\La_{N-f_0}=r$ and $P_\Lambda(x_,\ldots,x_N)$ is translationally invariant.\\

\noindent (i) If   $\La_m\geq r$ or $m=0$, then 
$$P_\Lambda(x_,\ldots,x_N)\Big|_{x_{N-f_{0}+1}=\ldots=x_{N}=z}=\prod_{i=1}^{N-f_{0}} (x_{i}-z)^{r}\cdot P_{\Lambda- r^\ell}(x_{1}-z,\ldots,x_{N-f_{0}}-z).$$

\noindent (ii) If $\La_{m} = r-1$, then 
$$P_{\Lambda}(x_{1},\ldots,x_{N})\Big|_{x_{N-f_{0}+1}=\ldots=x_{N}=z}=\prod_{i=1}^{N-f_{0}} (x_{i}-z)^{r-1}\cdot P_{\Lambda-(r-1)^\ell}(x_{1}-z,\ldots,x_{N-f_{0}}-z). $$

\noindent (iii) If $\La_{m}=0$, then 
\begin{multline*} \qquad \quad
P_{\Lambda}(x_{1},\ldots,x_{N})\Big|_{x_{m}=x_{N-f_{0}+1}=\ldots=x_{N}=z}\\
=  \prod_{\substack{1\leq i\leq N-f_0\\i\neq m}}  (x_{i}-z)^{v}\cdot P_{\widetilde{\Lambda}}(x_{1}-z,\ldots,x_{m-1}-z,x_{m+1}-z,\ldots,x_{N-f_{0}}-z)
\end{multline*}
where $v=\min(r,\La_{m-1})$, $\widetilde{\Lambda}=\widetilde{C}\Lambda-v^{(\ell-1)}$, and $\widetilde{C}\La=(\La_1,\ldots,\La_{m-1};\La_{m+1},\ldots,\La_N)$ .
\end{proposition}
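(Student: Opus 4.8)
The plan is to reduce each of the three specializations to the structural operations already established in Section 2—translation invariance, stability, and the removal of a column or a circle—and then read off the residual index directly from the diagram. First I would use the hypothesis that $P_\Lambda$ is translationally invariant to convert the specialization into a translation. Exactly as in \eqref{eqtranslation}, setting $x_{N-f_0+1}=\cdots=x_N=z$ (and, in case (iii), also $x_m=z$) coincides with translating every variable by $-z$ and then sending the affected variables to $0$; writing $y_i=x_i-z$ this gives $P_\Lambda(x)|_{\cdots=z}=P_\Lambda(y_1,\ldots,y_{N-f_0},0,\ldots,0)$, with $f_0$ trailing zeros (and $y_m=0$ in case (iii)).

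Next I would peel off the zero variables. Since by assumption the symmetric part of $\Lambda$ has exactly $f_0$ trailing zeros and $\Lambda_{N-f_0}=r>0$, applying the stability property for type $\AS$ (Proposition \ref{stability1}) $f_0$ times removes them and yields $P_{\Lambda'}(y_1,\ldots,y_{N-f_0})$ with $\Lambda'=(\Lambda_1,\ldots,\Lambda_m;\Lambda_{m+1},\ldots,\Lambda_{N-f_0})$. In cases (i) and (ii) every part of $\Lambda'$ is then positive. In case (iii) one has $\Lambda'_m=\Lambda_m=0$, so the extra condition $y_m=0$ is handled by the removal-of-a-circle half of Proposition \ref{removecircle}, which turns $P_{\Lambda'}|_{y_m=0}$ into $\epsilon_m\,P_{\tilde{\mathcal{C}}\Lambda'}$ with $\epsilon_m=(-1)^{m(m-1)/2}$ and $\tilde{\mathcal{C}}\Lambda'=(\Lambda_1,\ldots,\Lambda_{m-1};\Lambda_{m+1},\ldots,\Lambda_{N-f_0})$; once more all parts are positive.

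Then I would extract the common factor by iterating the removal of a column (Proposition \ref{removecircle}, equivalently Lemma \ref{simpleprod}): each step requires all parts to be positive and factors out one copy of $\prod_i(x_i-z)$ while subtracting $1$ from every row. The number of admissible steps equals the smallest nonzero part, which the hypotheses pin down to be $r$ in case (i) (the minimal part is $\min(\Lambda_m,r)=r$ since $\Lambda_m\geq r$), $r-1$ in case (ii) (the minimal part is the fermionic $\Lambda_m=r-1$), and $v=\min(r,\Lambda_{m-1})$ in case (iii). Removing exactly $v$ columns factors out $\prod(x_i-z)^v$ and leaves the Jack polynomial indexed by the superpartition obtained from $\Lambda'$ (respectively $\tilde{\mathcal{C}}\Lambda'$) by deleting those $v$ columns, namely $\Lambda-r^\ell$, $\Lambda-(r-1)^\ell$, and $\widetilde\Lambda$.

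The main obstacle is the bookkeeping in this last step: I must verify that the smallest nonzero part is \emph{exactly} $v$, so that neither more nor fewer columns can be factored and the residual index remains an honest superpartition. This is where strictness and weak $(k,r,N)$-admissibility of $\Lambda$ enter, ensuring—together with the hypothesis $\Lambda_{N-f_0}=r$—that the symmetric part contains no part strictly between $0$ and $r$ and that the fermionic minimal part behaves as asserted in each case. Keeping track of the sign $\epsilon_m$ in case (iii), and confirming that the extracted quotient is genuinely a Jack polynomial with prescribed symmetry rather than merely the indeterminate polynomial $Q$ of the preceding proposition, is the delicate point of the argument.
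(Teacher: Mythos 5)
Your proposal follows essentially the same route as the paper's proof: translation invariance plus the stability property reduces the specialization to $P_\Lambda(x_1-z,\ldots,x_{N-f_0}-z)$, the circle-removal of Proposition \ref{removecircle} handles the extra variable in case (iii), and the decomposition $\Lambda=\widetilde\Lambda+v^{\ell}$ together with Lemma \ref{simpleprod} factors out the columns. The only remark is that the ``main obstacle'' you identify is not actually needed: one only has to check that every part of the truncated superpartition is \emph{at least} $v$ (not that the smallest nonzero part equals $v$ exactly), since Lemma \ref{simpleprod} guarantees the quotient is again a genuine Jack polynomial with prescribed symmetry whether or not further columns could be peeled off.
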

\begin{proof}

 Proceeding as in the proof of the previous proposition, we use the translation invariance and the stability of the Jack polynomials with prescribed symmetry, and find
\begin{equation}\label{eqtranslation4}  
P_{\Lambda}(x_{1},\ldots,x_{N})\Big|_{x_{N-f_{0}+1}=\ldots=x_{N}=z} =P_{\Lambda}(x_{1}-z,\ldots,x_{N-f_{0}}-z).
\end{equation}

 \noindent (i) If $\lambda_{N-f_{0}}=r$ and $m=0$ or  $m>0$ and $\La_{m} \geq r$, then we can decompose the superpartition $\La$ as $$\La=\widetilde{\La}+ r^\ell,$$ where $\tilde \La$ is some other superpartition, which could be empty, and $r^\ell$ denotes the partition $(r,\ldots,r)$ of length $\ell$.  This allows us to use Lemma \ref{simpleprod} and factorize the RHS of \eqref{eqtranslation4}.  This yields, as expected, 
\begin{equation*}
P_{\Lambda}(x_{1},\ldots,x_{N})\Big|_{x_{N-f_{0}+1}=\ldots=x_{N}=z} =\prod_{i=1}^{N-f_{0}} (x_{i}-z)^{r}\cdot P_{\widetilde{\Lambda}}(x_{1}-z,\ldots,x_{N-f_{0}}-z).
\end{equation*}

\noindent (ii) If $\Lambda_{N-f_{0}}=r$ and $\La_{m} = r-1$, then $\La$ can be decomposed as   $$\La=\widetilde{\La}+ (r-1)^\ell,$$
where, this time, $\widetilde{\La}$ is a non-empty superpartition of length $\ell$ and such that $\tilde \La_m=0$.   Using once again Lemma \ref{simpleprod}, we can factorize RHS of \eqref{eqtranslation4} and get the desired result:
\begin{equation*}%\label{}
P_{\Lambda}(x_{1},\ldots,x_{N})\Big|_{x_{N-f_{0}+1}=\ldots=x_{N}=z} =\prod_{i=1}^{N-f_{0}} (x_{i}-z)^{r-1}\cdot P^{(\alpha)}_{\widetilde{\Lambda}}(x_{1}-z,\ldots,x_{N-f_{0}}-z).
\end{equation*}

\noindent (iii) Finally, we suppose  $\Lambda_{N-f_{0}}=r$, $\La_{m}=0$, and $v=\min(r,\La_{m-1})$.  In equation \eqref{eqtranslation4}, we set $x_m=z$.  This yields 
$$ P_{\Lambda}(x_{1},\ldots,x_{N})\Big|_{x_{m}=x_{N-f_{0}+1}=\ldots=x_{N}=z}=P_{\Lambda}(x_{1}-z,\ldots,x_{m-1}-z,0,x_{m+1}-z,\ldots,x_{N-f_{0}}-z).
$$ 
According to Lemma \ref{removecircle}, the RHS of the last equation can be simplify as follows 
\begin{equation}\label{eqtranslation5}    P_{\Lambda}(x_{1},\ldots,x_{N})\Big|_{x_{m}=x_{N-f_{0}+1}=\ldots=x_{N}=z}=P_{\widetilde{C}\Lambda}(x_{1}-z,\ldots,x_{m-1}-z,x_{m+1}-z,\ldots,x_{N-f_{0}}-z).\end{equation}
 Now,  we can decompose $\widetilde{C}\La$  as 
    $$  \widetilde{C}\La=\widetilde{\La}+ v^{\ell-1},$$ 
for some superpartition $\widetilde{\La}$ whose length is smaller or equal to $\ell-1$ . 
This allows us to exploit 
   Lemma \ref{simpleprod} and rewrite  the RHS of \eqref{eqtranslation5} as    
\begin{equation*}%\label{}
\prod_{i=1}^{m-1} (x_{i}-z)^{v}\cdot \prod_{i=m+1}^{N-f_{0}} (x_{i}-z)^{v}\cdot  P^{(\alpha)}_{\widetilde{\Lambda}}(x_{1}-z,\ldots,x_{m-1}-z,x_{m+1}-z,\ldots,x_{N-f_{0}}-z),
\end{equation*}
which is the desired result.
\end{proof}
\vspace{1ex}

Let us consider a non-trivial example in relation with the last proposition.  We choose $k=2$, $r=3$ and $N=8$. Let $\Lambda=(8,7,5;6,3,3)$, i.e.
{\small{$$\Lambda=\tableau[scY]{  &&&&&&&&\bl\tcercle{}\\&&&&&&&\bl\tcercle{}\\&&&&&\\&&&&&\bl\tcercle{}\\&& \\&&}$$}}
Clearly $P_{\Lambda}(x;{-3/2})$ is translationally invariant.    Proposition \ref{propexplicit} then yields  
\begin{equation*}
P_{\Lambda}(x_{1},\ldots,x_{8};{-3/2})\Big|_{x_{7}=x_{8}=z}=\prod_{i=4}^{6} (x_{i}-z)^{3}P^{(-3/2)}_{\widetilde{\Lambda}}(x_{1}-z,\ldots,x_{6}-z)
\end{equation*}
where $\widetilde{\Lambda}=(5,4,2;3)$, i.e., 
{\small{$$\widetilde{\Lambda}=\tableau[scY]{  &&&&&\bl\tcercle{}\\&&&&\bl\tcercle{}\\&&\\&&\bl\tcercle{}}$$}}
Moreover,  $P_{\widetilde{\Lambda}}(x;-3/2)$ is also translationally invariant in $\widetilde{N}=N-k=6$ variables, so that
\begin{equation*}
P^{}_{\widetilde{\Lambda}}(x_{1}-z,\ldots,x_{6}-z; -3/2)=P^{}_{\widetilde{\Lambda}}(x_{1},\ldots,x_{6}; -3/2).
\end{equation*}
Therefore,
\begin{equation*}
P_{\Lambda}^{}(x_{1},\ldots,x_{8};-3/2)\Big|_{x_{7}=x_{8}=z}=\prod_{i=4}^{6} (x_{i}-z)^{3}P^{}_{\widetilde{\Lambda}}(x_{1},\ldots,x_{6};-3/2)
\end{equation*}

The last example is very special because it involves a pair of superpartitions satisfying the following bi-invariance property: $\La$ and $\tilde \La=\La-r^\ell$  are such that both $P_{\Lambda}{(x_1,\ldots,x_N;\alpha_{k,r})}$  and  $P_{\tilde\Lambda}{(x_1,\ldots,x_{N-k};\alpha_{k,r})}$  are invariant   under translation.   By using Theorem \ref{TheoTransla}, one can check that the diagrams given  below define a large family  of pairs of superpartitions satisfying this bi-invariance property.  

\vspace{6ex}

\setlength{\unitlength}{4.3pt}   

\begin{center}
\begin{picture}(19,20)(0,0)
%\put(0,0){i}\put(14,14){f}
\put(3,5){{$$\tableau[scY]{   &\bl \ldots & && \bl \ldots& \tf& &\bl \ldots& &\bl \tcercle{$\star$}\\ \bl \vspace{-2ex} \vdots \\  \bl \bl \\&\bl \ldots & & & \bl \ldots& & \\  & \bl \ldots & \tf&& \bl \ldots& &\bl \tcercle{$\star$}  \\ &\bl \ldots & \\ \bl \vspace{-2ex} \vdots \\  \bl \bl  \\ &\bl \ldots  & \star }$$} }
         \put(18.9,17.2){ {$\overbrace{\phantom{xxxxxx}}^{r-1} $} }
				\put(10.8,3.9){ {$\underbrace{\phantom{xxxxxx}}_{r-1} $} }
				\put(2.3,-7.1){ {$\underbrace{\phantom{xxxxxx}}_{r} $} }
  \put(-0.7,10.9){{$ k
    \left\{\rule[19.9pt]{0pt}{12.4pt}\right.$}}  %}
	\put(-0.7,-2){{$ k
    \left\{\rule[19.9pt]{0pt}{9pt}\right.$}}  %}
\end{picture}
\end{center}

\vspace{1.5cm}

\begin{center}
\begin{picture}(14,17)(0,-2)
%\put(0,0){i}\put(14,14){f}
\put(3,5){{$$ \tableau[scY]{    & \bl \ldots &\tf& & \bl \ldots & &\bl \tcercle{$\star$}\\ \bl \vspace{-2ex} \vdots \\ \bl \bl \\&  \bl \ldots& & \\  &\bl \ldots & &\bl \tcercle{$\star$}   }$$ } }
        \put(11.3,11.9){ {$\overbrace{\phantom{xxxxxx}}^{r-1} $} }
        \put(3,-1.5){ {$\underbrace{\phantom{xxxxxx}}_{r-1} $} }
  \put(0,5.1){{$ k
    \left\{\rule[19.9pt]{0pt}{13pt}\right.$}}  %}
\end{picture}
\end{center}

  \begin{appendix}

\section{Examples of admissible and invariant superpartitions}

In this appendix, for the  triplet $(k,r,N)$ given below,  we display all  smallest possible $(k,r,N)$-admissible superpartitions  that lead to  Jack polynomials with prescribed symmetry AS that are translationally invariant and, as a consequence,  admit clusters of size $k$ and order $r$.   The word ``smallest'' refers to the least number of boxes in the corresponding diagrams. 

Let $(k,r,N)=(4,3,15)$.  Suppose first that the number $m$ of circle is zero.  Then, according to Corollary \ref{lempartitioninv}, the smallest possible partition   that is $(k,r,N)-$admissible and indexes an invariant polynomial  is $\lambda=(9^3,6^4,3^4)$.    For higher values of $m$,  one obtains the smallest superpartitions by deleting some squared corners in $\lambda$  and adding circles while keeping conditions C1 and C2 satisfied.   All smallest superpartitions for $(k,r,N)=(4,3,15)$ are given below.  
\vspace{.5cm}

{\small

\setlength{\unitlength}{4.2pt}

\hspace{.5cm}\begin{picture}(14,18)(-6,-2)

\put(-10,5){{$$\tiny\tableau[scY]{   &&&&&&&&\\&&&&&&&&\\&&&&&&&&\\&&&&&\\&&&&&\\&&&&&\\&&&&&\\&&\\&&\\&&\\&&\\ }$$} }
     
\put(20,5){{$$\tiny\tableau[scY]{  &&&&&&&&\\&&&&&&&&\\&&&&&&&\\&&&&&\\&&&&&\\&&&&&\\&&&&\\&&\\&&\\&&\\&&\bl\tcercle{}\\ }$$} }
     
\put(50,5){{$$\tiny\tableau[scY]{   &&&&&&&&\\&&&&&&&&\\&&&&&&&\\&&&&&\\&&&&&\\&&&&&\\&&&&&\bl\tcercle{}\\&&\\&&\\&&\\&&\\ }$$} }

\put(80,5) {{$$\tiny\tableau[scY]{  &&&&&&&&\\&&&&&&&&\\&&&&&&&&\bl\tcercle{}\\&&&&&\\&&&&&\\&&&&&\\&&&&&\\&&\\&&\\&&\\&&\\ }$$} }  

\end{picture}

\vspace{1 cm}

\hspace{.5cm}\begin{picture}(14,18)(-6,-2)

\put(-10,5){{$$\tiny\tableau[scY]{   &&&&&&&&\\&&&&&&&&\\&&&&&&\\&&&&&\\&&&&&\\&&&&&\\&&&&\bl\tcercle{}\\&&\\&&\\&&\\&&\bl\tcercle{}\\ }$$} }

\put(20,5){{$$\tiny\tableau[scY]{   &&&&&&&&\\&&&&&&&\\&&&&&&&\\&&&&&\\&&&&&\\&&&&&\bl\tcercle{}\\&&&&\\&&\\&&\\&&\\&&\bl\tcercle{}\\ }$$} }

\put(50,5){{$$\tiny\tableau[scY]{   &&&&&&&&\\&&&&&&&&\\&&&&&&&\bl\tcercle{}\\&&&&&\\&&&&&\\&&&&&\\&&&&\\&&\\&&\\&&\\&&\bl\tcercle{}\\ }$$} }

\put(80,5){{$$\tiny\tableau[scY]{  &&&&&&&&\\&&&&&&&&\bl\tcercle{}\\&&&&&&&\\&&&&&\\&&&&&\\&&&&&\\&&&&\\&&\\&&\\&&\\&&\bl\tcercle{}\\ }$$} }
\end{picture}

\vspace{1 cm}

\hspace{.5cm}\begin{picture}(14,18)(-6,-2)

\put(-10,5){{$$\tiny\tableau[scY]{   &&&&&&&&\\&&&&&&&&\\&&&&&&&\bl\tcercle{}\\&&&&&\\&&&&&\\&&&&&\\&&&&&\bl\tcercle{}\\&&\\&&\\&&\\&&\\ }$$} }

\put(20,5){{$$\tiny\tableau[scY]{  &&&&&&&&\\&&&&&&&&\bl\tcercle{}\\&&&&&&&\\&&&&&\\&&&&&\\&&&&&\\&&&&&\bl\tcercle{}\\&&\\&&\\&&\\&&\\ }$$} }

\put(50,5){{$$\tiny\tableau[scY]{  &&&&&&&&\\&&&&&&&\\&&&&&&\\&&&&&\\&&&&&\\&&&&&\bl\tcercle{}\\&&&&\bl\tcercle{}\\&&\\&&\\&&\\&&\bl\tcercle{}\\ }$$} }

\put(80,5){{$$\tiny\tableau[scY]{  &&&&&&&&\\&&&&&&&&\\&&&&&&\bl\tcercle{}\\&&&&&\\&&&&&\\&&&&&\\&&&&\bl\tcercle{}\\&&\\&&\\&&\\&&\bl\tcercle{}\\ }$$} }

\end{picture}

\vspace{1 cm}

\hspace{.5cm}\begin{picture}(14,18)(-6,-2)

\put(-10,5){{$$\tiny\tableau[scY]{  &&&&&&&&\\&&&&&&&&\bl\tcercle{}\\&&&&&&\\&&&&&\\&&&&&\\&&&&&\\&&&&\bl\tcercle{}\\&&\\&&\\&&\\&&\bl\tcercle{}\\ }$$} }

\put(20,5){{$$\tiny\tableau[scY]{  &&&&&&&&\\&&&&&&&\\&&&&&&&\bl\tcercle{}\\&&&&&\\&&&&&\\&&&&&\bl\tcercle{}\\&&&&\\&&\\&&\\&&\\&&\bl\tcercle{}\\ }$$} }

\put(50,5){{$$\tiny\tableau[scY]{  &&&&&&&&\bl\tcercle{}\\&&&&&&&\\&&&&&&&\\&&&&&\\&&&&&\\&&&&&\bl\tcercle{}\\&&&&\\&&\\&&\\&&\\&&\bl\tcercle{}\\ }$$} }

\put(80,5){{$$\tiny\tableau[scY]{  &&&&&&&&\\&&&&&&&&\bl\tcercle{}\\&&&&&&&\bl\tcercle{}\\&&&&&\\&&&&&\\&&&&&\\&&&&\\&&\\&&\\&&\\&&\bl\tcercle{}\\ }$$} }

\end{picture}

\vspace{1 cm}

\hspace{.5cm}\begin{picture}(14,18)(-6,-2)

\put(-10,5){{$$\tiny\tableau[scY]{   &&&&&&&&\\&&&&&&&&\bl\tcercle{}\\&&&&&&&\bl\tcercle{}\\&&&&&\\&&&&&\\&&&&&\\&&&&&\bl\tcercle{}\\&&\\&&\\&&\\&&\\ }$$} }

\put(20,5){{$$\tiny\tableau[scY]{  &&&&&&&&\\&&&&&&&\\&&&&&&\bl\tcercle{}\\&&&&&\\&&&&&\\&&&&&\bl\tcercle{}\\&&&&\bl\tcercle{}\\&&\\&&\\&&\\&&\bl\tcercle{}\\ }$$} }

\put(50,5){{$$\tiny\tableau[scY]{   &&&&&&&&\\&&&&&&&\bl\tcercle{}\\&&&&&&\\&&&&&\\&&&&&\\&&&&&\bl\tcercle{}\\&&&&\bl\tcercle{}\\&&\\&&\\&&\\&&\bl\tcercle{}\\ }$$} }

\put(80,5){{$$\tiny\tableau[scY]{   &&&&&&&&\bl\tcercle{}\\&&&&&&&\\&&&&&&\\&&&&&\\&&&&&\\&&&&&\bl\tcercle{}\\&&&&\bl\tcercle{}\\&&\\&&\\&&\\&&\bl\tcercle{}\\ }$$} }

\end{picture}

\vspace{1 cm}

\hspace{.5cm}\begin{picture}(14,18)(-6,-2)

\put(-10,5){{$$\tiny\tableau[scY]{  &&&&&&&&\\&&&&&&&&\bl\tcercle{}\\&&&&&&\bl\tcercle{}\\&&&&&\\&&&&&\\&&&&&\\&&&&\bl\tcercle{}\\&&\\&&\\&&\\&&\bl\tcercle{}\\ }$$} }

\put(20,5){{$$\tiny\tableau[scY]{  &&&&&&&&\bl\tcercle{}\\&&&&&&&\\&&&&&&&\bl\tcercle{}\\&&&&&\\&&&&&\\&&&&&\bl\tcercle{}\\&&&&\\&&\\&&\\&&\\&&\bl\tcercle{}\\ }$$} }

\put(50,5){{$$\tiny\tableau[scY]{   &&&&&&&&\\&&&&&&&\bl\tcercle{}\\&&&&&&\bl\tcercle{}\\&&&&&\\&&&&&\\&&&&&\bl\tcercle{}\\&&&&\bl\tcercle{}\\&&\\&&\\&&\\&&\bl\tcercle{}\\ }$$} }

\put(80,5){{$$\tiny\tableau[scY]{   &&&&&&&&\bl\tcercle{}\\&&&&&&&\\&&&&&&\bl\tcercle{}\\&&&&&\\&&&&&\\&&&&&\bl\tcercle{}\\&&&&\bl\tcercle{}\\&&\\&&\\&&\\&&\bl\tcercle{}\\ }$$} }

\end{picture}

\vspace{1 cm}

\hspace{.5cm}\begin{picture}(14,18)(-6,-2)

\put(-10,5){{$$\tiny\tableau[scY]{  &&&&&&&&\bl\tcercle{}\\&&&&&&&\bl\tcercle{}\\&&&&&&\\&&&&&\\&&&&&\\&&&&&\bl\tcercle{}\\&&&&\bl\tcercle{}\\&&\\&&\\&&\\&&\bl\tcercle{}\\ }$$} }

\put(20,5){{$$\tiny\tableau[scY]{   &&&&&&&&\bl\tcercle{}\\&&&&&&&\bl\tcercle{}\\&&&&&&\bl\tcercle{}\\&&&&&\\&&&&&\\&&&&&\bl\tcercle{}\\&&&&\bl\tcercle{}\\&&\\&&\\&&\\&&\bl\tcercle{}\\ }$$} }

\put(50,5){{$$\tiny\tableau[scY]{   &&&&&&&&\bl\tcercle{}\\&&&&&&&\bl\tcercle{}\\&&&&&&\bl\tcercle{}\\&&&&&\\&&&&&\\&&&&&\bl\tcercle{}\\&&&&\bl\tcercle{}\\&&&\bl\tcercle{}\\&&\\&&\\&&\bl\tcercle{}\\ }$$} }

\put(80,5){{$$\tiny\tableau[scY]{   &&&&&&&&\bl\tcercle{}\\&&&&&&&\bl\tcercle{}\\&&&&&&\bl\tcercle{}\\&&&&&\\&&&&&\\&&&&&\bl\tcercle{}\\&&&&\bl\tcercle{}\\&&\\&&\\&&\\&&\bl\tcercle{}\\\bl\tcercle{}}$$} }

\end{picture}

\vspace{1 cm}

\hspace{.5cm}\begin{picture}(14,18)(-6,-2)
\put(-10,5){{$$\tiny\tableau[scY]{   &&&&&&&&\bl\tcercle{}\\&&&&&&&\bl\tcercle{}\\&&&&&&\bl\tcercle{}\\&&&&&\\&&&&&\\&&&&&\bl\tcercle{}\\&&&&\bl\tcercle{}\\&&&\bl\tcercle{}\\&&\\&&\\&&\bl\tcercle{}\\ \bl\tcercle{}}$$} }

\end{picture}
}
  \end{appendix}


\begin{thebibliography}{99}



\bibitem{bdf}
 T.H.\ Baker, C.F.\  Dunkl, and P.J.\ Forrester,\emph{ Polynomial eigenfunctions of the Calogero-Sutherland-Moser
models with exchange terms}, pages 37--42 in J.\ F.\ van Diejen and L.\ Vinet, \emph{ Calogero-Sutherland-Moser Models},  CRM Series in Mathematical Physics, Springer (2000).


%\bibitem{bf1} T.H.\ Baker and P.J.\ Forrester, \emph{The Calogero-Sutherland model and generalized classical polynomials}, Commun.\ Math.\ Phys.\ {188} (1997), 175--216.






\bibitem{bf2}
T.H.\ Baker and P.J.\ Forrester, \emph{ The Calogero-Sutherland model and polynomials with prescribed symmetry}, Nucl.\ Phys.\  B 492 (1997),  682--716.

\bibitem{Baratta}
W.\ Baratta, \textit{Some properties of Macdonald polynomials with prescribed symmetry}, Kyushu J.\ Math.\  {64} (2010), 323--343.

\bibitem{BarFor}
W.\ Baratta and P.J.\ Forrester, \emph{Jack polynomial fractional quantum Hall states and their generalizations},
Nucl.\ Phys.\  B 843 (2011),  362--381.


\bibitem{Griffeth}
C.\ Berkesch, S.\ Griffeth, and S.V.\ Sam, \emph{Jack polynomials as fractional quantum Hall states and the Betti numbers of the $(k+ 1)$-equals ideal}, arXiv:1303.4126 (2013), 19 pages.


\bibitem{pasquier}
D.\ Bernard, M.\ Gaudin, F.D.\ Haldane, and V.\ Pasquier,
\emph{Yang-Baxter equation in long range interacting system},
J.\ Phys.\ A {26}  (1993), 5219--5236.


\bibitem{bh}
B.A.\ Bernevig and F.D.\ Haldane,
 \emph{Fractional quantum Hall states and Jack polynomials},
 Phys.\ Rev.\ Lett.\ { 101} (2008),  246806 1--4; \emph{Generalized Clustering Conditions of Jack Polynomials at Negative Jack Parameter $\alpha$},
Phys.\ Rev.\ { B77} (2008),  184502 1--10.




\bibitem{cl}
S.\ Corteel and J.\ Lovejoy, \emph{Overpartitions}, Trans.\ Amer.\ Math.\ Soc.\ 356 (2004), 1623--1635.


\bibitem{dlm_cmp}
P.\ Desrosiers, L.\ Lapointe, and P.\ Mathieu, \emph{Jack polynomials in
superspace}, Commun.\ Math.\ Phys.\ {242} (2003),  331--360.

\bibitem{dlm_adv}P.\ Desrosiers, L.\ Lapointe, and P.\ Mathieu, \emph{Orthogonality of Jack polynomials in superspace}, Adv.\ Math.\ {212} (2007), 361--388.

\bibitem{dlm_comb}
P.\ Desrosiers, L.\ Lapointe, and P.\ Mathieu, \emph{Classical symmetric functions in  superspace}, J.\ Alg.\ Comb.\  {24} (2006), 209--238.


\bibitem{dlm_imrn}
P.\ Desrosiers, L.\ Lapointe, and P.\ Mathieu, \emph{Evaluation and normalization of Jack polynomials in superspace},   Int.\ Math.\ Res.\ Not.\ (2012), 5267--5327.

\bibitem{dlm_cmp2}
P.\ Desrosiers, L.\ Lapointe, and P.\ Mathieu, \emph{Jack Superpolynomials with Negative Fractional
Parameter: Clustering Properties and Super-Virasoro
Ideals}, Commun.\ Math.\ Phys.\  316 (212), 395--440.


\bibitem{dunkl}
C.F. Dunkl, {\it Orthogonal polynomials of types $A$ and $B$ and related Calogero models}, 
Commun. Math. Phys. {\bf 197} (1998) 451-487.

\bibitem{DunklLuque}, C.F. Dunkl, J.-G. Luque, \textit{ Clustering properties of rectangular Macdonald polynomials}, arXiv:1204.5117 (2012), 43 pages.

\bibitem{ES}
B. Estienne and R. Santachiara,
{\it Relating Jack wavefunctions to $WA_{k-1}$ theories},   J. Phys. A: Math. Theor. {\bf 42} (2009) 445209.

\bibitem{FJMM}
B. Feigin, M. Jimbo, T. Miwa and  E. Mukhin, {\it  A differential ideal of symmetric polynomials spanned by Jack polynomials at $\beta=-(r-1)/(k+1)$}, Int. Math. Res. Not. (2002) {\bf 23}, 1223--1237.





\bibitem{FJMMqt}
B.\ Feigin, M.\ Jimbo, T.\ Miwa, and  E.\ Mukhin, {\it  Symmetric polynomials vanishing on the shifted diagonals and Macdonald polynomials}, Int. Math. Res. Not. (2003) {\bf 23}, 1015--1034.

\bibitem{forrester} P.J.\    Forrester, \textit{Log-gases and Random Matrices}, London Mathematical Society Monographs 34, Princeton University Press (2010).


\bibitem{mcanally}P J Forrester, D S McAnally and Y Nikoyalevsky, \textit{ On the evaluation formula for Jack polynomials with  prescribed symmetry}, J.\ Phys.\ A \textbf{34} (2001) 8407--8424.

%\bibitem{fw}P. J. Forrester, S.O. Warnaar, \textit{The importance of the Selberg integral}, Bull.\ Amer.\ Math.\ Soc.\ 45 (2008), 489--534.

 \bibitem{jl}
Th. Jolicoeur and J.-G. Luque {\it Highest weight Macdonald and Jack polynomials}
J. Phys. A: Math. Theor. {\bf 44} (2011) 055204 (21pp). %doi:10.1088/1751-8113/44/5/055204


%\bibitem{kadell} K.\ W.\ J.\ Kadell, \textit{The Selberg-Jack symmetric functions}, Adv.\ Math.\ {130} (1997), 33--102.

%\bibitem{kaneko} J.\ Kaneko, \textit{Selberg integrals and hypergeometric functions associated with Jack polynomials},SIAM  J.\ Math.\ Anal.\ {24} (1993), 1086--1110.




\bibitem{kasatani}
M. Kasatani, {\it Subrepresentations in the polynomial representation of the double affine Hecke algebra of type $GL_n$ at $t^{k+1}q^{r+1} = 1$}, Int. Math. Res. Not.  no. 28, (2005) 1717--1742.

\bibitem{kato}
Y.\ Kato  and Y.\ Kuramoto, \emph{Exact Solution of the Sutherland Model with Arbitrary Internal Symmetry}, Phys.\ Rev.\ Lett.\ 74 (1995), 1222--1225.


\bibitem{kato2009}
Y.\ Kato  and Y.\ Kuramoto, \emph{Dynamics of One-Dimensional Quantum Systems: Inverse-Square Interaction Models}, Cambridge University Press (2009). 


\bibitem{kato1998}
Y.\ Kato and T.\ Yamamoto, \emph{Jack polynomials with prescribed symmetry and hole propagator of spin Calogero-Sutherland
model}, J.\ Phys.\ A 31 (1998),  9171--9184.

\bibitem{knop}
F.~Knop and S.~Sahi, {\it A recursion and a combinatorial formula
for Jack polynomials}, Invent.\ Math.\ {\bf 128} (1997), 9--22.


\bibitem{lassalle} M.\ Lassalle, \textit{Une formule du bin\^ome g\'en\'eralis\'ee
pour les polyn\^omes de Jack}, C. R. Acad. Sci. Paris  S\'erie I 310 (1990), 253--256.

\bibitem{lassalle2} M.\ Lassalle,\textit{Coefficients binomiaux g\'en\'etalis\'es et polyn\^omes de Macdonald}, J.\ Funct.\ Anal.\  158 (1998), 289--324.

\bibitem{macdonald} I.G.\ Macdonald, \textit{Symmetric Functions and Hall Polynomials} (2nd
ed.), Oxford University Press Inc, New York, 1995

\bibitem{opdam} E.M.\ Opdam, \emph{Harmonic analysis for certain representations of graded Hecke algebras}, Acta Math.\ 175 (1995), 75--121.

\bibitem{polychronakos} A.P.\ Polychronakos, \emph{Exchange Operator Formalism for Integrable Systems of Particles}, Phys.\ Rev.\ Lett.\  69 (1992), 702--705.

\bibitem{polychronakos2006} A.P.\ Polychronakos, \emph{The physics and mathematics of Calogero particles}, J.\ Phys.\ A 39 (2006), 12793--12845.  

\bibitem{stanley} R.~P.~Stanley, \emph{Some combinatorial properties of Jack symmetric functions}, Adv.\ Math.\ {77} (1989), 76--115.

\bibitem{sutherland} B.\ Sutherland, \emph{Exact Results for a Quantum Many-Body Problem in One Dimension}, Phys.\ Rev.\ A 4 (1971), 2019--2021; \emph{Exact Results for a Quantum Many-Body Problem in One Dimension. II}, Phys.\ Rev.\ A 5 (1972), 1372--1376.





%\bibitem{aar} G.\ E.\ Andrews, R. Askey, R. Roy, \textit{Special Functions}, Cambridge University Press (2000).


\end{thebibliography}
\end{document}